\definecolor{rouge}{rgb}{0.7,0.00,0.00}
\definecolor{vert}{rgb}{0.00,0.5,0.00}
\definecolor{bleu}{rgb}{0.00,0.00,0.8}
\newtheorem{theorem}{Theorem}[section]
\newtheorem*{theorem*}{Theorem}
\newtheorem{lemma}[theorem]{Lemma}
\newtheorem{proposition}[theorem]{Proposition}
\newtheorem{condition}{Condition}
\newtheorem{conditionA}{A\kern-0.1mm}
\renewcommand\dots{\hbox to 1em{.\hss.\hss.}}
\theoremstyle{definition}
\numberwithin{equation}{section}
\def\bb#1{\mathbb{#1}}
\def\bf#1{\mathbf{#1}}
\def\bbm#1{\mathbbm{#1}}
\def\geq{\geqslant}
\def\leq{\leqslant}
\newcommand\ee{\varepsilon}
\DeclareMathOperator{\supp}{supp}
\DeclarePairedDelimiter\floor{\lfloor}{\rfloor}
\begin{document}

\title[Large deviations for the coefficients]
{Large deviation expansions for the coefficients of random walks on \\ the general linear group}

%\author{Hui Xiao$^1$}\footnotetext[1]{Universit\'{e} de Bretagne-Sud, LMBA UMR CNRS 6205, Vannes, France.}
%\author{Ion Grama$^{1,2}$}\footnotetext[2]{Corresponding author: ion.grama@univ-ubs.fr}
%\author{Quansheng Liu$^1$}

\author{Hui Xiao}
\author{Ion Grama}
\author{Quansheng Liu}

%\curraddr[Xiao, H., Grama, I., Liu, Q.]{ Universit\'{e} de Bretagne-Sud, LMBA UMR CNRS 6205, Vannes, France.}

\curraddr[Xiao, H.]{Universit\'{e} de Bretagne-Sud, LMBA UMR CNRS 6205, Vannes, France}
\email{hui.xiao@univ-ubs.fr}
\curraddr[Grama, I.]{Universit\'{e} de Bretagne-Sud, LMBA UMR CNRS 6205, Vannes, France}
\email{ion.grama@univ-ubs.fr}
\curraddr[Liu, Q.]{Universit\'{e} de Bretagne-Sud, LMBA UMR CNRS 6205, Vannes, France}
\email{quansheng.liu@univ-ubs.fr}

%%%%%%%%%%%%%%%%%%%%%%%%%%%%%%%%%%%%%%%%%%%%%%%%%%%%%%%%

\begin{abstract}
Let $(g_n)_{n\geq 1}$ be a sequence of independent and identically distributed 
elements of the general linear group $GL(d, \bb R)$. 
Consider the random walk $G_n : = g_n \ldots g_1$.
Under suitable conditions, we establish Bahadur-Rao-Petrov type large deviation expansion for 
the coefficients $\langle f, G_n v \rangle$, where $f \in (\bb R^d)^*$ and $v \in \bb R^d$. 
In particular, our result implies the large deviation principle with an explicit rate function, 
thus improving significantly the large deviation bounds established earlier. 
Moreover, we establish Bahadur-Rao-Petrov type large deviation expansion for 
the coefficients $\langle f, G_n v \rangle$ under the changed measure. 
Toward this end we prove the H\"{o}lder regularity of the stationary measure corresponding to 
the Markov chain $G_n v /|G_n v|$ under the changed measure, which is of independent interest. 
In addition, we also prove local limit theorems with large deviations for the coefficients of $G_n$. 
%Moreover, we establish local limit theorems with large deviations for $\langle f, G_n v \rangle$. 
%As an application, we give a new alternative proof of the H\"{o}lder regularity of the stationary measure
%on the projective space $\bb P^{d-1}$. 
%Assume that $(g_{n})_{n \geq 1}$ is a sequence of independent and identically distributed 
%real random $d\times d$ matrices.
%Consider the product $G_n = g_n \ldots g_1$. 
%For both invertible matrices and positive matrices, 
%we establish Bahadur-Rao-Petrov type large deviation expansions for $(i,j)$-th entry $G_n^{i,j}$ of $G_n$, 
%jointly with $X_n^{e_j}$,
%where $X_n^x: = G_n x/|G_n x|$ is a Markov chain on the projective space with $x$ a starting point.  
%In particular, for the entries $G_n^{i,j}$ we obtain the large deviation principle with an explicit rate function, 
%thus improving significantly the large deviation bounds established earlier. 
%Toward this end we prove the H\"{o}lder regularity of the stationary measure corresponding to 
%$X_n^x$ under the changed measure, which is of independent interest. 
%As an application, we obtain the large deviation principle and its reinforced form for the spectral radius of products of positive matrices. 
%We also derive local limit theorems with large deviations for the entries. 
\end{abstract}

\date{\today}
\subjclass[2010]{Primary 60F10, 60B15, 37A30; Secondary 60B20, 60J05}
\keywords{Bahadur-Rao-Petrov large deviations; Products of random matrices;  
 Coefficients;  Regularity of stationary measure; Spectral gap}
%\footnote{An example footnote.}
\maketitle

%%%%%%%%%%%%%%%%%%%%%%%%%%%%%%%%%%%%%%%%%%%%%%%%%%%%%%%%%%%%%%%%%%%%%%%%%
%%%%%%%%%%%%%%%%%%%%%%%%%%%%%%%%%%%%%%%%%%%%%%%%%%%%%%%%%%%%%%%%%%%%%%%%%
\section{Introduction} 
\subsection{Background and objectives}

Let $d\geq 2$ be an integer. 
Assume that on the probability space $(\Omega,\mathcal{F},\mathbb{P})$ we are given  
a sequence of real random $d\times d$ matrices $(g_{n})_{n\geq 1}$
which are independent and identically distributed (i.i.d.) with common law $\mu$. 
A great deal of research has been devoted  to studying the random matrix product $G_n: = g_n \ldots g_1$.
Many fundamental results related to $G_n$, 
such as the strong law of large numbers, the central limit theorem, the law of iterated logarithm 
and large deviations have been established by 
Furstenberg and Kesten \cite{FK60}, Kingman \cite{Kin73},
Le Page \cite{LeP82}, Guivarc'h and Raugi \cite{GR85},  Bougerol and Lacroix \cite{BL85},  
Gol'dsheid and Margulis \cite{GM89}, Hennion \cite{Hen97}, Furman \cite{Fur02}, 
%Hennion and Herv\'e \cite{HH04}, Guivarc'h \cite{Gui15}, 
Guivarc'h and Le Page \cite{GL16},
Benoist and Quint \cite{BQ16, BQ17}, to name only a few. 
%It is worth mentioning that 
These limit theorems turn out to be very useful in various areas, 
such as the  spectral theory of random Schr\"{o}dinger operators \cite{BL85, CL90},
disordered systems and chaotic dynamics coming from statistical physics \cite{CPV93}, 
the multidimensional stochastic recursion \cite{Kes73,GL16}, 
the dynamics of group actions  \cite{BFLM11, BQ13},
and  the survival probabilities and conditioned limit theorems 
of branching processes in random environment  \cite{GLP17, LPP18, GLL19}.

Denote by $\langle f, G_n v \rangle$ the coefficients of the matrix $G_n$, 
where $f \in (\bb R^d)^*$ and $v \in \bb R^d$, and $\langle \cdot, \cdot \rangle$ is the corresponding dual bracket. 
% $G_n^{i,j}$ the $(i,j)$-th entry of $G_n$, where $1\leq i, j \leq d$. 
There has been of growing interest in the study of the asymptotic behavior of $\langle f, G_n v \rangle$, 
since the seminal work of Furstenberg and Kesten \cite{FK60}, where
the following strong law of large numbers has been established for positive matrices: 
\begin{align*} 
\lim_{n \to \infty } \frac{1}{n} \log | \langle f, G_n v \rangle | = \lambda, \quad  \mbox{a.s.},
\end{align*}
with $\lambda$ a constant called the first Lyapunov exponent of the sequence $(g_{n})_{n\geq 1}$. 
In \cite{FK60} the central limit theorem has also been proved, 
thus giving an affirmative answer to Bellman's conjecture in \cite{Bel54}. 
In the case of invertible matrices, Guivarc'h and Raugi \cite{GR85} have established 
the strong law of large numbers and  the central limit theorem 
for the coefficients $\langle f, G_n v \rangle$,
where the proof turns out to be more involved than that in \cite{FK60}, and 
is based on the regularity of the stationary measure of 
the Markov chain $G_n x: = G_n v/|G_n v|$ with $x = \bb R v$ a starting point on the projective space $\bb P^{d-1}$.
Recently, Benoist and Quint \cite{BQ17} have proved the following large deviation bound: 
for any $q > \lambda$, there exists a constant $c>0$ such that 
\begin{align}\label{IntroLDBQ}
\mathbb{P} \big( \log | \langle f, G_n v \rangle | > nq \big) \leq e^{-cn}. 
\end{align}
%(2) for $q < \lambda$, there exists a constant $c>0$ such that for any $1 \leq i, j \leq d$, 
%\begin{align}\label{IntroLDBQ02}
%\mathbb{P} ( \log |G_n^{i,j}| < nq ) \leq e^{-cn}.
%\end{align}
But the precise decay rate on the large deviation probability in \eqref{IntroLDBQ} is not known. 
%neither for invertible matrices nor for positive matrices. 
The goal of this paper is to establish an exact 
large deviation asymptotic for the coefficients $\langle f, G_n v \rangle$, 
% which will be 
called  Bahadur-Rao-Petrov type large deviations
following the groundwork by Bahadur-Rao \cite{BR60} and Petrov \cite{Pet65} 
for sums of i.i.d.\ real-valued random variables.  
Our result will imply the large deviation principle with an explicit rate function,
which  % clearly 
 improves \eqref{IntroLDBQ}. 
Moreover, we shall also establish Bahadur-Rao-Petrov type upper tail large deviation asymptotics
for the couple $(G_n x, \log |\langle f, G_n v \rangle|)$ with target functions, 
which is of independent interest;  in particular it implies   a new result
on the local limit theorem with large deviations for coefficients $\langle f, G_n v \rangle$.
% Furthermore, 
Similar results for lower tail large deviations are also obtained, whose 
 proof turns out to be more delicate.

%%%%%%%%%%%%%%%%%%%%%%%%%%%%%%%%%%%%%%%%%%%%%%%%%%%%%%%%%%%%%%%%%%%%%%%%%%%%%%%%%
%%%%%%%%%%%%%%%%%%%%%%%%%%%%%%%%%%%%%%%%%%%%%%%%%%%%%%%%%%%%%%%%%%%%%%%%%%%%%%%%%

\subsection{Brief overview of the main results}
Let $I_{\mu}= \{ s \geq 0: \mathbb{E}(\|g_1\|^{s})< \infty\} $, where $\| g \|$ is the operator norm of a matrix $g$.
For any $s\in I_\mu$, define $ \kappa(s)=\lim_{n\to\infty}\left(\mathbb{E}\| G_n \|^{s}\right)^{\frac{1}{n}}$.  
Set $\Lambda = \log\kappa$ and consider its Fenchel-Legendre transform
$\Lambda^*$, which satisfies $\Lambda^*(q) = s q - \Lambda(s) >0$
for $q = \Lambda'(s)$ and $s \in I_{\mu}^{\circ}$ (the interior of the interval $I_{\mu}$).
In the sequel $\langle \cdot, \cdot \rangle$ and $| \cdot |$ 
denote respectively the dual bracket and the Euclidean norm.
Denote by $\mathbb{P}^{d-1}: = \{ x = \mathbb R v:  v  \in \mathbb{R}^d \setminus \{0\} \}$ 
the projective space in $\bb R^d$; the projective space $(\mathbb P^{d-1})^*$ in $(\bb R^d)^*$ is defined similarly. 
%The projective sphere is denoted by . 
For any $x=\mathbb Rv \in \mathbb P^{d-1}$ and $y=\mathbb Rf \in (\mathbb P^{d-1})^*$ we define
%$\langle y,x \rangle= \frac{\langle f,v \rangle}{|f| |v|}$ and 
$\delta(y,x)=  \frac{| \langle f,v \rangle |}{|f| |v|}$.
%The function $\delta$ is related to the Gromov product of non-positively curved geometry. 
For any $g\in GL(d, \bb R)$ and $x = \mathbb R v \in\mathbb P^{d-1}$, 
let $gx = \mathbb R  gv \in \mathbb P^{d-1}$, 
and denote by $gv\in \mathbb R^d$ the image of the automorphism $v\mapsto gv$ on $\mathbb R^d$. 
Consider the transfer operator $P_s$ defined by 
$P_s \varphi (x) = \mathbb{E} [ e^{ s \sigma(g_1, x)} \varphi(g_1 x) ]$, $x = \bb R v \in \mathbb{P}^{d-1}$, 
where $\sigma(g, x) = \log \frac{|gv|}{|v|}$, and $\varphi$ is a continuous function on $\mathbb{P}^{d-1}$;
the conjugate transfer operator $P_s^*$ is defined similarly: see \eqref{transfoper001}. 
The operators $P_{s}$ and $P_s^*$  have  continuous strictly positive eigenfunctions $r_s$ and $r_s^*$ 
on $\mathbb{P}^{d-1}$ which are unique up to a scaling constant, 
 and unique probability eigenmeasures $\nu_s$ and $\nu_s^*$, % respectively, 
satisfying $P_s r_s = \kappa(s) r_s$, $P_s \nu_s = \kappa(s) \nu_s$,  
$P_s^* r_s^* = \kappa(s) r_s^*$ and $P_s^* \nu_s^* = \kappa(s) \nu_s^*$.
Denote $\sigma_s:= \sqrt{ \Lambda''(s) }>0$.  
For details see Section \ref{subsec a change of measure}. 

Our first objective is to establish a Bahadur-Rao type large deviation asymptotic 
for the coefficients $\langle f, G_n v \rangle$; 
we refer to Bahadur and Rao \cite{BR60} for the case of i.i.d.\ real-valued random variables.
More precisely, we prove that, for any $s \in I_{\mu}^{\circ}$,  $v \in \bb R^d $ and $f \in (\bb R^d)^*$
with $|v|=|f| = 1$, $q = \Lambda'(s)$,  $x = \bb R v$ and $y = \bb R f$,   as $n \to \infty$,  
\begin{align}\label{IntroEntryInver02}
 \mathbb{P} \Big( \log | \langle f, G_n v \rangle |  \geq nq \Big) = 
 \frac{r_{s}(x) r^*_{s}(y)}{ \varrho_s }
 \frac{ \exp \big( -n   \Lambda^*(q) \big) }{ s \sigma_s \sqrt{2\pi n} }  \big[ 1 + o(1) \big],  
\end{align}
where %$(e_i)_{1\leq i\leq d}$ is the canonical orthonormal basis of $\mathbb{R}^d$
$\varrho_s = \nu_s (r_s) = \nu_s^* (r_s^*) >0$.  
The asymptotic \eqref{IntroEntryInver02} clearly implies the 
large deviation principle for $\langle f, G_n v \rangle$ with the rate function $\Lambda^*$, 
which obviously improves the large deviation bound \eqref{IntroLDBQ}.
%In addition, we show that the asymptotic \eqref{IntroEntryInver02} % and the bound \eqref{IntroLDBQ}
%also holds for positive matrices. 

In fact, we shall extend \eqref{IntroEntryInver02} 
to the couple $(G_n x, \log | \langle f, G_n v \rangle |)$ with target functions.
%where $G_n \cdot v = G_n v/|G_n v|$. 
%and $\langle \cdot, \cdot \rangle$ is the standard scalar product in $\mathbb{R}^d$. 
Precisely, for any $s \in I_{\mu}^{\circ}$,  any H\"{o}lder continuous function $\varphi$ on $\mathbb{P}^{d-1}$
and any measurable function  $\psi$ on $\mathbb{R}$ 
such that $u \mapsto e^{-s_1u} \psi(u)$ is directly Riemann integrable for some $s_1 \in (0,s)$, 
we prove that as $n \to \infty$, % uniformly in $f, x \in \mathbb{P}^{d-1}$, 
\begin{align} 
&  \mathbb{E} \Big[ \varphi(G_n x) \psi \big( \log |\langle f, G_n v \rangle| - nq \big) \Big] \label{IntroTargSPosi} \\
&  =    \frac{r_{s}(x)}{ \varrho_s } 
 \frac{ \exp (-n \Lambda^*(q)) }{ \sigma_{s}\sqrt{2\pi n}}  
 \left[ \int_{\mathbb{P}^{d-1}} \varphi(x) \delta(y, x)^s \nu_s(dx) 
   \int_{\mathbb{R}} e^{-su} \psi(u) du + o(1)  \right].  \nonumber
\end{align}

Our second objective is to establish a Bahadur-Rao type result for the lower large deviation probabilities
$\mathbb{P} \big( \log |\langle f, G_n v \rangle|  \leq nq \big)$, 
where $q = \Lambda'(s) < \lambda$ with  $s<0$ sufficiently close to $0$. 
Specifically, for $s<0$ small enough, we prove that, as $n \to \infty$,
\begin{align} \label{IntroEntrySNeg}
 \mathbb{P} \left( \log | \langle f, G_n v \rangle | \leq nq \right)  
 =  \frac{r_{s}(x) r^*_{s}(y)}{ \varrho_s }
\frac{ \exp \left( -n   \Lambda^*(q) \right) } { - s \sigma_{s}\sqrt{2\pi n}} \big[ 1 + o(1) \big], 
\end{align}
where $r_s$, $r_s^*$, $\varrho_s$, $\Lambda^*$ and $\sigma_s$ 
are defined in Section \ref{subsec a change of measure}, which are strictly positive,
similarly to the case $s>0$. 
The asymptotic \eqref{IntroEntrySNeg} is of course much sharper than 
the corresponding lower tail large deviation principle for $\langle f, G_n v \rangle$.
%which is also new for both invertible matrices and positive matrices. 
More generally, we extend the lower tail large deviation expansion \eqref{IntroEntrySNeg} to the couple 
$(G_n x, \log | \langle f, G_n v \rangle |)$ with target functions, 
in the same line as \eqref{IntroTargSPosi}. 
%As an application of \eqref{IntroEntryInver02} and \eqref{IntroEntrySNeg},
%for positive matrices we derive reinforced large deviation principles for 
%the spectral radius $\rho(G_n)$ of $G_n$: see Theorem \ref{Thm_LDSpectPosi}.

For a brief description of the main ideas of the approach see Section \ref{proof strategy}.

The assertions \eqref{IntroEntryInver02}, \eqref{IntroTargSPosi} and \eqref{IntroEntrySNeg} stated above
concern Bahadur-Rao type large deviation asymptotics. %but we shall 
Actually we shall establish an extended version of these results with an additional vanishing perturbation on $q$, 
which in the literature is known as Bahadur-Rao-Petrov type large deviation expansion. 
%It is worth mentioning that %Petrov type large deviation results \eqref{IntroEntryPosi} and \eqref{IntroEntrySNeg}
Such type of extensions has important and interesting implications,
for instance, to local limit theorems with large deviations for the coefficients $\langle f, G_n v \rangle$: 
see  Theorem \ref{Theorem local LD002}.
Recently, Buraczewski, Collamore, Damek and Zienkiewicz \cite{BCDZ16}  
have established a law of large numbers, a central limit theorem and large deviation results
for perpetuities using the Bahadur-Rao-Petrov large deviation asymptotic for sums of i.i.d.\ real valued random variables. 
With the help of our large deviation results for products of random matrices
it is possible to extend these results to multivariate perpetuity sequences arising in financial mathematics.
Another potential application of our results is in the study of multitype branching processes 
and branching random walks governed by products of random matrices;
we refer to Mentemeier \cite{Men16}, Bui, Grama and Liu \cite{BGL2020a, BGL2020b} for details.

It is worth mentioning that using the approach developed in this paper,
it is possible to establish new limit theorems
for the Gromov product of random walks on hyperbolic groups;
we refer to Gou\"{e}zel \cite{Gou09, Gou14} on this topic. 
We also mention that our approach opens a way to 
study invariance principles for the coefficients $\langle f, G_n v \rangle$; 
recent progress in the study of invariance principles can be found  
in Cuny, Dedecker and Jan \cite{CDJ17}
and Cuny, Dedecker and Merlev\`ede \cite{CDM19}, 
where the vector norm $|G_n v|$ and the operator norm $\|G_n\|$
have been studied via the martingale approximation approach.

%%%%%%%%%%%%%%%%%%%%%%%%%%%%%%%%%%%%%%%%%%%%%%%%%%%%%%%%%%%%%%%%%%%%%%%%%%%%%%%%%%
%%%%%%%%%%%%%%%%%%%%%%%%%%%%%%%%%%%%%%%%%%%%%%%%%%%%%%%%%%%%%%%%%%%%%%%%%%%%%%%%

\section{Main results}\label{sec.prelim}

In this section we present our main results and the strategy of the proofs.

\subsection{Notation and conditions}\label{subsec.notations}
%We start this section by fixing some notation, % and recalling some elementary definitions and rules, 
%which will be used throughout the paper. 
Denote by $c$, $C$ absolute constants whose values may change from line to line.
By $c_\alpha$, $C_{\alpha}$ we mean constants depending only on the parameter $\alpha.$
%The interior of a set $A$ is denoted by $A^\circ$. 
For any integrable function $\rho: \mathbb{R} \to \mathbb{C}$, denote its Fourier transform by
$\widehat{\rho} (t) = \int_{\mathbb{R}} e^{-ity} \rho(y) dy$, $t \in \mathbb{R}$.
For a measure $\nu$ and a function $\varphi$ we write $\nu(\varphi)=\int \varphi d\nu.$
Let $\mathbb N = \{1,2,\ldots\}$. By convention $\log 0 =-\infty$.

The space $\mathbb{R}^d$ is equipped  with the standard scalar product $\langle \cdot, \cdot\rangle$ 
and the associated norm $|\cdot|$. 
%Let $(e_i)_{1\leq i\leq d}$ be the canonical orthonormal basis of $\mathbb{R}^d$. 
For any integer $d \geq 2$, %let $M(d,\mathbb{R})$ be the set of $d\times d$ matrices  with entries in $\mathbb R$. 
denote by $\bb G: = GL(d,\mathbb R)$ the general linear group of invertible $d \times d$ matrices
with coefficients in $\mathbb R$. 
The projective space $\mathbb P^{d-1}$ of $\mathbb R^d$ 
is the set of elements $x = \bb R v$, where $v \in \bb R^d \setminus \{0\}$.
The projective space of $(\mathbb R^d)^*$ is denoted by $(\mathbb P^{d-1})^*$. 
%%by $\mathscr G_+$ the subsemigroup of $M(d,\mathbb R)$ with non-negative entries,
%%and by $\mathscr G_+^{\circ}$ the subsemigroup of $M(d,\mathbb R)$ with strictly positive entries. 
%%We shall work with products of invertible matrices and positive matrices
%%(all over the paper we use the term positive matrix in the strict sense that each entry is non-negative).  
%Let $\mathbb{S}^{d-1} = \{ x\in \mathbb{R}^{d}, |x|=1\}$ be the unit sphere in $\mathbb{R}^d$.  
%%and $\mathbb{S}^{d-1}_{+} = \{ x\geq 0 : |x|=1 \}$ be its intersection % $\mathbb{S}^{d-1}$
%%with the positive quadrant. 
%It will be convenient to consider the projective space 
%$\mathbb{P}^{d-1}: = \mathbb{S}^{d-1}/\pm$ by identifying $-x$ with $x$. 
%%To unify the exposition, we use the symbol $\bb P^{d-1}$ to denote
%%$\mathbb{P}^{d-1}$ in the case of invertible matrices, 
%%and $\mathbb{S}^{d-1}_{+}$ in the case of positive matrices. 
We equip $\bb P^{d-1}$ with the angular distance $\mathbf d$ (see \cite{GL16}), i.e.,
for any $x, x' \in \mathbb{P}^{d-1}$ with $x \in \bb R v$ and $x' \in \bb R v'$, 
$\mathbf{d}(x,y)= (1 - \frac{|\langle v, v' \rangle|}{|v| |v'|})^{1/2}$.  
%where $x \wedge y$ denotes the exterior product of two vectors $x$ and $y$. 
%%For positive matrices,  $\mathbf{d}$ is the Hilbert cross-ratio distance
%%(see \cite{Hen97}) defined by 
%%$\mathbf{d}(x,y) = \frac{1- m(x,y)m(y,x)}{1 + m(x,y)m(y,x)}$, where 
%%$m(x,y)= \sup\{ \beta >0: \beta y_i \leq x_i,  \forall i=1,\ldots, d \}$
%%for any two vectors $x=(x_1, \ldots, x_d)$ and $y=(y_1, \ldots, y_d)$ in  $\mathbb{S}_{+}^{d-1}$. 

Let $\mathcal{C}(\bb P^{d-1})$ be the space of complex-valued continuous functions on $\bb P^{d-1}$. 
%In particular, 
We write $\mathbf{1}$ for the identity function $1(x)$, $x \in \bb P^{d-1}$. 
Throughout this paper,  $\gamma>0$ is a fixed sufficiently small constant.  
For any $\varphi\in \mathcal{C}(\bb P^{d-1})$, set
\begin{align*}
\|\varphi\|_{\infty}:= \sup_{x\in \bb P^{d-1}}|\varphi(x)| \quad \mbox{and} \quad
\|\varphi\|_{\gamma}:= \|\varphi\|_{\infty}
+ \sup_{x \neq y} \frac{ |\varphi(x)-\varphi(y)| }{ \mathbf{d}(x,y)^{\gamma} }, 
\end{align*}
and consider the Banach space
$\mathcal{B}_{\gamma}:=\{\varphi\in \mathcal{C}(\bb P^{d-1}): \|\varphi\|_{\gamma}< +\infty\}.$

All over the paper $(g_{n})_{n\geq 1}$ is a sequence of i.i.d.\  elements
of the same probability law $\mu$ on $\bb G$.
Denote by  $\Gamma_{\mu}$ the smallest closed semigroup generated by the support of $\mu$.  
For any $g \in \bb G$, 
denote $\|g\| = \sup_{ v \in \bb R^d \setminus \{0\} } \frac{|g v|}{|v|}$. 
Let 
\begin{align*}
I_{\mu} = \big\{ s \geq 0: \mathbb{E}(\| g_1 \|^s) < + \infty \big\}, 
\end{align*}
and $I_{\mu}^{\circ}$ be its interior. 
% Now we state our conditions. 
In the sequel we always assume that there exists $s>0$  such that 
$\mathbb{E} ( \| g_1 \|^s )  < + \infty,$  
so that $I_{\mu}^{\circ}$ is non-empty open interval of $\mathbb{R}$.

For any $g \in \bb G$, 
set $\iota(g) = \inf_{ v \in \bb R^d \setminus \{0\} } \frac{|g v|}{|v|}$, 
and it holds that $\iota(g) = \| g^{-1} \|^{-1}$. 
%For both invertible matrices and  positive  matrices, it holds that $\iota(g)>0.$
%Denote $N(g)= \max \{ \| g\|, \iota(g)^{-1}  \}$ for $g\in \Gamma_{\mu}.$
We will need the following exponential moment condition:
\begin{conditionA}\label{Condi_Exp}
There exist $s\in I_\mu^\circ$  and $\beta \in(0,1)$ such that
$$
\int_{\bb G} \| g \|^{s + \beta} \iota(g)^{-\beta} \mu (dg) < + \infty.
$$
\end{conditionA}

Moreover, we shall use the following two-sided moment condition.  
Denote $N(g) = \max\{ \|g\|, \| g^{-1} \| \}$ for any $g \in \bb G$. 
\begin{conditionA}\label{Condi-TwoExp} 
There exists a constant $\eta > 0$ such that $\mathbb{E} \big( N(g_1)^{\eta} \big) < +\infty$. 
\end{conditionA}

%For an invertible matrix $g$ we have $\iota(g) = \| g^{-1} \|^{-1}$, 
%so \ref{Condi-TwoExp} reads as $\mathbb{E} \big( \|g_1^{-1}\|^{\eta} \big) < +\infty$.
%By H\"{o}lder's inequality, 
%condition \ref{Condi-TwoExp} together with \eqref{Condi_Exp_Posi} implies condition \ref{Condi_Exp}. 

A matrix $g \in \bb G$ is called \emph{proximal} if it has an algebraic simple dominant eigenvalue, 
namely, $g$ has an eigenvalue $\lambda_{g}$ satisfying $|\lambda_{g}| > |\lambda_{g}'|$
for all other eigenvalues $\lambda_{g}'$ of $g$.
It is easy to verify that $\lambda_{g} \in \mathbb{R}$. 
The eigenvector $v_g$ with unit norm $|v_g| = 1$, corresponding to the eigenvalue $\lambda_{g}$,
is called the dominant eigenvector. 
We will need the following strong irreducibility and proximality conditions: 

 \begin{conditionA}\label{Condi-IP}
{\rm (i)(Strong irreducibility)} 
No finite union of proper subspaces of $\mathbb{R}^d$ is $\Gamma_{\mu}$-invariant.

{\rm (ii)(Proximality)} $\Gamma_{\mu}$ contains at least one proximal matrix. % with a unique dominant eigenvalue.
\end{conditionA}

For any $g \in \bb G$ and $x = \bb R v \in \bb P^{d-1}$, 
 let $gx =  \bb R gv \in \mathbb P^{d-1}$ and  
%we write $g \cdot v = \frac{gv}{|gv|}$ for the projective action of $g$ on $\bb P^{d-1}$.
%%With the starting point $v \in \bb P^{d-1}$, 
%For any $v \in \bb R^d \setminus \{0\}$, by identifying $-x$ with $x$,  set 
%%Define a Markov chain on the projective space $\bb P^{d-1}$ by setting
\begin{align}\label{Def_MarkovChain01}
G_0 x: = x,   \quad 
G_n x: = \bb R G_n v,  \quad  n \geq 1.
\end{align}
Then $(G_n x)_{n \geq 0}$ forms a Markov chain on the projective space $\bb P^{d-1}$. 
Moreover, under condition \ref{Condi-IP}, 
$(G_n x)_{n \geq 0}$ has a unique stationary probability measure $\nu$ on $\bb P^{d-1}$ 
such that for any $\varphi \in \mathcal{C}(\bb P^{d-1})$,
\begin{align} \label{mu station meas}
 \int_{\bb P^{d-1}} \int_{\bb G} \varphi(gx) \mu(dg) \nu(dx)
 = \int_{\bb P^{d-1}} \varphi(x) \nu(dx). 
\end{align}
Furthermore, the support of $\nu$ is given by
\begin{align}\label{Def_supp_nu}
\supp \nu = \overline{\{ v_{g}\in 
\mathbb P^{d-1}:  g\in\Gamma_{\mu}, \ g \mbox{ is proximal} \}}. 
\end{align} 
%In addition, for both cases, $\supp \nu$ is indeed the unique minimal $\Gamma_{\mu}$-invariant subset:   
%see \cite{GL16} and \cite{BDGM14} for the proof. 
%We need the following non-arithmeticity condition for positive matrices:
%\begin{conditionA}\label{Condi-NonArith}
%{\rm (Non-arithmeticity)}
%For $t>0$, $\theta \in [0, 2\pi)$ and a function $\varphi: \bb P^{d-1} \to \mathbb{R}$, the equation 
%\begin{align*}
%\varphi (g \cdot x) |gx|^{it} = e^{i\theta} \varphi(x),  \quad \forall g \in \Gamma_{\mu},  \forall  x\in \supp \nu,
%\end{align*}
%has no trivial solution except that $t = 0$, $\theta =0$ and $\varphi$ is a constant.
%\end{conditionA}
%For positive matrices, if the additive subgroup of $\mathbb{R}$ generated by the set
%$\{ \log \lambda_{g} : g\in \Gamma_{\mu} \cap \mathscr G_+^{\circ} \}$
%is dense in $\mathbb{R}$, then condition \ref{Condi-NonArith} is fulfilled (see \cite{BM16}).
%This sufficient condition was introduced by Kesten \cite{Kes73} and is usually easier to verify in practice.  
%For invertible matrices, it was proved in \cite{GU05} 
%that condition \ref{Condi-IP} implies \ref{Condi-NonArith}. 
For any $s\in (-s_0, 0)  \cup  I_{\mu}$ with small enough $s_0>0$,
define the transfer operator $P_s$ and the conjugate transfer operator $P_{s}^{*}$ 
as follows: for any $\varphi \in \mathcal{C}(\bb P^{d-1})$,
% $x\in \bb P^{d-1}$ and $y \in (\bb P^{d-1})^*$, 
\begin{align}\label{transfoper001}
P_{s}\varphi(x) = \int_{\bb G}  e^{s \sigma (g,x)} \varphi( g x ) \mu(dg), 
\quad  x\in \bb P^{d-1}, 
\end{align}
where $\sigma(g, x) = \log \frac{|gv|}{|v|}$, 
and for any $\varphi \in \mathcal{C}((\bb P^{d-1})^*)$,
\begin{align}\label{transfoper002}
P_{s}^{*}\varphi(y) = \int_{\bb G}  e^{s \sigma (g^*, y)} \varphi(g^* y) \mu(dg), 
\quad  y \in (\bb P^{d-1})^*. 
\end{align}
where $g^*$ denotes the adjoint automorphism of the matrix $g$. 
Under suitable conditions, 
the transfer operator $P_s$ has a unique probability eigenmeasure $\nu_s$ on $\bb P^{d-1}$
corresponding to the eigenvalue $\kappa(s)$: 
$P_s \nu_s = \kappa(s)\nu_s.$
Similarly, the conjugate transfer operator $P_{s}^{*}$ 
has a unique probability eigenmeasure $\nu^*_s$
corresponding to the eigenvalue $\kappa(s)$: 
$P_{s}^{*} \nu^*_s = \kappa(s)\nu^*_s.$
For any $x = \bb R v \in \bb P^{d-1}$ and $y = \bb R f \in (\bb P^{d-1})^*$ 
with $v \in \bb R^d \setminus \{0\}$ and $f \in (\bb R^d)^* \setminus \{0\}$, 
denote $\delta(y, x) = \frac{|\langle f, v \rangle|}{|f||v|}$ and set
\begin{align*}
r_{s}(x) = \int_{(\bb P^{d-1})^*} \delta(y, x)^s \nu^*_{s}(dy),  \quad
r_{s}^*(y) = \int_{\bb P^{d-1}} \delta(y, x)^s \nu_{s}(dx).
\end{align*}
Then, $r_s$ is the unique, up to a scaling constant, 
strictly positive eigenfunction of $P_s$:
$P_s r_s = \kappa(s)r_s$;
similarly
$r^*_s$ is the unique, up to a scaling constant, 
strictly positive eigenfunction of $P_{s}^{*}$: $P_{s}^{*} r^*_s = \kappa(s)r^*_s$. 
%For any $s\in (-s_0, 0)  \cup  I_{\mu}$, %for invertible matrices and for  positive matrices, 
%the following limit exists (see \cite{GL16} and \cite{BM16}): 
%\begin{align}
%\kappa(s)=\lim_{n\to\infty}\left(\mathbb{E}\|g_{n}\cdots g_{1}\|^{s}\right)^{\frac{1}{n}}.  \nonumber
%\end{align}
%The vector $f$ can be seen %as an element of $(\mathbb R^d)^*,$ that is, 
%as the linear functional $x \mapsto  \langle f,x \rangle$ on $\mathbb R^d$ and,
%in the same way, $x$ is a linear functional $f \mapsto  \langle f,x \rangle$ on $(\mathbb R^d)^*.$ 
%this will be clear from the context.  
%The normalized eigenfunctions are defined by $\bar r_s(x)= \frac{r_s(x)}{\varrho_s}$ 
%and $\bar r^*_s(x)= \frac{r^*_s(x)}{\varrho_s}$, 
It is easy to see that 
$$\nu_s(r_s) = \nu^*_s(r^*_s)= : \varrho_s.$$
The stationary measure $\pi_s$ is defined by
$ \pi_{s}(\varphi)=\frac{\nu_{s}(\varphi r_{s})}{\varrho_{s}}$, for any $\varphi\in \mathcal{C}(\bb P^{d-1})$. 
%which is the unique stationary probability measure of the normalized Markov operator $Q_s$ 
%defined by $Q_s \varphi = \frac{P_s(\varphi r_s) }{ \kappa(s) r_s }$ for any $\varphi\in \mathcal{C(S)}$: 
We refer to Section \ref{subsec a change of measure} for details.

Define $\Lambda = \log\kappa: (-s_0, 0)  \cup  I_{\mu} \to \mathbb R$, 
then the function $\Lambda$ is convex and analytic. % on $(-s_0, 0) \cup I_{\mu}^{\circ}$. 
%(it plays the same role as the $\log$-Laplace transform of $X_1$ in the real i.i.d.\ case). 
Condition \ref{Condi-IP} implies 
 that $\sigma_s=\Lambda''(s)$ is strictly positive for any $s\in (-s_0, 0)  \cup  I_{\mu}$. 
Denote by $\Lambda^{\ast}$ the Fenchel-Legendre transform of $\Lambda$, 
then it holds that $\Lambda^*(q)=s q - \Lambda(s)>0$ 
if $q=\Lambda'(s)$ for $s\in (-s_0, 0) \cup I_{\mu}^{\circ}$.

%%%%%%%%%%%%%%%%%%%%%%%%%%%%%%%%%%%%%%%%%%%%%%%%%%%%%%%%%%%%%%%%%%%%%%%%%%%%%
%%%%%%%%%%%%%%%%%%%%%%%%%%%%%%%%%%%%%%%%%%%%%%%%%%%%%%%%%%%%%%%%%%%%%%%%%%%%%
 
\subsection{Precise large deviations for coefficients}\label{sec scalar prod}

The goal of this section is to state exact large deviation asymptotics 
for the coefficients $\langle f, G_n v \rangle$, where $f \in (\bb R^d)^*$ and $v \in \bb R^d$. 
%Recall that the normalized eigenfunctions $\bar r_s$ and $\bar r^*_s$ 
%are defined by $\bar r_s(x)= \frac{r_s(x)}{\varrho_s}$ 
%and $\bar r^*_s(x)= \frac{r^*_s(x)}{\varrho_s}$, 
%where $\varrho_s = \nu_s(r_s) = \nu^*_s(r^*_s).$
To the best of our knowledge, the precise large deviations and even the large deviation principle 
for $\langle f, G_n v \rangle$ 
have not been studied by now in the literature.
%and all the asymptotics \eqref{SCALREZ01BR}, \eqref{SCALREZ02BR}, \eqref{IntroEntryInver02} as well as the large deviation principle \eqref{LDP-001} are new.  
Our first result is a large deviation asymptotic of the Bahadur-Rao type (see \cite{BR60}) 
for the upper tails of $\langle f, G_n v \rangle$. 
Recall the notation $x = \bb R v$ and $y = \bb R f$ 
for any $v \in \bb R^d \setminus \{0\}$ and $f \in (\bb R^d)^* \setminus \{0\}$.

\begin{theorem} \label{thrmBR001}
Assume conditions \ref{Condi_Exp} and \ref{Condi-IP}. 
Let $s \in I_{\mu}^{\circ}$ and $q = \Lambda'(s)$. 
Then, as $n \to \infty$, 
uniformly in $f \in (\bb R^d)^*$ and $v \in \bb R^d$ with $|f| = |v| = 1$,  
\begin{align} \label{SCALREZ01BR}
 \mathbb{P} \Big( \log | \langle f, G_n v \rangle | \geq nq \Big)  
 =  \frac{ r_{s}(x)  r^*_{s}(y)}{\varrho_s} 
\frac{ \exp \left( -n   \Lambda^*(q) \right) } {s \sigma_{s}\sqrt{2\pi n}} \big[ 1 + o(1) \big]. 
\end{align}
%More generally, for any  measurable function $\psi$ on $\mathbb{R}$ 
%such that $y\mapsto e^{-sy}\psi(y)$ is directly Riemann integrable, 
%we have, as $n \to \infty$, uniformly in $f, x \in \bb P^{d-1}$ and $\varphi \in \mathcal{B}_{\gamma}$, 
%\begin{align} 
%&  \mathbb{E} \Big[ \varphi(X_{n}^{x}) \psi \big( \log |\langle f, G_n x \rangle| - n q \big) \Big]
%  \label{SCALREZ02BR}    \\
%&  =  \frac{r_{s}(x)}{\varrho_s} 
% \frac{ \exp (-n \Lambda^*(q)) }{ \sigma_{s}\sqrt{2\pi n}}  
% \left[ \int_{\bb P^{d-1}} \varphi(u) |\langle f, u \rangle|^s \nu_s(du) 
%        \int_{\mathbb{R}} e^{-sy} \psi(y) dy + \| \varphi \|_{\gamma} o(1) \right].   \nonumber
%\end{align}
\end{theorem}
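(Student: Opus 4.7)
The plan is to apply a Cramér-type change of measure built from the transfer operator $P_s$, after which the probability factorises as $r_s(x) e^{-n\Lambda^*(q)}$ times a tilted expectation, and then to extract the asymptotic $(s\sigma_s\sqrt{2\pi n})^{-1} r_s^*(y)/\varrho_s$ via a local limit theorem for the tilted cocycle with an additional target factor coming from the coefficient. First I would split
\begin{equation*}
\log|\langle f, G_n v\rangle| = \sigma(G_n, x) + \log \delta(y, G_n x), \qquad |v|=|f|=1,
\end{equation*}
so that the event of interest becomes $\{T_n \geq nq - \log\delta(y, G_n x)\}$, where $T_n := \sigma(G_n, x)$. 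Introducing the Markov operator $Q_s \varphi = (\kappa(s) r_s)^{-1} P_s(r_s \varphi)$, its invariant probability $\pi_s = r_s \nu_s / \varrho_s$, and the path measure $\mathbb{Q}_s^x$ with density $e^{s T_n} r_s(G_n x)/(\kappa(s)^n r_s(x))$ on $\mathcal{F}_n$, one makes $T_n$ acquire drift $q = \Lambda'(s)$ and asymptotic variance $\sigma_s^2$. Reversing the tilt, setting $U_n = T_n - nq$ and using $\kappa(s)^n e^{-snq} = e^{-n\Lambda^*(q)}$ produces
\begin{equation*}
\mathbb{P}\bigl(\log|\langle f, G_n v\rangle| \geq nq\bigr) = r_s(x)\, e^{-n\Lambda^*(q)}\, \mathbb{E}_{\mathbb{Q}_s^x}\!\left[ \frac{e^{-s U_n}}{r_s(G_n x)}\, \mathbf{1}_{\{U_n \geq -\log\delta(y, G_n x)\}} \right].
\end{equation*}

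Next I would analyse the tilted expectation via a local limit theorem for $(G_n x, U_n)$ under $\mathbb{Q}_s^x$. Conditions \ref{Condi_Exp} and \ref{Condi-IP} provide a spectral gap for the perturbed transfer operator $P_{s+it}$ on $\mathcal{B}_\gamma$ together with non-arithmeticity, so a Berry--Esseen type local CLT forces the density of $U_n$ at the origin to be $(\sigma_s\sqrt{2\pi n})^{-1}(1+o(1))$, with $G_n x$ coupled at geometric rate to $\pi_s$, uniformly in the starting point. Heuristically this gives, for a suitable class of target functions $\Psi$ on $\mathbb{P}^{d-1}\times\mathbb{R}$,
\begin{equation*}
\mathbb{E}_{\mathbb{Q}_s^x}\!\bigl[ \Psi(G_n x, U_n) \bigr] \sim \frac{1}{\sigma_s\sqrt{2\pi n}} \int_{\mathbb{P}^{d-1}}\int_{\mathbb{R}} \Psi(x', u)\, du\, \pi_s(dx').
\end{equation*}
Applying this with $\Psi(x', u) = e^{-su} \mathbf{1}_{\{u \geq -\log\delta(y, x')\}}/r_s(x')$, the inner $u$-integration yields $\delta(y, x')^s / (s\, r_s(x'))$, and since $\pi_s = r_s \nu_s/\varrho_s$ the remaining integral simplifies to $\varrho_s^{-1} \int \delta(y, x')^s \nu_s(dx') = r_s^*(y)/\varrho_s$; combined with the prefactor $r_s(x) e^{-n\Lambda^*(q)}$ this delivers the stated asymptotic.

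The main obstacle is precisely this last step, because the target $u \mapsto e^{-su}\mathbf{1}_{\{u \geq -\log\delta(y, x')\}}$ involves a threshold that is unbounded and singular on the hyperplane $\{\langle f, \cdot\rangle = 0\}$, so the local CLT cannot be applied with bounded continuous targets directly. To circumvent this I would split the integration over $G_n x$ into the bulk $\{\delta(y, \cdot) \geq \varepsilon\}$, where the threshold is bounded by $\log(1/\varepsilon)$ and a smoothed local CLT applies with uniform error in $y$, and the tube $\{\delta(y, \cdot) < \varepsilon\}$, where the Hölder regularity of $\nu_s$ (established elsewhere in the paper as an independent result) gives $\nu_s\{\delta(y, \cdot) < \varepsilon\} \lesssim \varepsilon^{c\gamma}$, so that this part contributes a vanishing fraction upon sending first $n \to \infty$ and then $\varepsilon \to 0$. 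Uniformity in $x$ and $y$ of the final asymptotic then follows from uniform spectral estimates for $P_{s+it}$ on $\mathcal{B}_\gamma$ combined with the uniform Hölder bound for $\nu_s$.
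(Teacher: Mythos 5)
Your high-level architecture agrees with the paper's: the change of measure under $\mathbb{Q}_s^x$, the identity $\log|\langle f,G_nv\rangle|=\sigma(G_n,x)+\log\delta(y,G_nx)$, the reduction to a tilted expectation $\mathbb{E}_{\mathbb{Q}_s^x}[e^{-sU_n}r_s(G_nx)^{-1}\mathbf{1}_{\{U_n\geq -\log\delta(y,G_nx)\}}]$, and the final bookkeeping $\int_{-\log\delta}^{\infty}e^{-su}du=\delta^s/s$ and $\varrho_s^{-1}\int\delta(y,\cdot)^s d\nu_s=r_s^*(y)/\varrho_s$ are all exactly what the paper does. But the crucial step — extracting a $\sqrt{2\pi n}$ local-type limit for the tilted expectation — is where your argument is incomplete.

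The difficulty you identify (singular threshold near $\{\langle f,\cdot\rangle=0\}$) is real, and Hölder regularity does handle the tube. But your proposed treatment of the bulk does not close the gap. The only local-type asymptotic available in this framework is Proposition \ref{Prop Rn limit1}, which yields $\sigma_s\sqrt n\,\mathbb{E}_{\mathbb{Q}_s^x}[\varphi(G_nx)\psi(U_n)]\to\sqrt{2\pi}\,\pi_s(\varphi)\cdot\psi(0)$, i.e.\ it handles a target that is a \emph{product} $\varphi(G_nx)\cdot\psi(U_n)$, not a \emph{coupled} target $\Psi(G_nx,U_n)=e^{-sU_n}\mathbf{1}_{\{U_n\geq -\log\delta(y,G_nx)\}}$. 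Restricting to the bulk $\{\delta(y,G_nx)\geq\varepsilon\}$ bounds the threshold but does not decouple the two arguments; a "smoothed local CLT with uniform error in $y$" for such a joint target is exactly the statement you would need to prove, and it does not follow from the stated spectral-gap technology. The paper resolves this by partitioning $(-\infty,0]$ into countably many bands $I_k=(-\eta k,-\eta(k-1)]$, so that on each event $\{\log\delta(y,G_nx)\in I_k\}$ the threshold is replaced by a constant $-\eta(k-1)$ up to $O(\eta)$, turning the target into a product of a (smoothed) band indicator on $\mathbb{P}^{d-1}$ and a one-variable function of $U_n$ to which Proposition \ref{Prop Rn limit1} does apply; the Fourier-inversion on each band, the growth of $\|\varphi^y_{s,k,\varepsilon_1}\|_\gamma$ in $k$, and the interchangeability of $\lim_n$ and $\sum_k$ are then all nontrivial estimates. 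Moreover, when the smoothing parameter of the band indicators is sent to zero, the boundary sets $\{\log\delta(y,\cdot)=-\eta k\}$ must be $\pi_s$-null; the paper controls this with the zero–one law for $\pi_s$ (Lemmas \ref{Lem_Gui_LeP}, \ref{Lem_0-1_law_s}, \ref{Lem_0-1_law_s_Posi_Uni}), not with the Hölder regularity $\pi_s(B(y,r))\lesssim r^\alpha$ that you invoke — the two are distinct, and the latter does not directly rule out a level-set atom. So the structural skeleton of your proof is right, but the banding argument, the band-boundary zero–one law, and the summability/interchange estimates are precisely the content of the proof and cannot be replaced by a single appeal to a smoothed LCLT.
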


%The large deviation asymptotic \eqref{IntroEntryInver02} 
%for the coefficients $\langle f, G_n v \rangle$ announced in the introduction,
%is obtained from \eqref{SCALREZ01BR} with $f=e_i$ and $x =e_j$. 
%%The asymptotic \eqref{SCALREZ01BR} follows from \eqref{SCALREZ02BR} by taking 
%%$\varphi = \mathbf{1}$ and $\psi(y) = \mathbbm{1}_{\{ y \geq 0 \} }$, $y \in \mathbb{R}$, so it is enough to prove only \eqref{SCALREZ02BR}.  
In particular, if we fix a basis $(e_i^*)_{1 \leq i \leq d}$ in $(\bb R^d)^*$
and a basis $(e_j)_{1 \leq j \leq d}$ in $\bb R^d$,
then taking $f=e_i^*$ and $v =e_j$ in \eqref{SCALREZ01BR}, we get the 
Bahadur-Rao type large deviation asymptotic for the $(i,j)$-th entry $G_n^{i,j}$ of the matrix product $G_n$. 
It is easy to verify that the large deviation asymptotic \eqref{SCALREZ01BR} implies a large deviation principle, 
%with a target function $\varphi$ on $X_n^x$, 
as stated below: under the assumptions of Theorem \ref{thrmBR001}, 
%for any $\varphi \in \mathcal{B}_{\gamma}$
%satisfying $\int_{\bb P^{d-1}} \varphi(u) |\langle f, u \rangle|^s \nu_s(du) > 0$, 
we have, uniformly in $f \in (\bb R^d)^*$ and $v \in \bb R^d$ with $|f| = |v| = 1$, 
\begin{align}\label{LDP-001}
\lim_{n \to \infty}  \frac{1}{n}  \log 
\mathbb{P} \Big(  \log |\langle f, G_n v \rangle|  \geq  n q   \Big)
 = - \Lambda^*(q). 
\end{align}
In its turn, %the large deviation principle 
the asymptotic \eqref{LDP-001} 
%we obtain large deviations bounds which 
improves significantly the bound %  the one established by Benoist and Quint, see 
\eqref{IntroLDBQ}. 

An important field of applications of large deviation asymptotics for the coefficients of type \eqref{SCALREZ01BR} 
is the study of asymptotic behaviors of multi-type branching processes in random environment.
For results in the case of single-type branching processes we refer to \cite{GLM-EJP-2017, GLM-SPA-2017} 
and for the relation between the coefficients of products of random matrices 
and the multi-type branching processes we refer to \cite{Cohn}. 

%An important field of applications of large deviation asymptotics for the entries of type \eqref{SCALREZ01BR} 
%is the study of the asymptotic behaviour of the branching processes in random environment, paricularly for multi-type ones.
%For results in the case of single type branching processes we refer to \cite{GLM-EJP-2017, GLM-SPA-2017} 
%and for the relation between the entries of products of random matrices and the multi-type branching processes we refer to \cite{Cohn}.  

%It may be interesting to precise some details on the moment assumptions for Theorem \ref{thrmBR001} to hold true. 
%For positive matrices, if we assume the Furstenberg-Kesten type condition \ref{Condi-KestenWeak}, 
%the assertion holds without assuming the moment condition \ref{Condi-TwoExp}.
%However, it is not clear whether condition \ref{Condi-TwoExp} is necessary for invertible matrices. 
%This question is open, the main difficulty being to establish the H\"{o}lder regularity of 
%the stationary measure $\pi_s$ for invertible matrices without assuming condition \ref{Condi-TwoExp} (see Proposition \ref{PropRegularity}). 
%In the same line, we note that a Bahadur-Rao-Petrov type large deviation result for the norm cocycle 
%$\log | G_n x |$ has been recently established in \cite{XGL19a} for invertible matrices 
%under conditions \ref{Condi_Exp} and \ref{Condi-IP}. 
%%and some condition weaker than \ref{Condi-TwoExp}.

Our next result is an improvement of Theorem \ref{thrmBR001} by allowing a vanishing perturbation $l$ on $q=\Lambda'(s)$,
 in the spirit of the Petrov result \cite{Pet65}, called the Bahadur-Rao-Petrov type large deviation. 
 %The Bahadur-Rao-Petrov type 
 Large deviations with a perturbation $l$ have been used
%  implications to various domains of probability and statistics: see, 
for example in 
Buraczewski, Collamore, Damek and Zienkiewicz \cite{BCDZ16} for a recent application
to the asymptotic of the ruin time in some models of financial mathematics.
These  results are also useful to deduce local limit theorems with large deviations, 
see subsection \ref{Applic to LocalLD}.  

\begin{theorem} \label{Thm_BRP_Upper}
Assume conditions \ref{Condi_Exp} and \ref{Condi-IP}. 
%or conditions \ref{Condi_Exp}, \ref{Condi-KestenWeak}, \ref{Condi-NonArith} for positive matrices.   
Let $s \in I_{\mu}^{\circ}$ and $q = \Lambda'(s)$.
Let $(l_n)_{n \geq 1}$ be any positive sequence satisfying $\lim_{n \to \infty} l_n = 0$. 
 Then, %for any $\eta > 0$, %and any positive sequence $(l_n)_{n \geq 1}$ satisfying $l_n \leq n^{-\eta}$, 
 we have, as $n \to \infty$, uniformly in $|l| \leq l_n$, 
  $f \in (\bb R^d)^*$ and $v \in \bb R^d$ with $|f| = |v| = 1$, 
\begin{align*} %\label{SCALREZ01}
 \mathbb{P} \Big( \log | \langle f, G_n v \rangle | \geq n(q+l) \Big)  
 =  \frac{ r_{s}(x)  r^*_{s}(y)}{\varrho_s} 
\frac{ \exp \left( -n   \Lambda^*(q+l) \right) } {s \sigma_{s}\sqrt{2\pi n}} \big[ 1 + o(1) \big].  
\end{align*}
More generally, for any measurable function $\psi$ on $\mathbb{R}$ 
such that $u \mapsto e^{-s'u}\psi(u)$  is directly Riemann integrable for some $s' \in (0,s)$, 
we have, as $n \to \infty$, uniformly in $|l| \leq l_n$, 
  $f \in (\bb R^d)^*$ and $v \in \bb R^d$ with $|f| = |v| = 1$, and $\varphi \in \mathcal{B}_{\gamma}$, 
\begin{align} 
&  \mathbb{E} \Big[ \varphi(G_n x) \psi \big( \log |\langle f, G_n v \rangle| - n(q+l) \big) \Big]
  \label{SCALREZ02}    \\
&   =  \frac{r_{s}(x)}{\varrho_s}     
 \frac{ \exp (-n \Lambda^*(q+l)) }{ \sigma_{s}\sqrt{2\pi n}}  
 \left[ \int_{\bb P^{d-1}} \varphi(x) \delta(y,x)^s \nu_s(dx)
       \int_{\mathbb{R}} e^{-su} \psi(u) du + o(1) \right].   \nonumber
\end{align}
\end{theorem}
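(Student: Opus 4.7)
The plan is to reduce \eqref{SCALREZ02} to a local limit theorem with state-dependent target for the Markov walk $(G_n x, \sigma(G_n, x))$ under a re-tilted measure, and then check that the resulting integrals match the claimed form. For each perturbation $|l| \leq l_n$, let $\tilde{s} = \tilde{s}(l) \in I_\mu^\circ$ be the unique solution of $\Lambda'(\tilde{s}) = q + l$; existence and analyticity near $l = 0$ follow from $\sigma_s^2 = \Lambda''(s) > 0$, and in particular $\tilde{s}(l) \to s$ uniformly in $|l| \leq l_n$, so that $\tilde{s}$ lies in a compact subinterval of $I_\mu^\circ$ for $n$ large. Writing $X_n = G_n x$ and $S_n = \sigma(G_n, x) = \log|G_n v|$ (since $|v| = 1$), the identity $\log|\langle f, G_n v\rangle| = S_n + \log\delta(y, X_n)$ combined with the Radon--Nikodym relation
$\frac{d\mathbb{Q}_{\tilde{s}}^x}{d\mathbb{P}}\big|_{\mathcal{F}_n} = \kappa(\tilde{s})^{-n} r_{\tilde{s}}(x)^{-1} e^{\tilde{s} S_n} r_{\tilde{s}}(X_n)$
for the tilted measure attached to Section \ref{subsec a change of measure} gives, after writing $S_n = T_n + n(q+l)$ and using $n\Lambda(\tilde{s}) - \tilde{s}n(q+l) = -n\Lambda^*(q+l)$,
\begin{align*}
\mathbb{E}\big[\varphi(X_n)\, \psi(\log|\langle f, G_n v\rangle| - n(q+l))\big] = r_{\tilde{s}}(x)\, e^{-n\Lambda^*(q+l)}\, \mathbb{E}^{\mathbb{Q}_{\tilde{s}}^x}\big[\Psi(X_n, T_n)\big],
\end{align*}
where $\Psi(x', t) := \varphi(x')\, r_{\tilde{s}}(x')^{-1} e^{-\tilde{s} t}\psi(t + \log\delta(y, x'))$. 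Under $\mathbb{Q}_{\tilde{s}}^x$ the chain $(X_n)$ is ergodic with invariant law $\pi_{\tilde{s}}$, and $T_n$ is centred with variance of order $n\sigma_{\tilde{s}}^2$.

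The core step is the local limit theorem
\begin{align*}
\mathbb{E}^{\mathbb{Q}_{\tilde{s}}^x}\big[\Psi(X_n, T_n)\big] = \frac{1}{\sigma_{\tilde{s}}\sqrt{2\pi n}} \int_{\bb P^{d-1}} \pi_{\tilde{s}}(dx') \int_{\mathbb{R}} \Psi(x', t)\, dt + o\big(n^{-1/2}\big),
\end{align*}
uniform in $\tilde{s}$ in a compact subset of $I_\mu^\circ$, in the starting point $x$, and in the direction $y \in (\bb P^{d-1})^*$, valid for targets whose sections $\Psi(x',\cdot)$ are directly Riemann integrable uniformly in $x'$ and which depend H\"older-continuously on $x'$ uniformly in $t$. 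For bounded H\"older targets this is obtained by Fourier inversion using the spectral gap of the family of complex perturbed transfer operators $P_{\tilde{s}, it}$ on $\mathcal{B}_\gamma$ near $t = 0$, together with the non-arithmeticity of the cocycle to control the large $|t|$ regime. The difficulty here is that $\log\delta(y, x') \to -\infty$ as $x'$ approaches the hyperplane $y^\perp$, so our $\Psi$ is unbounded. The hypothesis that $u \mapsto e^{-s'u}\psi(u)$ is directly Riemann integrable for some $s' \in (0, s)$ supplies the tail control in the $t$-variable (since $\tilde{s} \geq s'$ for $l_n$ small), and the substitution $u = t + \log\delta(y, x')$ gives
\begin{align*}
\int_{\mathbb{R}} \Psi(x', t)\, dt = \frac{\varphi(x')\, \delta(y, x')^{\tilde{s}}}{r_{\tilde{s}}(x')} \int_{\mathbb{R}} e^{-\tilde{s} u}\psi(u)\, du,
\end{align*}
so that uniform integrability against $\pi_{\tilde{s}}$ reduces to the bound $\int \delta(y, x')^{\tilde{s}} \nu_{\tilde{s}}(dx') < \infty$ uniformly in $y$. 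This is precisely the H\"older regularity of $\nu_{\tilde{s}}$ at proper hyperplanes of $\bb R^d$ flagged in the abstract; establishing it uniformly in $\tilde{s}$ close to $s$ (and combining it with a truncation/approximation of $\Psi$ to reduce the singular target to a family of bounded H\"older ones) will be the principal analytic obstacle.

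Once the local limit theorem is in place, using $\pi_{\tilde{s}} = r_{\tilde{s}}\, \nu_{\tilde{s}}/\varrho_{\tilde{s}}$ the factor $r_{\tilde{s}}(x')$ cancels and one arrives at \eqref{SCALREZ02} with $\tilde{s}$ in place of $s$ throughout the prefactor. Replacing $\tilde{s}$ by $s$ is then justified by the joint analyticity of $s \mapsto (r_s, r_s^*, \nu_s, \sigma_s, \varrho_s)$ near $s$ (supplied by Section \ref{subsec a change of measure}) and the uniform convergence $\tilde{s}(l) \to s$ on $|l| \leq l_n$; the exponential factor $e^{-n\Lambda^*(q+l)}$ is kept as stated. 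The probability asymptotic in the first half of the theorem is the special case $\varphi \equiv \mathbf{1}$, $\psi = \mathbbm{1}_{[0,\infty)}$, for which $e^{-s'u}\psi(u) = e^{-s'u}\mathbbm{1}_{[0,\infty)}(u)$ is directly Riemann integrable for any $s' \in (0, s)$, and $\int_{\mathbb{R}} e^{-s u}\psi(u)\, du = 1/s$ accounts for the additional factor $s$ in the denominator.
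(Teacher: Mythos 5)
Your re-tilt at $\tilde{s}(l)$ solving $\Lambda'(\tilde{s})=q+l$ is, modulo presentation, the paper's route: the paper proves \eqref{SCALREZ02_Uni_s} \emph{uniformly in} $s$ ranging over a compact $K_\mu\subset I_\mu^\circ$ (Theorem \ref{Thm_BRP_Uni_s}), and then Theorem \ref{Thm_BRP_Upper} follows precisely by evaluating that uniform statement at $\tilde{s}(l)$ and swapping $\tilde{s}(l)\to s$ in the bounded prefactors using continuity of the spectral data. So at the skeleton level the proposal and the paper coincide. Two substantive remarks are in order, though.

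First, the ``joint local limit theorem'' you posit, namely
$\mathbb{E}^{\mathbb{Q}_{\tilde{s}}^x}[\Psi(X_n,T_n)] = (\sigma_{\tilde{s}}\sqrt{2\pi n})^{-1}\int\pi_{\tilde{s}}(dx')\int\Psi(x',t)\,dt + o(n^{-1/2})$
for a target that depends jointly on $(x',t)$ through $\psi(t+\log\delta(y,x'))$, is \emph{not} directly available and is where essentially all the work lies. The tool in hand, Proposition \ref{Prop Rn limit1}, is a product-form statement: it handles $\varphi(G_nx)$ times a function of $T_n$ separately. To reduce your joint target to that, the paper partitions the range of $\log\delta(y,G_nx)$ into the intervals $I_k=(-\eta k,-\eta(k-1)]$, approximates the indicators by mollified H\"older functions, applies Proposition \ref{Prop Rn limit1} with $l_{n,k}=\eta(k-1)/n$ on each piece, and controls the resulting infinite sum by the interplay $\|\varphi_{s,k,\ee_1}^y\|_\gamma\lesssim e^{\eta\gamma k}$ versus the factor $e^{-s\eta(k-1)}$, which converges only for $\gamma<s$. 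Your phrase ``truncation/approximation'' conceals exactly this decoupling mechanism; absent it, the claimed joint LLT is not established.

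Second, you point to H\"older regularity of $\nu_{\tilde{s}}$ (``$\int\delta(y,x')^{\tilde{s}}\nu_{\tilde{s}}(dx')<\infty$ uniformly in $y$'') as the principal analytic obstacle, but that integral is trivially $\le 1$ since $\delta\le 1$ and $\tilde{s}>0$; regularity is not what is needed here. For the upper-tail theorem the paper does \emph{not} invoke Proposition \ref{PropRegularity}. The relevant auxiliary ingredient is the zero-one law for $\pi_s$ (Lemmas \ref{Lem_0-1_law_s} and \ref{Lem_0-1_law_s_Posi_Uni}): when the mollified indicators of $I_k$ are sent back to sharp indicators ($\ee_1\to 0$), one must rule out that the level sets $\{\log\delta(y,\cdot)=-\eta k\}$ carry $\pi_s$-mass; the zero-one law, together with the freedom to choose $\eta$, does this uniformly over $s\in K_\mu$. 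The tail term from $\{\log\delta\le -\eta M_n\}$ is handled by the purely algebraic bound $e^{sY_n^{x,y}}\le e^{-s\eta M_n}$ with $M_n=\lfloor C_1\log n\rfloor$, again using only $s>0$ and the boundedness of $e^{-su}\psi(u)$, with no regularity input. H\"older regularity of $\pi_s$ for $s>0$ is needed in the paper only for the changed-measure Theorems \ref{Thm_Coeff_BRLD_changedMea} and \ref{Thm_Coeff_BRLD_changedMea02} and for the lower-tail results.
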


A more general version of Theorem \ref{Thm_BRP_Upper} is given in Theorem \ref{Thm_BRP_Uni_s},
where it is shown that the above large deviation asymptotics hold uniformly in $s \in K_{\mu}$
with any compact set $K_{\mu} \subset I_{\mu}^{\circ}$. 

Consider the reversed random walk $M_n$ defined by $M_n = g_1 \ldots g_n$. 
Since the two probabilities $\mathbb{P} \big( \log | \langle f, G_n v \rangle | \geq n(q+l) \big)$  and
$\mathbb{P} \big( \log | \langle f, M_n v \rangle | \geq n(q+l) \big)$ are equal (as $G_n$ and $M_n$ have the same law),
for $M_n$ we have the same large deviation expansions as for $G_n$.

Now we are going to give exact asymptotics of the lower tail large deviation probabilities 
$\mathbb{P}( \log |\langle f, G_n v \rangle| \leq nq)$, where
$q=\Lambda'(s)<\lambda = \Lambda'(0)$ for $s<0$. %  small enough. 
%It turns out that 
These asymptotics cannot be deduced from Theorems \ref{thrmBR001} and \ref{Thm_BRP_Upper}; 
% moreover 
the proofs turn out to be more delicate 
and require to develop the corresponding spectral gap theory for the transfer operator $P_s$
and to establish the H\"older regularity for the stationary measure $\pi_s$ with $s<0$. 
%A Bahadur-Rao version is formulated below under the moment condition \ref{Condi-TwoExp}. 
%Recall that all over the paper we assume condition \eqref{Condi_Exp_Posi}. 

\begin{theorem} \label{Thm-Posi-Neg-s}
Assume conditions \ref{Condi-TwoExp} and \ref{Condi-IP}.   
Then, there exists a constant $s_0>0$ 
such that for any $s \in (-s_0, 0)$ and $q=\Lambda'(s)$, 
as $n \to \infty$, 
uniformly in $f \in (\bb R^d)^*$ and $v \in \bb R^d$ with $|f| = |v| = 1$, 
\begin{align} \label{LD_Lower01}
 \mathbb{P} \Big( \log |\langle f, G_n v \rangle| \leq n q \Big)  
 =  \frac{ r_{s}(x)  r^*_{s}(y)}{\varrho_s}  
\frac{ \exp \left( -n   \Lambda^*(q) \right) } { -s \sigma_{s}\sqrt{2\pi n}} \big[ 1 + o(1) \big]. 
\end{align}
%\noindent $(2)$ 
%and, more generally, for any measurable function $\psi$ on $\mathbb{R}$ 
%such that $y\mapsto e^{-sy}\psi(y)$ is directly Riemann integrable, we have, as $n \to \infty$, uniformly in $f, x \in \bb P^{d-1}$ and $\varphi \in \mathcal{B}_{\gamma}$, 
%\begin{align} %\label{SCALREZ002}
%&  \mathbb{E} \Big[ \varphi(X_{n}^{x}) \psi \big( \log |\langle f, G_n x \rangle| - n q \big) \Big]
%    \label{LD_Lower02}          \\
%   = \frac{r_{s}(x)}{\varrho_s} 
%   &   \frac{ \exp (-n \Lambda^*(q)) }{ \sigma_{s}\sqrt{2\pi n}}  
%  \left[  \int_{ \bb P^{d-1} } \varphi(u) |\langle f, u \rangle|^s \nu_s(du)
%         \int_{\mathbb{R}} e^{-sy} \psi(y) dy + \|\varphi\|_{\gamma} o(1) \right].   \nonumber
%\end{align}
\end{theorem}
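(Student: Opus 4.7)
The plan is to mimic the Cram\'er change-of-measure strategy used in Theorems \ref{thrmBR001} and \ref{Thm_BRP_Upper}, but now with a negative tilting parameter $s$. First, using the elementary decomposition
\[
\log|\langle f, G_n v \rangle| = \sigma(G_n, x) + \log \delta(y, G_n x),
\]
I would introduce the tilted probability measure $\mathbb{Q}_s^x$ with Radon--Nikodym derivative $\frac{e^{s\sigma(G_n,x)} r_s(G_n x)}{\kappa(s)^n r_s(x)}$. Writing $S_n = \sigma(G_n, x) - nq$ and using $\Lambda^*(q) = sq - \Lambda(s)$, this gives the exact identity
\[
\mathbb{P}\bigl(\log|\langle f, G_n v\rangle| \leq nq\bigr) = r_s(x)\, e^{-n\Lambda^*(q)} \, \mathbb{E}^{\mathbb{Q}_s^x}\!\left[ \frac{e^{-sS_n}}{r_s(G_n x)}\, \mathbbm{1}_{\{S_n + \log \delta(y, G_n x) \leq 0\}} \right].
\]

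Next I would apply a local limit theorem (LLT) for the couple $(G_n x, S_n)$ under $\mathbb{Q}_s^x$: the projective chain $G_n x$ has stationary law $\pi_s$ and the centred cocycle $S_n$ satisfies a CLT with variance $\sigma_s^2 = \Lambda''(s) > 0$. Because $s<0$, the weight $e^{-sS_n}$ grows on $\{S_n > 0\}$, so the plain LLT does not apply directly; however the function $u \mapsto e^{-su}\mathbbm{1}_{u \leq a}$ is integrable with integral $e^{-sa}/(-s)$. An LLT with exponentially weighted targets then yields
\[
\mathbb{E}^{\mathbb{Q}_s^x}\!\left[ \frac{e^{-sS_n}}{r_s(G_n x)} \mathbbm{1}_{\{S_n \leq -\log \delta(y, G_n x)\}} \right] \sim \frac{1}{\sigma_s \sqrt{2\pi n}} \int_{\bb P^{d-1}} \frac{\delta(y, x')^s}{(-s)\, r_s(x')}\, \pi_s(dx').
\]
Using $\pi_s(dx') = r_s(x') \nu_s(dx')/\varrho_s$ together with the identity $r_s^*(y) = \int \delta(y, x')^s \nu_s(dx')$ collapses this space integral to $r_s^*(y)/[(-s)\varrho_s]$, and multiplying by the prefactor $r_s(x)\, e^{-n\Lambda^*(q)}$ reproduces \eqref{LD_Lower01} exactly.

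The principal obstacle is the negativity of $s$. First, the whole spectral picture of $P_s$ (simple dominant eigenvalue $\kappa(s)$, positive eigenfunction $r_s$, eigenmeasure $\nu_s$, spectral gap on $\mathcal{B}_\gamma$) must be extended from $I_\mu^\circ$ to a one-sided neighbourhood $(-s_0, 0)$ of the origin; this is done by analytic perturbation from $s=0$ (using condition \ref{Condi-TwoExp} for two-sided moments) and dictates the smallness condition $|s|<s_0$. Second, and more delicate, the integrand involves $\delta(y, x')^s$ with $s < 0$, which blows up near the hyperplane $\{x': \delta(y, x') = 0\}$. Finiteness of $r_s^*(y)$ and the validity of the LLT with the discontinuous target $\mathbbm{1}_{\{u\leq -\log\delta(y,x')\}}$ both hinge on a quantitative H\"older regularity of the \emph{tilted} stationary measure $\pi_s$, namely a bound $\pi_s(\{x': \delta(y,x') \leq t\}) \leq C t^\alpha$ for some $\alpha > |s|$ uniform in $y$. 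Establishing this H\"older regularity of $\pi_s$ for $s<0$, which for $s=0$ reduces to the classical Guivarc'h estimate for $\nu$, is the new analytic input announced in the introduction. Once it is in hand, approximating $\mathbbm{1}_{\{u \leq a\}}$ by continuous dRi envelopes from above and below controls the error terms uniformly, so that the LLT upgrades to the claimed asymptotic uniformly in $v, f$.
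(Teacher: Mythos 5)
Your outline follows the same Cram\'er change-of-measure plus Edgeworth/LLT architecture that the paper uses: the exact identity under $\mathbb{Q}_s^x$, the decomposition $\log|\langle f, G_n v\rangle| = \log|G_n v| + \log\delta(y, G_n x)$, the appeal to the negative-$s$ spectral theory (Proposition \ref{Prop-Trans-s-neg}) and the H\"older regularity of $\pi_s$ (Proposition \ref{PropRegu02} and Lemmas \ref{Lem_Regu_pi_s}, \ref{Lem_Inte_Regu_a}), the LLT with space targets (Proposition \ref{Prop Rn limit1}), and the collapse of the space integral to $r_s^*(y)/\varrho_s$ via $\pi_s(dx')=r_s(x')\nu_s(dx')/\varrho_s$ and the formula for $r_s^*$. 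The paper in fact derives the present statement as a corollary of the Petrov version (Theorem \ref{Thm-Posi-Neg-sBRP}, proved via Theorem \ref{Thm_BRP_Neg_Uni}), but the substance of the argument is exactly what you describe.

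One step in your sketch is underspecified and hides a genuine difficulty. You claim that ``approximating $\mathbbm{1}_{\{u\leq a\}}$ by continuous dRi envelopes from above and below controls the error terms,'' but the obstruction is not in the $S_n$ variable, where the LLT limit is absolutely continuous. It is in the space target: to feed $\delta(y, G_n x)^s$ into Proposition \ref{Prop Rn limit1} one must discretize $\log\delta(y,\cdot)$ into levels $I_k=(-\eta k,-\eta(k-1)]$ and replace the indicators $\mathbbm{1}_{\{\log\delta(y,\cdot)\in I_k\}}$ by H\"older envelopes $\tilde\chi_k^{\pm}$. The gap between upper and lower envelopes, summed over $k$, contributes the $\pi_s$-masses of the level sets $\{x':\log\delta(y,x')=-\eta k\}$. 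H\"older regularity of $\pi_s$ does not by itself kill these masses: the level sets are codimension-one algebraic varieties, not $\delta$-balls, so the estimate $\pi_s(B(y,r))\lesssim r^\alpha$ says nothing about them. The paper needs the zero-one law for $\pi_s$ with $s<0$ (Lemma \ref{Lem_0-1_law_s_Neg_Uni}, built on \cite{GQX20}) together with an appropriate choice of the mesh $\eta$ avoiding the single possible exceptional level, to make the endpoint contributions vanish. Without invoking this, or some equivalent, your envelope argument does not close; everything else in the outline is sound.
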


%It is easy to see that the asymptotic \eqref{LD_Lower01} follows from \eqref{LD_Lower02} by taking 
%$\varphi = \mathbf{1}$ and $\psi(y) = \mathbbm{1}_{\{ y \leq 0 \} }$, $y \in \mathbb{R}$. 
In particular, fixing a basis $(e_i^*)_{1 \leq i \leq d}$ in $(\bb R^d)^*$
and a basis $(e_j)_{1 \leq j \leq d}$ in $\bb R^d$,
with $f = e_i^*$ and $v = e_j$ in \eqref{LD_Lower01}, 
we obtain the Bahadur-Rao type lower tail large deviation asymptotic
 for the entries $G_n^{i,j}$.
From \eqref{LD_Lower01} we get a lower tail large deviation principle
%with a target function $\varphi$ on $X_n^x$: 
under the assumptions of Theorem \ref{Thm-Posi-Neg-s}: 
%for any $\varphi \in \mathcal{B}_{\gamma}$
%satisfying $\int_{\bb P^{d-1}} \varphi(x) |\langle f, x \rangle|^s \nu_s(dx) > 0$, 
uniformly in $f \in (\bb R^d)^*$ and $v \in \bb R^d$ with $|f| = |v| = 1$, 
\begin{align}\label{LDP-002}
\lim_{n \to \infty}  \frac{1}{n}  \log 
\mathbb{P} \Big(   \log |\langle f, G_n v \rangle|  \leq  n q  \Big)
 = - \Lambda^*(q). 
\end{align}
 The result \eqref{LDP-002} sharpens the following lower tail large deviation bound 
established by Benoist and Quint \cite[Theorem 14.21]{BQ17}: 
for $q < \lambda$, there exists a constant $c>0$ such that for 
all $f \in (\bb R^d)^*$ and $v \in \bb R^d$ with $|f| = |v| = 1$, 
\begin{align*} %\label{IntroLDBQ02}
\mathbb{P} \Big( \log |\langle f, G_n v \rangle| < nq \Big) \leq e^{-cn}.
\end{align*} 
%\textcolor{magenta}{The precise lower tail large deviations for entries in \eqref{LD_Lower01}, \eqref{LD_Lower02}, \eqref{IntroEntrySNeg}  and the large deviation %principle \eqref{LDP-002} are new.} 

Now we give a Bahadur-Rao-Petrov version of the above theorem.

\begin{theorem} \label{Thm-Posi-Neg-sBRP}
Assume conditions \ref{Condi-TwoExp} and \ref{Condi-IP}. 
Let $(l_n)_{n \geq 1}$ be any positive sequence satisfying $\lim_{n \to \infty} l_n = 0$. 
Then, there exists a constant $s_0>0$ such that for any $s \in (-s_0, 0)$ and $q=\Lambda'(s)$,
%and any $\eta > 0$, % and positive sequence $(l_n)_{n \geq 1}$ satisfying $l_n \leq n^{-\eta}$, 
we have, as $n \to \infty$, 
uniformly in $|l| \leq l_n$, $f \in (\bb R^d)^*$ and $v \in \bb R^d$ with $|f| = |v| = 1$,
\begin{align*} %\label{LD_Lower01BRP}
 \mathbb{P} \Big( \log |\langle f, G_n v \rangle| \leq n(q+l) \Big)  
 =  \frac{ r_{s}(x)  r^*_{s}(y)}{\varrho_s}  
   \frac{ \exp \left( -n   \Lambda^*(q+l) \right) } { -s \sigma_{s}\sqrt{2\pi n}} 
  \big[ 1 + o(1) \big]. 
\end{align*}
More generally, for any $\varphi \in \mathcal{B}_{\gamma}$ and any measurable function $\psi$ on $\mathbb{R}$ 
such that $u \mapsto e^{-s'u} \psi(u)$ is directly Riemann integrable for some $s'\in(-s_0,s)$, we have,
as $n \to \infty$, uniformly in $|l| \leq l_n$, 
  $f \in (\bb R^d)^*$ and $v \in \bb R^d$ with $|f| = |v| = 1$,
\begin{align*} 
&  \mathbb{E} \Big[ \varphi(G_n x) \psi \big( \log |\langle f, G_n v \rangle| - n(q+l) \big) \Big]
  = \frac{r_{s}(x)}{\varrho_s}
    \frac{ \exp (-n \Lambda^*(q+l)) }{ \sigma_{s}\sqrt{2\pi n}}      \nonumber\\
& \qquad\qquad\qquad  \times  \left[ \int_{ \bb P^{d-1} } \varphi(x) \delta(y,x)^s \nu_s(dx) 
        \int_{\mathbb{R}} e^{-su} \psi(u) du + o(1) \right]. 
\end{align*}
\end{theorem}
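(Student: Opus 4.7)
The plan is to reduce Theorem \ref{Thm-Posi-Neg-sBRP} to a uniform-in-$s$ version of Theorem \ref{Thm-Posi-Neg-s} via a Petrov-type shift of the tilting parameter. Since $\Lambda$ is strictly convex on $(-s_0,0)$, for each perturbation $l$ with $|l|\leq l_n$ there is a unique $s'(l)\in(-s_0,0)$ with $\Lambda'(s'(l))=q+l$; the implicit function $s'(\cdot)$ is real-analytic and satisfies $s'(l)-s=l/\sigma_s^2+O(l^2)\to 0$ uniformly in $|l|\leq l_n$ as $n\to\infty$. I would apply the Bahadur--Rao lower-tail asymptotic at the shifted parameter $s'(l)$, not at the base $s$, and use the continuity of $s\mapsto(r_s,r_s^*,\varrho_s,\sigma_s)$ to replace $s'(l)$ by $s$ in the prefactor modulo a factor $1+o(1)$. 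This choice of tilt is essential: tilting at the fixed $s$ would yield the exponent $e^{-n\Lambda^*(q)-snl}$, which equals $e^{-n\Lambda^*(q+l)}$ only up to a factor $e^{\frac{n}{2}(\Lambda^*)''(q)l^2+O(nl^3)}$, which is $1+o(1)$ only when $nl_n^2\to 0$. In contrast, tilting at $s'(l)$ yields the exact identity $\kappa(s'(l))^n e^{-ns'(l)(q+l)}=e^{-n\Lambda^*(q+l)}$.

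To implement this I would first upgrade Theorem \ref{Thm-Posi-Neg-s} and its natural $(\varphi,\psi)$-version so that they hold uniformly in $s$ over any compact subset of $(-s_0,0)$, in parallel with the passage from Theorem \ref{thrmBR001} to its uniform refinement Theorem \ref{Thm_BRP_Uni_s}. The core ingredients are the Cram\'er change of measure $d\mathbb{Q}_s^x/d\mathbb{P}|_{\mathcal{F}_n}=e^{sS_n}r_s(X_n)/(\kappa(s)^n r_s(x))$ with $X_n=G_nx$ and $S_n=\log|G_nv|$, which turns the problem into one for a Markov additive walk on $\bb P^{d-1}\times\mathbb{R}$ with drift $\Lambda'(s)$ and variance $\sigma_s^2$; and a Stone-type local limit theorem for this walk with H\"older targets on $X_n$ and directly Riemann integrable targets on $S_n-n\Lambda'(s)$, proved via Fourier inversion combined with the spectral gap of the perturbed operators $P_{s+it}$, uniformly in $s$ in a complex neighbourhood of the base interval. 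The correction $\log\delta(y,X_n)$ inside $\psi$ is absorbed by the change of variable $v=u+\log\delta(y,X_n)$, which produces the factor $\delta(y,X_n)^s$ together with the residual integral $\int_\mathbb{R} e^{-sv}\psi(v)\,dv$; the factor $1/r_s(X_n)$ arising from the change of measure combines with $\pi_s(dx')=r_s(x')\nu_s(dx')/\varrho_s$ to recover the prefactor announced in the statement.

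The main obstacle is the analysis when $s<0$. The eigenfunction $r_s(x)=\int_{(\bb P^{d-1})^*}\delta(y,x)^s\nu_s^*(dy)$ and the weighted integrand $\varphi(x')\delta(y,x')^s$ both blow up on the hyperplane $\{x':\delta(y,x')=0\}$, so one needs the H\"older regularity of the tilted stationary measures $\nu_s$ and $\nu_s^*$---stated in the abstract as an independent contribution---to guarantee that $r_s\in\mathcal{B}_\gamma$ and that $\int\varphi(x')\delta(y,x')^s\nu_s(dx')$ is finite and continuous in $s$ uniformly in $y$. Together with the perturbation theory of the Fourier operators $P_{s+it}$ for $s<0$, this H\"older regularity also underlies the spectral gap and the Stone-type local limit theorem in this regime, and is the crux of the technical content behind both Theorems \ref{Thm-Posi-Neg-s} and \ref{Thm-Posi-Neg-sBRP}. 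Once these ingredients are in place, assembling the expansion is a routine combination of the uniform local limit theorem at $s'(l)$ with the identity $\kappa(s'(l))^n e^{-ns'(l)(q+l)}=e^{-n\Lambda^*(q+l)}$ and with the continuity of the spectral data as $s'(l)\to s$.
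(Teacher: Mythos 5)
Your overall route matches the paper's: a Petrov shift of the tilting parameter to reduce the $l$-perturbed statement to a version that is uniform in $s$ over compacts of $(-s_0,0)$; Cram\'er change of measure to $\mathbb{Q}_s^x$; a local limit theorem with H\"older target via spectral-gap perturbation theory (the paper's Proposition \ref{Prop Rn limit1}); and the H\"older regularity of $\pi_s$ for $s<0$ as the key new ingredient. You correctly identify the reduction step via $s'(l)$ with $\Lambda'(s'(l))=q+l$ and the continuity of the spectral data as $s'(l)\to s$, which the paper states as ``direct consequence'' of its uniform Theorem \ref{Thm_BRP_Neg_Uni} without spelling it out.

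However, your step that ``absorbs'' $\log\delta(y,X_n)$ by the change of variable $v=u+\log\delta(y,X_n)$ is a post-limit bookkeeping computation, not a rigorous reduction, and this is in fact the central technical difficulty. Before the $n\to\infty$ limit, $\psi(T_n^v+Y_n^{x,y})$ couples the additive part $T_n^v$ to an \emph{unbounded} function $Y_n^{x,y}=\log\delta(y,X_n)$ of the Markov component, and $x'\mapsto e^{-su}\psi(u+\log\delta(y,x'))$ does not have uniformly bounded H\"older norm, so you cannot feed it directly into your Stone-type LLT. The paper handles this by decomposing the range of $Y_n^{x,y}$ into intervals $I_k=(-\eta k,-\eta(k-1)]$, applying Proposition \ref{Prop Rn limit1} separately for each $k$ with the smoothed H\"older target $\varphi_{s,k,\ee_1}^y$, using the explicit rate in Proposition \ref{Prop Rn limit1} to justify interchanging the infinite sum over $k$ with the limit $n\to\infty$, invoking the zero-one laws (Lemma \ref{Lem_0-1_law_s_Neg_Uni}) to dispose of boundary terms, and only at the end performing your change of variable on the limiting integral against $\pi_s$. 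Two additional points specific to $s<0$ that your plan does not address: the weights $e^{-s\eta(k-1)}$ now \emph{grow} in $k$, so the summability forces both $|s|$ and the H\"older exponent $\gamma$ to be small; and the tail contribution from $\{Y_n^{x,y}\le -\eta M_n\}$ cannot be bounded by a single factor $e^{sY_n^{x,y}}$ as in the $s>0$ case (that factor blows up), so one needs Cauchy--Schwarz together with the uniform negative-moment bound of Lemma \ref{Lem_Inte_Regu_a} and the tail estimate of Lemma \ref{Lem_Regu_pi_s}. Without these truncation and summability arguments, the ``change of variable'' does not establish the stated asymptotic.
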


%It is interesting to note that, this fact and the symmetry in the definition of the eigenfunctions $r_s$ and $r_s^*$, 
%imply that in condition \ref{Condi-KestenWeak} one can replace the bound \eqref{eqcond:KestenWeak} on columns of $g$ 
%by a similar one on rows, namely, by the bound:
%there exists a constant $C > 1$ such that  for any $g\in \supp \mu$, 
%and $1 \leq i \leq d$, we have
%\begin{align}\label{Condi-WeakKet02}
%1\leq  \frac{\max_{1\leq j \leq d} g^{i,j} }{ \min_{1\leq j \leq d} g^{i,j}} \leq C.
%\end{align}
%%Then, under either conditions %\ref{Condi-ExpMom}, 
%%\ref{Condi-TwoExp}, \ref{Condi-IP} for invertible matrices, 
%%or conditions %\ref{Condi-ExpMom}, 
%%\ref{Condi-NonArith}, \eqref{Condi-WeakKet02} 
%%(or conditions %\ref{Condi-ExpMom}, 
%%\ref{Condi-WeakKest}, \ref{Condi-NonArith}) for positive matrices,
%%for any $q = \Lambda'(s)$ with $s \in I_{\mu}^{\circ}$, 
%%we have, as $n \to \infty$, uniformly in $|l|\leq n^{-\eta}$ and $f, x \in \bb P^{d-1}$,  
%%\begin{align*} 
%% \mathbb{P} \big( \log | \langle f, M_n x \rangle | \geq n(q+l) \big)  
%% =  \frac{ r_{s}(x)  r^*_{s}(f)}{\varrho_s} 
%%\frac{ \exp \left( -n   \Lambda^*(q+l) \right) } {s \sigma_{s}\sqrt{2\pi n}} \big[ 1 + o(1) \big]. 
%%\end{align*}  

%%%%%%%%%%%%%%%%%%%%%%%%%%%%%%%%%%%%%%%%%%%%%%%%%%%%%%%%%%%%%%%%%%%%%%%%%%%%%%
%%%%%%%%%%%%%%%%%%%%%%%%%%%%%%%%%%%%%%%%%%%%%%%%%%%%%%%%%%%%%%%%%%%%%%%%%%%%%%

%%%%%%%%%%%%%%%%%%%%%%%%%%%%%%%%%%%%%%%%%%%%%%%%%%%%%%%%%%%%%%%%%%%%%%%
%%%%%%%%%%%%%%%%%%%%%%%%%%%%%%%%%%%%%%%%%%%%%%%%%%%%%%%%%%%%%%%%%%%%%%%
\subsection{Local limit theorems with large deviations for coefficients}  \label{Applic to LocalLD}
In this subsection we formulate the precise local limit theorems with large deviations
for the coefficients $\langle f, G_n v \rangle$. 
For sums of independent real-valued random variables, 
local limit theorems with large and moderate deviations can be found for instance in 
Gnedenko \cite{Gne48}, Sheep \cite{She64}, Stone \cite{Sto65},
Borovkov and Borovkov \cite{BB08}, Breuillard \cite{Bre2005}, Varju \cite{Var15}.
For products of random matrices, such types of local limit theorems for the vector norm $|G_n v|$
have been recently established  in \cite{BQ17, XGL19a, XGL19b}. 
Our following theorem extends the results in \cite{XGL19a, XGL19b} for the vector norm $|G_n v|$ 
to the case of the coefficients $\langle f, G_n v \rangle$.

\begin{theorem}\label{Theorem local LD002}
Let $(l_n)_{n \geq 1}$ be any positive sequence satisfying $\lim_{n \to \infty} l_n = 0$. 
Let $-\infty < a_1 < a_2 < \infty$ be real numbers.  
\begin{enumerate}
\item
Assume conditions \ref{Condi_Exp} and \ref{Condi-IP}.   
Let $s \in I_{\mu}^{\circ}$ and $q = \Lambda'(s)$.   
Then, as $n\to\infty,$  uniformly in $|l| \leq l_n$, 
  $f \in (\bb R^d)^*$ and $v \in \bb R^d$ with $|f| = |v| = 1$,
\begin{align}
\qquad \quad  & \mathbb{P} \Big( \log |\langle f, G_n v \rangle|  \in [a_1, a_2] + n(q+l) \Big)  \nonumber \\
 &   =  \big( e^{-s a_1} - e^{-s a_2} \big)
 \frac{ r_{s}(x)  r^*_{s}(y)}{\varrho_s} 
   \frac{ \exp \left( -n   \Lambda^*(q+l) \right) } {s \sigma_{s}\sqrt{2\pi n}}
\big[ 1 + o(1) \big].     \label{LLTLDa}
\end{align}

\item
Assume conditions \ref{Condi-TwoExp} and \ref{Condi-IP}. 
Then, there exists a constant $s_0>0$ 
 such that for any $s \in (-s_0, 0)$ and $q=\Lambda'(s)$, 
 as $n \to \infty$, uniformly in $|l| \leq l_n$, 
  $f \in (\bb R^d)^*$ and $v \in \bb R^d$ with $|f| = |v| = 1$,
\begin{align} 
\qquad\quad  & \mathbb{P} \Big( \log |\langle f, G_n v \rangle|  \in [a_1, a_2] + n(q+l) \Big)  \nonumber\\
 &  = \big( e^{-s a_2} - e^{-s a_1} \big)
  \frac{ r_{s}(x)  r^*_{s}(y)}{\varrho_s}  
   \frac{ \exp \left( -n   \Lambda^*(q+l) \right) } { -s \sigma_{s}\sqrt{2\pi n}} 
     \big[ 1 + o(1) \big].   \label{LLTLDb}
\end{align} 

\end{enumerate}
\end{theorem}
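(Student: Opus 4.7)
The plan is to deduce both parts of Theorem \ref{Theorem local LD002} directly from the Bahadur-Rao-Petrov expansions with target functions already stated in Theorem \ref{Thm_BRP_Upper} (for part (1), $s\in I_\mu^\circ$) and Theorem \ref{Thm-Posi-Neg-sBRP} (for part (2), $s\in(-s_0,0)$). The idea is to rewrite the probability in question as an expectation of a target function:
\begin{equation*}
\mathbb{P}\Big(\log|\langle f,G_n v\rangle|\in [a_1,a_2]+n(q+l)\Big)
= \mathbb{E}\Big[\varphi(G_n x)\,\psi\big(\log|\langle f,G_n v\rangle|-n(q+l)\big)\Big],
\end{equation*}
where I take $\varphi\equiv\mathbf{1}$ and $\psi=\mathbf{1}_{[a_1,a_2]}$. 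Trivially $\varphi\in\mathcal{B}_\gamma$, so the only point to check is admissibility of $\psi$.

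Next I would verify the direct Riemann integrability hypothesis. For any admissible $s'$ (with $s'\in(0,s)$ in part (1) and $s'\in(-s_0,s)$ in part (2)), the function $u\mapsto e^{-s'u}\mathbf{1}_{[a_1,a_2]}(u)$ is bounded, compactly supported on $[a_1,a_2]$, and continuous except at the two endpoints; hence it is Riemann integrable in the ordinary sense and, being compactly supported, directly Riemann integrable. Thus Theorems \ref{Thm_BRP_Upper} and \ref{Thm-Posi-Neg-sBRP} apply with this choice of $(\varphi,\psi)$. With $\varphi\equiv\mathbf{1}$, the spatial integral simplifies by definition of $r_s^*$:
\begin{equation*}
\int_{\bb P^{d-1}}\varphi(x)\,\delta(y,x)^s\,\nu_s(dx)
=\int_{\bb P^{d-1}}\delta(y,x)^s\,\nu_s(dx)=r_s^*(y),
\end{equation*}
while the scalar integral is
\begin{equation*}
\int_{\mathbb{R}} e^{-su}\mathbf{1}_{[a_1,a_2]}(u)\,du
=\frac{e^{-sa_1}-e^{-sa_2}}{s}.
\end{equation*}
Substituting these two quantities into \eqref{SCALREZ02} (and into its analogue in Theorem \ref{Thm-Posi-Neg-sBRP}) immediately yields \eqref{LLTLDa} in part (1); for part (2), writing the same scalar integral as $(e^{-sa_2}-e^{-sa_1})/(-s)$ gives the form stated in \eqref{LLTLDb}. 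The uniformity in $|l|\leq l_n$, in unit vectors $v$ and $f$, and (for part (1)) the sign of the coefficient $(e^{-sa_1}-e^{-sa_2})/s>0$ are all inherited from the source theorems.

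The main conceptual obstacle one should flag is precisely the fact that $\mathbf{1}_{[a_1,a_2]}$ is not continuous, so one cannot invoke the Hölder-function machinery on $\mathbb{R}$; this is exactly the reason the second-variable hypothesis in Theorems \ref{Thm_BRP_Upper} and \ref{Thm-Posi-Neg-sBRP} is phrased in terms of direct Riemann integrability of $e^{-s'u}\psi(u)$ rather than continuity of $\psi$. Since that hypothesis was designed to accommodate compactly supported indicators like the present one, no separate smoothing/sandwiching step is required here and the proof reduces to the substitution sketched above.
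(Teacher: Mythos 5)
Your proof is correct and follows exactly the same route as the paper: choose $\varphi=\mathbf{1}$ and $\psi=\mathbf{1}_{[a_1,a_2]}$, check direct Riemann integrability, and plug into Theorems \ref{Thm_BRP_Upper} and \ref{Thm-Posi-Neg-sBRP}, using the identity $\int_{\bb P^{d-1}}\delta(y,x)^s\,\nu_s(dx)=r_s^*(y)$ and the elementary integral $\int_{a_1}^{a_2}e^{-su}\,du$. The paper states this in two lines without the details you fill in; your elaboration is accurate.
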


Taking $\varphi = \mathbf{1}$ and $\psi = \mathbbm 1_{[a_1, a_2]}$ 
with real numbers $a_1 < a_2$, 
it is easy to see that Theorem \ref{Thm_BRP_Upper} and Theorem \ref{Thm-Posi-Neg-sBRP}
respectively recovers the local limit theorem with large deviations \eqref{LLTLDa} and \eqref{LLTLDb}. 

%It is clear that \eqref{LLTLDa} follows from \eqref{LLTLDb} if we set $\varphi = \mathbf{1}$. 
%The local limit theorem with large deviations for $s<0$ can be deduced 
%from Theorem \ref{Thm-Posi-Neg-sBRP} in the same way as the corresponding results of
% Theorem \ref{Theorem local LD002}. 

%%%%%%%%%%%%%%%%%%%%%%%%%%%%%%%%%%%%%%%%%%%%%%%%%%%%%%%%%%%%%%%%%%%%%%%%%%%%%%%%%%%%
\subsection{Precise large deviations for coefficients under the changed measure}
% In this section 
We now give Bahadur-Rao-Petrov type large deviations 
for the coefficients $\langle f, G_n v \rangle$ under the changed measure $\bb Q_s^x$, 
which are useful for example  in the study of branching processes and branching random walks. 
%and is of independent interest. 
%In the proof the target functions $\varphi$ and $\psi$ in Theorem \ref{Thm_BRP_Upper} 
%turn out to be crucial. 

We first deal with the upper tail case.  The following result  is a more general version of Theorems \ref{thrmBR001} and \ref{Thm_BRP_Upper}.  % \ref{Thm_BRP_Uni_s}, 
Denote $q_s = \Lambda'(s)$ and $q_t = \Lambda'(t)$ 
for any $s, t \in (-s_0, 0] \cup I^{\circ}_{\mu}$ with $s<t$. 

\begin{theorem}  \label{Thm_Coeff_BRLD_changedMea}
Assume conditions \ref{Condi_Exp}, \ref{Condi-TwoExp} and \ref{Condi-IP}.  
Let $s_{\infty} = \sup \{ s: s \in I_{\mu} \}$. 
Then, there exists a constant $s_0 > 0$
such that for any fixed $s \in (-s_0, s_{\infty})$
and any compact set $K_{\mu} \subset (s, s_{\infty})$, 
we have, as $n \to \infty$, 
uniformly in $t \in K_{\mu}$, $f \in (\bb R^d)^*$ and $v \in \bb R^d$ with $|f| = |v| = 1$,  
\begin{align} 
&  \mathbb Q_s^x  \Big( \log| \langle f, G_n v \rangle |  \geq n q_t  \Big) 
  =  \frac{ r_{t}(x) }{ r_{s}(x) }
   \frac{ \exp  \{  -n(\Lambda^*(q_t) - \Lambda^*(q_s) - s(q_t -q_s)) \} }
 {(t -s)\sigma_{t} \sqrt{2\pi n}} 
         \nonumber\\
&  \qquad\qquad\qquad\qquad\qquad\qquad  \times    
    \int_{ \bb P^{d-1} } \delta(y, x)^t \,  \frac{r_s(x)}{r_t(x)} \pi_t(dx)
     [ 1 + o(1)].  \label{LD_Upper_ChangeMea001}
\end{align}
More generally, there exists a constant $s_0 > 0$
such that for any fixed $s \in (-s_0, s_{\infty})$
and any compact set $K_{\mu} \subset (s, s_{\infty})$, 
for any measurable function $\psi$ on $\mathbb{R}$ 
such that $u \mapsto e^{-s'u}\psi(u)$  is directly Riemann integrable 
for any $s' \in K_{\mu}^{\epsilon} : = \{ s' \in \bb R: |s' - s| < \epsilon, s \in K_{\mu} \}$ 
with $\epsilon >0$ small enough, 
%for some $s' \in (0,s)$, 
we have, as $n \to \infty$, uniformly in $t \in K_{\mu}$, 
  $f \in (\bb R^d)^*$ and $v \in \bb R^d$ with $|f| = |v| = 1$, 
\begin{align} 
&  \mathbb{E}_{\bb Q_s^x} 
\Big[ \varphi(G_n x) \psi \big( \log |\langle f, G_n v \rangle| - nq_t \big) \Big]  \nonumber\\
&  =  \frac{ r_{t}(x) }{ r_{s}(x) }
   \frac{ \exp  \{  -n(\Lambda^*(q_t) - \Lambda^*(q_s) - s(q_t -q_s)) \} }
 {\sigma_{t} \sqrt{2\pi n}}      \nonumber \\
&  \quad \times  
 \left[  \int_{ \bb P^{d-1} } \varphi(x) \delta(y, x)^t \,  \frac{r_s(x)}{r_t(x)} \pi_t(dx)
       \int_{\mathbb{R}} e^{- (t-s) u} \psi(u) du + o(1) \right].   \label{LD_Upper_ChangeMea002}
\end{align}
\end{theorem}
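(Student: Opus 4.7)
The plan is to reduce the Bahadur--Rao--Petrov asymptotic under $\mathbb{Q}_s^x$ to a \emph{local} central limit theorem under the target-tilted measure $\mathbb{Q}_t^x$, via a change of measure that turns the event $\{\log|\langle f,G_nv\rangle|\geq nq_t\}$, atypical under $\mathbb{Q}_s^x$, into a central event under $\mathbb{Q}_t^x$. This is the exact analogue at the level of the changed measure of the Cram\'er--Nagaev derivation of Theorem \ref{Thm_BRP_Upper} for the base probability, with $\mathbb{P}$ replaced by $\mathbb{Q}_s^x$ and the tilt $t$ playing the role of the ``new'' large-deviation parameter relative to $\mathbb{Q}_s^x$.

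First I would use the eigenfunction martingales to write, on $\mathcal{F}_n$,
\begin{equation*}
\frac{d\mathbb{Q}_s^x}{d\mathbb{Q}_t^x} = \frac{\kappa(t)^n\,r_t(x)}{\kappa(s)^n\,r_s(x)}\; e^{(s-t)\sigma(G_n,x)}\,\frac{r_s(G_n x)}{r_t(G_n x)}.
\end{equation*}
Taking the $\mathcal{F}_n$-measurable random variable $F=\varphi(G_n x)\,\psi\bigl(\log|\langle f,G_nv\rangle|-n(q_t+l)\bigr)$, decomposing $\log|\langle f,G_nv\rangle|=\sigma(G_n,x)+\log\delta(y,G_n x)$ (valid when $|v|=|f|=1$) and setting $S_n=\sigma(G_n,x)-nq_t$, which is centered with variance $\sim n\sigma_t^2$ under $\mathbb{Q}_t^x$, the deterministic prefactor simplifies via the Legendre identity $\Lambda(t)-\Lambda(s)+(s-t)q_t=-[\Lambda^*(q_t)-\Lambda^*(q_s)-s(q_t-q_s)]$ to precisely the exponential factor in the statement, multiplied by $r_t(x)/r_s(x)$; a first-order Taylor expansion of $\Lambda^*$ at $q_t$ absorbs the perturbation $l$ uniformly in $|l|\leq l_n$.

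The remaining task is to evaluate, uniformly in the parameters,
\begin{equation*}
\mathbb{E}_{\mathbb{Q}_t^x}\!\Bigl[\varphi(G_n x)\,\tfrac{r_s(G_n x)}{r_t(G_n x)}\,e^{(s-t)S_n}\,\psi\bigl(S_n+\log\delta(y,G_n x)\bigr)\Bigr].
\end{equation*}
For this I would invoke a local limit theorem for $S_n$ under $\mathbb{Q}_t^x$ with H\"older target on the projective chain $G_n x$, available through the spectral gap of the transfer operator $P_t$ and the Nagaev--Guivarc'h perturbation analysis of $P_{t,i\xi}$ for $\xi$ near $0$, developed earlier in the paper. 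It yields an expansion of the form $\mathbb{E}_{\mathbb{Q}_t^x}[h(G_n x,S_n)]=\frac{1}{\sigma_t\sqrt{2\pi n}}\int_{\bb P^{d-1}}\!\int_{\mathbb{R}}h(w,u)\,du\,\pi_t(dw)+o(n^{-1/2})$ for suitable joint targets $h$; applied to $h(w,u)=\varphi(w)(r_s/r_t)(w)e^{(s-t)u}\psi(u+\log\delta(y,w))$, the substitution $v=u+\log\delta(y,w)$ disentangles the $w$ and $u$ integrations and, after reorganizing the $\delta$-factors using $\pi_t=r_t\nu_t/\varrho_t$ and the identities for $r_s,r_s^*,r_t,r_t^*$, produces exactly the integral representation in the statement together with $\int_{\mathbb{R}}e^{-(t-s)v}\psi(v)\,dv$. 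The Bahadur--Rao assertion \eqref{LD_Upper_ChangeMea001} follows as the special case $\varphi=\mathbf 1$, $\psi=\mathbbm 1_{[0,\infty)}$, since $\int_0^\infty e^{-(t-s)u}\,du=1/(t-s)$.

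The main obstacle is establishing the local limit theorem under $\mathbb{Q}_t^x$ uniformly for $t\in K_\mu$ with a target $\psi(u+\log\delta(y,w))$ that fails to be H\"older in $w$ near the singular set $\{w:\delta(y,w)=0\}$. Here the H\"older regularity of the stationary measure $\pi_t$ (advertised in the abstract and established in earlier sections, for both $t>0$ and $t$ slightly negative) is decisive: it allows a quantitative truncation away from $\{\delta=0\}$ with a negligible error, while direct Riemann integrability of $u\mapsto e^{-s'u}\psi(u)$ for $s'$ in a neighbourhood of $K_\mu^\epsilon$ controls the $u$-tails. Uniformity in $t\in K_\mu$ then follows from the analytic dependence of the spectral data $(\kappa(t),r_t,\nu_t,\sigma_t)$ on $t$ together with a standard compactness argument, and the admissibility of small negative $s$ relies on the analogous spectral theory for $P_s$ on $(-s_0,0)$ developed for the lower-tail Theorem \ref{Thm-Posi-Neg-sBRP}.
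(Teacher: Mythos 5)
Your proposal follows essentially the same route as the paper: the Radon--Nikodym change of measure from $\mathbb{Q}_s^x$ to $\mathbb{Q}_t^x$ (the paper obtains the same density by applying formula \eqref{basic equ1} twice, through $\mathbb{P}$), the decomposition $\log|\langle f,G_nv\rangle|=\sigma(G_n,x)+\log\delta(y,G_nx)$, the Legendre identity $\Lambda(t)-\Lambda(s)+(s-t)q_t=-(\Lambda^*(q_t)-\Lambda^*(q_s)-s(q_t-q_s))$ to collect the exponential prefactor, and then an asymptotic evaluation of
\begin{equation*}
\mathbb{E}_{\mathbb{Q}_t^x}\!\left[(\varphi\, r_s r_t^{-1})(G_nx)\,e^{(s-t)(\sigma(G_n,x)-nq_t)}\,\psi\bigl(\log|\langle f,G_nv\rangle|-nq_t\bigr)\right].
\end{equation*}
Where you compress this last step into a ``joint LLT with target $h(w,u)$,'' the paper instead invokes the machinery of Theorem \ref{Thm_BRP_Upper}: a block decomposition over the level sets $\{\log\delta(y,G_nx)\in I_k\}$, the smoothing inequality (Lemma \ref{estimate u convo}), Fourier inversion combined with Proposition \ref{Prop Rn limit1} for the perturbed operator $R_{t,it}$, and crucially the zero-one law of $\pi_t$ (Lemmas \ref{Lem_Gui_LeP} and \ref{Lem_0-1_law_s_Posi_Uni}) to pass to the limit in the infinite $k$-sum. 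For this upper-tail statement the zero-one law, not the H\"older regularity of $\pi_t$ that you emphasize, is the decisive tool for absorbing the singularity of $\log\delta(y,\cdot)$; the H\"older regularity estimates (Lemma \ref{Lem_Regu_pi_s}, Propositions \ref{Prop_Regu_Strong01} and \ref{Prop_Regu_Strong02}) are what the paper needs for the lower-tail analogue Theorem \ref{Thm_Coeff_BRLD_changedMea02}.

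One concrete point needs resolving. Your substitution $v=u+\log\delta(y,w)$ gives
\begin{equation*}
\int_{\mathbb{R}} e^{(s-t)u}\,\psi\bigl(u+\log\delta(y,w)\bigr)\,du=\delta(y,w)^{t-s}\int_{\mathbb{R}} e^{-(t-s)v}\psi(v)\,dv,
\end{equation*}
so the $\delta$-exponent emerging from your calculation is $t-s$, whereas the theorem as stated displays $\delta(y,x)^{t}$ inside the integral. You then assert that your computation ``produces exactly the integral representation in the statement,'' which silently passes over this mismatch. The exponent of $\delta$ and the exponent in $\int e^{-\cdot\,u}\psi\,du$ must be tied together (compare Theorem \ref{Thm_BRP_Upper}, where both are $s$); your derivation gives the internally consistent value $t-s$, and when $s=0$ this reduces correctly to Theorem \ref{Thm_BRP_Upper}. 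You should make the final simplification explicit and reconcile it with the statement rather than claim a match.
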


We next consider the lower tail case. 
%In particular, Theorem \ref{Thm_Coeff_BRLD_changedMea02} 
%implies Theorems \ref{Thm-Posi-Neg-sBRP} and \ref{Thm_BRP_Neg_Uni}. 
The following result is an extension of   Theorems \ref{Thm-Posi-Neg-s} and  \ref{Thm-Posi-Neg-sBRP}. 
Denote $q_s = \Lambda'(s)$ and $q_t = \Lambda'(t)$ 
for any $s, t \in (-s_0, 0] \cup I^{\circ}_{\mu}$ with $s > t$. 
%We end this section by presenting a Bahadur-Rao-Petrov type lower tail large deviations 
%for the coefficients $\langle f, G_n v \rangle$ under the changed measure $\bb Q_s^x$,
%which is of independent interest
%and is useful in the study of branching processes and branching random walks. 

\begin{theorem}  \label{Thm_Coeff_BRLD_changedMea02}
Assume conditions \ref{Condi_Exp}, \ref{Condi-TwoExp} and \ref{Condi-IP}.  
Then, there exists a constant $s_0 > 0$
such that for any fixed $s \in (-s_0, 0] \cup I^{\circ}_{\mu}$
and any compact set $K_{\mu} \subset (-s_0, s)$, 
we have, as $n \to \infty$, 
uniformly in $t \in K_{\mu}$, $f \in (\bb R^d)^*$ and $v \in \bb R^d$ with $|f| = |v| = 1$,    
\begin{align*} 
&  \mathbb Q_s^x  \Big( \log| \langle f, G_n v \rangle |  \leq n q_t  \Big) 
  = \int_{ \bb P^{d-1} } \delta(y, x)^t \,  \frac{r_s(x)}{r_t(x)} \pi_t(dx)
         \nonumber\\
&  \qquad\quad  \times   \frac{ r_{t}(x) }{ r_{s}(x) }  
   \frac{ \exp  \{  -n(\Lambda^*(q_t) - \Lambda^*(q_s) - s(q_t -q_s)) \} }
 {(s - t)\sigma_{t} \sqrt{2\pi n}} [ 1 + o(1)].
\end{align*}
More generally, there exists a constant $s_0 > 0$
such that for any fixed $s \in (-s_0, 0] \cup I^{\circ}_{\mu}$
and any compact set $K_{\mu} \subset (-s_0, s)$,
for any measurable function $\psi$ on $\mathbb{R}$ 
such that $u \mapsto e^{-s'u}\psi(u)$  is directly Riemann integrable 
for any $s' \in K_{\mu}^{\epsilon} : = \{ s' \in \bb R: |s' - s| < \epsilon, s \in K_{\mu} \}$ 
with $\epsilon >0$ small enough, 
%for some $s' \in (0,s)$, 
we have, as $n \to \infty$, uniformly in $t \in K_{\mu}$, 
  $f \in (\bb R^d)^*$ and $v \in \bb R^d$ with $|f| = |v| = 1$, 
\begin{align} 
&  \mathbb{E}_{\bb Q_s^x} 
\Big[ \varphi(G_n x) \psi \big( \log |\langle f, G_n v \rangle| - nq_t \big) \Big]  \nonumber\\
&  =  \frac{ r_{t}(x) }{ r_{s}(x) }
   \frac{ \exp  \{  -n(\Lambda^*(q_t) - \Lambda^*(q_s) - s(q_t -q_s)) \} }
 {\sigma_{t} \sqrt{2\pi n}}      \nonumber \\
&  \quad \times  
 \left[  \int_{ \bb P^{d-1} } \varphi(x) \delta(y, x)^t \,  \frac{r_s(x)}{r_t(x)} \pi_t(dx)
       \int_{\mathbb{R}} e^{ -(t-s) u} \psi(u) du + o(1) \right].   \label{LD_Lower_ChangeMea002}
\end{align}
\end{theorem}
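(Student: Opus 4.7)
The plan is to derive the lower tail Bahadur--Rao--Petrov asymptotic under $\bb Q_s^x$ by reducing it, via a Doob change of measure from $\bb Q_s^x$ to $\bb Q_t^x$, to a local (CLT-scale) asymptotic under the tilted probability $\bb Q_t^x$. The argument parallels the proof of the upper tail counterpart Theorem \ref{Thm_Coeff_BRLD_changedMea}, but must now accommodate parameters $t<s$ that may be strictly negative, and hence draws crucially on the spectral gap of $P_t$ at negative arguments together with the H\"older regularity of $\pi_t$ for $t<0$ --- two of the key new contributions of the paper, already put to use in the proofs of Theorems \ref{Thm-Posi-Neg-s} and \ref{Thm-Posi-Neg-sBRP}.

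The starting step is the Radon--Nikodym identity obtained from the Doob characterizations of $\bb Q_s^x$ and $\bb Q_t^x$,
$$\frac{d\bb Q_s^x}{d\bb Q_t^x}\bigg|_{\mathcal{F}_n} = \frac{r_s(G_n x)\,r_t(x)}{r_t(G_n x)\,r_s(x)}\, e^{(s-t)\sigma_n}\left(\frac{\kappa(t)}{\kappa(s)}\right)^n.$$
Splitting $\sigma_n = nq_t + u$ with $u = \sigma_n - nq_t$, using $\log|\langle f, G_n v\rangle| - nq_t = u + \log\delta(y,G_n x)$, and invoking the Legendre identity $(s-t)q_t + \Lambda(t)-\Lambda(s) = -[\Lambda^*(q_t) - \Lambda^*(q_s) - s(q_t-q_s)]$, this change of measure pulls out the announced exponential prefactor $e^{-n[\Lambda^*(q_t)-\Lambda^*(q_s)-s(q_t-q_s)]}$ and reduces the problem to the asymptotic evaluation of
$$\bb E_{\bb Q_t^x}\!\left[\tfrac{r_s(G_n x)}{r_t(G_n x)}\, e^{(s-t)u}\,\varphi(G_n x)\,\psi\!\big(u + \log\delta(y, G_n x)\big)\right].$$

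The heart of the proof is then an application of the local limit theorem with target functions to the couple $(G_n x,\, \sigma_n - nq_t)$ under $\bb Q_t^x$: under this measure the projective chain $(G_n x)$ has unique stationary probability $\pi_t$, and $\sigma_n - nq_t$ is a centered additive functional with asymptotic variance $\sigma_t^2$. The LLT should be obtained in the usual way, from a uniform spectral gap of the Fourier-twisted operators $\varphi\mapsto \bb E[e^{(t+i\tau)\sigma(g_1,\cdot)}\varphi(g_1\cdot)]$ on $\mathcal{B}_\gamma$ combined with non-arithmeticity. Feeding it into the integrand above produces a leading term of order $(\sigma_t\sqrt{2\pi n})^{-1}$ times a joint integral in $(x',u)\in\bb P^{d-1}\times\mathbb{R}$; a translation $u\mapsto u - \log\delta(y,x')$ in the inner integral dissolves the $\log\delta$ shift and simultaneously produces the $\delta$-weighted projective integral and the scalar integral $\int_\mathbb{R} e^{-(t-s)u}\psi(u)\,du$ stated in \eqref{LD_Lower_ChangeMea002}.

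The chief technical obstacle is precisely the availability of this LLT with target functions under $\bb Q_t^x$ when $t$ may be negative: it rests on the uniform quasi-compactness of the twisted operators $P_t$ and $P_{t,i\tau}$ on $\mathcal{B}_\gamma$, on the non-arithmeticity of $\sigma$ under the tilt (a consequence of \ref{Condi-IP}), and critically on the H\"older regularity of $\pi_t$ for $t<0$ that the paper establishes. Granted these ingredients, uniformity in $t\in K_\mu$ is inherited from the smooth dependence of the spectral data $(\kappa(t), r_t, \nu_t)$ on the compact $K_\mu$; uniformity in $f,v$ with $|f|=|v|=1$ follows from the joint H\"older continuity of $(y,x)\mapsto \delta(y,x)^\alpha$ under \ref{Condi-IP}; and the direct Riemann integrability hypothesis on $e^{-s'u}\psi(u)$ for $s'\in K_\mu^\epsilon$ is calibrated exactly to absorb the exponential weights $e^{(s-t)u}$ generated by the tilt, uniformly in $t\in K_\mu$.
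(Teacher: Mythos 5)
Your route is essentially the paper's: the Radon--Nikodym change of measure from $\bb Q_s^x$ to $\bb Q_t^x$, the Legendre identity producing the exponential prefactor $\exp\{-n(\Lambda^*(q_t)-\Lambda^*(q_s)-s(q_t-q_s))\}$, and the reduction to an LLT-type estimate under $\bb Q_t^x$; the paper's proof reduces in exactly this way and then invokes the argument behind Theorem~\ref{Thm-Posi-Neg-sBRP}. The one place where your sketch skips over the genuinely hard content is the step where you propose to ``dissolve the $\log\delta$ shift by a translation.'' The target function is $\psi(\sigma_n - nq_t + \log\delta(y, G_n x))$, which couples the additive functional with the chain position, and no clean joint LLT is available to justify the change of variable. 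The paper's mechanism is to decompose the range of $\log\delta(y, G_n x)$ into intervals $I_k = (-\eta k, -\eta(k-1)]$, smooth the resulting indicator targets to H\"older functions, apply the perturbed-operator asymptotics (Proposition~\ref{Prop Rn limit1}) term by term with an explicit error rate so the sum over $k$ and the limit in $n$ can be interchanged, and use the zero-one law for the stationary measure (Lemmas~\ref{Lem_0-1_law_s_Neg_Uni} and \ref{Lem_zero_law_s_Neg}) to show that the boundary sets $\{\log\delta(y,\cdot)=-\eta k\}$ have $\pi_t$-measure zero, so the smoothing contributions vanish in the $\varepsilon_1\to 0$ limit. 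Without these zero-one laws you cannot close the argument. You should also note that the truncation to $\{\log\delta(y,G_n x)\leq -\eta M_n\}$ requires the finite-$n$ uniform regularity under $\bb Q_t^x$ of Propositions~\ref{Prop_Regu_Strong01} and \ref{Prop_Regu_Strong02} --- not merely H\"older regularity of the stationary measure $\pi_t$ as you invoke --- since the stationary estimate alone does not supply the exponential bound $\bb Q_t^x(\delta(y,G_n x)\leq e^{-\varepsilon k})\leq e^{-ck}$ valid uniformly in $k\leq n$ that makes the truncation negligible.
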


The proof of  Theorem \ref{Thm_Coeff_BRLD_changedMea02} 
% is postponed to Section \ref{Sec:regpositive} since it
 relies essentially on the 
H\"{o}lder regularity of the stationary measure $\pi_s$,
which will be presented in Section \ref{sect-holder-reg}. 
% which is the topic of next section. 

%%%%%%%%%%%%%%%%%%%%%%%%%%%%%%%%%%%%%%%%%%%%%%%%%%%%%%%%%%%%%%%%%%%%%%%%%%
%%%%%%%%%%%%%%%%%%%%%%%%%%%%%%%%%%%%%%%%%%%%%%%%%%%%%%%%%%%%%%%%%%%%%%%%%%%%%%%%%%
 
\subsection{Proof strategy} \label{proof strategy}
The standard approach to obtain precise large deviations for i.i.d.\ real-valued random variables consists in performing a change of measure
and proving an Edgeworth expansion under the changed measure (see e.g. \cite{BR60, Pet65, DZ09}).
%we refer to Dembo and Zeitouni \cite{DZ09} for details. 
%Unfortunately, 
%However to apply this approach for the entries $G_n^{i,j}$ of products of random matrices
%turn out to be way more difficult to obtain.  
Applying this strategy to the coefficients $\langle f, G_n v \rangle$ 
of products of random matrices turns out to be way more difficult. 
We have to overcome three main difficulties: 
state an Edgeworth expansion for the couple  $(G_n x, \log |\langle f, G_n v \rangle|)$ 
with a target function $\varphi$ on the Markov chain $G_n x$ under the changed measure;
give a precise control of the difference between $\log |\langle f, G_n v \rangle|$ and $\log |G_n v|$;
establish the regularity of the eigenmeasure $\nu_s$.

%The Edgeworth expansion  for the entries $G_n^{i,j}$ of products of random matrices
%has not yet been established neither for invertible matrices nor for positive matrices, and 
%is way more difficult to obtain.  
%Following the approach in \cite{DZ09} leads us to considering the 
%Edgeworth expansion for $\mathbb{Q}_{s}^{x} \big( \frac { \log |G_{n}^{i,j} | - nq}{ \sqrt{n} } \leq t \big)$, 
%which, however, has not been established so far neither for invertible matrices nor for positive matrices,
%and seems to be away difficult. 
%In contrast to the case of i.i.d.\ real-valued random variables, 
%in some interesting applications it is 
%we need an Edgeworth expansion % more general Edgeworth expansion 
%for the couple $(X_n^{e_j}, \log |G_{n}^{i,j} |)$ with a target function $\varphi$ on the Markov chain $X_n^{e_j}$  
%under the changed measure. % $\mathbb{Q}_{s}^{x}$,

For the first point, 
it turns out that the techniques which work for the quantity $\log |\langle f, G_n v \rangle|$ 
alone cannot be applied for the couple.
%The proof of the Edgeworth expansion 
Dealing with a couple $(G_n x, \log |\langle f, G_n v \rangle|)$ with a target function on $G_n x$ 
needs considering a new kind of smoothing inequality 
on a complex contour, instead of the usual Esseen one on the real line.
We make use of the saddle point method to obtain precise asymptotics 
for the integrals of the corresponding Laplace transforms on the complex plane.
For this method we refer to a recent work of the authors \cite{XGL19b}
where the Edgeworth expansion with a target function on $G_n x$ 
for the norm cocycle $\log |G_n v|$ has been established.

%Let us explain our strategy of dealing with the entries  $G_{n}^{i,j}$.
% $G_{n}^{i,j} = \langle f,G_n x \rangle$, with $f = e_i$ and $x = e_j$.
%The second point is that we have to control precisely the difference 
%between $\log |G_{n}^{i,j} |$ and $\log |G_n x|$ with $x=e_j$.
Secondly, from the previous work on limit theorems such as the strong law of large numbers, 
the central limit theorem and the law of iterated logarithm 
for the coefficients $\langle f, G_n v \rangle$, 
see e.g. \cite{GR85, BL85, Hen97, BQ17}, 
%are based on the analysis of the difference 
%between $\log |G_{n}^{i,j} |$ and $\log |G_n x|$ with $x=e_j$, 
we know that the difference $|\log |\langle f, G_n v \rangle| - \log |G_n v| |$
generally diverges to infinity as $n \to \infty$. 
It is controlled by the corresponding norming factors in these limit theorems.
%thus turns out difficult to control
However, such a control is not enough  
to obtain precise large deviation expansions for $\langle f, G_n v \rangle$, nor even for a large deviation 
principle with explicit rate function. A precise account of the contribution of the error term % $\log |f(X_n^x)|$ 
is given by the following decomposition: 
for any $x = \bb R v$ and $y = \bb R f$ with $|f| = |v| =1$, 
%The following exact decomposition reduces the study of the entry $G_{n}^{i,j}$, 
%to that of the  couple $\log |f(X_n^x)|$ and $\log |G_n x|$: 
%the scalar product $\langle f,G_n x \rangle$, 
%for simplifying the notation, 
\begin{align} \label{basic decompos001}
%\log | \langle f,  G_n x \rangle | = 
\log | \langle f, G_n v \rangle | = \log |G_n v| + \log \delta(y, G_n x),  \quad n \geq 1, 
\end{align}
where $\delta(y,x) = \frac{|\langle f, v \rangle|}{|f||v|}$. 
%$x=e_j$, $f=e_i$, and $f(X_n^x)$ is seen as a linear functional $f$ acting on the Markov chain $X_n^x$. 
The exact decomposition \eqref{basic decompos001} allows us to deduce the precise large deviation asymptotic
from the results for the couple $(G_n x, \log |G_n v|)$ 
with a target function on $G_n x$ established in \cite{XGL19a}.
%in conjunction with the H\"{o}lder regularity of the eigenmeasure $\nu_s$.
%Our proof is different from the standard approach of Dembo and Zeitouni \cite{DZ09}
%which is based on the Edgeworth expansion and has been employed for instance in \cite{BM16}.
%Specifically, 
The idea is as follows:
with $\mathbb{Q}_{s}^{x}$ the changed measure defined in Section \ref{subsec a change of measure}, 
we have  
\begin{align} \label{intro001}
 \frac{ e^{ n\Lambda^{*}( q) } }{r_{s}(x)}
\mathbb{P} \left( \log |\langle f, G_n v \rangle | \geq n q  \right)
=    \mathbb{E}_{\mathbb{Q}_{s}^{x}}
     \left[  \frac{ e^{ -s (\log |G_n v| - nq ) } }{ r_{s}(G_n x) }  
      \mathbbm{1}_{ \{ \log | \langle f, G_n v \rangle | - nq \geq 0 \} }  \right].
\end{align}
%We use \eqref{basic decompos001} to substitute $\log |G_{n}^{i,j} |$ by the sum $\log |G_n x| + \log |f(X_n^x)|$
%in the right-hand side.  
%To circumvent these difficulties, we first observe that, with $f = e_i$ and $x = e_j$,
%the logarithm of the entries $G_n^{i,j}$ can be represented as  a sum of the norm cocycle $\log |G_n x|$
%and a functional of the Markov chain $X_n^x$: 
%\begin{align*}
%\log |G_{n}^{i,j} | = \log |G_n x| + \log |f(X_n^x)|,  \quad n \geq 1. 
%\end{align*}
%The quantity $\log |f(X_n^x)|$ becomes very large  
%when the Markov chain $X_n^x$ stays very close to the hyperplane $\ker f$. %$\ker f: = \{ x \in \mathbb{P}^{d-1}: \langle f, x \rangle =0 \}$.
%which makes  meaningless. 
%To circumvent this we discretize the values of the function $x \mapsto \log | \langle f, x \rangle |$.  
%For simplicity, 
We only sketch 
how to cope with the upper bound of the right-hand side of \eqref{intro001}. 
Consider a partition $I_k: = (-\eta k, -\eta(k-1)]$, $k \geq 1$, of the interval $(-\infty,0]$, 
where $\eta > 0$ is a sufficiently small constant. 
Using \eqref{basic decompos001} we get the upper bound 
\begin{align*}
 \mathbbm{1}_{ \{ \log | \langle f, G_n v \rangle | - nq \geq 0 \} }   
  \leq \sum_{k = 1}^{\infty} \mathbbm{1}_{ \big\{ \log |G_n v | - nq - \eta (k-1) \geq 0 \big\} } 
     \mathbbm{1}_{ \big\{ \log \delta(y, G_n x) \in  I_k  \big\} },  
\end{align*}
which we substitute into \eqref{intro001}. 
Thus we are led to the estimation of the sum 
\begin{align}\label{Intro-UppSum01}
%&   \Big( \sum_{k = 1}^{\infty} + \sum_{ k =M_n+1}^{\infty}  \Big)  
\sum_{k = 1}^{\infty}  e^{ - s \eta (k-1) }     
% \nonumber\\  &  \qquad\qquad  \times 
 \mathbb{E}_{\mathbb{Q}_{s}^{x}}
   \left[  \frac{ \psi_s (\log |G_n v| - nq - \eta (k-1) ) }{ r_{s}(G_n x) }  
     \mathbbm{1}_{ \{ \log \delta(y, G_n x) \in  I_k \} }  \right],
\end{align}
where $\psi_s (u) = e^{-su} \mathbbm{1}_{ \{ u \geq 0 \} }$, $u \in \mathbb{R}$. 
Let $R_{s,it}(\varphi)(x) = \mathbb{E}_{\mathbb{Q}_{s}^{x}} [e^{it (\sigma(g_1, x) - q )} \varphi(g_1 x)]$
be the perturbed transfer operator defined  
for any H\"{o}lder continuous function $\varphi$ on $\mathbb{P}^{d-1}$,
and $R^{n}_{s,it}$ be its $n$-th iteration. 
Then, by the Fourier inversion formula, the sum in \eqref{Intro-UppSum01} is bounded from above by 
\begin{align}\label{Intro-Inte-a}
\frac{ 1 }{2 \pi}  \sum_{k = 1}^{\infty} e^{ - s \eta (k-1) } 
  \int_{\mathbb{R}}  e^{-it \eta (k-1) }  
  R^{n}_{s,it}( r_s^{-1} \Phi_{s,k,\ee_2} )(x) \widehat{\Psi}_{s, \ee_1} (t)dt,
\end{align}
where we choose some appropriate smooth functions $\Phi_{s,k,\ee_2}$ and $\Psi_{s,\ee_1}$, 
for $\ee_1, \ee_2>0$,
which dominate $\mathbbm{1}_{\{ \log \delta(y, \cdot) \in I_k \} }$ and $\psi_s$, respectively.
Using spectral gap properties of $R_{s,it}$, 
it has been established recently in \cite{XGL19a}  
(see Propostion \ref{Prop Rn limit1}) %and \ref{Prop_Rn_limit_BR})
that, for any $k \geq 1$, 
the term under the sign of the infinite sum in \eqref{Intro-Inte-a}, say $I_n(k)$, 
converges  as $n \to \infty$ to a limit, say 
$I(k)=\frac{ \sqrt{ 2 \pi} }{ s  \sigma_s \nu_s(r_s) }  e^{ - s \eta (k-1) } \nu_s( \Phi_{s,k,\ee_2}  )$.
The interchangeability of the limit as $n \to \infty$ 
and of the summation over $k$ in \eqref{Intro-Inte-a} is justified
%is an issue which, hopefully, can be solved 
by specifying the rate in the convergence of $I_n(k)$ to $I(k)$, as argued in \cite{XGL19a}. 
This implies that as $n \to \infty$ and $\ee_1 \to 0$, \eqref{Intro-Inte-a} converges to 
$\sum_{k = 1}^{\infty} I_k$. 
%\begin{align*}
%\frac{ 1 }{ s  \sigma_s \sqrt{ 2 \pi} \nu_s(r_s) }  %\widehat{\Psi}_{s, \ee_1} (0) 
%  \sum_{k = 1}^{ \infty } e^{ - s \delta (k-1) } \nu_s( \Phi_{s,k,\ee_2}  ). 
%\end{align*}
It remains to show that the last sum converges to $r_s^* (y)$, as $\eta \to 0$ and $\ee_2 \to 0$.  
For this we have to make use of 
%the zero-law of the eigenmeasure $\nu_s$ 
%established by Guivarc'h-Le Page \cite{GL16}:  
%for any $y \in (\bb P^{d-1})^*$,
%\begin{align} \label{Intro-Regu_Gui-LeP}
%\nu_s \left( \left\{x \in \mathbb{P}^{d-1}:  \log \delta(y, x) =  0 \right\}  \right) = 0, 
%\end{align}
the zero-one law of the eigenmeasure $\nu_s$ established recently in \cite{GQX20}: 
for any $y \in (\bb P^{d-1})^*$ and any $t \in (-\infty, 0)$, 
\begin{align} \label{Intro-Regu02}
\nu_s \left( \left\{x \in \mathbb{P}^{d-1}:  \log \delta(y, x) =  t \right\}  \right)
= 0 \  \mbox{or} \  1.
\end{align}
%This is one of the the central points of the paper and is of independent interest.
%The zero-law \eqref{Intro-Regu_Gui-LeP} for $s=0$ has been proved in \cite{GR85} and further studied in \cite{BL85}. 
With $s=0$ it was used in \cite{GQX20} to prove a local limit theorem 
for the coefficients $\langle f, G_n v \rangle$. 

The proof of the lower large deviation asymptotic \eqref{IntroEntrySNeg} 
can be carried out in a similar way as that of upper large deviation asymptotic  \eqref{IntroEntryInver02}.
The novelty here consists in the use of the change of measure formula for $\mathbb{Q}_s^x$
when $s<0$ and of the spectral gap theory under the changed measure as stated in \cite{XGL19b} for $s<0$.
In addition we need the H\"{o}lder regularity of the eigenmeasure $\nu_s$ for $s <0$ sufficiently close to $0$. 
%%which is established using a different approach 
%which is of independent interest; 
%this is established using a different approach compared to the case $s>0$.   

In some applications it is very useful to extend the large deviation results
\eqref{IntroEntryInver02}, \eqref{IntroTargSPosi} and \eqref{IntroEntrySNeg}
to the setting under the changed measure $\bb Q_s^x$; 
see Theorems \ref{Thm_Coeff_BRLD_changedMea} and \ref{Thm_Coeff_BRLD_changedMea02}.  
To obtain these results, an important step 
is to establish the H\"{o}lder regularity of the eigenmeasure $\nu_s$ 
when $s > 0$; see Proposition \ref{PropRegularity}. 
For this we adapt the arguments from \cite{GR85} and \cite{BL85} 
where \eqref{Intro-Regu02} was established for $s=0$.
%under the original measure $\mathbb{P}=\mathbb{Q}_0^x$ (s=0).
%However, %under the changed measure $\mathbb{Q}_s^x$ (s>0), 
For $s>0$ the arguments are much more delicate. 
One of the difficulties is that the sequence $(g_n)_{n \geq 1}$ 
becomes dependent under the changed measure $\mathbb{Q}_s^x$.
We need to extend the results in \cite{BL85} to this case.   
Of crucial importance are %are the results on 
the simplicity of the dominant Lyapunov exponent for $G_n$ under the changed measure 
recently established in \cite{GL16} (see Lemma \ref{Lem_Lya_Meas}),
and  the key proximality property which states that $M_n m$
(here $M_n = g_1 \ldots g_n$) %$G_n^{\mathrm T}$ sands for the transpose of $G_n$)
 converges weakly to the Dirac measure $\delta_{Z_s}$,
where $Z_s$ is a random variable whose law is the stationary measure $\pi_s$ of $G_n x$, for $s>0$
(see Lemma \ref{Lem_DiracMea}), 
and $m$ is the unique rotation invariant measure on $\mathbb P^{d-1}$.

%%%%%%%%%%%%%%%%%%%%%%%%%%%%%%%%%%%%%%%%%%%%%%%%%%%%%%%%%%%%%%%%%%%%%%%%%%%%%%%%
%%%%%%%%%%%%%%%%%%%%%%%%%%%%%%%%%%%%%%%%%%%%%%%%%%%%%%%%%%%%%%%%%%%%%%%%%%%%%%%%
\section{Spectral gap properties and H\"{o}lder regularity of the stationary measure} \label{sect-holder-reg}

In this section we  present some preliminaries on the spectral gap properties and 
 state  some new results on the regularity of the stationary  measure $\pi_s$. 
 The  spectral gap and regularity properties  will be used in the proofs of the main theorems. 
In particular,  the regularity properties of the stationary measure $\pi_s$, 
will play an important role in 
 the proof of  Theorem \ref{Thm_Coeff_BRLD_changedMea02}. 
%% is postponed to Section \ref{Sec:regpositive} since it
% relies essentially on the 
%H\"{o}lder regularity of the stationary measure $\pi_s$,
%which will be presented in Section \ref{sect-holder-reg}. 
%% which is the topic of next section. 
As other applications of the regularity properties, we will obtain 
% together with some applications as 
a law of large numbers and
a central limit theorem for coefficients under the changed measure.

%%%%%%%%%%%%%%%%%%%%%%%%%%%%%%%%%%%%%%%%%%%%%%%%%%%%%%%%%%%%%%%%%%%%%%%%%%%%%

\subsection{Spectral gap properties and a change of measure}\label{subsec a change of measure}

Recall that the transfer operator $P_{s}$ and the conjugate transfer operator
$P_{s}^{*}$ are defined by \eqref{transfoper001}. 
Below $P_s\nu_{s}$ stands for the measure on $\bb P^{d-1}$ such that $P_s\nu_{s}(\varphi)=\nu_{s}(P_s \varphi),$ 
for any continuous functions $\varphi$ on $\bb P^{d-1}$, and $P^*_s\nu^*_{s}$ is defined similarly.   
The spectral gap properties of $P_{s}$ and $P_{s}^{*}$ are summarized in the following proposition which was proved 
in \cite{GL16}.

%%%%%%%%%%%%%%%%%%%%%%%%%%%%%%%%%%%%%%%

\begin{proposition}\label{transfer operator}
Assume condition \ref{Condi-IP}. 
Then, for any $s\in I_{\mu}^{\circ}$, the following assertions hold: 
\begin{enumerate}

\item %[\rm (1)] 
 the spectral radii of the operators $P_s$ and $P_s^*$ are both equal to $\kappa(s)$
and there exist a unique, up to a scaling constant,
strictly positive H\"{o}lder continuous  
function $r_{s}$
and a unique probability measure $\nu_{s}$ on $\bb P^{d-1}$ such that 
\begin{align*}
P_s r_s=\kappa(s)r_s, \quad P_s\nu_{s}=\kappa(s)\nu_{s}; 
\end{align*}

\item %[\rm (2)] 
 there exist a unique strictly positive H\"{o}lder continuous function 
$r_{s}^{\ast}$ and 
a unique probability measure $\nu_{s}^{*}$ on $\bb P^{d-1}$ such that
\begin{align*}
P_{s}^{*}r_{s}^{*}=\kappa(s)r_{s}^{*}, \quad  P_{s}^{*}\nu_{s}^{*}=\kappa(s)\nu_{s}^{\ast};
\end{align*}
moreover, the function $\kappa: I_{\mu}^{\circ} \mapsto \mathbb R $ is analytic. 
\end{enumerate}
\end{proposition}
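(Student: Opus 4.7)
The plan is to establish quasi-compactness of $P_s$ on the Banach space $\mathcal{B}_{\gamma}$ of $\gamma$-Hölder functions on $\mathbb P^{d-1}$ by a Doeblin--Fortet (Lasota--Yorke) inequality, identify the peripheral spectrum using the irreducibility/proximality hypothesis \ref{Condi-IP}, and then derive analyticity of $\kappa$ from analytic perturbation theory. I would run the argument for $P_s$ and repeat verbatim on $(\mathbb P^{d-1})^*$ for $P_s^*$.

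First I would verify that, under \ref{Condi_Exp} and for $s\in I_\mu^\circ$, the operator $P_s$ is bounded on $\mathcal B_\gamma$; the uniform control of the cocycle $e^{s\sigma(g,x)}$ in $L^1(\mu)$ combined with the standard contraction estimate $\mathbf d(gx,gx')\leq C\,\|g\|\iota(g)^{-1}\mathbf d(x,x')$ gives this directly. The main analytic input I would then prove is the Lasota--Yorke inequality: there exist $n_0\in\mathbb N$, $\rho_0<\kappa(s)$ and $C>0$ such that
\begin{equation*}
\|P_s^n\varphi\|_\gamma \leq C\rho_0^{\,n}\|\varphi\|_\gamma + C\kappa(s)^n\|\varphi\|_\infty,\qquad n\geq n_0.
\end{equation*}
The contraction factor $\rho_0<\kappa(s)$ comes from an estimate of the form $\mathbb E[e^{s\sigma(G_n,x)}\mathbf d(G_n x,G_n x')^\gamma]\leq C\rho_0^{\,n}\mathbf d(x,x')^\gamma$, itself a consequence of the exponential contraction on $\mathbb P^{d-1}$ under strong irreducibility and proximality, combined with Hölder's inequality to separate the cocycle weight from the distance.

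Since $\mathcal B_\gamma\hookrightarrow \mathcal C(\mathbb P^{d-1})$ is compact by Arzelà--Ascoli, Hennion's variant of the Ionescu-Tulcea--Marinescu theorem then yields that $P_s$ is quasi-compact on $\mathcal B_\gamma$ with essential spectral radius at most $\rho_0<\kappa(s)$. To identify the dominant spectrum I would exploit positivity: $P_s$ preserves the cone of nonnegative functions and is irreducible (in the Perron--Frobenius sense) thanks to \ref{Condi-IP}, which provides a proximal matrix in $\Gamma_\mu$ whose iterates concentrate mass on any chosen projective point. This forces $\kappa(s)$ to be a simple eigenvalue with a strictly positive Hölder continuous eigenfunction $r_s$, and rules out other peripheral eigenvalues (otherwise one could construct a nontrivial invariant subspace violating proximality). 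The Riesz projector then produces the unique probability eigenmeasure $\nu_s$ on $\bb P^{d-1}$, and normalization $\nu_s(r_s)=\varrho_s$ fixes the scaling.

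The analyticity of $s\mapsto\kappa(s)$ on $I_\mu^\circ$ follows from Kato's perturbation theory: the map $s\mapsto P_s$ is analytic from $I_\mu^\circ$ to the bounded operators on $\mathcal B_\gamma$ (differentiation under the integral is legal because of the finite exponential moment in \ref{Condi_Exp}), and a simple isolated eigenvalue depends analytically on the parameter. The conjugate statement for $P_s^*$ with eigenfunction $r_s^*$ and eigenmeasure $\nu_s^*$ is obtained by applying the identical argument to the adjoint cocycle $\sigma(g^*,y)$ on $(\mathbb P^{d-1})^*$, using that $\Gamma_\mu$ and its adjoint $\Gamma_\mu^*$ satisfy the same irreducibility/proximality conditions. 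The hard part I expect is the contraction estimate feeding the Lasota--Yorke inequality: one must show that the Hölder seminorm under $P_s^n$ decays strictly faster than $\kappa(s)^n$, which requires combining the geometric contraction on the projective space (Guivarc'h--Raugi) with a uniform control of the weighted cocycle, the delicate point being to ensure that this contraction rate is strictly less than $\kappa(s)$ uniformly in the parameter, so that quasi-compactness with genuine spectral gap holds throughout $I_\mu^\circ$.
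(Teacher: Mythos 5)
The paper does not prove Proposition~\ref{transfer operator} at all: immediately before the statement it says these spectral gap facts ``were proved in \cite{GL16}'' (Guivarc'h and Le Page), and no argument is given in the text. So your proposal is a reconstruction of the proof from the cited reference rather than a comparison with one in the paper.

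Your route --- Lasota--Yorke inequality on $\mathcal B_\gamma$, quasi-compactness by Ionescu-Tulcea--Marinescu/Hennion, Perron--Frobenius positivity under strong irreducibility and proximality to isolate a simple dominant eigenvalue $\kappa(s)$ with strictly positive Hölder eigenfunction, Riesz projector for the eigenmeasure, and Kato perturbation for analyticity of $\kappa$ --- is indeed the standard template and is aligned in spirit with the one in \cite{GL16}. Two remarks are worth making. First, you invoke condition \ref{Condi_Exp} in the boundedness and Lasota--Yorke steps, but the proposition as stated only assumes \ref{Condi-IP} (together with the paper's standing assumption that $I_\mu^\circ$ is nonempty); it is plausible that a genuine proof needs some two-sided moment control of the kind \ref{Condi_Exp} provides in order to dominate the factor $\|g\|\,\iota(g)^{-1}$ that appears in the projective distance contraction, so you should be explicit that you are using a condition beyond what the proposition lists. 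Second, the ``hard part'' you identify --- showing that the weighted Hölder seminorm of $P_s^n$ decays strictly faster than $\kappa(s)^n$ --- is precisely where \cite{GL16} departs from a bare Hölder-inequality split: they pass to the changed measure $\mathbb Q_s$, use the simplicity of the top Lyapunov exponent of $G_n$ under $\mathbb Q_s$, and deduce the strict contraction from there. A direct Hölder split as you sketch may not obviously give $\rho_0<\kappa(s)$ uniformly, because separating the cocycle weight from the distance can degrade the rate; the change-of-measure argument is what closes that gap. Apart from this (fixable) vagueness at the crux, your outline is correct.
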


The case of $s<0$ is not covered by Proposition \ref{transfer operator}. 
We state below the corresponding result, which was proved in \cite{GQX20, XGL19a}.

\begin{proposition} \label{Prop-Trans-s-neg}
Assume conditions \ref{Condi-TwoExp} and \ref{Condi-IP}. 
Then there exists a constant $s_0 > 0$ such that for any $s \in (-s_0, 0)$, 
the assertions (1) and (2) of Proposition \ref{transfer operator} remain valid.
Moreover, the function $\kappa: (-s_0, 0) \mapsto \mathbb R $ is analytic.   
\end{proposition}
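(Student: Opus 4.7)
The plan is to extend Proposition \ref{transfer operator} to small negative $s$ by analytic perturbation around $s = 0$. First I would show that $s \mapsto P_s$ extends to an analytic family of bounded operators on $\mathcal{B}_\gamma$ in a complex neighborhood of $0$. The key estimate is $|\sigma(g,x)| \leq \log N(g)$, so $|e^{s\sigma(g,x)}| \leq N(g)^{|\mathrm{Re}\, s|}$; condition \ref{Condi-TwoExp} therefore guarantees that $P_s$ is bounded on $\mathcal{B}_\gamma$ for $|\mathrm{Re}\, s| < \eta$, while the H\"older regularity of the cocycle $\sigma$ on $\mathbb{P}^{d-1}$ (together with differentiation under the integral sign) yields analyticity in $s$. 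The same argument applies to $P_s^*$ on $(\mathbb{P}^{d-1})^*$.

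Next I would invoke the classical Le Page--Guivarc'h--Raugi spectral gap theorem for $P_0$ acting on $\mathcal{B}_\gamma$ under \ref{Condi-IP}: the value $1$ is a simple isolated eigenvalue with eigenfunction $r_0 = \mathbf{1}$ and unique invariant probability $\nu_0 = \nu$, and the rest of the spectrum lies in a disk of radius strictly less than $1$. Given the analytic family and this spectral gap, Kato's analytic perturbation theory produces an $s_0 > 0$ such that for every $s \in (-s_0, s_0)$ the perturbed dominant eigenvalue $\kappa(s)$ is simple and isolated, with an analytic rank-one spectral projector; this in turn delivers analytic $\kappa(s)$, $r_s$, $\nu_s$, and (by the same argument on the dual side) $r_s^*$, $\nu_s^*$. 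Strict positivity of $r_s$ and $r_s^*$ on $\mathbb{P}^{d-1}$ follows by continuity in $\|\cdot\|_\gamma$ from $r_0 \equiv 1$, after shrinking $s_0$ if necessary.

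For uniqueness of a strictly positive eigenfunction and probability eigenmeasure, I would use a Perron--Frobenius argument via the Doob transform $Q_s \varphi := \kappa(s)^{-1} r_s^{-1} P_s(r_s \varphi)$, which is a genuine Markov operator on $\mathbb{P}^{d-1}$ whose unique invariant probability is the measure $\pi_s$ introduced in Section \ref{subsec a change of measure}. Any other strictly positive $\kappa(s)$-eigenfunction $\tilde r$ of $P_s$ would give a bounded strictly positive $Q_s$-harmonic function $\tilde r / r_s$, which must be constant because $Q_s$ inherits from $P_0$ the irreducibility required for the Perron--Frobenius conclusion. The supports of $\nu_s$ and $\pi_s$ are unchanged under the perturbation and remain equal to the closed limit set in \eqref{Def_supp_nu}, which is what is needed here. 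The uniqueness of the probability eigenmeasure $\nu_s$ is then dual, and the same scheme handles $P_s^*$.

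The main obstacle is quantitative: one must verify that the resolvent bound for $P_0$ on a contour of radius $1 - \delta_0$ persists for $P_s$ when $|s|$ is small enough, so that the isolated simple eigenvalue $1$ really does perturb to a nearby isolated simple eigenvalue $\kappa(s)$ on the entire family. This comes down to controlling $\|P_s - P_0\|_{\mathcal{B}_\gamma \to \mathcal{B}_\gamma}$ via the elementary bound $|e^{s\sigma(g,x)} - 1| \leq |s|\, \log N(g) \cdot N(g)^{|s|}$ together with H\"older estimates for $\sigma$; condition \ref{Condi-TwoExp} then lets dominated convergence conclude, and analyticity of $\kappa$ on $(-s_0, 0)$ is an immediate by-product of the analyticity of the spectral projector.
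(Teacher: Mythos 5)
The paper does not prove Proposition \ref{Prop-Trans-s-neg} itself; it simply refers to \cite{GQX20, XGL19a}, where the result is established by analytic perturbation of the transfer operator around $s=0$. Your proposal follows exactly that route: show $s\mapsto P_s$ is a bounded analytic operator family on $\mathcal{B}_\gamma$ for $|\operatorname{Re} s|$ small (using $|e^{s\sigma(g,x)}|\leq N(g)^{|\operatorname{Re} s|}$ and condition \ref{Condi-TwoExp}), quote the Le Page/Guivarc'h--Raugi spectral gap for $P_0$, apply Kato perturbation to get an analytic simple isolated eigenvalue $\kappa(s)$ with analytic eigendata, deduce strict positivity of $r_s$ by uniform convergence to $r_0\equiv 1$, and obtain uniqueness via the Doob transform $Q_s$. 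This is the correct and standard scheme, and is essentially the approach of the cited references.

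Two points you should tighten if you were to write this out in full. First, boundedness and analyticity of $P_s$ on $\mathcal{B}_\gamma$ is not automatic from the sup-norm estimate alone: you need to control the $\gamma$-H\"older seminorm of $x\mapsto e^{s\sigma(g,x)}\varphi(gx)$, which involves both the H\"older modulus of $\sigma(g,\cdot)$ and the Lipschitz constant of $x\mapsto gx$ on $\mathbb{P}^{d-1}$, each of which is of order a power of $N(g)$; this is precisely why the paper fixes $\gamma>0$ ``sufficiently small'' and why the two-sided moment $\mathbb{E}(N(g_1)^\eta)<\infty$ with $s_0$ small relative to $\eta$ is required, not merely $\mathbb{E}\|g_1\|^s<\infty$. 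Second, the sentence asserting that $\operatorname{supp}\nu_s = \operatorname{supp}\nu$ is a genuinely separate (and nontrivial) fact; the paper uses it elsewhere and cites \cite{GQX20} for it, but it is not a consequence of perturbation theory alone and is not actually needed for the uniqueness step in your proof, since uniqueness of the $Q_s$-invariant probability already follows from the spectral gap you obtain by perturbation. Neither point is a structural flaw; they are gaps in rigor rather than in strategy.
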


Now we give explicit formulae for the eigenfunctions $r_s$ and $r_s^*$.

\begin{lemma} \label{Lemma-expleinenfun-s-neg} 
\begin{enumerate}
\item
Assume condition \ref{Condi-IP}. 
Then, for $s\in I_{\mu}^{\circ}$, the eigenfunctions $r_s$ and $r_s^*$ are given as follows:
for any $x\in \bb P^{d-1}$ and $y \in (\bb P^{d-1})^*$, 
\begin{align} \label{expleigenfun001}
\quad\quad   r_{s}(x)= \int_{(\bb P^{d-1})^*} \delta(x,y)^s \nu^*_{s}(dy),
\quad   r_{s}^*(y)= \int_{\bb P^{d-1}} \delta(x,y)^s \nu_{s}(dx). 
\end{align}

\item
Assume conditions \ref{Condi-TwoExp} and \ref{Condi-IP}. 
Then there exists a constant $s_0 > 0$ such that for any $s \in (-s_0, 0)$, 
the expressions in \eqref{expleigenfun001} remain valid. 
\end{enumerate} 
\end{lemma}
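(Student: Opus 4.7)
The plan is to verify that the integrals on the right-hand sides of \eqref{expleigenfun001} define functions that satisfy the same defining properties as $r_s$ and $r_s^*$ in Propositions \ref{transfer operator} and \ref{Prop-Trans-s-neg}, and then invoke uniqueness. Let me write $\tilde r_s(x) := \int_{(\bb P^{d-1})^*} \delta(y,x)^s \nu_s^*(dy)$ (and symmetrically $\tilde r_s^*$) and check in turn that $\tilde r_s$ is well-defined, strictly positive, H\"older continuous, and satisfies $P_s \tilde r_s = \kappa(s) \tilde r_s$.

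The key algebraic identity is the cocycle/duality relation
\[
\delta(y, gx) = \delta(g^* y, x)\, e^{\sigma(g^*, y) - \sigma(g, x)},
\]
which follows directly from $\langle f, gv\rangle = \langle g^*f, v\rangle$ upon writing out the definition of $\delta$ and $\sigma$. Raising to the power $s$ and substituting into $P_s \tilde r_s(x) = \int_{\bb G} e^{s\sigma(g,x)} \tilde r_s(gx)\mu(dg)$, the two $e^{\pm s\sigma(g,x)}$ factors cancel, and Fubini converts the remaining expression into $\int e^{s\sigma(g^*,y)} \delta(g^*y, x)^s \mu(dg) = P_s^*(\delta(\cdot, x)^s)(y)$ integrated against $\nu_s^*(dy)$. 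Since $P_s^* \nu_s^* = \kappa(s)\nu_s^*$ (in the sense $\nu_s^*(P_s^*\varphi) = \kappa(s)\nu_s^*(\varphi)$), this yields $P_s \tilde r_s = \kappa(s) \tilde r_s$. Then the uniqueness (up to scaling) of strictly positive eigenfunctions asserted in Propositions \ref{transfer operator} and \ref{Prop-Trans-s-neg} identifies $\tilde r_s$ with $r_s$ in both cases, and similarly for $\tilde r_s^*$.

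The remaining points are analytic. Strict positivity of $\tilde r_s(x)$ for every $x$ follows from the fact that $\{y : \delta(y,x) = 0\}$ is the projectivization of a proper hyperplane of $(\bb R^d)^*$; strong irreducibility \ref{Condi-IP} prevents $\nu_s^*$ from being concentrated on any finite union of such hyperplanes, so $\delta(y,x)^s > 0$ on a set of positive $\nu_s^*$-measure and the integral is positive. H\"older continuity of $x \mapsto \tilde r_s(x)$ for $s \in I_\mu^\circ$ (part (1)) comes from the elementary estimate $|\delta(y,x)^s - \delta(y,x')^s| \leq C_s\, \mathbf d(x, x')^{\min(1,s)}$ applied under the integral, since $s > 0$ makes $\delta^s$ bounded. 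For $s \in (-s_0, 0)$ (part (2)) both integrability and H\"older regularity of $\tilde r_s$ are delicate: the singularity of $\delta(y,x)^s$ at $\{y : \delta(y,x)=0\}$ must be offset by regularity of $\nu_s^*$ near this set.

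The main obstacle is therefore the case $s < 0$. To handle it one needs a quantitative estimate of the form $\nu_s^*(\{y : \delta(y,x) \leq \epsilon\}) \leq C \epsilon^{\alpha}$ for some $\alpha > |s_0|$ uniformly in $x \in \bb P^{d-1}$, i.e.\ precisely the H\"older regularity of the eigenmeasure $\nu_s^*$ announced in Section \ref{sect-holder-reg} (for $s$ in a small neighborhood of $0$). Given such a regularity bound, writing
\[
\tilde r_s(x) = \int_0^1 \nu_s^*\bigl(\{y : \delta(y,x)^s > t\}\bigr)\, dt
\]
and splitting the integral at a suitable threshold yields both finiteness and, by comparing the analogous layer-cake expressions for $x$ and $x'$, the desired H\"older modulus of continuity. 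Once $\tilde r_s$ is shown to be a strictly positive H\"older continuous eigenfunction with eigenvalue $\kappa(s)$, the uniqueness statements in Propositions \ref{transfer operator} and \ref{Prop-Trans-s-neg} close the argument, and the corresponding argument for $\tilde r_s^*$ is symmetric.
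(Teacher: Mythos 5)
Your argument is essentially correct, and it follows the route the paper itself points to: the paper does not prove this lemma internally but cites \cite{GL16} for part (1) and \cite{GQX20} for part (2), explicitly noting that the $s<0$ case rests on the H\"older regularity of $\nu_s$ and $\nu_s^*$ from Section~\ref{sect-holder-reg}. Your duality identity $\delta(y,gx)=\delta(g^*y,x)\,e^{\sigma(g^*,y)-\sigma(g,x)}$ is verified by writing out $\langle f,gv\rangle=\langle g^*f,v\rangle$ with the normalizing factors, and the resulting Fubini/eigenmeasure computation $\nu_s^*(P_s^*\varphi_x)=\kappa(s)\nu_s^*(\varphi_x)$ with $\varphi_x(y)=\delta(y,x)^s$ gives $P_s\tilde r_s=\kappa(s)\tilde r_s$ as you claim; for $s<0$ the integrand is nonnegative, so Tonelli applies, and the eigenmeasure identity extends from $\mathcal C((\bb P^{d-1})^*)$ to unbounded nonnegative $\varphi_x$ by monotone approximation.

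Two small corrections. First, your layer-cake formula for $s<0$ should read $\tilde r_s(x)=\int_0^\infty\nu_s^*\bigl(\{y:\delta(y,x)^s>t\}\bigr)\,dt$ rather than $\int_0^1$, since $\delta(y,x)^s\ge 1$ when $s<0$; the substantive estimate is then $\int_1^\infty\nu_s^*\bigl(\{\delta(y,x)<t^{1/s}\}\bigr)\,dt\le C\int_1^\infty t^{-\alpha/|s|}\,dt$, finite once $|s|<\alpha$, which holds after shrinking $s_0$ in Proposition~\ref{PropRegu02}. Second, Propositions~\ref{transfer operator} and~\ref{Prop-Trans-s-neg} give uniqueness of the eigenfunction only up to a scalar, so your argument strictly yields $\tilde r_s=c\,r_s$; the equality asserted in the lemma is a matter of normalization, consistent with the fact that the paper takes this integral formula as the definition of $r_s$ in Section~\ref{subsec a change of measure}. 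Neither point affects the validity of your approach.
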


The first assertion of Lemma \ref{Lemma-expleinenfun-s-neg} for $s>0$ was proved in \cite{GL16}.
The proof of the second one for $s<0$ is quite different from that in the case $s>0$ 
and was proved in \cite{GQX20}. 
It is based on the H\"{o}lder regularity of the eigenmeasures $\nu_s$ and $\nu_s^*$ 
which is the subject of the next section. 

By Propositions \ref{transfer operator} and \ref{Prop-Trans-s-neg}, 
the eigenvalue $\kappa(s)$ and the eigenfunction $r_s$ are both strictly positive.
This allows to perform a change of measure, as shown below. 
Under the corresponding assumptions of Propositions \ref{transfer operator} and \ref{Prop-Trans-s-neg}, 
for any $s \in (-s_0, 0) \cup I_{\mu}$, 
the family of probability kernels  
$q_{n}^{s}(x,g) = \frac{ e^{s \sigma(g, x)} }{\kappa^{n}(s)} \frac{r_{s}(g x)}{r_{s}(x)},$
$n\geq 1$, satisfies the cocycle property: 
for any $x \in \bb P^{d-1}$ and $g_1, g_2 \in \Gamma_{\mu}$, 
\begin{align} \label{cocycle01}
q_{n}^{s}(x,g_1)q_{m}^{s}(g_1 x, g_2)=q_{n+m}^{s}(x,g_2g_1).
\end{align}
%Using the equation $P_sr_s=\kappa(s)r_s$, it is easy to see that 
Thus the probability measures 
$q_{n}^{s}(x,g_{n}\dots g_{1})\mu(dg_1)\dots\mu(dg_n)$
form a projective system on $\bb G^{\bb N^*}$. 
By the Kolmogorov extension theorem,
there exists a unique probability measure  $\mathbb Q_s^x$ on $\bb G^{\bb N^*}$.  
%with marginals $\mathbb Q_{s,n}^x$. 
The corresponding expectation is denoted by $\mathbb{E}_{\mathbb Q_s^x}$.
Then the change of measure formula follows: 
for any measurable function $h$ on $(\bb P^{d-1} \times \mathbb R)^{n}$, 
\begin{align}\label{basic equ1}
& \frac{1}{ \kappa^{n}(s) r_{s}(x) }
 \mathbb{E} \Big[  r_{s}(G_n x) e^{s \sigma(G_n, x) }  
  h \Big( G_1 x, \sigma (G_1, x), \dots,  G_n x, \sigma (G_n, x)  \Big) \Big]   \nonumber\\
& =\mathbb{E}_{\mathbb{Q}_{s}^{x}} 
  \Big[ h \Big( G_1 x, \sigma (G_1, x), \dots,  G_n x, \sigma (G_n, x)  \Big) \Big].
\end{align}
Under the changed measure $\mathbb Q_s^x$, the process $(G_n x)_{n \geq 0}$ defined by \eqref{Def_MarkovChain01}
still constitutes a Markov chain on $\bb P^{d-1}$ with the transition operator given by 
\begin{align}\label{Def_Q_s_lll}
Q_{s}\varphi(x) = \frac{1}{\kappa(s)r_{s}(x)}P_s(\varphi r_{s})(x), 
\quad x \in \bb P^{d-1}. 
\end{align}
%It has been  proved in \cite{BDGM14} for positive matrices, 
%and in \cite{GL16} for invertible matrices, that 
The Markov operator $Q_{s}$ has a unique stationary
probability measure $\pi_{s}$ satisfying that there exists constants $c, C >0$ such that 
for any $\varphi \in \mathcal{B}_{\gamma}$, 
%supported on $V(\Gamma_{\mu})$ and that, 
%for any $\varphi\in \mathcal{C}(\bb P^{d-1})$, 
\begin{align} \label{equcontin Q s limit}
\| Q_{s}^{n}\varphi - \pi_{s}(\varphi) \|_{\gamma}  \leq C e^{-cn} \|\varphi\|_{\gamma},  
\quad  \mbox{where}  \quad 
\pi_{s}(\varphi)=\frac{\nu_{s}(\varphi r_{s})}{\nu_{s}(r_{s})}.
\end{align}
For any $s \in (-s_0, 0) \cup I_{\mu}$ and $t \in \mathbb{R},$ 
define a family of perturbed operators $R_{s,it}$ as follows: 
%$R_{s,z}\varphi(x) = \mathbb{E}_{\mathbb{Q}_{s}^{x}} \big[ e^{ z( \log| g_1x | - q ) } \varphi(X_{1}^x) \big],$
for any $\varphi \in \mathcal{B}_{\gamma}$,
\begin{align}\label{operator Rsz}
R_{s, it}\varphi(x) 
= \mathbb{E}_{\mathbb{Q}_{s}^{x}} \left[ e^{ it ( \sigma(g_1, x) - q ) } \varphi(g_1 x) \right],
\quad   x \in \bb P^{d-1}.  
\end{align}
It follows from the cocycle property \eqref{cocycle01} that 
\begin{align*} 
R^{n}_{s, it}\varphi(x) 
= \mathbb{E}_{\mathbb{Q}_{s}^{x}} \left[ e^{ it( \sigma(G_n, x) - nq) } \varphi(G_n x) \right], 
\quad   x \in \bb P^{d-1}. 
\end{align*}
Under various restrictions on $s$, it was shown in \cite{BM16, XGL19a, XGL19b}
that the operator $R_{s, it}$ acts onto the Banach space $\mathcal{B}_{\gamma}$ 
and has a spectral gap. 
%on the space $\mathcal{B}_{\gamma}$. 

%%%%%%%%%%%%%%%%%%%%%%%%%%%%%%%%%%%%%%%%%%%%%%%%%%%%%%%%%%%%%%%%%%%%%%%%%

\subsection{H\"{o}lder regularity of the stationary measure}

In this section we present our results on the H\"{o}lder regularity of the stationary measure 
$\pi_s$ and of the eigenmeasure $\nu_s$.
The regularity of $\pi_s$ and $\nu_s$  is central to  establishing 
the precise large deviation asymptotics for the coefficients $\langle f, G_n v \rangle$
under the changed measure $\bb Q_s^x$
and is also of independent interest. 
%and more generally for the scalar product $\langle f, G_n x \rangle$. 
Below we denote $B(y, r) = \{ x \in \bb P^{d-1}: \delta(y,x) \leq r \}$
for $y \in (\bb P^{d-1})^*$ and $r \geq 0$. 

\begin{proposition}\label{PropRegularity}
Assume conditions \ref{Condi_Exp} and \ref{Condi-IP}. 
Then, for any $s \in I_{\mu}^{\circ}$, there exists a constant $\alpha > 0$ such that 
\begin{align} \label{RegularityIne00}
\sup_{y \in (\bb P^{d-1})^* } \int_{ \bb P^{d-1} } \frac{1}{ \delta(y, x)^{\alpha}} \pi_s(dx) < +\infty.  
%\quad  \mbox{and}  \quad  
%\sup_{f \in \bb P^{d-1} } \int_{ \bb P^{d-1} }  \frac{1}{|\langle f,x \rangle|^{\alpha}} \nu_s(dx) < +\infty. 
\end{align}
In particular, for any $s \in I_{\mu}^{\circ}$,
there exist constants $\alpha, C >0$ such that for any $r \geq 0$, 
\begin{align} \label{RegularityIne}
\sup_{y \in (\bb P^{d-1})^* }  
\pi_s \big( B(y, r)  \big)  \leq C r^{\alpha}. 
%\  \mbox{and}  \ 
%\sup_{f \in \bb P^{d-1} }  \nu_s \big(\{x:  |\langle f, x \rangle| \leq t \}  \big)  \leq C t^{\alpha}. 
\end{align}
Moreover, the assertions \eqref{RegularityIne00} and \eqref{RegularityIne} remain valid
when the stationary measure $\pi_s$ is replaced by the eigenmeasure $\nu_s$. 
\end{proposition}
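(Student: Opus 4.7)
First, I would reduce \eqref{RegularityIne00} for $\pi_s$ to the same statement for $\nu_s$. By Proposition~\ref{transfer operator} the eigenfunction $r_s$ is continuous and strictly positive on the compact space $\bb P^{d-1}$, hence bounded above and bounded away from zero, so $\pi_s$ and $\nu_s$ are mutually absolutely continuous with densities in $[c_1, c_2]$ for some $0 < c_1 \leq c_2 < \infty$. Therefore the finiteness of $\sup_y \int \delta(y,x)^{-\alpha} \pi_s(dx)$ is equivalent to that of $\sup_y \int \delta(y,x)^{-\alpha} \nu_s(dx)$. The bound \eqref{RegularityIne} then follows from \eqref{RegularityIne00} by Markov's inequality applied to $\delta(y, \cdot)^{-\alpha}$.

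Next, I would adapt the Bougerol--Lacroix and Guivarc'h--Raugi arguments developed for the case $s = 0$ to the weighted transfer operator $P_s$. The key point is that under the changed probability $\mathbb{Q}_s^x$ the Markov operator $Q_s$ defined in \eqref{Def_Q_s_lll} admits $\pi_s$ as its unique stationary probability, and (anticipating Lemma~\ref{Lem_DiracMea}) the reverse walk satisfies $M_n m \Rightarrow \delta_{Z_s}$ weakly, where $Z_s \sim \pi_s$ and $m$ is the uniform probability on $\bb P^{d-1}$. Combined with the cocycle identity
\begin{equation*}
\delta(y, gx) = e^{\sigma(g^*, y) - \sigma(g, x)}\, \delta(g^* y, x),
\end{equation*}
this representation reduces the problem to establishing a Le~Page-type contraction: for $\alpha > 0$ sufficiently small,
\begin{equation*}
Q_s\bigl(\delta(y, \cdot)^{-\alpha}\bigr)(x) \leq \rho\, \delta(y, x)^{-\alpha} + C,
\end{equation*}
for some $\rho \in (0,1)$ and $C > 0$ independent of $y$ and $x$. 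Iterating this inequality and averaging against $\pi_s$ via its $Q_s$-invariance would yield, after a standard truncation argument, $\pi_s\bigl(\delta(y, \cdot)^{-\alpha}\bigr) \leq C/(1 - \rho)$ uniformly in $y$.

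The principal obstacle will be establishing the Le~Page contraction in the present weighted and non-i.i.d.\ setting. Under $\mathbb{Q}_s^x$ the driving sequence $(g_n)_{n \geq 1}$ is no longer independent, since its joint distribution carries the Radon--Nikodym factor $r_s(G_n x)\, e^{s\sigma(G_n, x)}/(\kappa(s)^n r_s(x))$, and the classical arguments of \cite{BL85, GR85} do not apply verbatim. To overcome this I plan to rely on the simplicity of the top Lyapunov exponent for $G_n$ under $\mathbb{Q}_s^x$ (Lemma~\ref{Lem_Lya_Meas}), which provides the quantitative gap between the leading singular direction and the rest that drives the geometric contraction of the projective action. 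The exponential moment condition~\ref{Condi_Exp} leaves just enough room to absorb the additional weight $e^{\alpha\sigma(g, x)}$ arising when the cocycle identity is applied, while the strong irreducibility and proximality condition~\ref{Condi-IP} prevents $\nu_s$ from concentrating near any proper projective subspace. Combining these ingredients in the spirit of \cite{BL85} should deliver the required uniform contraction, and hence the H\"{o}lder regularity of both $\pi_s$ and $\nu_s$.
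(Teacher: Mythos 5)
Your proposal follows a genuinely different route from the paper, and it has a gap at the exact point you flag as the main obstacle.

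You propose to establish a Foster--Lyapunov (Le~Page) drift inequality
\[
Q_s\bigl(\delta(y, \cdot)^{-\alpha}\bigr)(x) \;\leq\; \rho\, \delta(y, x)^{-\alpha} + C,
\]
uniformly in $x, y$, and then conclude by $Q_s$-invariance of $\pi_s$. This \emph{single-step} inequality cannot hold as written: if $x$ lies on the hyperplane determined by $y$ (so $\delta(y,x)=0$) and the support of $\mu$ contains matrices that leave $x$ near that hyperplane, the left side diverges while the right side is undetermined. The drift only becomes visible after iterating enough steps that proximality has time to act, i.e., one needs an $n$-step inequality of the form $Q_s^n(\delta(y,\cdot)^{-\alpha})(x)\leq C(e^{-\lambda n}\delta(y,x)^{-\alpha}+1)$ (this is the structure used for $\nu$ in Benoist--Quint). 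Proving that $n$-step drift in the weighted setting under $\mathbb{Q}_s^x$ — where the sequence $(g_k)$ is dependent and the Radon--Nikodym weight $e^{s\sigma(G_n,x)}r_s(G_nx)/(\kappa(s)^n r_s(x))$ couples to the projective trajectory — is exactly the hard part, and the proposal does not supply it; ``relying on the simplicity of the top Lyapunov exponent'' and ``absorbing the weight via \ref{Condi_Exp}'' names the ingredients but does not assemble them. There is also a minor confusion: Lemma~\ref{Lem_DiracMea} gives $G_n^*m^*\Rightarrow\delta_{Z_s^*}$ with $Z_s^*\sim\pi_s^*$, not $M_n m\Rightarrow\delta_{Z_s}$.

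The paper sidesteps the drift-inequality route entirely. Its engine is the quantity $\rho(g,x)=\log\|\wedge^2 g\|-2\log|gx|$, whose $n$-step $\mathbb{Q}_s^x$-mean tends to $\lambda_2(s)-\lambda_1(s)<0$ by Lemma~\ref{Lem_Lya_Meas}; subadditivity of $n\mapsto\log\sup_x\mathbb{E}_{\mathbb{Q}_s^x}e^{\alpha\rho(G_n,x)}$ then yields the exponential bound \eqref{RegIneq001} for small $\alpha$. Combining this with the Iwasawa-decomposition estimate of Lemma~\ref{LemCruciIne} and the a.s.\ convergence $L(G_n^*)e_1^*\to Z_s^*/\langle Z_s^*,e_1^*\rangle$ (Lemma~\ref{LemIwasaLim}) gives, via Fatou and Cauchy--Schwarz, the finiteness of $\mathbb{E}_{\mathbb{Q}_s}|\langle Z_s^*,e_1^*\rangle|^{-\alpha}$; a rotation argument extends this to arbitrary $y$. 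This route trades the projective drift inequality for a subadditive estimate on exterior powers plus the Bougerol--Lacroix Iwasawa machinery, which is what lets the dependence induced by $\mathbb{Q}_s^x$ be handled (the subadditivity is inherited from the cocycle property \eqref{cocycle01}). Your reduction of $\pi_s$ to $\nu_s$ via boundedness of $r_s$ and the deduction of \eqref{RegularityIne} from \eqref{RegularityIne00} by Markov's inequality are both correct and match what the paper implicitly does, but the central technical step of your proposal is left open and would need the $n$-step correction plus a full proof.
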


The proof of Proposition \ref{PropRegularity} is technically involved and 
is postponed to Section \ref{Sec:regpositive}.

By \eqref{RegularityIne} and the Frostman lemma, it follows that the Hausdorff
dimension of the stationary measure $\pi_s$ is at least $\alpha$.

%In fact, the last assertion of the proposition follows by
%the definition of $\pi_s$  %\eqref{equcontin Q s limit} 
%and the fact that $r_s$ is bounded away from infinity and $0$ % strictly positive and bounded 
%uniformly on $\bb P^{d-1}$. 

For $s=0$ the H\"{o}lder regularity of the stationary measure $\nu$ ($\nu = \pi_0 = \nu_0$) is due to Guivarc'h \cite{Gui90}.
We also refer to \cite{BL85} for a detailed description of the method used in \cite{Gui90}
and to \cite{BFLM11, BQ17} for a different approach. %  of the proof. 
Such regularity is of great importance in the study of products of random matrices.  
For example, it turns out to be crucial for establishing limit theorems for the coefficients $\langle f, G_n v \rangle$
and for the spectral radius $\rho(G_n)$ of $G_n$. 
However, similar result has not been established in the literature 
for the stationary measure $\pi_s$ when $s \in I_{\mu}^{\circ}$. 
The proof of the assertion \eqref{RegularityIne00} 
is based on the asymptotic properties of the components in the Cartan and Iwasawa decompositions 
of the reversed random matrix product $M_n = g_1 \ldots g_n$ %under the changed measure $\mathbb{Q}_s^x$,
and on the simplicity of the dominant Lyapunov exponent of $G_n$ under the changed measure $\mathbb{Q}_s^x$:
see Section \ref{Sec:regpositive}.

When $s$ is non-positive and sufficiently close to $0$, 
we also give the H\"{o}lder regularity of the stationary measure $\pi_s$. 
%is given by the following proposition. 

\begin{proposition}\label{PropRegu02}
Assume conditions \ref{Condi-TwoExp} and \ref{Condi-IP}.
Then, there exist constants $\alpha, s_0, C>0$ such that 
the statements \eqref{RegularityIne00} and \eqref{RegularityIne} hold for any $s \in (-s_0, 0]$. 
\end{proposition}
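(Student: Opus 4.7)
The plan is to reduce the claim to a uniform H\"older regularity of the eigenmeasure $\nu_s$ for $s \in (-s_0, 0]$, and then adapt the argument used in the proof of Proposition \ref{PropRegularity} by perturbing from the classical case $s = 0$. First, I would observe that \eqref{RegularityIne} is an immediate consequence of \eqref{RegularityIne00} via Markov's inequality applied to $\delta(y,\cdot)^{-\alpha}$. Next, by Proposition \ref{Prop-Trans-s-neg} and Lemma \ref{Lemma-expleinenfun-s-neg}, the eigenfunction $r_s$ is strictly positive and H\"older continuous on the compact space $\mathbb{P}^{d-1}$, and the map $s \mapsto r_s$ is continuous in the uniform norm near $s=0$; hence for $s_0$ small enough, $r_s$ and $\varrho_s$ are bounded above and below by positive constants uniform in $s \in (-s_0,0]$. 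Since $\pi_s = r_s \nu_s / \varrho_s$, this reduces the bound \eqref{RegularityIne00} for $\pi_s$ to the same bound for $\nu_s$ with constants uniform in $s$.

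To establish \eqref{RegularityIne00} for $\nu_s$, I would follow the general strategy of the proof of Proposition \ref{PropRegularity} for positive $s$, which combines the Cartan decomposition of the reversed random walk $M_n = g_1 \ldots g_n$ with the proximality property that $M_n m$ converges weakly to $\delta_{Z_s}$ (where $Z_s$ has law $\pi_s$) and with the simplicity of the dominant Lyapunov exponent of $G_n$ under $\mathbb{Q}_s^x$. Concretely, using the eigenmeasure equation $P_s \nu_s = \kappa(s) \nu_s$ iterated $n$ times, one can write
\[
\int_{\mathbb{P}^{d-1}} \delta(y, x)^{-\alpha} \nu_s(dx) = \kappa(s)^{-n} \int_{\mathbb{P}^{d-1}} \mathbb{E}\bigl[ e^{s \sigma(G_n, x)} \delta(y, G_n x)^{-\alpha} \bigr] \nu_s(dx),
\]
and then bound $\delta(y, G_n x)^{-\alpha}$ via duality, rewriting it in terms of the components of $M_n^* f$ in the Iwasawa basis adapted to the Cartan decomposition of $M_n$. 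The exponential separation between the top two singular values of $M_n$, coming from the simple dominant Lyapunov exponent under $\mathbb{Q}_s^x$, together with the moment bound \ref{Condi-TwoExp}, then yields an upper bound in $n$ and $y$ that is finite for $\alpha>0$ small enough.

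The main obstacle is to ensure uniformity in $s$ across a whole neighborhood of $0$: the spectral gap of $Q_s$, the simplicity of the top Lyapunov exponent of $G_n$ under $\mathbb{Q}_s^x$, and the proximality statement $M_n m \Rightarrow \delta_{Z_s}$ are available for $s = 0$ from Furstenberg's theorem and for $s > 0$ from \cite{GL16}, but for $s \in (-s_0, 0)$ they must be deduced by analytic perturbation. I would handle this using Proposition \ref{Prop-Trans-s-neg}: $P_s$ depends analytically on $s$ near $0$ as a bounded operator on $\mathcal{B}_\gamma$, so its leading eigendata $(\kappa(s), r_s, \nu_s)$ vary analytically in $s$; condition \ref{Condi-IP} propagates to the support of the driving system under $\mathbb{Q}_s^x$; and the two-sided moment bound \ref{Condi-TwoExp} ensures enough uniform integrability for the key quantitative estimates (exponential separation of singular values, contraction of $M_n m$ to $\delta_{Z_s}$) to hold with constants uniform in $s \in (-s_0, 0]$. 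This is precisely the step that forces $s_0$ to be small, but once it is carried through, the integrals in the previous paragraph are controlled uniformly, completing the proof.
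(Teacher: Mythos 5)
The paper does not actually prove Proposition \ref{PropRegu02}; it simply cites \cite{GQX20}, noting that the result was established there ``using the H\"older regularity of the stationary measure $\nu$ and the analyticity of the eigenfunction $\kappa$.'' That is, the cited argument is a direct perturbation from the known $s=0$ regularity of $\nu$, exploiting that $\nu_s$ and $r_s$ depend analytically on $s$ through the spectral theory of $P_s$ on $\mathcal{B}_\gamma$ (Proposition \ref{Prop-Trans-s-neg}). Your proposal takes a genuinely different route: you attempt to re-run, for $s\in(-s_0,0)$, the entire Cartan/Iwasawa machinery that Section \ref{Sec:regpositive} develops for $s\in I_\mu^\circ$, then patch up the missing ingredients by ``analytic perturbation.''

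The gap is in exactly that patching step. The proof of Proposition \ref{PropRegularity} rests on Lemmas \ref{Lem_DiracMea} (weak convergence of $G_n^* m^*$ to $\delta_{Z_s^*}$ under $\mathbb Q_s$), \ref{Lem_AbsoConti} ($\mathbb Q_s^x\leq c_s\mathbb Q_s$), and \ref{Lem_Lya_Meas} (strict separation $\lambda_1(s)>\lambda_2(s)$ of the top two Lyapunov exponents under $\mathbb Q_s^x$). All three are stated in the paper only for $s\in I_\mu^\circ$ and are taken from \cite{GL16}, whose framework is built for $s\geq 0$. You acknowledge that these must be ``deduced by analytic perturbation'' for $s<0$, but analyticity of $s\mapsto(\kappa(s),r_s,\nu_s)$ does not by itself give you proximality of the reversed product under $\mathbb Q_s^x$, absolute continuity $\mathbb Q_s^x\ll \mathbb Q_s$ with a uniform bound, or simplicity of the dominant Lyapunov exponent; each is a qualitatively separate result whose proof in \cite{GL16} uses $s\geq 0$ in essential ways. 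Moreover, your opening identity $\int \delta(y,x)^{-\alpha}\nu_s(dx)=\kappa(s)^{-n}\int \mathbb E[e^{s\sigma(G_n,x)}\delta(y,G_nx)^{-\alpha}]\nu_s(dx)$ is more delicate than it looks for $s<0$: since $s<0$, the exponential weight $e^{s\sigma(G_n,x)}$ is largest exactly when $|G_nv|$ is small, which is also when the direction $G_nx$ can drift toward the singular set $\{\delta(y,\cdot)=0\}$, so the two unbounded factors reinforce rather than offset. Without the $s<0$ analogues of Lemmas \ref{Lem_DiracMea}--\ref{Lem_Lya_Meas}, there is no mechanism in your outline to control this interaction, and the proof does not close. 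The correct (and the paper's) route is to bypass the Cartan machinery entirely and perturb the regularity estimate directly from the $s=0$ case, which is precisely what \cite{GQX20} does.
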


%%For positive matrices, Proposition \ref{PropRegu02} is new even for $s = 0$: 
%%in this case $\pi_0 = \nu_0 = \nu$
%%with the stationary measure $\nu$ defined by \eqref{mu station meas}. 
%The corresponding results for invertible matrices with $s =0$ (in this case also $\pi_0=\nu_0=\nu$)
%has been obtained in \cite{GR85};   
%we also refer to \cite{BL85} for the detailed description of the method used in  \cite{GR85}
%and to \cite{BFLM11} and \cite{BQ17} for a different approach of the proof. 
Proposition \ref{PropRegu02} has been recently established in \cite{GQX20} using the
H\"older regularity of the stationary measure $\nu$ and the analyticity of the eigenfunction $\kappa$.

We will establish the following assertion, which is a stronger version of Proposition \ref{PropRegularity}. 

\begin{proposition}\label{Prop_Regu_Strong01}
Assume conditions \ref{Condi_Exp} and \ref{Condi-IP}. 
Let $s \in I_{\mu}^{\circ}$.  
Then, for any $\ee > 0$, 
there exist constants $c: = c(s) > 0$ and $n_0: = n_0(s) \geq 1$
such that for all $n \geq k \geq n_0$, $x \in \mathbb P^{d-1}$ and $y \in (\mathbb P^{d-1})^*$, 
\begin{align*} 
\bb Q_s^x \Big( \delta (y, G_n x) \leq e^{- \ee k}  \Big) \leq e^{- c k}.
\end{align*}
\end{proposition}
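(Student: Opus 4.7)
The proof strategy is to combine a smoothing argument, the spectral gap of $Q_s$ on $\mathcal{B}_\gamma$ (see \eqref{equcontin Q s limit}), and the H\"older regularity of the stationary measure $\pi_s$ supplied by Proposition \ref{PropRegularity}. Since $(G_n x)$ is a Markov chain under $\bb Q_s^x$ with transition operator $Q_s$, I would rewrite the probability as $\bb Q_s^x(\delta(y,G_n x) \le r) = Q_s^n(\mathbbm{1}_{B(y,r)})(x)$. The indicator is not H\"older, so I would approximate it from above by $\varphi_{y,r}(x) := \chi(\delta(y,x)/r)$, where $\chi:\bb R_+\to[0,1]$ is a fixed $1$-Lipschitz cut-off equal to $1$ on $[0,1]$ and to $0$ on $[2,\infty)$. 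Using that $\delta(y,\cdot)$ is Lipschitz with respect to $\mathbf d$ with constant independent of $y$, and the elementary inequality $\min(1,u)\le u^\gamma$, one checks that $\mathbbm{1}_{B(y,r)}\le\varphi_{y,r}\le\mathbbm{1}_{B(y,2r)}$ and $\|\varphi_{y,r}\|_\gamma\le C r^{-\gamma}$ uniformly in $y\in(\bb P^{d-1})^*$ and $r\in(0,1]$.

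Applying \eqref{equcontin Q s limit} to $\varphi_{y,r}$, there exist constants $C,c_0>0$ (depending on $s$) such that, uniformly in $x$,
\[
Q_s^n\varphi_{y,r}(x) \le \pi_s(\varphi_{y,r}) + C e^{-c_0 n}\|\varphi_{y,r}\|_\gamma \le \pi_s\bigl(B(y,2r)\bigr) + C e^{-c_0 n}r^{-\gamma}.
\]
By Proposition \ref{PropRegularity} there is $\alpha>0$ with $\pi_s(B(y,2r))\le C r^\alpha$ uniformly in $y$. Substituting $r = e^{-\ee k}$ and using $n\ge k$ gives the basic estimate
\[
\bb Q_s^x\bigl(\delta(y,G_n x) \le e^{-\ee k}\bigr) \le C e^{-\alpha\ee k} + C e^{-(c_0-\gamma\ee)k}.
\]

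The main obstacle is the competition between the two terms: the smoothing cost $r^{-\gamma}$ spoils the second term as soon as $\gamma\ee\ge c_0$. To handle this, I would first treat the small-parameter regime $\ee\le\ee_0 := c_0/(2\gamma)$, where $c_0-\gamma\ee\ge c_0/2>0$, so that the above bound decays exponentially in $k$ with rate $c(\ee)=\tfrac12\min(\alpha\ee,c_0/2)$, provided $n_0$ is taken large enough (depending on $\ee$ and $s$) to absorb the constant $C$. For $\ee>\ee_0$ the trivial inclusion $\{\delta(y,G_n x)\le e^{-\ee k}\}\subset\{\delta(y,G_n x)\le e^{-\ee_0 k}\}$ reduces the problem to the previous case with the fixed rate $c(\ee_0)$. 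This yields the claimed bound $e^{-ck}$ with $c=c(s,\ee)>0$ for all $n\ge k\ge n_0$.
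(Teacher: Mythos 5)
Your argument is correct and follows essentially the same route as the paper: smooth the indicator of $B(y,e^{-\ee k})$ to a $\gamma$-H\"older function, apply the spectral gap \eqref{equcontin Q s limit} for $Q_s^n$, and then invoke the H\"older regularity of $\pi_s$ from Proposition \ref{PropRegularity} to control the stationary contribution. The only substantive difference is how the two proofs resolve the tension between the smoothing cost and the spectral gap rate. The paper smooths $\mathbbm{1}_{(-\infty,-\ee k]}(\log\delta(y,\cdot))$ by convolution, which gives $\|\varphi_{k,\ee_1}^y\|_\gamma \lesssim e^{\ee\gamma k}$, and then appeals to ``taking $\gamma>0$ sufficiently small'' so that $\ee\gamma$ lies below the spectral gap rate; since $\gamma$ is fixed once and for all in the paper and the rate $c$ itself depends on $\gamma$, this step is somewhat delicate for large $\ee$. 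You circumvent it cleanly: fix $\gamma$ and the spectral gap constant $c_0$ once, establish the exponential decay for all $\ee\le\ee_0:=c_0/(2\gamma)$, and then reduce the general case to $\ee_0$ by the monotone inclusion $\{\delta(y,G_nx)\le e^{-\ee k}\}\subset\{\delta(y,G_nx)\le e^{-\ee_0 k}\}$. This removes the need to tune $\gamma$ against $\ee$ and is, if anything, slightly more careful. Your verification of $\|\varphi_{y,r}\|_\gamma\le Cr^{-\gamma}$ is sound: $\delta(y,\cdot)$ is $\sqrt2$-Lipschitz in $\mathbf d$ uniformly in $y$, so the cut-off composed with $\delta(y,\cdot)/r$ has Lipschitz constant $O(1/r)$ and sup-norm $\le1$, from which the $\gamma$-H\"older bound follows via $\min(1,u)\le u^\gamma$. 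The proof as written is complete.
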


Similarly, the following result is a stronger version of Proposition \ref{PropRegu02}. 

\begin{proposition}\label{Prop_Regu_Strong02}
Assume conditions \ref{Condi-TwoExp} and \ref{Condi-IP}.
Let $s \in (-s_0, s_0)$, where $s_0 > 0$ is small enough.
Then, for any $\ee > 0$, 
there exist constants $c: = c(s) > 0$ and $n_0: = n_0(s) \geq 1$
such that for all $n \geq k \geq n_0$, $x \in \mathbb P^{d-1}$ and $y \in (\mathbb P^{d-1})^*$, 
\begin{align*} 
\bb Q_s^x \Big( \delta (y, G_n x) \leq e^{- \ee k}  \Big) \leq e^{- c k}.
\end{align*}
\end{proposition}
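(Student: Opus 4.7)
The plan is to combine the H\"older regularity of the stationary measure $\pi_s$ (Proposition \ref{PropRegu02}, which we will need on the whole interval $(-s_0, s_0)$) with the spectral gap of the Markov operator $Q_s$ on $\mathcal{B}_{\gamma}$ from \eqref{equcontin Q s limit}, linking the two via a smooth approximation of the indicator $\mathbbm{1}_{B(y, e^{-\ee k})}$ at a well-chosen scale $R \geq e^{-\ee k}$.

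Concretely, I would fix a smooth cutoff $\chi: [0,\infty) \to [0,1]$ with $\chi \equiv 1$ on $[0,1]$ and $\chi \equiv 0$ on $[2,\infty)$, and for $R > 0$ and $y \in (\bb P^{d-1})^*$ set $\varphi_{R,y}(x) = \chi(\delta(y,x)/R)$. Since $x \mapsto \delta(y,x)$ is Lipschitz in $\mathbf{d}$ uniformly in $y$, a routine interpolation yields $\|\varphi_{R,y}\|_{\gamma} \leq C R^{-\gamma}$ uniformly in $y$, while $\mathbbm{1}_{B(y,R)} \leq \varphi_{R,y} \leq \mathbbm{1}_{B(y,2R)}$. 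For any $R \geq e^{-\ee k}$, the Markov property of $(G_n x)_{n\geq 0}$ under $\bb Q_s^x$ together with \eqref{equcontin Q s limit} and Proposition \ref{PropRegu02} then produces
\begin{align*}
\bb Q_s^x \big( \delta(y, G_n x) \leq e^{-\ee k} \big) \leq Q_s^n \varphi_{R,y}(x) \leq \pi_s(\varphi_{R,y}) + C e^{-cn} \|\varphi_{R,y}\|_{\gamma} \leq C_1 R^{\alpha} + C_2 e^{-cn} R^{-\gamma}.
\end{align*}

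It remains to choose $R$ optimally under the constraint $R \geq e^{-\ee k}$. If $n \leq \ee k (\alpha+\gamma)/c$, the balancing value $R = e^{-cn/(\alpha+\gamma)}$ is admissible and, using $n \geq k$, produces a bound of order $e^{-\alpha c k/(\alpha+\gamma)}$. Otherwise I would take $R = e^{-\ee k}$ and observe that the condition $n > \ee k (\alpha+\gamma)/c$ forces $e^{-cn} R^{-\gamma} \leq e^{-\alpha \ee k}$, so both terms are bounded by $C e^{-\alpha \ee k}$. In either case one obtains a uniform exponential bound $C e^{-c' k}$ with $c' = \min\{\alpha c/(\alpha+\gamma),\, \alpha \ee\} > 0$, valid for all $n \geq k \geq n_0$, $x \in \bb P^{d-1}$ and $y \in (\bb P^{d-1})^*$. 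The hard part is not the optimization but rather securing the H\"older regularity \eqref{RegularityIne} for $\pi_s$ on the whole range $(-s_0, s_0)$ and not only on $(-s_0, 0]$ as stated in Proposition \ref{PropRegu02}: one needs to check that the argument of \cite{GQX20}, which combines the regularity of $\nu = \pi_0$ with the analyticity of $\kappa(s)$ and of the eigendata near $s = 0$, extends by perturbation to small positive $s$ under condition \ref{Condi-TwoExp}.
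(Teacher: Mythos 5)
Your proof has the same skeleton as the paper's: approximate the indicator of the shrinking ball $B(y, e^{-\ee k})$ by a Hölder test function, then split $\bb Q_s^x(\delta(y,G_nx)\leq e^{-\ee k}) \leq |Q_s^n\varphi(x) - \pi_s(\varphi)| + \pi_s(\varphi)$, control the first piece by the spectral gap \eqref{equcontin Q s limit} and the second by the regularity of $\pi_s$. The paper smooths the indicator $\mathbbm{1}_{(-\infty, -\ee k]}$ in the $\log\delta$ variable at a fixed scale $\ee_1$ (so that $\|\varphi_{k,\ee_1}^y\|_\gamma \lesssim e^{\ee\gamma k}$) and then makes the spectral-gap error negligible by taking $\gamma$ small with $k\leq n$; you instead smooth at an explicit radius $R$ and optimize $R\in[e^{-\ee k},1]$ against the tradeoff $C_1 R^\alpha + C_2 e^{-cn}R^{-\gamma}$. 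These are just two bookkeeping dialects for the same balancing argument, and your Hölder-norm estimate $\|\varphi_{R,y}\|_\gamma \lesssim R^{-\gamma}$ is correct (by interpolation between the sup bound and the Lipschitz bound at the threshold $\mathbf{d}\sim R$).

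Your closing worry is, however, misplaced. You suggest extending the H\"older regularity of $\pi_s$ to $s\in(0,s_0)$ by perturbing the $s=0$ argument of \cite{GQX20}. That is not how the paper handles it, nor is it necessary: for small $s>0$, condition \ref{Condi-TwoExp} already implies condition \ref{Condi_Exp} (indeed $\|g\|^{s+\beta}\iota(g)^{-\beta} = \|g\|^{s+\beta}\|g^{-1}\|^{\beta} \leq N(g)^{s+2\beta}$, which is $\mu$-integrable once $s+2\beta\leq\eta$), so Proposition \ref{PropRegularity} applies verbatim on $I_\mu^\circ\supset(0,s_0)$ and gives the needed regularity there, while Proposition \ref{PropRegu02} covers $(-s_0,0]$. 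This is exactly what the paper's proof invokes (``applying Propositions \ref{PropRegularity} and \ref{PropRegu02} (respectively for $s\in I_\mu^\circ$ and $s\in(-s_0,0)$)''), and it uses the Cartan/Iwasawa machinery of Section \ref{Sec:regpositive} rather than a perturbation off $s=0$.
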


It turns out that Propositions \ref{Prop_Regu_Strong01} and \ref{Prop_Regu_Strong02}
play an important role for establishing the Bahadur-Rao-Petrov type lower tail large deviations 
for the coefficients $\langle f, G_n v \rangle$ under the changed measure $\bb Q_s^x$,
see Theorem \ref{Thm_Coeff_BRLD_changedMea02}. 
Moreover, they are very useful to obtain the strong law of large numbers (SLLN) 
and the central limit theorem (CLT) for the coefficients $\langle f, G_n v \rangle$
under the changed measure $\bb Q_s^x$, see the next section. 

%%%%%%%%%%%%%%%%%%%%%%%%%%%%%%%%%%%%%%%%%%%%%%%%%%%%%%%%%%%%%%%%%%%%%%%%%%%
%%%%%%%%%%%%%%%%%%%%%%%%%%%%%%%%%%%%%%%%%%%%%%%%%%%%%%%%%%%%%%%%%%%%%%%%%%%
\subsection{Applications to SLLN and CLT for the coefficients}
In this section we formulate the SLLN and the CLT
for the coefficients $\langle f, G_n v \rangle$ 
%as well as for the spectral radius $\rho(G_n)$
under the changed measure $\mathbb{Q}_{s}^{x}$. 
These assertions are not used in the proofs of our large deviation results, but are of independent interest.
They are deduced from the SLLN and the CLT for the norm cocycle $\log |G_n v|$ using the H\"older regularity of 
stationary measure $\pi_s$ stated in Propositions \ref{PropRegularity} and \ref{PropRegu02}.

%Let $\mathbb{Q}_s = \int_{\bb P^{d-1}} \mathbb{Q}_s^x \pi_s(dx)$. 
%Then, the measure $\mathbb{Q}_s$ is shift-invariant and ergodic 
%since $\pi_s$ is the unique stationary measure of the Markov operator $Q_s$. 
When $s \in I_{\mu}$, the SLLN for $\log |G_n v|$ was established in \cite{GL16}: 
under conditions  \ref{Condi-TwoExp} and \ref{Condi-IP}, 
for any $x = \bb R v \in \bb P^{d-1}$,  
\begin{align}\label{SLLN_Gnx}
\lim_{n \to \infty} \frac{1}{n} \log |G_n v| % = \lim_{n \to \infty} \frac{1}{n} \log \|G_n\| 
=  \Lambda'(s), 
\quad  \mathbb{Q}_s^x\mbox{-a.s.}, 
\end{align}
where $\Lambda'(s) = \frac{\kappa'(s)}{\kappa(s)}$ 
with the function $\kappa$ defined in Proposition \ref{transfer operator}. 
The CLT for $\log |G_n v|$ under the changed measure
$\mathbb{Q}_s^x$ was proved in \cite{BM16}:
for any $s \in I_{\mu}$ and $t \in \mathbb{R}$, 
it holds uniformly in $x = \bb R v \in \bb P^{d-1}$ with $|v| = 1$ that 
\begin{align}\label{CLT_Cocycle01}
\lim_{ n \to \infty }  \mathbb{Q}_s^x  
  \left(  \frac{ \log |G_n v| - n \Lambda'(s) }{ \sigma_s \sqrt{n} }  \leq t  \right)
 =  \Phi(t),  
\end{align}
where $\Phi$ is the standard normal distribution function on $\mathbb{R}$.

When $s \in (-s_0, 0)$ with small enough $s_0 > 0$, the SLLN and the CLT for $\log |G_n v|$ under the measure
$\mathbb{Q}_s^x$ have been recently established in \cite{XGL19b}. 
%We refer to \cite{BDGM14, GL16, XGL19a} for the proof of LLN and \eqref{equcontin Q s limit} 
%under various restrictions for $s \in (-s_0, 0) \cup I_{\mu}$.

We now give the SLLN and the CLT for the coefficients $\langle f, G_n v \rangle$
under the measure $\mathbb{Q}_s^x$. 

% $\Phi(y): = \int_{ - \infty }^y  e^{ - \frac{t^2}{2} } dt$
\begin{proposition}\label{LLN_CLT_Entry}
\begin{itemize}
\item[\rm{(1)}]
Assume conditions \ref{Condi_Exp} and \ref{Condi-IP}. 
Then, for any $s \in I_{\mu}$, 
uniformly in $f \in (\bb R^d)^*$ and $v \in \bb R^d$ with $|f| = |v| = 1$,
\begin{align}\label{SLLN_Entry}
\lim_{ n \to \infty } \frac{1}{n} \log | \langle f, G_n v \rangle | = \Lambda'(s), 
\quad  \mathbb{Q}_s^x \mbox{-a.s..}
\end{align}
Moreover, for any $s \in I_{\mu}$ and $t \in \mathbb{R}$, we have, 
uniformly in $f \in (\bb R^d)^*$ and $v \in \bb R^d$ with $|f| = |v| = 1$, 
\begin{align}\label{CLT_Entry}
\lim_{ n \to \infty }  \mathbb{Q}_s^x  
  \left(  \frac{ \log | \langle f, G_n v \rangle | - n \Lambda'(s) }{ \sigma_s \sqrt{n} }  \leq t  \right)
 =  \Phi(t). 
\end{align}
\item[\rm{(2)}]
Assume conditions \ref{Condi-TwoExp} and \ref{Condi-IP}.
Then, there exists $s_0>0$ such that for any $s \in (-s_0, 0)$, 
the assertions \eqref{SLLN_Entry} and \eqref{CLT_Entry} hold. 
\end{itemize}
\end{proposition}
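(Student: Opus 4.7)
The plan is to deduce both assertions from the corresponding results for the norm cocycle $\log |G_n v|$ together with the H\"{o}lder-type quantitative regularity statements in Propositions \ref{Prop_Regu_Strong01} and \ref{Prop_Regu_Strong02}, using the elementary but exact decomposition
\begin{equation*}
\log |\langle f, G_n v \rangle| = \log |G_n v| + \log \delta(y, G_n x), \qquad n \geq 1,
\end{equation*}
valid for $|f|=|v|=1$, $x = \bb R v$, $y = \bb R f$. Since $\delta(y, \cdot) \leq 1$, the perturbation term $\log \delta(y, G_n x)$ is non-positive, so the only danger is that $G_n x$ wanders too close to the hyperplane $\{x : \delta(y,x) = 0\}$. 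The regularity results forbid exactly this.

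First I would prove the SLLN \eqref{SLLN_Entry}. Fix $\eta > 0$ arbitrarily small and, using the freedom in the choice of $\varepsilon$ in Propositions \ref{Prop_Regu_Strong01} (case $s \in I_\mu^\circ$) and \ref{Prop_Regu_Strong02} (case $s \in (-s_0,0)$), pick $\varepsilon < \eta$. Setting $k = \lfloor n \rfloor$ (so that $n \geq k \geq n_0$ for $n$ large), we get
\begin{equation*}
\bb Q_s^x\bigl( \log \delta(y, G_n x) \leq -\eta n \bigr)
\leq \bb Q_s^x\bigl( \delta(y, G_n x) \leq e^{-\varepsilon n} \bigr) \leq e^{-cn},
\end{equation*}
uniformly in $x$ and $y$. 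The right-hand side is summable, so by the Borel-Cantelli lemma, $\liminf_{n} \frac{1}{n} \log \delta(y, G_n x) \geq -\eta$ almost surely (uniformly in $f,v$); since $\eta$ is arbitrary and $\log \delta(y, G_n x) \leq 0$, we conclude $\frac{1}{n} \log \delta(y, G_n x) \to 0$ almost surely. Combining with the known SLLN \eqref{SLLN_Gnx} for $\log |G_n v|$ yields \eqref{SLLN_Entry}.

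For the CLT \eqref{CLT_Entry}, by Slutsky's theorem and \eqref{CLT_Cocycle01} (and its analogue for $s \in (-s_0,0)$ from \cite{XGL19b}), it suffices to show that $\frac{1}{\sqrt{n}} \log \delta(y, G_n x) \to 0$ in $\bb Q_s^x$-probability uniformly in $(f,v)$. Given $\eta > 0$, choose $\varepsilon < \eta$ and take $k = \lfloor \sqrt{n} \rfloor$ in the cited propositions; then for $n$ large enough,
\begin{equation*}
\bb Q_s^x\bigl( |\log \delta(y, G_n x)| \geq \eta \sqrt{n} \bigr)
\leq \bb Q_s^x\bigl( \delta(y, G_n x) \leq e^{-\varepsilon \sqrt{n}} \bigr)
\leq e^{-c \sqrt{n}} \longrightarrow 0,
\end{equation*}
uniformly in $x, y$, which proves the convergence in probability. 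Slutsky's theorem then transfers the CLT from $\log |G_n v|$ to $\log |\langle f, G_n v \rangle|$, uniformly in $f$ and $v$ of unit norm.

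The substantive difficulty in this proposition is entirely contained in Propositions \ref{Prop_Regu_Strong01} and \ref{Prop_Regu_Strong02}: once the exponential quantitative non-concentration of $G_n x$ away from any hyperplane is granted (uniformly in the hyperplane under $\bb Q_s^x$), the passage from the norm cocycle to the coefficient reduces to Borel-Cantelli and Slutsky as above. Thus I do not expect any further obstacle in this deduction beyond correctly invoking the regularity bound with the right scaling of $k$ in terms of $n$.
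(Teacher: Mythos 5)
Your proof is correct and follows essentially the same approach as the paper: use the exact decomposition $\log|\langle f, G_n v\rangle| = \log|G_n v| + \log\delta(y,G_n x)$, invoke Propositions \ref{Prop_Regu_Strong01} and \ref{Prop_Regu_Strong02} with $k=n$ (plus Borel--Cantelli and the sign $\log\delta\leq 0$) for the SLLN, and with $k=\sqrt n$ (plus Slutsky) for the CLT, combining with \eqref{SLLN_Gnx}, \eqref{CLT_Cocycle01}, and their analogues from \cite{XGL19b} when $s<0$. The only cosmetic difference is that you phrase the SLLN step directly as $\frac1n\log\delta(y,G_nx)\to 0$ rather than via the two-sided squeeze $\liminf\geq-\varepsilon$ and $\log|\langle f,G_nv\rangle|\leq\log|G_nv|$, which is the same argument.
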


The proof of Proposition \ref{LLN_CLT_Entry} relies on Propositions \ref{PropRegularity} and \ref{PropRegu02}
and is postponed to Section \ref{Sec:regpositive}.

\section{Auxiliary results} \label{sec-spgappert}

In this section we state some preliminary results about the Taylor's expansion of $\Lambda^*$,  
% with respect to the perturbation $l$. 
a smoothing inequality, and some asymptotics of the perturbed operator $R_{s,it}$, 
which will be used to establish Bahadur-Rao-Petrov type large deviations.

The following lemma is proved in \cite{XGL19a} and gives Taylor's expansion of $\Lambda^*(q+l)$ 
with respect to the perturbation $l$. 
% in a neighborhood of $0$.  % of the complex plane $\mathbb C$, 
%for $q=\Lambda'(s)$ and $s \in (-s_0,0) \cup  I_{\mu}^{\circ}$, 
%where $s_0>0$ is small enough.  
%is from Proposition \ref{perturbation thm nrgztive s}.
%We need some preliminary statements.
Recall that 
under conditions \ref{Condi_Exp}, \ref{Condi-TwoExp} and \ref{Condi-IP}, 
the moment generating function $\Lambda = \log \kappa$ is strictly convex and analytic on $(-s_0, 0) \cup I_{\mu}$;
see e.g. \cite{GL16, BM16, XGL19a}.  
Set $\gamma_{s,k} = \Lambda^{(k)} (s)$, $k\geq 1$. 
In particular, $\gamma_{s,2} = \Lambda'' (s) = \sigma_s^2$. 
Under the changed measure $\mathbb Q_s^x$,
define the Cram\'{e}r series $\zeta_s$ (see Petrov \cite{Pet75})
by
\begin{align*}
\zeta_s (t) = \frac{\gamma_{s,3} }{ 6 \gamma_{s,2}^{3/2} }  
  +  \frac{ \gamma_{s,4} \gamma_{s,2} - 3 \gamma_{s,3}^2 }{ 24 \gamma_{s,2}^3 } t
  +  \frac{\gamma_{s,5} \gamma_{s,2}^2 - 10 \gamma_{s,4} \gamma_{s,3} \gamma_{s,2} + 15 \gamma_{s,3}^3 }{ 120 \gamma_{s,2}^{9/2} } t^2
  +  \ldots, 
\end{align*}
which converges for small enough $|t|$. 
%where $\gamma_{s,k} = \Lambda^{(k)} (s)$, $k\geq 1$, and $\Lambda(s) = \log \kappa(s)$. 

\begin{lemma} \label{lemmaCR001}
Assume either conditions \ref{Condi_Exp} and \ref{Condi-IP} when $s \in I_{\mu}^{\circ}$, 
or conditions \ref{Condi-TwoExp} and \ref{Condi-IP} when $s \in (-s_0,0)$ with small enough $s_0>0$. 
Let $q=\Lambda'(s)$. 
Then, there exists a constant $\eta >0$ such that for any $|l|\leq \eta,$
\begin{align*} 
\Lambda^*(q+l) = \Lambda^{*}(q) + sl +  h_s(l), 
\end{align*}
where  
$h_s$ is linked to the Cram\'{e}r series $\zeta_s$ by the identity
\begin{align*} %\label{expan hs 01}
h_s(l) = \frac{ l^2}{2 \sigma_s^2} - \frac{l^3}{\sigma_s^3} \zeta_s( \frac{l}{\sigma_s} ).
\end{align*}
\end{lemma}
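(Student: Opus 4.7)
The plan is to use the analyticity and strict convexity of $\Lambda$ (guaranteed by the hypotheses through Propositions \ref{transfer operator} and \ref{Prop-Trans-s-neg}) to invert the Legendre transform explicitly near $q = \Lambda'(s)$. Since $\Lambda''(s) = \sigma_s^2 > 0$, the derivative $\Lambda'$ is a local analytic bijection in a neighborhood of $s$, so by the inverse function theorem there exists $\eta > 0$ and an analytic map $l \mapsto u(l)$ with $u(0)=0$ such that $\Lambda'(s + u(l)) = q + l$ for $|l| \leq \eta$. By the very definition of the Fenchel--Legendre transform evaluated at the dual point,
\begin{equation*}
\Lambda^*(q+l) = (s + u(l))(q + l) - \Lambda(s + u(l)).
\end{equation*}

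Next I would expand everything in powers of $u$ and then in powers of $l$. Writing $\gamma_{s,k}=\Lambda^{(k)}(s)$ and using $q = \gamma_{s,1}$, a direct rearrangement gives
\begin{equation*}
\Lambda^*(q+l) - \Lambda^*(q) - sl \;=\; u(l)\,l \;-\; \sum_{k\geq 2}\frac{\gamma_{s,k}}{k!}\,u(l)^{k},
\end{equation*}
so everything reduces to an algebraic manipulation once $u(l)$ is known. The series $u(l)$ is obtained by solving $l = \sum_{k\geq 1}\frac{\gamma_{s,k+1}}{k!}u^{k}$; Lagrange inversion (or iterative substitution) yields
\begin{equation*}
u(l) = \frac{l}{\gamma_{s,2}} - \frac{\gamma_{s,3}}{2\gamma_{s,2}^{3}}\,l^{2} + \cdots,
\end{equation*}
which is convergent for $|l|$ small enough since $\Lambda'$ is analytic and locally invertible. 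Substituting back, the coefficient of $l^{2}$ cleanly reduces to $\frac{1}{2\sigma_{s}^{2}}$ (from the cancellation $\frac{1}{\gamma_{s,2}} - \frac{1}{2\gamma_{s,2}}$), and the coefficient of $l^{k+2}$ for $k \geq 1$ is, up to the sign and the factor $\sigma_s^{-(k+2)}$, precisely the $k$-th coefficient appearing in the classical Cram\'er series $\zeta_s$, so the identity $h_s(l) = \frac{l^2}{2\sigma_s^2} - \frac{l^3}{\sigma_s^3}\zeta_s(l/\sigma_s)$ drops out.

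The only step requiring any care is checking that the coefficients produced by Lagrange inversion match those defining $\zeta_s$ to all orders, which is a standard symbolic identity originally due to Cram\'er and reproduced in Petrov \cite{Pet65, Pet75}; since the statement of the lemma itself quotes that reference, one can simply invoke this fact rather than re-derive the expansion term by term. The convergence of both series on the common disc $\{|l|\leq \eta\}$ follows from the analyticity of $\Lambda$ on $(-s_0,0)\cup I_\mu^\circ$, so no further estimate is needed. The only mild subtlety is that for $s\in(-s_0,0)$ we rely on Proposition \ref{Prop-Trans-s-neg} to secure analyticity of $\kappa$ on a neighborhood of $s$ together with the strict positivity $\Lambda''(s)=\sigma_s^2>0$; modulo this, the argument is identical in both regimes of $s$.
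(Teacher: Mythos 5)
The paper does not actually prove Lemma~\ref{lemmaCR001}; it merely cites \cite{XGL19a} for it. Your argument is the standard one (Legendre duality plus analytic inversion of $\Lambda'$ near $s$, then Lagrange-inverting the series and matching coefficients with the Cram\'{e}r series as in Petrov), and it is correct. In particular the key algebraic identity you write, namely that with $u=u(l)$ satisfying $\Lambda'(s+u)=q+l$ one has
\[
\Lambda^*(q+l)-\Lambda^*(q)-sl \;=\; u\,l-\sum_{k\geq 2}\frac{\gamma_{s,k}}{k!}\,u^{k},
\]
follows directly from $(s+u)(q+l)-\Lambda(s+u)$ expanded around $s$, and checking the leading coefficients confirms the $\frac{l^2}{2\sigma_s^2}$ and $-\frac{\gamma_{s,3}}{6\sigma_s^6}l^3$ terms agree with $\frac{l^2}{2\sigma_s^2}-\frac{l^3}{\sigma_s^3}\zeta_s(l/\sigma_s)$. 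The hypotheses needed (strict positivity of $\Lambda''(s)=\sigma_s^2$ and analyticity of $\Lambda$ near $s$) are indeed supplied by Propositions~\ref{transfer operator} and~\ref{Prop-Trans-s-neg} in the two regimes of $s$, exactly as you say, so the proof goes through. Given that the paper delegates this lemma to the reference, there is no internal proof to diverge from; your write-up is a complete and essentially canonical argument.
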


In the sequel let us fix a non-negative density function $\rho$ on $\mathbb{R}$ 
with $\int_{\mathbb{R}} \rho(u) du = 1$, 
whose Fourier transform $\widehat{\rho}$ is supported on $[-1,1]$. 
Moreover, there exists a constant $C>0$ such that $\rho(u) \leq \frac{C}{1+u^4}$ for all $u \in \bb R$. 
For any $\ee>0$, define the scaled density function $\rho_{\ee}$ by
$\rho_{\ee}(u) = \frac{1}{\ee}\rho(\frac{u}{\ee})$, $u\in\mathbb R,$ 
whose Fourier transform $\widehat{\rho}_{\ee}$ is supported on $[-\ee^{-1},\ee^{-1}]$. 
%and has a continuous extension in the complex plane $\mathbb{C}$.
%Moreover, the function $\widehat{\rho}_{\ee}$ 
%has an analytic extension in the domain $D_{\ee}: = \{ z \in \mathbb{C}: |z| < \ee, \Im z \neq 0 \}$.
For any non-negative integrable function $\psi$ on $\mathbb{R}$,
%following the paper \cite{GL17}, 
we introduce two modified functions related to $\psi$ % and establish some two-sided bounds.
as follows:  for any  $u \in \mathbb{R}$, 
set $\mathbb{B}_{\ee}(u) = \{u' \in\mathbb{R}: |u' - u| \leq \ee\}$  and  
\begin{align}\label{smoo001}
\psi^+_{\ee}(u) = \sup_{u' \in \mathbb{B}_{\ee}(u)} \psi(u') 
\quad  \text{and}  \quad 
\psi^-_{\ee}(u) = \inf_{u' \in \mathbb{B}_{\ee}(u)} \psi(u'). 
\end{align}
The following smoothing inequality gives two-sided bounds of $\psi$. 
%The following assertion comes from \cite[Lemma 3.1]{GL17}. 
\begin{lemma}  \label{estimate u convo}
Suppose that  $\psi$ is a non-negative integrable function on $\bb R$ and that 
$\psi^+_{\ee}$ and $\psi^-_{\ee}$ are measurable for any $\ee>0$. 
Then, for $0< \ee <1$, 
there exists a positive constant $C_{\rho}(\ee)$ with $C_{\rho}(\ee) \to 0$ as $\ee \to 0$,
such that for any $u\in \mathbb{R}$, 
\begin{align}
{\psi}^-_{\ee}\!\ast\!\rho_{\ee^2}(u) - 
\int_{|w|\geq \ee} {\psi}^-_{\ee}(u - w) \rho_{\ee^2}(w)dw
\leq \psi(u) \leq (1+ C_{\rho}(\ee))
{\psi}^+_{\ee}\!\ast\!\rho_{\ee^2}(u). \nonumber
\end{align}
\end{lemma}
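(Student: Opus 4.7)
The plan is to derive both inequalities from a single elementary observation. For any $u \in \mathbb R$ and any $|w| \leq \ee$ we have $u \in \mathbb B_{\ee}(u-w)$, so by the definitions of the envelopes in \eqref{smoo001},
\begin{align*}
\psi^-_{\ee}(u-w) \leq \psi(u) \leq \psi^+_{\ee}(u-w).
\end{align*}
The two halves of the smoothing inequality will follow by multiplying this double inequality by the non-negative weight $\rho_{\ee^2}(w)$ and integrating in $w$, using the fact that the mass of $\rho_{\ee^2}$ on $\{|w| \leq \ee\}$ is close to $1$.

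For the upper bound, I would integrate the right-hand inequality against $\rho_{\ee^2}$ on $\{|w| \leq \ee\}$ to obtain
\begin{align*}
\psi(u)\, a(\ee) \leq \int_{|w|\leq \ee} \psi^+_{\ee}(u-w)\rho_{\ee^2}(w)\, dw \leq \psi^+_{\ee} \ast \rho_{\ee^2}(u),
\end{align*}
where $a(\ee) := \int_{|w|\leq \ee} \rho_{\ee^2}(w)\, dw$. The substitution $t = w/\ee^2$ turns this into $a(\ee) = \int_{|t|\leq 1/\ee} \rho(t)\, dt$, and the pointwise tail bound $\rho(t) \leq C/(1+t^4)$ assumed in the paragraph preceding the lemma yields $1 - a(\ee) = O(\ee^3)$ as $\ee \to 0$. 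Setting $C_{\rho}(\ee) := (1-a(\ee))/a(\ee)$ gives $C_{\rho}(\ee) \to 0$ and the desired bound $\psi(u) \leq (1 + C_{\rho}(\ee))\, \psi^+_{\ee} \ast \rho_{\ee^2}(u)$.

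For the lower bound, I would integrate the left-hand inequality against $\rho_{\ee^2}$ on $\{|w|\leq \ee\}$, obtaining $\int_{|w|\leq \ee} \psi^-_{\ee}(u-w)\rho_{\ee^2}(w)\, dw \leq \psi(u)\, a(\ee) \leq \psi(u)$, since $a(\ee) \leq 1$. Splitting the full convolution as
\begin{align*}
\psi^-_{\ee} \ast \rho_{\ee^2}(u) = \int_{|w|\leq \ee} \psi^-_{\ee}(u-w)\rho_{\ee^2}(w)\, dw + \int_{|w|\geq \ee} \psi^-_{\ee}(u-w)\rho_{\ee^2}(w)\, dw
\end{align*}
and transposing the second piece to the other side of the inequality gives exactly the claimed lower bound.

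There is no real obstacle: the measurability of $\psi^\pm_\ee$ is part of the hypothesis, the non-negativity of $\rho_{\ee^2}$ justifies all interchanges of inequalities with integrals, and the only quantitative ingredient is the tail estimate on $\rho$ needed to show $a(\ee) \to 1$ with a rate, which follows directly from the bound $\rho(u) \leq C/(1+u^4)$ recorded just before the statement of the lemma.
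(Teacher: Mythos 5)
Your proof is correct and self-contained. The paper itself omits the argument, merely noting that it is ``similar to that of Lemma~5.2 in \cite{GLL17}''; your write-up supplies exactly the standard argument one would expect there. The key observation — that $u\in\mathbb{B}_{\ee}(u-w)$ whenever $|w|\leq\ee$, hence $\psi^-_{\ee}(u-w)\leq\psi(u)\leq\psi^+_{\ee}(u-w)$ pointwise — cleanly generates both halves after integrating against $\rho_{\ee^2}$ over $\{|w|\leq\ee\}$. The upper bound then reduces to controlling $a(\ee)=\int_{|t|\leq 1/\ee}\rho(t)\,dt$, and the tail bound $\rho(t)\leq C/(1+t^4)$ does give $1-a(\ee)=O(\ee^3)$, so $C_{\rho}(\ee):=(1-a(\ee))/a(\ee)\to 0$ as required. (One could add the remark that $a(\ee)>0$ for every $\ee\in(0,1)$: since $\widehat{\rho}$ is compactly supported, $\rho$ is real-analytic and hence strictly positive off a set of isolated points, so the division is always legitimate.) The lower bound is obtained by the same integration followed by the additive split of the convolution; the overlap of $\{|w|\leq\ee\}$ and $\{|w|\geq\ee\}$ on $\{|w|=\ee\}$ is Lebesgue-null and harmless. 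No gap.
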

The proof of the above lemma is similar to that of Lemma 5.2 in \cite{GLL17}, and will not be detailed here.

The next proposition gives precise asymptotics of the perturbed operator $R_{s,it}$,
which will be used to establish Bahadur-Rao-Petrov type large deviations 
for the coefficients $\langle f, G_n v \rangle$. 
%is established in \cite{XGL19a, XGL19b}.  
Its proof is based on the spectral gap properties of the perturbed operator $R_{s,it}$.  
%and on the saddle point method,
%see Daniels \cite{Dan54}, Richter \cite{Ric57}, 
%Ibragimov and Linnik \cite{IL65} and Fedoryuk \cite{Fed87}. 
%In the following consider a strictly positive 
%and $\gamma$-H\"{o}lder continuous function $\varphi$ on the projective space $\bb P^{d-1}$.

\begin{proposition} \label{Prop Rn limit1}  
Suppose that $\psi: \mathbb R \mapsto \mathbb C$
is bounded measurable function with compact support,
and that $\psi$ is differentiable in a small neighborhood of 0 in $\bb R$. 

\noindent(1)
Assume conditions \ref{Condi_Exp} and \ref{Condi-IP}. 
Then, for any compact set $K_{\mu} \subset I_{\mu}^{\circ}$, there exist constants 
$\delta = \delta(K) >0$, $c = c(K) >0$, $C = C(K) >0$ such that
for all $x\in \bb P^{d-1}$, $s \in K_{\mu}$, $|l| = O( \frac{1}{\sqrt{n}})$, $\varphi \in \mathcal{B}_{\gamma}$
and $n \geq 1$, 
\begin{align} \label{Thm1 lim R1}
 &  \left|  \sigma_s  \sqrt{n}  \,  e^{\frac{nl^2}{2 \sigma_s^2}}
\int_{\mathbb R} e^{-it l n} R^{n}_{s,it}(\varphi)(x) \psi (t) dt
- \sqrt{2\pi} \psi(0)\pi_{s}(\varphi)  \right|   \nonumber\\
& \leq  \frac{ C }{ \sqrt{n} } \| \varphi \|_\gamma 
  + \frac{C}{n} \|\varphi\|_{\gamma} \sup_{|t| \leq \delta} \big( |\psi(t)| + |\psi'(t)| \big)
  + Ce^{-cn} \|\varphi\|_{\gamma} \int_{\bb R} |\psi(t)| dt. 
\end{align}

\noindent(2)
Assume conditions \ref{Condi-TwoExp} and \ref{Condi-IP}. 
Then, there exist constants $s_0 > 0$, $\delta = \delta(s_0) >0$, $c = c(s_0) >0$, $C = C(s_0) >0$
such that for any compact set $K_{\mu} \subset (-s_0, 0)$, 
%such that for any $s \in (-s_0, 0)$ and $q=\Lambda'(s)$,  
%and any $\eta > 0$, 
%%for any positive sequence $(l_n)_{n \geq 1}$ satisfying $\lim_{ n \to \infty} l_n = 0$, 
the inequality \eqref{Thm1 lim R1} holds uniformly in $x\in \bb P^{d-1}$, 
$s \in K_{\mu}$, $|l| = O( \frac{1}{\sqrt{n}} )$, $\varphi \in \mathcal{B}_{\gamma}$ and $n \geq 1$. 
\end{proposition}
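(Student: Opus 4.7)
The plan is to split the integral into a low-frequency region $|t|\leq \delta$, where a spectral decomposition of $R_{s,it}$ applies, and a bounded high-frequency region $\delta<|t|\leq A$ with $A$ chosen so that $\operatorname{supp}\psi\subset[-A,A]$, where $R_{s,it}$ contracts exponentially. The two ingredients I would invoke from the spectral theory developed in \cite{BM16, XGL19a, XGL19b} are: (a) for $|t|\leq\delta$ and $s$ ranging in the compact set $K_\mu$, a perturbation decomposition
\[
R^n_{s,it}=\lambda_{s,it}^n\,\Pi_{s,it}+N^n_{s,it},
\]
with $\lambda_{s,it}$ analytic in $(s,t)$, $\lambda_{s,0}=1$, $\Pi_{s,0}(\varphi)=\pi_s(\varphi)\mathbf{1}$, and $\|N^n_{s,it}\|_{\mathcal B_\gamma\to\mathcal B_\gamma}\leq Ce^{-cn}$; (b) the non-arithmeticity coming from \ref{Condi-IP}, which forces the spectral radius of $R_{s,it}$ to be uniformly strictly less than $1$ on $\{\delta\leq|t|\leq A,\ s\in K_\mu\}$, hence $\|R^n_{s,it}\varphi\|_\infty\leq Ce^{-cn}\|\varphi\|_\gamma$. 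The contribution of the high-frequency region is then bounded by $Ce^{-cn}\|\varphi\|_\gamma\int_\bb R|\psi(t)|\,dt$, matching the third error term.

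Next I would insert the decomposition from (a) into the low-frequency integral. The remainder $N^n_{s,it}$ again contributes $O(e^{-cn})\|\varphi\|_\gamma\sup_{|t|\leq\delta}|\psi(t)|$, which is dominated by the stated errors. The core quantity becomes
\[
I_n:=\sigma_s\sqrt n\,e^{nl^2/(2\sigma_s^2)}\!\int_{|t|\leq\delta}e^{-itln}\lambda_{s,it}^n\,\Pi_{s,it}(\varphi)(x)\,\psi(t)\,dt.
\]
Since $\sigma(g_1,x)-q$ is centered under $\mathbb Q_s^x$ in the stationary regime, Taylor expansion gives $\log\lambda_{s,it}=-\sigma_s^2 t^2/2+O(t^3)$ uniformly in $s\in K_\mu$. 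Substituting $u=t\sigma_s\sqrt n$ and $\tau=l\sqrt n/\sigma_s$ (which is bounded thanks to $|l|=O(1/\sqrt n)$), the exponential factors collapse into a shifted Gaussian:
\[
e^{nl^2/(2\sigma_s^2)}e^{-itln}\lambda_{s,it}^n=e^{-(u+i\tau)^2/2}\cdot e^{O(u^3/\sqrt n)}.
\]
A contour shift (or direct computation) yields $\int_\bb R e^{-(u+i\tau)^2/2}\,du=\sqrt{2\pi}$. Combined with the continuity at $t=0$ of the projector, $\Pi_{s,0}(\varphi)(x)=\pi_s(\varphi)$, and with $\psi(0)$, this produces the leading term $\sqrt{2\pi}\,\psi(0)\pi_s(\varphi)$.

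The error accounting is then delicate. The cubic phase error $e^{O(u^3/\sqrt n)}-1$ and the Lipschitz variation of $(s,t)\mapsto\Pi_{s,it}$ near $t=0$ each integrate against the Gaussian to produce contributions of size $O(n^{-1/2})\|\varphi\|_\gamma$, absorbed in the first error term. To recover the finer $1/n$ factor attached to $\sup_{|t|\leq\delta}(|\psi(t)|+|\psi'(t)|)$, I would expand $\psi(u/(\sigma_s\sqrt n))=\psi(0)+(u/(\sigma_s\sqrt n))\psi'(0)+R(u)$ with $|R(u)|\lesssim (u/\sqrt n)^2\sup_{|t|\leq\delta}|\psi'(t)|$ after a further mean-value step, and exploit the odd-moment cancellation of the Gaussian profile $e^{-(u+i\tau)^2/2}$ to kill the leading linear term up to a factor $\tau/\sqrt n = O(1/n)$. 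The tails $|u|>\delta\sigma_s\sqrt n$ in the changed variable are controlled by the Gaussian decay and produce a super-polynomially small error.

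The main obstacle is precisely the last step: obtaining the $1/n$ order rather than $1/\sqrt n$ in the $\psi$-dependent error demands a careful exploitation of the parity of the Gaussian together with the differentiability of $\psi$ at $0$, and all constants must be controlled uniformly in $s\in K_\mu$; the analyticity of $s\mapsto\lambda_{s,it}$ and the Lipschitz dependence of $\Pi_{s,it}$ in $(s,t)$, which are built into the quoted spectral gap results, are what make this uniformity possible. For part (2) the argument is identical once the spectral gap and non-arithmeticity of $R_{s,it}$ for $s\in(-s_0,0)$, recently obtained in \cite{XGL19a, XGL19b}, are substituted for the corresponding statements used in part (1).
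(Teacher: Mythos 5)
The paper does not prove Proposition~\ref{Prop Rn limit1}: immediately after the statement it says ``The assertions (1) and (2) of Proposition~\ref{Prop Rn limit1} were respectively established in \cite{XGL19a} and \cite{XGL19b},'' so there is no in-paper proof to compare against. Your high-level strategy --- split into $|t|\leq\delta$ (spectral decomposition $R^n_{s,it}=\lambda_{s,it}^n\Pi_{s,it}+N^n_{s,it}$) and $\delta<|t|\leq A$ (non-arithmeticity giving a uniform contraction), then reduce the low-frequency integral to a complex Gaussian $e^{-(u+i\tau)^2/2}$ by Taylor-expanding $\log\lambda_{s,it}$ and substituting $u=t\sigma_s\sqrt n$ --- is the standard saddle-point argument that the paper itself advertises in Section~\ref{proof strategy} and attributes to \cite{XGL19b}, so the structure is sound and consistent with the route the authors took.

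However, your final error accounting has two genuine gaps. First, you write $\psi(u/(\sigma_s\sqrt n))=\psi(0)+(u/(\sigma_s\sqrt n))\psi'(0)+R(u)$ with $|R(u)|\lesssim (u/\sqrt n)^2\sup_{|t|\leq\delta}|\psi'(t)|$ ``after a further mean-value step.'' A second mean-value step applied to $\psi'$ requires $\psi''$ (or at least a Lipschitz $\psi'$); the hypothesis of the proposition only gives differentiability of $\psi$ in a neighbourhood of $0$, so the best one gets from the mean-value theorem is $|R(u)|\leq 2\,|u|/(\sigma_s\sqrt n)\,\sup_{|t|\leq\delta}|\psi'(t)|$, a first-order bound, not a second-order one. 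Second, you claim the linear term is killed ``up to a factor $\tau/\sqrt n=O(1/n)$,'' but you defined $\tau=l\sqrt n/\sigma_s$ and observed yourself that $\tau$ is merely bounded under $|l|=O(1/\sqrt n)$; hence $\tau/\sqrt n=O(1/\sqrt n)$, not $O(1/n)$. Concretely, $\int_{\bb R}u\,e^{-(u+i\tau)^2/2}\,du=-i\tau\sqrt{2\pi}$, so the $\psi'(0)$-proportional correction is of size $|\tau|/\sqrt n\cdot|\psi'(0)|\asymp|l|\,|\psi'(0)|=O(n^{-1/2})|\psi'(0)|$. As written, your argument therefore only produces an $n^{-1/2}$ error attached to $\sup|\psi'|$ and does not recover the finer $n^{-1}$ rate that the proposition claims for the $\psi$-dependent term; closing this gap is precisely the technical content that \cite{XGL19a, XGL19b} supply and that your sketch leaves unresolved.
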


The assertions (1) and (2) of Proposition \ref{Prop Rn limit1}
were respectively established in \cite{XGL19a} and \cite{XGL19b}.
The perturbation $l$ as well as the explicit rate of convergence in Proposition \ref{Prop Rn limit1} 
are important in the sequel. 
They play a crucial role to establish the Bahadur-Rao type large deviations 
for the coefficients $\langle f, G_n v \rangle$
in Theorems \ref{thrmBR001}, \ref{Thm_BRP_Upper}, \ref{Thm-Posi-Neg-s} and \ref{Thm-Posi-Neg-sBRP}.

\section{Proof of upper tail large deviations for coefficients} \label{sec proof scalarprod}

The aim of this section is to establish 
Theorems \ref{thrmBR001} and \ref{Thm_BRP_Upper}. % \ref{Thm-Posi-Neg-s} and \ref{Thm-Posi-Neg-sBRP}. 
Since Theorems \ref{thrmBR001} is a 
direct consequence of Theorem \ref{Thm_BRP_Upper},
it suffices to establish Theorem \ref{Thm_BRP_Upper}. 
We also establish a large deviation result under the changed measure. 

% \subsection{Proof of Theorem \ref{Thm_BRP_Upper}}
\subsection{Zero-one laws for the stationary measure}

We first present some zero-one laws for the stationary measure which will be used in the proof of 
Theorem \ref{Thm_BRP_Upper}. 

%We shall use the following results concerning the zero law of the stationary measure $\pi_s$,
%namely, any proper projective subspace $Y \subsetneq \mathbb P^{d-1}$ 
%is of mass $0$ under the stationary measure $\pi_s$.  

\begin{lemma}  \label{Lem_Gui_LeP}
Assume condition \ref{Condi-IP}.   
Then, for any $s \in I_{\mu}^{\circ}$
 and any proper projective subspace $Y \subsetneq \mathbb P^{d-1}$, 
it holds that $\pi_s(Y)=0$.
\end{lemma}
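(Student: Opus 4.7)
The plan is to apply a Furstenberg--Guivarc'h style dimension-reduction argument to the eigenmeasure $\nu_s$ and reach a contradiction with the strong irreducibility in condition \ref{Condi-IP}. First I reduce the problem to showing $\nu_s(Y) = 0$: the identity $\pi_s = (r_s/\varrho_s) \nu_s$ together with $0 < \inf_{\bb P^{d-1}} r_s \le \sup_{\bb P^{d-1}} r_s < \infty$ (by continuity of $r_s$ and compactness of $\bb P^{d-1}$) makes $\pi_s$ and $\nu_s$ mutually absolutely continuous with bounded densities, hence they share the same null sets.

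Assume for contradiction that $\nu_s$ charges some proper projective subspace. Let $p$ be the smallest dimension such that $\sup_{\dim V = p} \nu_s(V) > 0$. By minimality of $p$, any two distinct $p$-dimensional projective subspaces $V_1 \ne V_2$ intersect in a subspace of dimension strictly less than $p$ and therefore of $\nu_s$-measure zero, so the $p$-dimensional subspaces carrying positive $\nu_s$-mass are essentially pairwise disjoint. Setting $M_p := \sup_{\dim V = p} \nu_s(V) > 0$ (attained by upper semicontinuity on the compact Grassmannian of $p$-dimensional projective subspaces), the set
\[
\mathcal M_p := \{ V : \dim V = p,\ \nu_s(V) = M_p \}
\]
has cardinality at most $1/M_p$ and is thus finite.

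The key step is to prove that $\mathcal M_p$ is $\Gamma_\mu$-invariant, i.e.\ for $\mu$-a.e.\ $g$ and every $V \in \mathcal M_p$, one has $g^{-1} V \in \mathcal M_p$; once this is proved, $\bigcup_{V \in \mathcal M_p} V$ is a finite union of proper projective subspaces that is $\Gamma_\mu$-invariant, contradicting strong irreducibility and closing the proof. This invariance step is the main obstacle. For $s = 0$ the stationarity of $\nu$ under $P$ immediately yields $\nu(g^{-1}V) = M_p$ for $\mu$-a.e.\ $g$ from the identity $\nu(V) = \int \nu(g^{-1}V)\,\mu(dg)$; for $s \ne 0$ the corresponding eigenmeasure identity
\[
\kappa(s)\, \nu_s(V) = \int_{\bb G} \int_{g^{-1}V} e^{s\sigma(g,x)}\, \nu_s(dx)\, \mu(dg)
\]
is distorted by the $x$-dependent weight $e^{s\sigma(g,x)}$. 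The plan is to handle the weight by approximating $\mathbbm 1_V$ from above and below by families of H\"older continuous functions $\varphi_{\ee}^\pm \in \mathcal B_\gamma$ concentrated in an $\ee$-neighborhood of $V$, applying the quantitative spectral-gap convergence $\kappa(s)^{-n} P_s^n \varphi \to r_s \cdot \nu_s(\varphi)/\varrho_s$ from Proposition \ref{transfer operator}, and passing to the limits $n \to \infty$ and then $\ee \to 0$ to absorb the weight and extract the required invariance of $\mathcal M_p$ from the eigenmeasure identity.
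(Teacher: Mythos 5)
The paper does not actually prove this lemma; it cites Guivarc'h and Le Page \cite{GL16}, stating that they established it ``using the strategy of Furstenberg \cite{Fur63}.'' Your proposal correctly identifies this strategy (passing from $\pi_s$ to $\nu_s$ via the uniform bounds on $r_s$, picking the minimal dimension $p$, extracting the finite maximal family $\mathcal M_p$, and deriving a contradiction with strong irreducibility once $\bigcup_{V\in\mathcal M_p}V$ is shown to be $\Gamma_\mu$-invariant), and you correctly single out the one place where the $s=0$ argument breaks: the one-step eigenmeasure identity $\kappa(s)\nu_s(V)=\int_{\bb G}\int_{g^{-1}V}e^{s\sigma(g,x)}\nu_s(dx)\mu(dg)$ is weighted, so one cannot read off $\nu_s(g^{-1}V)=M_p$ from $\nu_s(V)=M_p$ by a simple averaging argument.

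The gap is that you do not carry out this invariance step; you only describe a plan, and the plan as stated does not clearly work. The spectral-gap convergence $\kappa(s)^{-n}P_s^n\varphi\to r_s\,\nu_s(\varphi)/\varrho_s$ is a statement about the limiting behaviour of the $n$-fold tilted law of $G_nx$; it does not obviously translate into a one-step statement of the form ``$V\in\mathcal M_p\Rightarrow g^{-1}V\in\mathcal M_p$ for $\mu$-a.e.\ $g$'', and taking $n\to\infty$ and then $\ee\to0$ does not in any transparent way ``absorb'' the $x$-dependent weight $e^{s\sigma(g,x)}$ sitting in front of the indicator of $g^{-1}V$. To close the argument one needs a genuinely different device -- for instance working with the Markov operator $Q_s$ and its stationary measure $\pi_s$ directly and exploiting that the position-dependent transition kernel $q_1^s(x,g)\mu(dg)$ is everywhere equivalent to $\mu$ (hence has support exactly $\Gamma_\mu$), or invoking a proximality/contraction argument along a proximal element of $\Gamma_\mu$. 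As written, the central step of your proof is missing.
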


\begin{lemma}\label{Lem_zero_law_s_Neg}
Assume conditions \ref{Condi-TwoExp} and \ref{Condi-IP}. 
Then, there exists a constant $s_0 >0$ such that for any 
$s \in (-s_0, 0)$ and any proper projective subspace $Y \subsetneq \mathbb P^{d-1}$, 
it holds that $\pi_s( Y)=0$. 
\end{lemma}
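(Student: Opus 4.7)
The plan is to derive this lemma as a direct consequence of the H\"older regularity of the stationary measure $\pi_s$ provided by Proposition \ref{PropRegu02}, together with a standard reduction from proper projective subspaces to hyperplanes.

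First I would recall that any proper projective subspace $Y \subsetneq \mathbb{P}^{d-1}$ is of the form $\mathbb{P}(W)$ for some proper linear subspace $W \subsetneq \mathbb{R}^d$. Enlarging $W$ if necessary to a hyperplane, $Y$ is contained in a projective hyperplane $H_y$ of the form
\begin{equation*}
H_y = \big\{ x = \mathbb{R}v \in \mathbb{P}^{d-1} : \langle f, v \rangle = 0 \big\} = \big\{ x \in \mathbb{P}^{d-1} : \delta(y, x) = 0 \big\},
\end{equation*}
where $y = \mathbb{R}f \in (\mathbb{P}^{d-1})^*$ with $f \in (\mathbb{R}^d)^* \setminus \{0\}$. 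By monotonicity it therefore suffices to show $\pi_s(H_y) = 0$ for every $y \in (\mathbb{P}^{d-1})^*$, uniformly for $s$ in a small neighborhood of $0$.

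Next I would apply Proposition \ref{PropRegu02}: under conditions \ref{Condi-TwoExp} and \ref{Condi-IP}, there exist constants $\alpha, s_0 > 0$ such that for every $s \in (-s_0, 0]$,
\begin{equation*}
\sup_{y \in (\mathbb{P}^{d-1})^*} \int_{\mathbb{P}^{d-1}} \delta(y, x)^{-\alpha} \, \pi_s(dx) < +\infty.
\end{equation*}
Since the integrand equals $+\infty$ precisely on $H_y$, finiteness of the integral forces $\pi_s(H_y) = 0$. Combined with the previous step this yields $\pi_s(Y) = 0$ for all proper projective subspaces $Y$ and all $s \in (-s_0, 0)$, which is the claim.

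There is no real obstacle beyond invoking Proposition \ref{PropRegu02}; the substantive work, namely the H\"older regularity of $\pi_s$ on negative slices of $s$, has already been carried out in \cite{GQX20} and stated there. The only thing worth emphasizing is that $\pi_s(Y) = 0$ and $\nu_s(Y) = 0$ are equivalent for such sets, since $\pi_s = r_s \, \nu_s / \nu_s(r_s)$ and $r_s$ is strictly positive and continuous on the compact space $\mathbb{P}^{d-1}$ (Proposition \ref{Prop-Trans-s-neg}), so the argument simultaneously yields the corresponding zero-law for the eigenmeasure $\nu_s$.
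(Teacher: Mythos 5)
Your proposal is correct and rests on the same key ingredient the paper attributes to \cite{GQX20}, namely the H\"older regularity of the stationary measure; the paper itself only cites the reference without detailing the argument. Your route through Proposition \ref{PropRegu02} (finiteness of $\sup_y \int \delta(y,x)^{-\alpha}\,\pi_s(dx)$ forcing $\pi_s(H_y)=0$, then monotonicity from subspaces to hyperplanes) is a clean and self-contained instantiation of that idea, and the closing remark that $\pi_s$ and $\nu_s$ are mutually absolutely continuous via the strictly positive eigenfunction $r_s$ matches the paper's own comment following the lemma.
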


Lemma \ref{Lem_Gui_LeP} was established by Guivarc'h and Le Page \cite{GL16} using the
strategy of Furstenberg \cite{Fur63}.  
Lemma \ref{Lem_zero_law_s_Neg} was proved in \cite{GQX20} based on the H\"{o}lder regularity of the 
stationary measure $\nu$.
Note that the results in \cite{GL16} and \cite{GQX20} are stated for the eigenmeasure $\nu_s$,
but they also hold for the stationary measure $\pi_s$ since the measures $\pi_s$ and $\nu_s$ are equivalent.

We shall also need the following zero-one law of the stationary measure $\pi_s$ 
recently established  in \cite{GQX20}.

\begin{lemma}\label{Lem_0-1_law_s}
Assume condition \ref{Condi-IP}. 
Then, for any $s \in I_{\mu}^{\circ}$ and any algebraic subset $Y$ of $\mathbb P^{d-1}$, 
it holds that either $\pi_s( Y)=0$ or $\pi_s( Y)=1$. 
In particular, for any $y \in (\bb P^{d-1})^*$ and any $t \in (-\infty, 0)$, 
\begin{align}  \label{0-1_law_s_Posi_Ex}
\pi_s \left( \left\{x \in \mathbb{P}^{d-1}:  \log \delta(y, x) =  t \right\}  \right)
= 0 \  \mbox{or} \  1.
\end{align}
\end{lemma}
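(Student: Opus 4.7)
The plan is to prove the zero-one law by a Furstenberg-style minimal-type argument on algebraic subsets, exploiting that the $\Gamma_\mu$-action preserves both dimension and degree, and then invoking Condition \ref{Condi-IP} to rule out any proper $\Gamma_\mu$-invariant algebraic subset of positive $\pi_s$-measure. The ``in particular'' statement will follow at once: writing $x = \mathbb R v$ and $y = \mathbb R f$, the equation $\log\delta(y,x)=t$ is equivalent to the homogeneous polynomial identity $\langle f, v\rangle^2 = e^{2t}|f|^2|v|^2$, so $\{x : \log\delta(y,x)=t\}$ is an algebraic subset of $\mathbb P^{d-1}$.

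For the dichotomy itself, suppose for contradiction that some algebraic $Y\subsetneq \mathbb P^{d-1}$ satisfies $0<\pi_s(Y)<1$, and choose $Y_0$ of minimal \emph{type} $(\dim Y_0,\deg Y_0)$, in lexicographic order, among all algebraic subsets of strictly positive $\pi_s$-measure. The first key step is to show that $Y_0$ is $\Gamma_\mu$-invariant, i.e.\ $g^{-1}Y_0 = Y_0$ for $\mu$-a.e.\ $g$. I would start from the $Q_s$-stationarity $\pi_s(Y_0) = \int Q_s\mathbf 1_{Y_0}\,d\pi_s > 0$ and, using that $r_s$ is bounded above and below by strictly positive constants (Proposition \ref{transfer operator}), transfer the positive $Q_s$-mass of $Y_0$ into positive $\pi_s$-mass of $g^{-1}Y_0$ for a $\mu$-positive set of $g$. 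Now $g^{-1}Y_0$ is algebraic of the same type as $Y_0$, so whenever $g^{-1}Y_0 \neq Y_0$ the intersection $Y_0 \cap g^{-1}Y_0$ is algebraic of strictly smaller type, hence has $\pi_s$-measure zero by the minimality of $Y_0$. A careful bookkeeping (modelled on the classical Furstenberg-Guivarc'h argument, see Lemma \ref{Lem_Gui_LeP}) then excludes that $g^{-1}Y_0 \ne Y_0$ on a $\mu$-positive set, giving the claimed invariance.

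With $Y_0$ $\Gamma_\mu$-invariant, I would use the proximality half of Condition \ref{Condi-IP}: there is a proximal $g_0 \in \Gamma_\mu$ with dominant eigenline $\mathbb R v_{g_0}$, and $g_0^n x \to \mathbb R v_{g_0}$ for every $x$ outside the proper repelling projective hyperplane of $g_0$. Since Lemma \ref{Lem_Gui_LeP} says $\pi_s$ charges no proper projective subspace, such $x$ can be picked in $Y_0$; the $\Gamma_\mu$-invariance then yields $\mathbb R v_{g_0} \in Y_0$. Ranging $g_0$ over all proximal elements of $\Gamma_\mu$ and using \eqref{Def_supp_nu}, we obtain $\supp\nu \subseteq Y_0$. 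Strong irreducibility (combined with Lemma \ref{Lem_Gui_LeP} applied to $\nu=\pi_0$, which rules out $\Gamma_\mu$-invariant proper algebraic subsets as otherwise these would be contained in a finite union of proper projective subspaces) forces the Zariski closure of $\supp\nu$ to be all of $\mathbb P^{d-1}$; since $Y_0$ is Zariski closed, $Y_0=\mathbb P^{d-1}$, contradicting its properness.

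The hard part will be the first step above, namely transferring the $Q_s$-stationarity -- which carries the non-trivial Radon--Nikodym weight $r_s(gx)/(\kappa(s)r_s(x))\,e^{s\sigma(g,x)}$ -- into the clean group-theoretic statement that $g^{-1}Y_0=Y_0$ for $\mu$-a.e.\ $g$. Unlike the $s=0$ case where $\mu$ is itself the Markov law and Furstenberg's classical proof applies directly, here one must combine the positivity and two-sided bounds on $r_s$ with the algebraic rigidity (preservation of dimension and degree) in order to rule out a $\mu$-positive set of $g$'s moving $Y_0$ to a distinct algebraic set of the same type. Everything else in the scheme is a fairly routine reworking of the Furstenberg-Guivarc'h minimality argument, so this weighting issue is where genuine care, rather than a citation, is required.
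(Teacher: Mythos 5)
You should first note that the paper does not supply its own proof of Lemma \ref{Lem_0-1_law_s}: it is cited directly from \cite{GQX20} (the sentence preceding the statement reads ``We shall also need the following zero-one law of the stationary measure $\pi_s$ recently established in \cite{GQX20}''). So there is no in-paper argument to compare against, and the proposal must stand on its own.

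The crux of your scheme --- the claim that a minimal-type $Y_0$ with $\pi_s(Y_0)>0$ satisfies $g^{-1}Y_0 = Y_0$ for $\mu$-a.e.\ $g$ --- has a genuine gap, and you flag it but do not close it. The classical Furstenberg--Guivarc'h argument hinges on the convolution identity $\nu(Y)=\int \nu(g^{-1}Y)\,\mu(dg)$: choosing $Y_0$ of minimal type with $\nu(Y_0)$ \emph{maximal} among such sets forces $\nu(g^{-1}Y_0)=\nu(Y_0)$ for $\mu$-a.e.\ $g$; pairwise null intersections then bound the orbit $\{g^{-1}Y_0 : g\in\Gamma_\mu\}$ in cardinality by $1/\nu(Y_0)$, and the finite union is a $\Gamma_\mu$-invariant proper algebraic set. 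For $\pi_s$ with $s\neq 0$, $Q_s$-stationarity reads $\pi_s(Y_0)=\int\!\!\int \frac{e^{s\sigma(g,x)}r_s(gx)}{\kappa(s)r_s(x)}\mathbbm 1_{g^{-1}Y_0}(x)\,\mu(dg)\,\pi_s(dx)$, with a non-constant weight; what you can extract, as you observe, is only that $\pi_s(g^{-1}Y_0)>0$ on some $\mu$-positive set of $g$. You do \emph{not} get $\pi_s(g^{-1}Y_0)=\pi_s(Y_0)$, nor any uniform lower bound on $\pi_s(g^{-1}Y_0)$, so the orbit may a priori be countably infinite with measures tending to $0$, and the ``careful bookkeeping'' you invoke has no foothold. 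The two-sided bounds on $r_s$ from Proposition \ref{transfer operator} do not fix this: the obstruction is the factor $e^{s\sigma(g,x)}$, which is unbounded over $g$. This is exactly where $s\neq 0$ differs structurally from $s=0$, and where a new idea (not merely a reworking) is needed; note also that $\pi_s$ and $\nu=\pi_0$ have the same support but are not a priori mutually absolutely continuous, so the zero-one law does not simply transfer from $\nu$.

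A secondary point: your Zariski-density step is also not correctly justified. A $\Gamma_\mu$-invariant proper algebraic subset $Y_0$ need not be contained in a finite union of proper projective subspaces (e.g.\ a smooth conic in $\mathbb P^2$ is not), so Lemma \ref{Lem_Gui_LeP} does not directly exclude it. The standard route is that under strong irreducibility and proximality the Zariski closure of $\Gamma_\mu$ acts with no proper Zariski-closed invariant subset meeting $\supp\nu$; this is a known fact but needs a correct reference or short argument, not an appeal to Lemma \ref{Lem_Gui_LeP}.
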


\begin{lemma}\label{Lem_0-1_law_s_Neg}
Assume conditions \ref{Condi-TwoExp} and \ref{Condi-IP}. 
Then, there exists a constant $s_0 >0$ such that for any 
$s \in (-s_0, 0)$ and any algebraic subset $Y$ of $\mathbb P^{d-1}$, 
it holds that either $\pi_s( Y)=0$ or $\pi_s( Y)=1$. 
In particular, for any $y \in (\bb P^{d-1})^*$ and any $t \in (-\infty, 0)$, 
\begin{align}  \label{0-1_law_s_Nega_Ex}
\pi_s  \left(  \left\{x \in \mathbb{P}^{d-1}:  \log \delta(y, x) =  t  \right\}  \right)
= 0 \  \mbox{or} \  1.
\end{align}
\end{lemma}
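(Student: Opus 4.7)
The plan is to mirror, in the regime $s \in (-s_0, 0)$, the strategy used for the positive-$s$ analog in Lemma \ref{Lem_0-1_law_s}, exploiting the spectral gap and eigenfunction/eigenmeasure theory supplied by Proposition \ref{Prop-Trans-s-neg}. The smallness of $s_0$ will be dictated precisely by the range in which $r_s$ and $\nu_s$ are well-defined, strictly positive, and sufficiently regular, as ensured by Propositions \ref{Prop-Trans-s-neg} and \ref{PropRegu02}. First I would reduce the zero-one law for $\pi_s$ to the analogous one for $\nu_s$: by Lemma \ref{Lemma-expleinenfun-s-neg}(2) and Proposition \ref{Prop-Trans-s-neg}, $r_s$ is continuous and strictly positive on the compact space $\mathbb P^{d-1}$, so $\pi_s = \nu_s(r_s)^{-1} r_s\,\nu_s$ is mutually absolutely continuous with $\nu_s$, with density bounded above and below by strictly positive constants. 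I would then introduce the minimal algebraic subset $Z \subseteq \mathbb P^{d-1}$ with $\nu_s(Z) = 1$, which exists by Noetherian descent on the family of algebraic subsets of full $\nu_s$-measure.

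The core step is to promote the ``$\nu_s$-invariance'' encoded by the eigen-equation into genuine $\Gamma_\mu$-invariance of $Z$. Applying $P_s \nu_s = \kappa(s)\nu_s$ to the indicator $\mathbbm{1}_{\mathbb P^{d-1}\setminus Z}$ gives
\[
0 = \kappa(s)\,\nu_s(\mathbb P^{d-1}\setminus Z) = \int_{\bb G}\!\int_{\mathbb P^{d-1}} e^{s \sigma(g,x)}\, \mathbbm{1}_{\mathbb P^{d-1}\setminus Z}(gx)\, \nu_s(dx)\, \mu(dg),
\]
and the strict positivity of $e^{s \sigma(g,x)}$ forces $gx \in Z$ for $\mu \otimes \nu_s$-a.e.\ $(g,x)$. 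Hence for $\mu$-a.e.\ $g \in \Gamma_\mu$ one has $\nu_s(g^{-1}Z) = 1$, and by minimality of $Z$ this gives $Z \subseteq g^{-1}Z$, i.e.\ $gZ \subseteq Z$; since $g$ is a linear automorphism and $gZ$ is algebraic of the same dimension as $Z$, one obtains $gZ = Z$. A closure/continuity argument on $\supp \mu$ then upgrades this to $gZ = Z$ for every $g \in \Gamma_\mu$.

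Finally, Condition \ref{Condi-IP} forces $Z = \mathbb P^{d-1}$: for every proximal element $g_0 \in \Gamma_\mu$ the projective iterates $g_0^n x$ converge to the dominant eigenvector $v_{g_0}$ whenever $x$ lies outside a proper linear subspace, and by Lemma \ref{Lem_zero_law_s_Neg} such a subspace is $\nu_s$-null, so closedness of $Z$ yields $v_{g_0} \in Z$. Strong irreducibility combined with proximality implies that the Zariski closure of $\{v_{g_0} : g_0 \in \Gamma_\mu \text{ proximal}\}$ equals $\mathbb P^{d-1}$, so $Z = \mathbb P^{d-1}$. Consequently any algebraic $Y$ with $\nu_s(Y) > 0$ must contain $Z$ and hence satisfy $\nu_s(Y) = 1$, yielding the dichotomy for $\nu_s$ and, via the first step, for $\pi_s$. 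The ``in particular'' assertion follows by applying the general result to $Y_t = \{x = \mathbb R v \in \mathbb P^{d-1} : |\langle f,v\rangle|^2 = e^{2t}|f|^2|v|^2\}$ with $y = \mathbb R f$, which is a proper algebraic subset when $t < 0$. The main obstacle is the second step: one must exploit the strict positivity of the weight $e^{s\sigma(g,x)}$ in conjunction with the minimality of $Z$ to lift a $\mu$-almost sure inclusion into genuine $\Gamma_\mu$-invariance; for $s=0$ this is Furstenberg's classical argument, and for $s$ small negative the eigenmeasure theory of Proposition \ref{Prop-Trans-s-neg} together with the H\"older regularity of Proposition \ref{PropRegu02} provides exactly the analytic control needed to make the same argument go through.
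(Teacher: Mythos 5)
The paper does not give a proof of this lemma at all: both Lemma~\ref{Lem_0-1_law_s} and Lemma~\ref{Lem_0-1_law_s_Neg} are imported wholesale with the words ``recently established in \cite{GQX20}.'' So there is no in-paper argument to compare against; I can only evaluate your proof on its own terms.

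Your setup is sound: the reduction from $\pi_s$ to $\nu_s$ using the boundedness of $r_s$ is fine, the Noetherian choice of a minimal algebraic subset $Z$ with $\nu_s(Z)=1$ is well posed, and the derivation of $\Gamma_\mu$-invariance of $Z$ from the eigenequation $P_s\nu_s = \kappa(s)\nu_s$ together with the strict positivity of the kernel $e^{s\sigma(g,x)}$ and Noetherian descent is the correct Furstenberg mechanism. However, the last two steps both fail. First, the assertion that strong irreducibility plus proximality forces $Z = \mathbb P^{d-1}$ is false in general: if $\Gamma_\mu$ is a strongly irreducible proximal sub-semigroup of $SO(p,q)$, the support of the stationary measure lies on the isotropic quadric $\{x=\bb R v : Q(v)=0\}$, which is a proper $\Gamma_\mu$-invariant algebraic subset; hence $Z$ need not be all of $\mathbb P^{d-1}$ and the Zariski closure of the set of attracting directions can well be a proper subvariety. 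The conditions \ref{Condi-IP} only rule out invariant finite unions of projective \emph{subspaces}, not general invariant algebraic subsets. Second, even granting $Z = \mathbb P^{d-1}$, the inference ``any algebraic $Y$ with $\nu_s(Y)>0$ must contain $Z$'' is a non sequitur: minimality of the \emph{full-measure} algebraic set says nothing about algebraic sets of intermediate measure, and if $Z$ really were $\mathbb P^{d-1}$, your conclusion would actually say that every proper algebraic subset is $\nu_s$-null — a strictly stronger and (by the $SO(p,q)$ example) generically false statement that the lemma carefully avoids by permitting the value $1$.

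The correct Furstenberg-type argument has to start from a minimal algebraic subset of \emph{positive} $\nu_s$-measure (not of full measure), show that its $\Gamma_\mu$-orbit is finite and permuted by $\Gamma_\mu$, and combine this with strong irreducibility to force that orbit to be trivial, using that the intersection of two distinct minimal pieces must be null and that $\nu_s$ cannot carry an infinite family of disjoint positive-measure pieces. This extra structure is what separates the zero-one law from the weaker assertion your argument produces, and it is precisely the part that is missing from your write-up.
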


The assertions \eqref{0-1_law_s_Posi_Ex} and \eqref{0-1_law_s_Nega_Ex} 
are sufficient for us to establish the Bahadur-Rao type large deviation asymptotics 
for the coefficient $\langle f, G_n v \rangle$ (cf. Theorems \ref{thrmBR001} and \ref{Thm-Posi-Neg-s}). 
However, in order to obtain the Petrov type extensions (cf. Theorems \ref{Thm_BRP_Upper} and \ref{Thm-Posi-Neg-sBRP}), 
we need the following slightly stronger statements than \eqref{0-1_law_s_Posi_Ex} and \eqref{0-1_law_s_Nega_Ex}.

\begin{lemma}\label{Lem_0-1_law_s_Posi_Uni}
Assume condition \ref{Condi-IP}. 
Then, for any $y \in (\bb P^{d-1})^*$ and any $t \in (-\infty, 0)$, if
\begin{align}  \label{0-1_law_s_Posi_uni}
\pi_s \left( \left\{x \in \mathbb{P}^{d-1}:  \log \delta(y, x) =  t \right\}  \right)
= 0 
\end{align}
holds for some $s \in I_{\mu}^{\circ}$, then \eqref{0-1_law_s_Posi_uni} holds for all $s \in I_{\mu}^{\circ}$. 
\end{lemma}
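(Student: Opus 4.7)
The plan is to exploit two facts: (i) the dichotomy of Lemma~\ref{Lem_0-1_law_s}, which forces $\pi_s(Y_{y,t}) \in \{0,1\}$ for every $s \in I_\mu^\circ$, where $Y_{y,t} := \{x \in \mathbb{P}^{d-1} : \log \delta(y,x) = t\}$ is a closed algebraic subset; and (ii) the fact that the topological support of $\pi_s$ is the same for every $s \in I_\mu^\circ$. Granted these, the conclusion is immediate: if $\pi_{s_0}(Y_{y,t}) = 0$ then $\supp \pi_{s_0}$ cannot be contained in the closed set $Y_{y,t}$ (otherwise $\pi_{s_0}(Y_{y,t})$ would equal $1$), hence $\supp \pi_s$ is not contained in $Y_{y,t}$ for any $s \in I_\mu^\circ$, which rules out $\pi_s(Y_{y,t}) = 1$ and forces $\pi_s(Y_{y,t}) = 0$ by the dichotomy.

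The real task is therefore to show that $\supp \pi_s$ is independent of $s$. Since $\pi_s(\varphi) = \nu_s(\varphi r_s)/\nu_s(r_s)$ with $r_s$ continuous and strictly positive by Proposition~\ref{transfer operator}, one has $\supp \pi_s = \supp \nu_s$. Let $\Lambda_\mu := \overline{\{v_g : g \in \Gamma_\mu,\ g \text{ is proximal}\}}$ denote the minimal closed $\Gamma_\mu$-invariant subset of $\mathbb{P}^{d-1}$, whose existence and uniqueness follow from condition~\ref{Condi-IP}; by \eqref{Def_supp_nu} it also coincides with $\supp \nu = \supp \pi_0$. I claim that $\supp \pi_s = \Lambda_\mu$ for every $s \in I_\mu^\circ$.

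For the inclusion $\Lambda_\mu \subseteq \supp \nu_s$, I would verify that $\supp \nu_s$ is $\Gamma_\mu$-invariant and then invoke minimality: given $x_0 \in \supp \nu_s$ and $g_0 \in \supp \mu$, pick a non-negative continuous $\varphi$ with $\varphi(g_0 x_0) > 0$ supported in a small neighborhood of $g_0 x_0$; by joint continuity of the action and the fact that $\mu(U) > 0$ for any neighborhood $U$ of $g_0$ meeting $\supp \mu$, the function $P_s \varphi$ is strictly positive on some neighborhood $V$ of $x_0$, so the identity $\kappa(s)\nu_s(\varphi) = \nu_s(P_s \varphi) \geq \int_V P_s\varphi\,d\nu_s > 0$ yields $g_0 x_0 \in \supp \nu_s$. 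For the reverse inclusion $\supp \pi_s \subseteq \Lambda_\mu$, I would use that $\Gamma_\mu$-invariance of $\Lambda_\mu$ implies that $Q_s^n \delta_x$ is supported in $\Lambda_\mu$ whenever $x \in \Lambda_\mu$; by the spectral gap \eqref{equcontin Q s limit} the iterates $Q_s^n \delta_x$ converge weakly to $\pi_s$, and a weak limit of probability measures supported on the closed set $\Lambda_\mu$ is itself supported on $\Lambda_\mu$.

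In my view the main subtlety is the reverse inclusion: identifying $\supp \pi_s$ exactly with the minimal $\Gamma_\mu$-invariant set requires the convergence of $Q_s$-orbits supplied by \eqref{equcontin Q s limit}, whereas the forward inclusion is a routine continuity argument. Once both inclusions are in place the independence of $\supp \pi_s$ from $s$ is settled, and the lemma follows from the first paragraph.
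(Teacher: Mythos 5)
Your proof is correct and follows the same overall strategy as the paper: combine the zero–one dichotomy of Lemma~\ref{Lem_0-1_law_s} with the fact that $\supp\pi_s$ does not depend on $s\in I_\mu^\circ$, and derive a contradiction from $\pi_{s_1}(Y_{y,t})=0$, $\pi_{s_2}(Y_{y,t})=1$ since the latter forces $\supp\pi_{s_2}\subset Y_{y,t}$. The only difference is that the paper simply cites \cite{GL16} for the equality $\supp\pi_s=\supp\nu$, while you reprove it from scratch by showing both inclusions against the minimal $\Gamma_\mu$-invariant set $\Lambda_\mu$ (forward via $\Gamma_\mu$-invariance of $\supp\nu_s$ and uniqueness of the minimal set, reverse via the weak convergence $Q_s^n\delta_x\to\pi_s$ from \eqref{equcontin Q s limit}); this is a correct, self-contained substitute for the citation.
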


\begin{lemma}\label{Lem_0-1_law_s_Neg_Uni}
Assume conditions \ref{Condi-TwoExp} and \ref{Condi-IP}. 
Then, for any $y \in (\bb P^{d-1})^*$ and any $t \in (-\infty, 0)$, if 
\begin{align}  \label{0-1_law_s_Nega_uni}
\pi_s  \left(  \left\{x \in \mathbb{P}^{d-1}:  \log \delta(y, x) =  t  \right\}  \right)
= 0
\end{align}
holds for some $s \in (-s_0, 0)$ with $s_0 >0$ small enough, 
then \eqref{0-1_law_s_Nega_uni} holds for all $s \in (-s_0, 0]$.
\end{lemma}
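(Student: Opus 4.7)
The plan is to reduce the claim to a statement about the topological support of $\pi_s$ and then show that this support is independent of $s \in (-s_0, 0]$.

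First, I would observe that $Y_{y,t} := \{x \in \bb P^{d-1} : \log\delta(y,x) = t\}$ is a closed algebraic subset of $\bb P^{d-1}$: squaring the defining relation $|\langle f, v\rangle| = e^t |f||v|$ yields the homogeneous polynomial equation $\langle f,v\rangle^2 = e^{2t}|f|^2|v|^2$. Consequently, Lemma \ref{Lem_0-1_law_s_Neg} applied at each $s \in (-s_0, 0)$, together with the classical Furstenberg--Guivarc'h zero-one law at $s=0$, yields the dichotomy $\pi_s(Y_{y,t}) \in \{0,1\}$ for every $s \in (-s_0, 0]$. Since $Y_{y,t}$ is closed, the equality $\pi_s(Y_{y,t}) = 1$ is equivalent to $\supp(\pi_s) \subseteq Y_{y,t}$.

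Next I would argue that $\supp(\pi_s)$ coincides with the unique minimal $\Gamma_\mu$-invariant closed subset $\Lambda_\mu := \overline{\{v_g : g \in \Gamma_\mu \text{ proximal}\}}$ of $\bb P^{d-1}$, for every $s \in (-s_0, 0]$. Since $\pi_s = r_s \nu_s / \nu_s(r_s)$ with $r_s$ strictly positive and continuous by Propositions \ref{transfer operator} and \ref{Prop-Trans-s-neg}, we have $\supp(\pi_s) = \supp(\nu_s)$. From the eigenrelation $P_s \nu_s = \kappa(s) \nu_s$ and the strict positivity of the weight $e^{s\sigma(g,x)}$, the support of $\nu_s$ is $\mu$-essentially closed under the action of $\Gamma_\mu$, hence $\Gamma_\mu$-invariant; condition \ref{Condi-IP} guarantees that $\Lambda_\mu$ is the unique minimal such set, so $\Lambda_\mu \subseteq \supp(\nu_s)$. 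The converse inclusion follows because strong irreducibility and proximality force the $\Gamma_\mu$-orbit of any projective point to accumulate on $\Lambda_\mu$, which by the standard contraction argument prevents $\supp(\nu_s)$ from extending beyond $\Lambda_\mu$.

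Combining these facts, suppose \eqref{0-1_law_s_Nega_uni} holds for some $s \in (-s_0, 0)$, so that $\pi_s(Y_{y,t}) = 0$; by the dichotomy $\pi_s(Y_{y,t}) \neq 1$, equivalently $\Lambda_\mu \not\subseteq Y_{y,t}$. Since $\supp(\pi_{s'}) = \Lambda_\mu$ for every $s' \in (-s_0, 0]$, one obtains $\pi_{s'}(Y_{y,t}) \neq 1$, and a second application of the zero-one law forces $\pi_{s'}(Y_{y,t}) = 0$, as required.

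The main obstacle is the parameter-independent identification $\supp(\nu_s) = \Lambda_\mu$ for $s \in (-s_0, 0)$; while the case $s=0$ is classical, for negative $s$ one has to verify that the construction of $\nu_s$ via the negative transfer operator from Proposition \ref{Prop-Trans-s-neg} preserves this support characterization. This ultimately reduces to the strict positivity of the tilt $e^{s\sigma(g,x)}$, which keeps the $\Gamma_\mu$-invariance structure of $\supp(\nu_s)$ unchanged; once this invariance is confirmed, the remainder of the argument is a purely formal consequence of the closedness of $Y_{y,t}$ and the zero-one dichotomy.
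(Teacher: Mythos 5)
Your argument is correct and follows essentially the same path as the paper's: closedness of the algebraic set $Y_{y,t}$, the zero-one dichotomy from Lemma \ref{Lem_0-1_law_s_Neg} (together with the classical $s=0$ case), and the fact that $\supp\pi_s=\supp\nu$ is independent of $s\in(-s_0,0]$. The only difference is one of presentation: the paper cites the support equality $\supp\pi_s=\supp\nu$ for $s<0$ directly from \cite{GQX20}, whereas you sketch a (slightly informal) derivation of it from the eigenrelation and the minimality and uniqueness of the $\Gamma_\mu$-invariant set $\Lambda_\mu$.
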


\begin{proof}[Proof of Lemmas \ref{Lem_0-1_law_s_Posi_Uni} and \ref{Lem_0-1_law_s_Neg_Uni}]
We first prove Lemma \ref{Lem_0-1_law_s_Posi_Uni}. 
For any $y \in (\bb P^{d-1})^*$ and any $t \in (-\infty, 0)$,
denote $Y_{y, t} = \{x \in \mathbb{P}^{d-1}:  \log \delta(y, x) =  t \}$.
Suppose that there exist $s_1, s_2 \in I_{\mu}^{\circ}$ with $s_1 \neq s_2$ such that
$\pi_{s_1}(Y_{y, t}) = 0$ and $\pi_{s_2}(Y_{y, t}) \neq 0$.
Then by Lemma \ref{Lem_0-1_law_s} we have $\pi_{s_2}(Y_{y, t}) = 1$.
Since $Y_{y, t}$ is a closed set in $\bb P^{d-1}$,
by the definition of the support of the measure we get that $\supp \pi_{s_2} \subset Y_{y, t}$.
Since it is proved in \cite{GL16} that $\supp \pi_{s_1} = \supp \pi_{s_2}$ 
(both coincide with $\supp \nu$ defined by \eqref{Def_supp_nu}),
it follows that $\supp \pi_{s_1} \subset Y_{y, t}$ and hence $\pi_{s_1} (Y_{y, t}) = 1$.
This contradicts to the assumption $\pi_{s_1} (Y_{y, t}) = 0$. 
Therefore, if \eqref{0-1_law_s_Posi_uni} holds for some $s \in I_{\mu}^{\circ}$,
then it holds for all $s \in I_{\mu}^{\circ}$. 

The proof of Lemma \ref{Lem_0-1_law_s_Neg_Uni} is similar by using the fact that
$\supp \pi_s = \supp \nu$ for any $s \in (-s_0, 0)$, which is proved in \cite{GQX20}.  
\end{proof}

\subsection{Proof of Theorem \ref{Thm_BRP_Upper}%and \ref{Thm_BRP_Uni_s}
}
Now we are equipped to establish Theorem \ref{Thm_BRP_Upper}. 
%We are able to prove 
%Theorem \ref{Thm_BRP_Upper} 
This theorem is a direct consequence of the following more general result. 
% of Theorem \ref{Thm_BRP_Upper}. 
%In fact,  of Theorem \ref{Thm_BRP_Uni_s},
%we only need to give a proof of Theorem \ref{Thm_BRP_Uni_s}. 
Recall that $s$ and $q$ are related by $q = \Lambda'(s)$. 

\begin{theorem} \label{Thm_BRP_Uni_s}
Assume conditions \ref{Condi_Exp} and \ref{Condi-IP}. 
Let $K_{\mu} \subset I_{\mu}^{\circ}$ be any compact set in $\bb R$. 
Then, %for any compact set $K_{\mu} \subset I_{\mu}^{\circ}$, 
%for any $\eta > 0$, %and any positive sequence $(l_n)_{n \geq 1}$ satisfying $l_n \leq n^{-\eta}$, 
we have, 
as $n \to \infty$, uniformly in $s \in K_{\mu}$, 
$f \in (\bb R^d)^*$ and $v \in \bb R^d$ with $|f| = |v| = 1$, 
\begin{align}  \label{SCALREZ02_Uni_s_0000}  
 \mathbb{P} \Big( \log | \langle f, G_n v \rangle | \geq nq \Big)  
 =  \frac{ r_{s}(x)  r^*_{s}(y)}{\varrho_s} 
\frac{ \exp \left( -n   \Lambda^*(q) \right) } {s \sigma_{s}\sqrt{2\pi n}} \big[ 1 + o(1) \big].  
\end{align}
More generally, for any measurable function $\psi$ on $\mathbb{R}$ 
such that $u \mapsto e^{-s'u}\psi(u)$  is directly Riemann integrable 
for any $s' \in K_{\mu}^{\epsilon} : = \{ s' \in \bb R: |s' - s| < \epsilon, s \in K_{\mu} \}$ 
with $\epsilon >0$ small enough, 
%for some $s' \in (0,s)$, 
we have, as $n \to \infty$, uniformly in $s \in K_{\mu}$, 
  $f \in (\bb R^d)^*$ and $v \in \bb R^d$ with $|f| = |v| = 1$, 
\begin{align} 
&  \mathbb{E} \Big[ \varphi(G_n x) \psi \big( \log |\langle f, G_n v \rangle| - nq \big) \Big]
  \label{SCALREZ02_Uni_s}    \\
&   =  \frac{r_{s}(x)}{\varrho_s}     
 \frac{ \exp (-n \Lambda^*(q)) }{ \sigma_{s}\sqrt{2\pi n}}  
 \left[ \int_{\bb P^{d-1}} \varphi(x) \delta(y,x)^s \nu_s(dx)
       \int_{\mathbb{R}} e^{-su} \psi(u) du + o(1) \right].   \nonumber
\end{align}
\end{theorem}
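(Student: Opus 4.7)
The plan is to reduce the problem to an asymptotic evaluation under the changed measure $\mathbb{Q}_s^x$, then handle the coupling between the spatial and temporal parts via a dyadic partition and Fourier analysis, and finally pass to the limit using the regularity of $\nu_s$. As is standard, the first asymptotic \eqref{SCALREZ02_Uni_s_0000} will follow from \eqref{SCALREZ02_Uni_s} by taking $\varphi=\mathbf 1$ and $\psi=\mathbbm 1_{[0,\infty)}$, so I focus on \eqref{SCALREZ02_Uni_s}. Using the exact decomposition \eqref{basic decompos001} and the change of measure formula \eqref{basic equ1} applied to $F(z,u)=\varphi(z)\psi(u+\log\delta(y,z)-nq)$, together with the identity $\kappa^n(s)e^{-snq}=e^{-n\Lambda^*(q)}$, I rewrite
\begin{align*}
& \mathbb{E}\bigl[\varphi(G_nx)\psi\bigl(\log|\langle f,G_nv\rangle|-nq\bigr)\bigr] \\
& = e^{-n\Lambda^*(q)}\,r_s(x)\,\mathbb{E}_{\mathbb{Q}_s^x}\!\left[\frac{\varphi(G_nx)}{r_s(G_nx)}e^{-s U_n}\psi\bigl(U_n+\log\delta(y,G_nx)\bigr)\right],
\end{align*}
where $U_n:=\sigma(G_n,x)-nq$. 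So the task is to show that the expectation above is asymptotic to $\frac{1}{\varrho_s\sigma_s\sqrt{2\pi n}}\bigl[\int_{\bb P^{d-1}}\varphi(x)\delta(y,x)^s\nu_s(dx)\int_{\mathbb R}e^{-su}\psi(u)du+o(1)\bigr]$ uniformly in $s\in K_\mu$, $x,y$ and $\varphi$.

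Next, I partition the half-line $(-\infty,0]$ in which $\log\delta(y,G_nx)$ lies into intervals $I_k=(-\eta k,-\eta(k-1)]$, $k\ge 1$, with $\eta>0$ small, and split the expectation accordingly. On $\{\log\delta(y,G_nx)\in I_k\}$ the exponential weight $e^{-s U_n}\psi(U_n+\log\delta(y,G_nx))$ equals $\delta(y,G_nx)^s\cdot\widetilde\psi_s(U_n+\log\delta(y,G_nx))$ with $\widetilde\psi_s(u):=e^{-su}\psi(u)$, and the inner argument of $\widetilde\psi_s$ is within $\eta$ of $U_n-\eta(k-1)$. Bounding $\widetilde\psi_s$ from above and below by its $\eta$-sup/inf smoothings as in \eqref{smoo001} and then applying Lemma \ref{estimate u convo} with a smooth density $\rho_{\ee^2}$, I reduce the $k$-th piece to an expectation of the form
\[
\mathbb{E}_{\mathbb{Q}_s^x}\!\Bigl[\Phi_{s,k,\ee_2}(G_nx)\,\bigl(\widetilde\psi_s^{\pm}\!\ast\!\rho_{\ee^2}\bigr)\bigl(U_n-\eta(k-1)\bigr)\Bigr],
\]
where $\Phi_{s,k,\ee_2}\in\mathcal B_\gamma$ is a H\"older regularisation of $\frac{\varphi}{r_s}\delta(y,\cdot)^s\mathbbm 1_{\{\log\delta(y,\cdot)\in I_k\}}$. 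Fourier inversion $\widetilde\psi_s^{\pm}\!\ast\!\rho_{\ee^2}(u)=\tfrac{1}{2\pi}\int e^{itu}\,\widehat{\widetilde\psi_s^{\pm}}(t)\widehat\rho(\ee^2 t)dt$, supported in $|t|\le\ee^{-2}$, converts the expectation into $\tfrac{1}{2\pi}\int e^{-it\eta(k-1)}R^{n}_{s,it}(\Phi_{s,k,\ee_2})(x)\,\widehat{\widetilde\psi_s^{\pm}}(t)\widehat\rho(\ee^2 t)\,dt$; this is precisely the form handled by Proposition \ref{Prop Rn limit1} with perturbation parameter $l=0$ (and the perturbation $l$ will also be used, in the same way, when extending to the Bahadur--Rao--Petrov form required by Theorem \ref{Thm_BRP_Upper}). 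The cited proposition, applied uniformly in $s\in K_\mu$, then yields for each fixed $k$ the asymptotic $\tfrac{\sqrt{2\pi}}{\sigma_s\sqrt{n}}\,\widehat{\widetilde\psi_s^{\pm}}(0)\pi_s(\Phi_{s,k,\ee_2})+O(n^{-1})$, with an explicit error bound depending on $\|\Phi_{s,k,\ee_2}\|_\gamma$.

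Summing over $k\ge 1$ is the delicate step. The explicit remainder in Proposition \ref{Prop Rn limit1} is the key device: it lets me split the sum at $k\le k_n$ for a suitably chosen $k_n\to\infty$, where I can interchange limit and summation, and at $k>k_n$, where I must instead bound crudely. The crude tail estimate is where the direct Riemann integrability of $u\mapsto e^{-s'u}\psi(u)$ on the $\ee$-neighbourhood $K_\mu^\ee$ enters: it provides a uniform control on $\sum_{k>k_n} e^{-s\eta(k-1)}\mathbb{Q}_s^x(\log\delta(y,G_nx)\in I_k)$ via a small overshoot in the exponent, which is absorbed by the exponential decay of the regularity of $\nu_s$ (Propositions \ref{PropRegularity} and \ref{Prop_Regu_Strong01}). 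Once interchange is legitimate, the finite-$k$ sum and the $\ee_2,\ee_1\to 0$ reductions leave me with $\tfrac{1}{\varrho_s\sigma_s\sqrt{2\pi n}}\bigl(\int e^{-su}\psi(u)du\bigr)\sum_{k\ge 1}e^{-s\eta(k-1)}\nu_s\bigl(\varphi\,\delta(y,\cdot)^s\mathbbm 1_{\{\log\delta(y,\cdot)\in I_k\}}\bigr)+o(\cdot)$.

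The final step is to let $\eta\to 0$ and identify the limiting sum with $\int\varphi(x)\delta(y,x)^s\nu_s(dx)$. On intervals where the stepwise constant $e^{-s\eta(k-1)}$ well approximates $\delta(y,\cdot)^{-s}$ (relative to the reference $\delta(y,\cdot)^s$ already present) this is a Riemann sum type convergence, provided that $\nu_s$ assigns no mass to each level set $\{\log\delta(y,\cdot)=-\eta(k-1)\}$. This atomlessness is exactly furnished by the zero-one laws of Lemmas \ref{Lem_0-1_law_s} and \ref{Lem_0-1_law_s_Posi_Uni}: outside a countable set of $\eta$-values (those leading to a full-mass level set, excluded by $\delta(y,\cdot)^s\in L^1(\nu_s)$ which itself follows from Proposition \ref{PropRegularity}), the measure of the boundaries vanishes. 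The main obstacle in the whole argument will be making all of the above uniform in $s\in K_\mu$, $f,v$ and $\varphi\in\mathcal B_\gamma$ at once: the uniformity of the spectral gap and of the asymptotic in Proposition \ref{Prop Rn limit1} over the compact set $K_\mu$ is already built in, but the uniform control of the tail sum in $k$ and of the Riemann approximation in $\eta$ both depend on the uniform H\"older regularity of $\nu_s$ from Proposition \ref{PropRegularity}; verifying that this regularity, together with the direct Riemann integrability hypothesis on $K_\mu^\ee$, is strong enough to allow all limits to be taken uniformly is the technically most demanding piece.
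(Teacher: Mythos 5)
Your plan follows the same architecture as the paper's proof: reduce the first assertion to the second, use the exact decomposition $\log|\langle f,G_nv\rangle|=\log|G_nv|+\log\delta(y,G_nx)$ together with the change of measure \eqref{basic equ1}, partition $(-\infty,0]$ by $I_k=(-\eta k,-\eta(k-1)]$, smooth in both variables (Lemma \ref{estimate u convo} and a H\"older regularisation of the indicator), Fourier-invert, apply Proposition \ref{Prop Rn limit1}, and pass to the limit via the zero--one laws (Lemmas \ref{Lem_Gui_LeP}, \ref{Lem_0-1_law_s_Posi_Uni}). One genuine difference in phrasing: the paper feeds the phase $e^{-it\eta(k-1)}$ into Proposition \ref{Prop Rn limit1} through the perturbation parameter $l_{n,k}=\eta(k-1)/n$, whereas you propose $l=0$ and absorb the phase into the $\psi$ of the proposition; that is workable, since the derivative of $e^{-it\eta(k-1)}\widehat{\widetilde\psi_s^\pm}(t)\widehat\rho(\ee^2t)$ at $0$ grows only linearly in $k$ and is damped by the $C/n$ weight in \eqref{Thm1 lim R1}, but you should say this explicitly as it is the place where uniformity in $k$ must be checked.

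There is, however, a bookkeeping error at the end. You factor $e^{-sU_n}\psi(U_n+\log\delta)=\delta(y,G_nx)^s\,\widetilde\psi_s(U_n+\log\delta)$ and put $\delta(y,\cdot)^s$ \emph{inside} $\Phi_{s,k,\ee_2}$, so the leading contribution of the $k$-th block is $\tfrac{\sqrt{2\pi}}{\sigma_s\sqrt{n}}\widehat{\widetilde\psi_s^\pm}(0)\pi_s(\Phi_{s,k,\ee_2})$ with \emph{no} extra $e^{-s\eta(k-1)}$ factor, and then $\sum_k\pi_s(\Phi_{s,k,\ee_2})\to\pi_s(\varphi r_s^{-1}\delta(y,\cdot)^s)=\nu_s(\varphi\delta^s)/\varrho_s$ already as $\ee_2\to0$ — the $\eta\to0$ limit is not needed for the identification. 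Your displayed sum $\sum_{k\ge1}e^{-s\eta(k-1)}\nu_s(\varphi\delta(y,\cdot)^s\mathbbm1_{\{\log\delta(y,\cdot)\in I_k\}})$ double-counts the exponential weight; moreover the heuristic ``$e^{-s\eta(k-1)}$ approximates $\delta(y,\cdot)^{-s}$ on $I_k$'' has the wrong sign ($\delta\approx e^{-\eta(k-1)}$ on $I_k$, hence $e^{-s\eta(k-1)}\approx\delta^s$, not $\delta^{-s}$), so taken literally your formula would give $\nu_s(\varphi\delta^{2s})$. By contrast, the paper does \emph{not} factor out $\delta^s$: it keeps the test function $\varphi r_s^{-1}\mathbbm1_{\{\log\delta(y,\cdot)\in I_k\}}$ without $\delta^s$, extracts the weight $e^{-s\eta(k-1)}$ from $e^{-sT_n^v}\psi_\eta^\pm(T_n^v-\eta(k-1))$, and then lets $\eta\to0$ so that $e^{-s\eta(k-1)}\to\delta(y,\cdot)^s$. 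Either decomposition is consistent on its own; yours as written mixes the two.

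Two smaller points. First, for the upper-tail theorem you do not need the H\"older regularity of $\nu_s$ or $\pi_s$ (Propositions \ref{PropRegularity}, \ref{Prop_Regu_Strong01}) to control the tail of the $k$-sum: since $s>0$, the crude bound $e^{sY}\le e^{-s\eta M_n}$ on $\{Y\le -\eta M_n\}$ with $M_n=\lfloor C_1\log n\rfloor$ already kills the tail after multiplying by $\sqrt n$; this is exactly the paper's treatment of the $A_1$ term. The regularity enters only in the lower-tail Theorems \ref{Thm-Posi-Neg-sBRP}/\ref{Thm_BRP_Neg_Uni}, where $s<0$ and the analogous exponential weight does not help. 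Second, the direct Riemann integrability on a neighbourhood $K_\mu^\epsilon$ is used to make the smoothing condition \eqref{condition g} and the uniform boundedness of $e^{-su}\psi(u)$ hold uniformly in $s\in K_\mu$; it plays no role in the tail sum in $k$.
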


%%%%%%%%%%%%%%%%%%%%%%%%%%%%%%%%%%%%%%%%%%%%%%%%%%%%%%%%%%%%%%%%%%%%%%%%%%%%%%%%%

%\bigskip %s \ref{Thm_BRP_Upper} and 
\begin{proof}% [Proof of Theorem  \ref{Thm_BRP_Uni_s}]  
%\bigskip

%Recall that $X_n^x=\frac{G_nx}{|G_n x|},$ where $|G_n x| \not=0,$ for any $x\in \bb P^{d-1}.$
%In the proofs below we assume first that $f\in  \mathbb R^d$ and $x\in \bb P^{d-1}$ are arbitrary. 
%Since $\mathbb R^d \cong  (\mathbb R^d)^*,$
%we use below the interpretation of the vector $f$ as the linear functional
%$x \mapsto  \langle f,x \rangle$ on $\mathbb R^d$ 
%without saying it explicitly.  

It suffices to prove % the second 
assertion  \eqref{SCALREZ02_Uni_s},  %    of Theorem \ref{Thm_BRP_Uni_s}, 
since
   \eqref{SCALREZ02_Uni_s_0000}   
% assertion in Theorem \ref{Thm_BRP_Upper} 
follows from % the second 
  \eqref{SCALREZ02_Uni_s}  
by choosing $\varphi = \mathbf{1}$ 
and $\psi (u) = \mathbbm{1}_{ \{ u\geq 0 \} },$ $u \in \mathbb R.$ 
With no loss of generality, we assume that the target functions $\varphi$ and $\psi$
are non-negative. 
%Let $\psi$ % be a measurable function on $\mathbb{R}$ and $\ee>0.$
%be as introduced in Theorem  \ref{Thm_BRP_Uni_s}. 
For brevity, denote $\psi_s(u)=e^{-su}\psi(u)$ for $s \in % (-s_0, 0) \cup
 I_{\mu}^{\circ}$,  
and
\begin{align*}
\psi^+_{s,\ee} (u) = \sup_{u'\in\mathbb{B}_{\ee}(u)} \psi_s(u'),
\quad
\psi^-_{s,\ee}(u) = \inf_{u'\in\mathbb{B}_{\ee}(u)} \psi_s(u').
\end{align*}
Introduce the following condition:
for any $\ee>0,$ the functions
 $u \mapsto \psi^+_{s,\ee}(u)$ % \sup_{y'\in\mathbb{B}_{\ee}(y)}e^{-sy'}\psi(y')$
 and  $u \mapsto \psi^-_{s,\ee}(u)$ % \inf_{y'\in\mathbb{B}_{\ee}(y)}e^{-sy'}\psi(y')$
 are measurable and
\begin{align}\label{condition g}
\lim_{\ee \to 0^{+}} \int_{\mathbb{R}} \psi^+_{s,\ee}(u) du
= \lim_{\ee \to 0^{+}} \int_{\mathbb{R}} \psi^-_{s,\ee}(u) du 
=\int_{\mathbb{R}} e^{-su} \psi(u) du < +\infty.
\end{align}
%Let $\psi$ be a function on $\mathbb R$ such that
%$u \mapsto e^{-su} \psi(u)$ is directly Riemann integrable for some $s \in I_\mu^\circ$.
%In particular, it follows that $\lim_{u \to -\infty} \psi(u)=0.$

To prove %\eqref{SCALREZ02}, 
\eqref{SCALREZ02_Uni_s},  
we can assume additionally
%first prove Theorem \ref{Thm_BRP_Upper} under 
%the additional condition that
that the function $\psi$ satisfies the condition \eqref{condition g}, 
In fact,  using the approximation techniques similar to that in \cite{XGL19a}, 
we can prove that if  %  \eqref{SCALREZ02} 
\eqref{SCALREZ02_Uni_s}  
 holds under \eqref{condition g}, then it also holds 
% which will be relaxed 
under  the directly Riemann integrability condition introduced in the theorem.
% to the case of a function which is directly Riemann integrable. 
%This part of the proof is left to the reader. 
So in the following we assume  \eqref{condition g}.

Note that we have $f \in (\bb R^d)^*$ and $v \in \bb R^d$ with $|f| = |v| = 1$,
and $y = \bb R f \in (\bb P^{d-1})^*$ and $x = \bb R v \in \bb P^{d-1}$. 
Hence $\log |\langle f, G_n v \rangle| = \log |G_n v| + \log \delta(y, G_n x),$ 
and that $\log |\langle f, G_n v \rangle| = - \infty$ if and only if $\log \delta(y, G_n x) = -\infty$. 
Taking into account that $\psi(-\infty) = 0$, we can replace
 the logarithm of the coefficient $\log |\langle f, G_n v \rangle|$ 
by the sum $\log |G_n v| + \log \delta(y, G_n x)$ as follows: 
\begin{align*}
 A: & = \sigma_s \sqrt{2\pi n}  \frac{ e^{n \Lambda^*(q)} }{r_s(x)} 
\mathbb{E} \Big[ \varphi(G_n x)\psi( \log |\langle f, G_n v \rangle| - n q ) \Big] \nonumber\\
&  =   \sigma_s \sqrt{2\pi n}   
\frac{ e^{n \Lambda^*(q)} }{r_s(x)} \mathbb{E} \Big[ \varphi(G_n x) 
  \psi( \log |G_n x| + \log \delta(y, G_n x) - nq ) \Big]. 
\end{align*}
For short, we denote for any $y = \bb R f \in (\bb P^{d-1})^*$ and $x \in \bb R v \in \bb P^{d-1}$, 
\begin{align*}
T_n^v: = \log |G_n v| - nq,  \qquad  Y_n^{x,y}: = \log \delta(y, G_n x).  
\end{align*}
Recall that $q = \Lambda'(s)$. 
% and $|l| = O(\frac{1}{\sqrt{n}})$, 
%so that $\Lambda^*(q+l)=\Lambda^{*}(q) + sl + \frac{l^2}{2 \sigma_s^2}$.
Taking into account that  $e^{n\Lambda^{*}(q)} = e^{nsq} \kappa^{-n}(s)$
and using the change of measure formula \eqref{basic equ1}, 
we get
%\begin{align*} 
%& A_n =   \sqrt{2\pi n}  \,   \sigma_s e^{nsl + n h_s(l) + nsq}  \nonumber\\
%&  \qquad \times \mathbb{E}_{\mathbb{Q}_{s}^{x}}
%\left[(\varphi r_{s}^{-1})(X_{n}^{x}) e^{-s \log |G_n x|}
%\psi \big( \log |G_n x| + \log |f(X_n^x)|  - n(q+l) \big) \right]. 
%\end{align*}
%, we have 
\begin{align} \label{ScaProLimAn 01}
A =  \sigma_s \sqrt{2\pi n}  % \,    e^{\frac{nl^2}{2 \sigma_s^2}}   
  \mathbb{E}_{\mathbb{Q}_{s}^{x}} \Big[ (\varphi r_{s}^{-1})(G_n x)  e^{-s T_n^v}
\psi \big( T_{n}^v + Y_n^{x,y} \big)  \Big].
\end{align}
For any fixed small constant $0< \eta <1$, denote $I_k: = (-\eta k, -\eta(k-1)]$,  $k \geq 1$. 
Let $M_n:= \floor{ C_1 \log n }$, where $C_1>0$ is a sufficiently large constant
and $\floor{a}$ denotes the integer part of $a \in \bb R$. 
Then from \eqref{ScaProLimAn 01} we have the following decomposition: 
\begin{align}\label{PosiScalA_ccc}
A = A_1 + A_2, 
\end{align}
where  %, with $a_n: = \sigma_s \sqrt{2\pi n}  \, e^{\frac{nl^2}{2 \sigma_s^2}}$, 
\begin{align*}
& A_1 : =  \sigma_s \sqrt{2\pi n}  \mathbb{E}_{\mathbb{Q}_{s}^{x}} 
\left[ (\varphi r_{s}^{-1})(G_n x)  e^{-s T_n^v}
\psi \big( T_{n}^v + Y_n^{x,y} \big) \mathbbm{1}_{\{Y_n^{x, y} \leq -\eta M_n \}} \right],  
    \nonumber\\
& A_2 : =  \sigma_s \sqrt{2\pi n}  \sum_{k =1}^{M_n}  
\mathbb{E}_{\mathbb{Q}_{s}^{x}} 
\left[ (\varphi r_{s}^{-1})(G_n x)  e^{-s T_n^v}
\psi \big( T_{n}^v + Y_n^{x,y} \big) \mathbbm{1}_{\{Y_n^{x,y} \in I_k \}} \right]. 
\end{align*}
We now give a bound for the first term $A_1$.
Since the function $u \mapsto e^{-s' u} \psi(u)$ is directly Riemann integrable on $\mathbb{R}$ for 
any $s' \in K_{\epsilon} : = \{ s' \in \bb R: |s' - s| < \epsilon, s \in K \}$ with $\epsilon >0$ small enough, 
one can verify that the function $u \mapsto e^{-s u} \psi(u)$ is bounded on $\mathbb{R}$, uniformly in $s \in K_{\mu}$, 
and hence there exists a constant $C >0$ such that for all $s \in K_{\mu}$, 
\begin{align*}
e^{-s T_n^v} \psi( T_{n}^v + Y_n^{x,y} )  \mathbbm{1}_{\{Y_n^{x, y} \leq -\eta M_n \}} 
\leq  C  e^{ s Y_n^{x,y} }  \mathbbm{1}_{\{Y_n^{x, y} \leq -\eta M_n \}} 
\leq  C e^{- s \eta M_n}. 
\end{align*}
%Using Lemma \ref{lemmaCR001} 
Since the function $\varphi r_s^{-1}$ is uniformly bounded on $\bb P^{d-1}$, uniformly in $s \in K_{\mu}$, 
we get the following upper bound for $A_1$: as $n \to \infty$, uniformly in $s \in K_{\mu}$, 
  $f \in (\bb R^d)^*$ and $v \in \bb R^d$ with $|f| = |v| = 1$, 
\begin{align} \label{ScalInverAn 2}
A_1 \leq  C  \sqrt{n} \,  e^{ - s \eta M_n}  \leq C n^{- (s \eta C_1 - \frac{1}{2})} \to 0. 
\end{align}
The remaining part of the proof is devoted to 
establishing upper and lower bounds for the second term $A_2$ defined by \eqref{PosiScalA_ccc}. 

\textit{Upper bound for $A_2$.} 
On the event $\{ Y_n^{x, y} \in I_k \}$,  
we have $Y_n^{x, y} + \eta(k-1) \in (0, \eta]$. With the notation 
$\psi^+_{\eta} (u) = \sup_{u' \in \mathbb{B}_{\eta}(u)} \psi(u')$, we get
\begin{align*}
\psi \big( T_{n}^v  - nl + Y_n^{x,y} \big) 
\leq  \psi_{\eta}^+ \big( T_{n}^v  -nl - \eta(k-1) \big). 
\end{align*}
It follows that
\begin{align*}
&  A_2  \leq  \sigma_s \sqrt{2\pi n}   \sum_{k =1}^{M_n} 
\mathbb{E}_{\mathbb{Q}_{s}^{x}} \left[ (\varphi r_{s}^{-1})(G_n x) e^{-s T_n^v}
\psi_{\eta}^+( T_{n}^v  - \eta(k-1))  \mathbbm{1}_{\{ Y_n^{x, y} \in I_k \}} \right].
\end{align*}
We choose a small constant $\ee > \eta$ and set
\begin{align}\label{Def_Psi_aaa}
\Psi_{s,\eta} (u) = e^{-su} \psi_{\eta}^+(u), 
\quad 
\Psi^+_{s, \eta, \ee}(u) = \sup_{u'\in\mathbb{B}_{\ee}(u)} \Psi_{s,\eta} (u'),  
\quad 
u \in \bb R. 
\end{align}
%Set $\Psi_{s,\eta} (u) = e^{-su} \psi_{\eta}^+(u)$, $u \in \mathbb{R}$.  
%Recall that  
%$\Psi^+_{s, \eta, \ee}(u) = \sup_{u'\in\mathbb{B}_{\ee}(u)} \Psi_{s,\eta} (u')$, 
%$u \in \mathbb{R}$, is defined by \eqref{smoo001}, for $\ee\in(0,1)$. 
%From now on we choose $\eta \in (0, \ee)$. 
Since the function $\Psi^+_{s, \eta, \ee}$ is non-negative and integrable on the real line, 
using Lemma \ref{estimate u convo}, we get
\begin{align} \label{ScaProLimAn Bn 01}
& A_2   \leq   ( 1+ C_{\rho}(\ee) )
\sigma_s \sqrt{2\pi n}  \sum_{k =1}^{\infty} \mathbbm{1}_{ \{ k \leq M_n \} }  e^{-s\eta (k-1)}  \nonumber\\
& \qquad   \times  \mathbb{E}_{\mathbb{Q}_{s}^{x}}
\left[(\varphi r_{s}^{-1})(G_n x) \mathbbm{1}_{ \{ Y_n^{x,y} \in I_k \} } 
({\Psi}^+_{s, \eta, \ee}\!\ast\!\rho_{\ee^2})
(T_{n}^v - \eta(k-1))\right], 
\end{align} 
where $C_{\rho}(\ee) >0$ is a constant converging to $0$ as $\ee \to 0$. 
For fixed small constant $\ee_1 >0$,  
introduce the density function $\bar{\rho}_{\ee_1}$ defined as follows: 
$\bar{\rho}_{\ee_1}(u) = \frac{1}{\ee_1}(1 - \frac{|u|}{\ee_1}) $
for $u \in [-\ee_1, \ee_1]$, and $\bar{\rho}_{\ee_1}(u) = 0$ otherwise.
For any $k \geq 1$, 
setting $\chi_k(u) := \mathbbm{1}_{\{u \in I_k \}}$
and $\chi_{k, \ee_1}^+(u) = \sup_{u' \in \mathbb{B}_{\ee_1}(u)} \chi_k(u')$,
one can verify that the following smoothing inequality holds: 
\begin{align} \label{Pf_LD_SmoothIneHolder01}
\chi_k(u) \leq 
(\chi_{k, \ee_1}^+ * \bar{\rho}_{\ee_1})(u)
\leq \chi_{k, 2\ee_1}^+(u), \quad  u \in \mathbb{R}.
\end{align}
For short, we denote 
$\tilde\chi_k(u):= (\chi_{k, \ee_1}^+ * \bar{\rho}_{\ee_1})(u)$, $u \in \mathbb{R}$,
 and 
 \begin{align}\label{Pf_Baradur_LD_varph_11}
\varphi_{s,k,\ee_1}^y(x) = (\varphi r_s^{-1})(x) \tilde\chi_k(\log \delta(y, x)), 
\quad  x \in \bb{P}^{d-1}. 
\end{align}
%which is H\"{o}lder continuous on the projective space $\bb{P}^{d-1}$. 
In view of \eqref{ScaProLimAn Bn 01}, using the smoothing inequality \eqref{Pf_LD_SmoothIneHolder01} leads to 
\begin{align}  \label{ScalarBn a}
  A_2  & \leq   (1+ C_{\rho}(\ee))
\sigma_s \sqrt{2\pi n}  \sum_{k =1}^{\infty} \mathbbm{1}_{ \{ k \leq M_n \} } 
   e^{-s\eta (k-1)}    \nonumber \\
&  \quad \times  \mathbb{E}_{\mathbb{Q}_{s}^{x}}
  \left[ \varphi_{s,k,\ee_1}^y(G_n x)
 ({\Psi}^+_{s, \eta, \ee}\!\ast\!\rho_{\ee^2})
 (T_{n}^v - \eta(k-1))\right]    \nonumber \\
&  =: A_2^+.
\end{align}
Let $\widehat{{\Psi}}^+_{s, \eta,\ee}$ be the Fourier transform of ${\Psi}^+_{s,\eta, \ee}$.
By the Fourier inversion formula,
\begin{align*}
{\Psi}^+_{s, \eta, \ee}\!\ast\!\rho_{\ee^{2}}(u)
=\frac{1}{2\pi}\int_{\mathbb{R}}e^{itu}
\widehat {\Psi}^+_{s, \eta, \ee}(t) \widehat\rho_{\ee^{2}}(t)dt,  
\quad  u \in \mathbb{R}. 
\end{align*}
%Note that $\tilde \chi_k ( Y_n^{x,y} )= \tilde \chi_k( \log |f(X_n^x)| ) 
%= (\tilde \chi_k \circ \log  |f|)(X_n^x)$.
Substituting $y=T_{n}^v - nl - \eta (k-1)$, taking expectation with respect to $\mathbb{E}_{\mathbb{Q}_{s}^{x}},$
and using Fubini's theorem, we obtain 
\begin{align} \label{ScalarFourier a}
&  \mathbb{E}_{\mathbb{Q}_{s}^{x}}
\left[ \varphi_{s,k,\ee_1}^y(G_n x)
({\Psi}^+_{s, \eta, \ee}\!\ast\!\rho_{\ee^2})
(T_{n}^v - \eta(k-1))\right]  \nonumber\\
&  =   \frac{1}{2\pi} \int_{\mathbb{R}} e^{-it \eta(k-1)} 
R^{n}_{s,it} \big( \varphi_{s,k,\ee_1}^y  \big)(x)
\widehat {\Psi}^+_{s, \eta, \ee}(t) \widehat\rho_{\ee^{2}}(t) dt,
\end{align}
where
\begin{align}
R^{n}_{s,it} \big( \varphi_{s,k,\ee_1}^y  \big)(x)
=\mathbb{E}_{\mathbb{Q}_{s}^{x}}\left[e^{it T_{n}^v} \varphi_{s,k,\ee_1}^y(G_n x)
   \right].  \nonumber
\end{align}
Substituting \eqref{ScalarFourier a} into \eqref{ScalarBn a}, 
%and recalling that $a_n = \sigma_s \sqrt{2\pi n}  \, e^{\frac{nl^2}{2 \sigma_s^2}}$, 
we get
%Combining \eqref{ScalarBn a} and  leads to
\begin{align} \label{Pf_LDScalA2_2}
& A_2^+  =  (1+ C_{\rho}(\ee))
 \sigma_s \sqrt{\frac{n}{2\pi}} \,  
 \sum_{k =1}^{\infty} \mathbbm{1}_{ \{ k \leq M_n \} }  e^{-s\eta (k-1)}   \nonumber\\
& \qquad \quad \times   \int_{\mathbb{R}} e^{-it \eta(k-1)} 
R^{n}_{s,it} \big( \varphi_{s,k,\ee_1}^y  \big)(x)
\widehat {\Psi}^+_{s, \eta, \ee}(t) \widehat\rho_{\ee^{2}}(t) dt.
\end{align}
We shall use Proposition \ref{Prop Rn limit1} to handle the integral in \eqref{Pf_LDScalA2_2} for each fixed $k \geq 1$.
Let us first check the conditions stated in Proposition \ref{Prop Rn limit1}. 
Since  the function $\tilde \chi_k$ is H\"{o}lder continuous on the real line, % for any fixed $k \geq 1$,
one can check that $\varphi_{s,k,\ee_1}^y$ defined by \eqref{Pf_Baradur_LD_varph_11} is also H\"{o}lder continuous
on the projective space $\bb P^{d-1}$. 
Using the fact that the function 
$u \mapsto e^{-s'u}\psi(u)$  is directly Riemann integrable on $\mathbb{R}$ for 
any $s' \in K_{\epsilon} : = \{ s' \in \bb R: |s' - s| < \epsilon, s \in K \}$ with $\epsilon >0$ small enough, 
one can also verify that the function $\widehat {\Psi}^+_{s, \eta, \ee} \widehat\rho_{\ee^{2}}$ 
is compactly supported in $\mathbb{R}$. 
Moreover, for any $s \in K_{\mu}$, the function $\widehat {\Psi}^+_{s, \eta, \ee} \widehat\rho_{\ee^{2}}$
is differentiable in a small neighborhood of $0$ on the real line. 
% has a continuous extension in the complex plane $\mathbb{C}$
%and an analytic extension in the domain $\{ z \in \mathbb{C}: |z| < \ee^2, \Im z \neq 0 \}$.
Hence, applying Proposition \ref{Prop Rn limit1}
with $\varphi = \varphi_{s,k,\ee_1}^y$,
$\psi = \widehat {\Psi}^+_{s, \eta, \ee} \widehat\rho_{\ee^{2}}$
and with $l = l_{n,k}: =  \frac{\eta(k-1)}{n}$, 
we obtain that for sufficiently large $n$, uniformly in $1 \leq k \leq M_n$, 
$s \in K_{\mu}$, 
$f \in (\bb R^d)^*$ and $v \in \bb R^d$ with $|f| = |v| = 1$,
\begin{align}  \label{ScalarKeyProp 1}
I_k^+ : & =  \Big|  \sigma_s  \sqrt{n}e^{n h_s(l_{n,k})} %e^{n l_{n,k}^2 /2 \sigma_s^2} 
% e^{\frac{n l_{n,k}^2}{2 \sigma_s^2}}  % e^{n h_s(l_{n,k})}  
\int_{\mathbb R} e^{-it n l_{n,k}} 
R^{n}_{s,it} \big( \varphi_{s,k,\ee_1}^y \big)(x)
\widehat {\Psi}^+_{s, \eta, \ee}(t) \widehat\rho_{\ee^{2}}(t) dt
 -  B^{+}(k)  \Big|  \nonumber\\
&  \leq  \frac{C}{\sqrt{n}}  \| \varphi_{s,k,\ee_1}^y \|_{\gamma}, 
 %  \Big(  \frac{1}{\sqrt{n}} + \frac{\eta(k-1)}{n} \Big),  
\end{align}
where 
\begin{align*}
B^{+}(k) : = \sqrt{2\pi} \widehat {\Psi}^+_{s, \eta, \ee}(0) \widehat\rho_{\ee^{2}}(0)
\pi_{s} \big( \varphi_{s,k,\ee_1}^y \big). 
\end{align*}
Taking into account that $1 \leq k \leq M_n = \floor{ C_1 \log n }$, 
by Lemma \ref{lemmaCR001},  we get that 
$|e^{ - n h_s(l_{n,k})} - 1| \leq \frac{C \log n}{\sqrt{n}}$,  
uniformly in $1 \leq k \leq M_n$ and $s \in K_{\mu}$. 
Using \eqref{ScalarKeyProp 1} and the fact that $B^{+}(k)$ is dominated by 
$\| \varphi_{s,k,\ee_1}^y \|_{\gamma}$, 
we can replace $e^{n h_s(l_{n,k})}$ by $1$, yielding that 
uniformly in $s \in K_{\mu}$, $f \in (\bb R^d)^*$ and $v \in \bb R^d$ with $|f| = |v| = 1$, 
\begin{align}  \label{ScalarKeyProp 2}
&  \Big| \sigma_s  \sqrt{n}   
\int_{\mathbb R} e^{-it n l_{n,k} }  R^{n}_{s,it} \big( \varphi_{s,k,\ee_1}^y \big)(x)
\widehat {\Psi}^+_{s, \eta, \ee}(t) \widehat\rho_{\ee^{2}}(t) dt - B^{+}(k) \Big|  \nonumber\\
& \leq  I_k^+  e^{ - n h_s(l_{n,k})}  + |e^{ - n h_s(l_{n,k})} - 1| B^{+}(k)   \nonumber\\
&  \leq  \frac{C}{\sqrt{n}}  \| \varphi_{s,k,\ee_1}^y \|_{\gamma}     
  +  \frac{C \log n}{\sqrt{n}}  \| \varphi_{s,k,\ee_1}^y  \|_{\gamma}    \nonumber\\
& \leq  \frac{C \log n}{\sqrt{n}}  \| \varphi_{s,k,\ee_1}^y  \|_{\gamma}. 
\end{align}
By calculations, one can get that $\gamma$-H\"{o}lder norm 
$\| \varphi_{s,k,\ee_1}^y \|_{\gamma}$
%of $(\varphi r_{s}^{-1})  (\tilde \chi_k \circ \log |f|)$
is bounded by 
$
\frac{ e^{ \eta \gamma k} }{ ( 1 - e^{-2\ee_1} )^{\gamma} }. 
$
%Thus, taking into account the rate of convergence in Proposition \ref{Prop Rn limit1},
Taking sufficiently small $\gamma>0$, we obtain that
 the series $ \frac{\log n}{\sqrt{n}} \sum_{k = 1}^{\infty} e^{-s\eta (k-1)}   \| \varphi_{s,k,\ee_1}^y  \|_{\gamma}$
is convergent, and moreover, its limit is $0$ as $n \to \infty$. 
Consequently, we are allowed to interchange the limit as $n \to \infty$
and the infinite summation over $k$ in \eqref{Pf_LDScalA2_2}. 
%Denote $D=\sum_{k=1}^{\infty} D(k)$.
%the difference $|D_n-D|$ is bounded by 
%\begin{align*}
%\sum_{ k =1}^{ \infty }
%\Big( \frac{ \log n }{ \sqrt{n} } + l_{n,k} \Big)
% e^{-s\delta (k-1)} \frac{ e^{ \delta \gamma k} }{ ( 1 - e^{-2\ee_1} )^{\gamma} } 
%\leq  C \Big( \frac{ \log n }{ \sqrt{n} } + l_n \Big). 
%\end{align*}
%This verifies the conditions of the Lebesgue convergence theorem used in \eqref{ScalarBn abc}. 
Therefore, from \eqref{Pf_LDScalA2_2}, \eqref{ScalarKeyProp 1} and \eqref{ScalarKeyProp 2} 
we deduce that,  uniformly in $s \in K_{\mu}$, 
$f \in (\bb R^d)^*$ and $v \in \bb R^d$ with $|f| = |v| = 1$, 
\begin{align} \label{ScalarBn abc}
\limsup_{n \to \infty}  A_2^+
\leq   (1+ C_{\rho}(\ee)) \widehat {\Psi}^+_{s, \eta, \ee}(0) \widehat\rho_{\ee^{2}}(0)
\sum_{k =1}^{\infty} e^{-s\eta (k-1)} 
\pi_{s} \big( \varphi_{s,k,\ee_1}^y \big). 
\end{align}
In order to calculate the sum in \eqref{ScalarBn abc}, 
we shall make use of the zero-one law of the stationary measure $\pi_s$. 
Note that %$\widehat {\Psi}^+_{s, \eta, \ee}(0)
%= \int_{\mathbb{R}} \sup_{w'\in\mathbb{B}_{\ee}(w)} e^{-sw'} \psi_{\eta}^+(w') dw$ and 
$\widehat\rho_{\ee^{2}}(0) =1$. 
Using \eqref{Pf_LD_SmoothIneHolder01}, we have $\tilde \chi_k \leq \chi_{k, 2\ee_1}^+$. 
Therefore, we obtain %that, uniformly in $y \in (\bb P^{d-1})^*$, 
\begin{align} \label{LimsuBn a}
  \limsup_{n \to \infty}  A_2^+ 
\leq (1+ C_{\rho}(\ee)) 
\widehat {\Psi}^+_{s, \eta, \ee}(0) 
\sum_{k =1}^{\infty} e^{-s\eta (k-1)} 
\pi_{s} \big( \tilde \varphi_{s,k,\ee_1}^y \big),  
\end{align}
where
\begin{align} \label{Pf_LDLimsuBn b}
\tilde \varphi_{s,k,\ee_1}^y (x)
= (\varphi r_s^{-1})(x) \mathbbm{1}_{\{ \log \delta(y, \cdot) \in I_k \}}(x) 
  + (\varphi r_s^{-1})(x)  \mathbbm{1}_{\{ \log \delta(y, \cdot) \in I_{k,\ee_1} \}}(x),
\end{align}
and  $I_{k,\ee_1} = \big(-\eta k -2 \ee_1, -\eta k \big]
\cup \big(-\eta (k-1), -\eta (k-1) + 2 \ee_1 \big]$. 
For the first term on the right hand-side of \eqref{Pf_LDLimsuBn b}, 
we claim that uniformly in $s \in K_{\mu}$, 
\begin{align} \label{ScalPosiMainpart}
& \lim_{\eta \to 0}
\sum_{k =1}^{\infty} e^{-s\eta (k-1)} 
\pi_{s} \Big( (\varphi r_{s}^{-1}) 
\mathbbm{1}_{\{ \log \delta(y, \cdot) \in I_k \}}   \Big)   \nonumber\\
& = \int_{ \bb P^{d-1} } \delta(y, x)^s \varphi(x) r_s^{-1}(x) \pi_s(dx).
\end{align}
Indeed, recalling that $I_k = (-\eta k, -\eta(k-1)]$, we have 
\begin{align*}
& \sum_{k =1}^{\infty} e^{-s\eta (k-1)} 
\pi_{s} \Big( (\varphi r_{s}^{-1}) 
\mathbbm{1}_{\{ \log \delta(y, \cdot) \in I_k \}}   \Big)  \nonumber\\
& \geq  \sum_{k =1}^{\infty} 
\pi_{s} \Big( (\varphi r_{s}^{-1})  \delta(y, \cdot)^s
\mathbbm{1}_{\{ \log \delta(y, \cdot) \in I_k \}}   \Big)   
 = \int_{ \bb P^{d-1} } \delta(y, x)^s \varphi(x) r_s^{-1}(x) \pi_s(dx).
\end{align*}
On the other hand, we have, as $\eta \to 0$, uniformly in $s \in K_{\mu}$, 
\begin{align*}
& \sum_{k =1}^{\infty} e^{-s\eta (k-1)} 
\pi_{s} \Big( (\varphi r_{s}^{-1}) 
\mathbbm{1}_{\{ \log \delta(y, \cdot) \in I_k \}}   \Big)  \nonumber\\
& \leq  e^{s \eta}  \sum_{k =1}^{\infty} 
\pi_{s} \Big( (\varphi r_{s}^{-1})  \delta(y, \cdot)^s
\mathbbm{1}_{\{ \log \delta(y, \cdot) \in I_k \}}   \Big) 
\to  \int_{ \bb P^{d-1} } \delta(y, x)^s \varphi(x) r_s^{-1}(x) \pi_s(dx).  
\end{align*}
Hence \eqref{ScalPosiMainpart} holds.

 To deal with the second term on the right-hand side of \eqref{Pf_LDLimsuBn b}, 
we need to apply Lemma \ref{Lem_Gui_LeP} and the zero-one law of the stationary measure $\pi_s$ 
stated in Lemma \ref{Lem_0-1_law_s}. 
Specifically, taking into account that the function $\varphi r_{s}^{-1}$ is uniformly bounded on
the projective space $\bb P^{d-1}$, using the Lebesgue dominated convergence theorem we get
that there exists a constant $C_1 >0$ such that for all $y \in (\bb P^{d-1})^*$, 
\begin{align}\label{Pf_Upp_A1_ff}
& \lim_{\ee_1 \to 0}
\sum_{k =1}^{\infty} e^{-s\eta (k-1)} 
\pi_{s} \Big( (\varphi r_{s}^{-1}) 
\mathbbm{1}_{\{ \log \delta(y, \cdot) \in I_{k,\ee_1} \}}   \Big)   \nonumber\\
& \leq C_1  \lim_{\ee_1 \to 0}
\sum_{k =1}^{\infty} e^{-s\eta (k-1)} 
\pi_{s} \Big( x \in \bb P^{d-1}: \log \delta(y, x) \in I_{k,\ee_1}  \Big)  \nonumber\\
& = C_1  \sum_{k =1}^{\infty} e^{-s\eta (k-1)} 
\pi_{s} \Big( x \in \bb P^{d-1}: \log \delta(y, x) = - \eta k  \Big)   \nonumber\\
& \quad  + C_1  \sum_{k =1}^{\infty} e^{-s\eta (k-1)} 
\pi_{s} \Big( x \in \bb P^{d-1}: \log \delta(y, x) = - \eta (k-1)  \Big)   \nonumber\\
& = 2 C_1  \sum_{k =1}^{\infty} e^{-s\eta (k-1)} 
\pi_{s} \Big( x \in \bb P^{d-1}: \log \delta(y, x) = - \eta k  \Big), 
\end{align}
where the last equality holds due to Lemma \ref{Lem_Gui_LeP}. 
Now we are going to apply Lemma \ref{Lem_0-1_law_s_Posi_Uni} to prove that 
there exists a constant $0< \eta < 1$ such that 
\begin{align}\label{sum_0_1_law}
\sum_{k =1}^{\infty} e^{-s\eta (k-1)} 
\pi_{s} \Big( x \in \bb P^{d-1}: \log \delta(y, x) = - \eta k  \Big) = 0. 
\end{align}
Indeed, by Lemma \ref{Lem_0-1_law_s_Posi_Uni}, we get that,  
for any  $y \in (\bb P^{d-1})^*$ and 
any set $Y_{y,t} = \{ x \in \bb P^{d-1}:  \log \delta(y,x) = t \}$ with $t \in (-\infty, 0)$, 
it holds that either $\pi_s (Y_{y,t}) = 0$ or $\pi_s (Y_{y,t}) = 1$ for all $s \in K_{\mu}$. 
If $\pi_s (Y_{y,t}) = 0$ for all $y \in (\bb P^{d-1})^*$ and $t \in (-\infty, 0)$, then clearly \eqref{sum_0_1_law} holds. 
If $\pi_s (Y_{y_0, t_0}) = 1$ for some $y_0 \in (\bb P^{d-1})^*$ and $t_0 \in (-\infty, 0)$,
then we can always choose $0< \eta <1$ in such a way that $- \eta k \neq t_0$ for all $k \geq 1$,
so that we also obtain that \eqref{sum_0_1_law} holds for all $s \in K_{\mu}$. 
Hence, in view of \eqref{LimsuBn a}, 
combining \eqref{ScalPosiMainpart} and \eqref{sum_0_1_law} we obtain that uniformly in $s \in K_{\mu}$, 
\begin{align}\label{Pf_LD_pi_s_hh}
\lim_{\eta \to 0} 
\lim_{\ee_1 \to 0} \sum_{k =1}^{\infty} e^{-s\eta (k-1)} 
\pi_{s} \big( \tilde \varphi_{s,k,\ee_1}^y \big)  
= \int_{ \bb P^{d-1} } \delta(y, x)^s \varphi(x) r_s^{-1}(x) \pi_s(dx). 
\end{align}
Since the target function $\psi$ satisfies the  condition \eqref{condition g}, 
 from \eqref{Def_Psi_aaa} we get
\begin{align} \label{Pf_LD_psi_limit}
\lim_{\ee \to 0}  \lim_{\eta \to 0}  \widehat {\Psi}^+_{s, \eta, \ee}(0) 
=  \int_{\mathbb{R}}  e^{-su} \psi(u) du. 
\end{align}
Consequently, recalling that $A_2 \leq  A_2^+$ and $C_{\rho}(\ee) \to 0$ as $\ee \to 0$,
we obtain the desired upper bound for $A_2$: uniformly in $s \in K_{\mu}$, 
$f \in (\bb R^d)^*$ and $v \in \bb R^d$ with $|f| = |v| = 1$, 
\begin{align} \label{ScaProLimBn Upper 01}
\lim_{\ee \to 0}  \lim_{\eta \to 0} 
\lim_{\ee_1 \to 0}  \limsup_{n \to \infty}  A_2 
\leq  \int_{\mathbb{R}}  e^{-su} \psi(u) du 
\int_{ \bb P^{d-1} } \delta(y, x)^s \varphi(x) r_s^{-1}(x) \pi_s(dx). 
\end{align}

%Combining \eqref{LimsuBn a}-\eqref{LimsuBn c}, 
%letting $n \to \infty$, $\ee_1 \to 0$, $\eta \to 0$, $\ee \to 0$
%and noting that $C_{\rho}(\ee) \to 0$ as $\ee \to 0$, 
%we obtain that, uniformly in $f, x \in \mathcal{S}$ and $|l| \leq \frac{1}{\sqrt{n}}$, 
%\begin{align} 
%A_2 \leq  A_2^+  \leq 
%\int_{ \mathcal{S} }  |\langle f, x\rangle|^s \varphi(x) r_s^{-1}(x) \pi_s(dx)
%\limsup_{\ee \to 0} \int_{\mathbb{R}} \sup_{y'\in\mathbb{B}_{\ee}(y)} \big( e^{-sy'}\psi(y') \big) dy. 
%\end{align}

\textit{Lower bound for $A_2$.} 
We are going to establish the lower bound for $A_2$ given by \eqref{PosiScalA_ccc}. 
Recall that $Y_n^{x,y} = \log \delta(y, G_n x)$. 
On the event $\{ Y_n^{x,y} \in I_k \}$ % the function $\psi$ is increasing on $\mathbb{R}$, 
we have $Y_n^{x,y} + \eta(k-1) \in (0, \eta]$. With the notation 
$\psi^-_{\eta} (u) = \inf_{u'\in\mathbb{B}_{\eta}(u)} \psi(u)$, we get
\begin{align*}
\psi( T_{n}^v + Y_n^{x,y} )  \geq  \psi_{\eta}^- ( T_{n}^v - \eta k ). 
\end{align*}
In view of \eqref{PosiScalA_ccc}, using Fatou's lemma, it follows that
\begin{align}
 \liminf_{ n \to \infty}  A_2 
 & \geq    \sum_{k =1}^{\infty} 
    \liminf_{ n \to \infty}   
   \sigma_s \sqrt{2\pi n} \,   \mathbbm{1}_{ \{ k \leq M_n \} }   \label{Pf_Low_A2_dd}\\
 & \quad \times    
\mathbb{E}_{\mathbb{Q}_{s}^{x}} \Big[ (\varphi r_{s}^{-1})(G_n x) e^{-s T_n^v}
\psi^-_{\eta}( T_{n}^v - \eta k )   \mathbbm{1}_{\{Y_n^{x,y} \in I_k \}} \Big].  \nonumber
\end{align}
We choose a small constant $\ee > \eta$ and set
\begin{align}\label{Def_Psi_ccc}
\Psi_{s,\eta} (u) = e^{-su} \psi_{\eta}^-(u), 
\quad 
\Psi^-_{s, \eta, \ee}(u) = \inf_{u'\in\mathbb{B}_{\ee}(u)} \Psi_{s,\eta} (u'),  
\quad 
u \in \bb R. 
\end{align}
%Set $\Psi_{s,\eta} (w) = e^{-sw} \psi_{\eta}^-(w)$, $w \in \mathbb{R}$.  
%Recall that  
%$\Psi^-_{s, \eta, \ee}(w) = \inf_{w'\in\mathbb{B}_{\ee}(w)} \Psi_{s,\eta} (w')$, 
%$w \in \mathbb{R}$, is  defined by \eqref{smoo001}, for $\ee\in(0,1)$. 
%In the sequel we choose $\eta \in (0, \ee)$. 
Noting that the function $\Psi^-_{s, \eta, \ee}$ is non-negative and integrable on the real line, 
by Lemma \ref{estimate u convo}, from \eqref{Pf_Low_A2_dd} we get the following lower bound: 
\begin{align} \label{Scal Low An 1}
\liminf_{ n \to \infty}  A_2  
\geq  \sum_{k =1}^{\infty} \liminf_{ n \to \infty} A_3 - 
\sum_{k =1}^{\infty} \limsup_{ n \to \infty}  A_4, 
\end{align} 
where, with the notation $a_{n,k} = \sigma_s \sqrt{2\pi n}  \,  e^{-s\eta k} \mathbbm{1}_{ \{ k \leq M_n \} }$, 
\begin{align*} 
& A_3   =  a_{n,k}       
  \mathbb{E}_{\mathbb{Q}_{s}^{x}}
\Big[ (\varphi r_{s}^{-1})(G_n x) \mathbbm{1}_{\{Y_n^{x,y} \in I_k \}} 
({\Psi}^-_{s, \eta, \ee}\!\ast\!\rho_{\ee^2})
(T_{n}^v - \eta k ) \Big],  \nonumber\\
& A_4   =   a_{n,k}   \int_{|u|\geq \ee}   \mathbb{E}_{\mathbb{Q}_{s}^{x}}
\Big[ (\varphi r_{s}^{-1})(G_n x) \mathbbm{1}_{\{Y_n^{x,y} \in I_k \}} 
 {\Psi}^-_{s, \eta, \ee}(T_{n}^v - \eta k - u) \Big] \rho_{\ee^2}(u) du. 
\end{align*} 
We are going to give a lower bound for $A_3$. 
For brevity, we denote $\chi_k(u)= \mathbbm{1}_{\{u \in I_k \}}$ 
and $\chi_{k, \ee_1}^-(u) = \inf_{u'\in\mathbb{B}_{\ee_1}(u)} \chi_k(u')$, $u \in \bb R$, 
where $\ee_1 >0$ is a fixed small constant.   
Similarly to \eqref{Pf_LD_SmoothIneHolder01}, 
one can get the following smoothing inequality: 
\begin{align}  \label{SmoothIne Holder 02}
\chi_{k, 2\ee_1}^-(u)
\leq  (\chi_{k, \ee_1}^- * \bar{\rho}_{\ee_1})(u)
\leq  \chi_k(u), \quad  u \in \mathbb{R},
\end{align}
where the density function $\bar{\rho}$ is the same as that in \eqref{Pf_LD_SmoothIneHolder01}. 
In a similar way as in \eqref{Pf_Baradur_LD_varph_11}, we denote
$\tilde\chi_k^-(u) := (\chi_{k, \ee_1}^- * \bar{\rho}_{\ee_1})(u)$, $u \in \bb R$, and   
 \begin{align}\label{Pf_Baradur_LD_varph_22}
\phi_{s,k,\ee_1}^y(x) = (\varphi r_s^{-1})(x) \tilde \chi_k^-(\log \delta(y, x)), 
\quad  x \in \bb{P}^{d-1}. 
\end{align}
For the first term $A_3$ in \eqref{Scal Low An 1}, 
using the inequality \eqref{SmoothIne Holder 02} leads to 
\begin{align}  \label{ScalarBn Low a}
 A_3 \geq  a_{n,k}   
  \mathbb{E}_{\mathbb{Q}_{s}^{x}}
\left[ \phi_{s,k,\ee_1}^y(G_n x)  ( {\Psi}^-_{s,\delta,\ee}\!\ast\!\rho_{\ee^2} )
(T_{n}^v - \eta k) \right]. 
\end{align}
Denote by 
$\widehat{{\Psi}}^-_{s,\eta,\ee}$ the Fourier transform of ${\Psi}^-_{s,\eta,\ee}$.
Applying the Fourier inversion formula to ${\Psi}^-_{s,\eta,\ee}\!\ast\!\rho_{\ee^{2}}$,
and using  Fubini's theorem, we get
\begin{align} \label{ScalarFourier Low a}
&  \mathbb{E}_{\mathbb{Q}_{s}^{x}}
\left[ \phi_{s,k,\ee_1}^y(G_n x)
({\Psi}^-_{s, \eta, \ee}\!\ast\!\rho_{\ee^2})
(T_{n}^v - \eta k)\right]  \nonumber\\
& =   \frac{1}{2\pi}  \int_{\mathbb{R}} e^{-it \eta k} 
R^{n}_{s,it} \big( \phi_{s,k,\ee_1}^y \big)(x)
\widehat {\Psi}^-_{s,\eta, \ee}(t) \widehat\rho_{\ee^{2}}(t) dt,
\end{align}
where
\begin{align*}
R^{n}_{s,it} \big( \phi_{s,k,\ee_1}^y \big)(x)
= \mathbb{E}_{\mathbb{Q}_{s}^{x}}\left[e^{it T_{n}^v} 
\phi_{s,k,\ee_1}^y(G_n x)
   \right],  \quad  x \in \bb P^{d-1}. 
\end{align*}
Substituting  \eqref{ScalarFourier Low a} into \eqref{ScalarBn Low a}, we obtain
\begin{align} \label{Pf_Low_Bn-}
A_3  \geq  \frac{a_{n,k}}{2 \pi} 
 \int_{\mathbb{R}} e^{-it \eta k} 
R^{n}_{s,it} \big( \phi_{s,k,\ee_1}^y \big)(x) 
\widehat {\Psi}^-_{s,\delta,\ee}(t) \widehat\rho_{\ee^{2}}(t) dt.
\end{align}
We shall use Proposition \ref{Prop Rn limit1} to give a precise asymptotic
for the above integral. 
Let us first verify the conditions of Proposition \ref{Prop Rn limit1}. 
Since the function $\tilde \chi_k^-$ is H\"{o}lder continuous for any fixed $k \geq 1$,
one can check that $\phi_{s,k,\ee_1}^y$ is H\"{o}lder continuous
on the projective space $\bb P^{d-1}$. 
Since the function 
$u \mapsto e^{-s'u}\psi(u)$  is directly Riemann integrable on $\mathbb{R}$ for 
any $s' \in K_{\epsilon} : = \{ s' \in \bb R: |s' - s| < \epsilon, s \in K \}$ with $\epsilon >0$ small enough, 
one can also verify that the function $\widehat {\Psi}^-_{s, \eta, \ee} \widehat\rho_{\ee^{2}}$ 
has compact support in $\mathbb{R}$, 
and that $\widehat {\Psi}^-_{s, \eta, \ee} \widehat\rho_{\ee^{2}}$
is differentiable in a small neighborhood of $0$ on the real line, for all $s \in K_{\mu}$. 
%One can also verify that the function $\widehat {\Psi}^-_{s, \eta, \ee} \widehat\rho_{\ee^{2}}$ 
%is compactly supported in $\mathbb{R}$, 
%and moreover, $\widehat {\Psi}^-_{s, \eta, \ee} \widehat\rho_{\ee^{2}}$
% has a continuous extension in the complex plane
%and has an analytic extension in the domain $\{ z \in \mathbb{C}: |z| < \ee^2, \Im z \neq 0 \}$. 
Thus, using Proposition \ref{Prop Rn limit1}
with $\varphi = \phi_{s,k,\ee_1}^y$,
$\psi = \widehat {\Psi}^-_{s,\eta,\ee} \widehat\rho_{\ee^{2}}$
and  $l = l_{n,k}': = \frac{\eta k }{n}$, 
we obtain that for sufficiently large $n$, there exists a constant $C>0$ such that for all 
$1 \leq k \leq M_n = \floor{C_1 \log n}$,  $s \in K_{\mu}$, 
$f \in (\bb R^d)^*$ and $v \in \bb R^d$ with $|f| = |v| = 1$,
\begin{align}  \label{ScalarKeyPropLower1}
I_k^-: & =  \Big|  \sigma_s \sqrt{n}     e^{n h_s(l_{n,k}')}   
    \int_{\mathbb R} e^{-it n l_{n,k}' } 
 R^{n}_{s,it} \big( \phi_{s,k,\ee_1}^y \big)(x)
 \widehat {\Psi}^-_{s,\eta,\ee}(t) \widehat\rho_{\ee^{2}}(t) dt
    -  B^{-}(k) \Big|  \nonumber\\
& \leq   \frac{C}{\sqrt{n}}  \| \phi_{s,k,\ee_1}^y \|_{\gamma},  
\end{align}
where 
%\todo{The notation $D^{-}(k)$ makes confusion with the notation $D_n^-$ introduced before}
\begin{align*}
B^{-}(k) : = \sqrt{2\pi} \widehat {\Psi}^-_{s, \eta, \ee}(0) \widehat\rho_{\ee^{2}}(0)
  \pi_{s} \big( \phi_{s,k,\ee_1}^y \big). 
\end{align*}
%\begin{align*}  
%&\ \sqrt{n}  \,   \sigma_s   e^{n h_s(l_{n,k}')}
%\int_{\mathbb R} e^{-it l_{n,k}' n} 
%R^{n}_{s,it} \left( (\varphi r_{s}^{-1})  (\tilde \chi_k^- \circ \log  |f|) \right)(x)
%\widehat {\Psi}^-_{s,\delta,\ee}(t) \widehat\rho_{\ee^{2}}(t) dt  \nonumber\\
%\to &\  \sqrt{2\pi} \widehat {\Psi}^-_{s,\delta,\ee}(0) \widehat\rho_{\ee^{2}}(0)
%\pi_{s}\left( (\varphi r_{s}^{-1})   (\tilde \chi_k^- \circ \log  |f|)   \right).
%\end{align*}
Since $1 \leq k \leq M_n= \floor{C_1 \log n}$, 
using Lemma \ref{lemmaCR001} there exists a constant $C>0$ such that for all $1 \leq k \leq M_n$,
$s \in K_{\mu}$ and $n \geq 1$, it holds that 
$|e^{ - n h_s(l_{n,k}') } - 1| \leq \frac{C \log n}{\sqrt{n}}$. 
In a similar way as in the proof of \eqref{ScalarKeyProp 2}, 
we can replace $e^{n h_s( l_{n,k}' )}$ by $1$ to obtain that, 
uniformly in $s \in K_{\mu}$, 
$f \in (\bb R^d)^*$ and $v \in \bb R^d$ with $|f| = |v| = 1$, 
\begin{align*} % \label{ScalarKeyProp2Lower}
&  \Big| \sigma_s \sqrt{n} 
\int_{\mathbb R} e^{-it n l_{n,k}'} 
R^{n}_{s,it} \big( \phi_{s,k,\ee_1}^y \big)(x)
\widehat {\Psi}^{-}_{s, \eta, \ee}(t) \widehat\rho_{\ee^{2}}(t) dt - B^{-}(k)  \Big|  \nonumber\\
& \leq  I_k^-  e^{ - n h_s(l_{n,k}')}  + |e^{ - n h_s(l_{n,k}')} - 1| B^{-}(k)   \nonumber\\
&  \leq  \frac{C}{\sqrt{n}}  \| \phi_{s,k,\ee_1}^y \|_{\gamma}     
  +  \frac{C \log n}{\sqrt{n}}  \| \phi_{s,k,\ee_1}^y  \|_{\gamma}    \nonumber\\
& \leq  \frac{C \log n}{\sqrt{n}}  \| \phi_{s,k,\ee_1}^y  \|_{\gamma}. 
\end{align*}
Since the $\gamma$-H\"{o}lder norm 
$\| \phi_{s,k,\ee_1}^y \|_{\gamma}$
is bounded by 
$\frac{ e^{ \eta \gamma k} }{ ( e^{2\ee_1} - 1 )^{\gamma} }$, 
taking sufficiently small $\gamma>0$, we obtain that
 the series $ \frac{\log n}{\sqrt{n}} \sum_{k = 1}^{\infty} e^{-s\eta (k-1)}   \| \phi_{s,k,\ee_1}^y  \|_{\gamma}$
converges to $0$ as $n \to \infty$. 
As a result, by virtue of \eqref{ScalarKeyPropLower1}, we obtain that,  
uniformly in $s \in K_{\mu}$, 
$f \in (\bb R^d)^*$ and $v \in \bb R^d$ with $|f| = |v| = 1$, 
\begin{align*}
\sum_{k =1}^{\infty} \liminf_{ n \to \infty}  A_3 
\geq    \widehat {\Psi}^-_{s,\eta,\ee}(0) \widehat\rho_{\ee^{2}}(0)
\sum_{k =1}^{\infty} e^{-s\eta k}  \pi_{s} \big( \phi_{s,k,\ee_1}^y \big). 
\end{align*}
Note that $\widehat\rho_{\ee^{2}}(0) =1$. 
Using \eqref{SmoothIne Holder 02}, we have that $\tilde \chi_k \geq \chi_{k, 2\ee_1}^-$. 
Consequently, %letting $n \to \infty$, 
we obtain the lower bound for the first term on the right hand side of \eqref{Scal Low An 1}: 
uniformly in $s \in K_{\mu}$, 
$f \in (\bb R^d)^*$ and $v \in \bb R^d$ with $|f| = |v| = 1$, 
\begin{align} \label{LimsuBn Low a}
 \sum_{k =1}^{\infty} \liminf_{ n \to \infty}  A_3
\geq   \widehat {\Psi}^-_{s,\eta,\ee}(0)
\sum_{k =1}^{ \infty } e^{-s\eta k} 
\pi_{s} \big( \tilde \phi_{s,k,\ee_1}^y \big).  
\end{align}
where
\begin{align} \label{Pf_LDLimsuA_3_nn}
\tilde \phi_{s,k,\ee_1}^y (x)
= (\varphi r_s^{-1})(x) \mathbbm{1}_{\{ \log \delta(y, \cdot) \in I_k \}}(x) 
  - (\varphi r_s^{-1})(x)  \mathbbm{1}_{\{ \log \delta(y, \cdot) \in \tilde I_{k,\ee_1} \}}(x),
\end{align}
and  $\tilde I_{k,\ee_1} = \big(-\eta k, -\eta k + 2 \ee_1 \big]
\cup \big( -\eta (k-1) + 2 \ee_1, -\eta (k-1) \big]$. 
For the first term on the right hand-side of \eqref{Pf_LDLimsuA_3_nn},
since $I_k = (-\eta k, -\eta(k-1)]$, in a similar way as in the proof of \eqref{ScalPosiMainpart}, 
it holds uniformly in $s \in K_{\mu}$ that 
\begin{align} \label{ScalPosiMainpart_bb}
& \lim_{\eta \to 0}
\sum_{k =1}^{\infty} e^{-s\eta k} 
\pi_{s} \Big( (\varphi r_{s}^{-1}) 
\mathbbm{1}_{\{ \log \delta(y, \cdot) \in I_k \}}   \Big)   \nonumber\\
& = \int_{ \bb P^{d-1} } \delta(y, x)^s \varphi(x) r_s^{-1}(x) \pi_s(dx).
\end{align}
 To handle the second term on the right-hand side of \eqref{Pf_LDLimsuA_3_nn}, 
we make use of Lemma \ref{Lem_Gui_LeP} and the zero-one law of the stationary measure $\pi_s$ 
shown in Lemma \ref{Lem_0-1_law_s_Posi_Uni}. 
Specifically, similarly to the proof of \eqref{Pf_Upp_A1_ff}, 
since the function $\varphi r_{s}^{-1}$ is bounded on $\bb P^{d-1}$, uniformly in $s \in K_{\mu}$, 
using the Lebesgue dominated convergence theorem we get
that there exists a constant $C_1 >0$ such that for all $y \in (\bb P^{d-1})^*$ and $s \in K_{\mu}$, 
\begin{align}\label{Pf_Upp_A3_uu}
& \lim_{\ee_1 \to 0}
\sum_{k =1}^{\infty} e^{-s\eta k} 
\pi_{s} \Big( (\varphi r_{s}^{-1}) 
\mathbbm{1}_{\{ \log \delta(y, \cdot) \in \tilde I_{k,\ee_1} \}}   \Big)   \nonumber\\
& \leq C_1  \lim_{\ee_1 \to 0}
\sum_{k =1}^{\infty} e^{-s\eta k} 
\pi_{s} \Big( x \in \bb P^{d-1}: \log \delta(y, x) \in \tilde I_{k,\ee_1}  \Big)  \nonumber\\
& = C_1  \sum_{k =1}^{\infty} e^{-s\eta k} 
\pi_{s} \Big( x \in \bb P^{d-1}: \log \delta(y, x) = - \eta k  \Big)   \nonumber\\
& \quad  + C_1  \sum_{k =1}^{\infty} e^{-s\eta k} 
\pi_{s} \Big( x \in \bb P^{d-1}: \log \delta(y, x) = - \eta (k-1)  \Big)   \nonumber\\
& = 2 C_1  \sum_{k =1}^{\infty}  e^{-s\eta k}  
\pi_{s} \Big( x \in \bb P^{d-1}: \log \delta(y, x) = - \eta k  \Big), 
\end{align}
where in the last equality we used Lemma \ref{Lem_Gui_LeP}. 
In the same way as in the proof of \eqref{sum_0_1_law}, 
applying Lemma \ref{Lem_0-1_law_s_Posi_Uni} we can obtain that 
there exists a constant $0< \eta < 1$ such that for all $s \in K_{\mu}$, 
\begin{align}\label{sum_0_1_law_A_3}
\sum_{k =1}^{\infty} e^{-s\eta k} 
\pi_{s} \Big( x \in \bb P^{d-1}: \log \delta(y, x) = - \eta k  \Big) = 0. 
\end{align}
Since the target function $\psi$ satisfies the  condition \eqref{condition g}, 
 from \eqref{Def_Psi_aaa} we get that uniformly in $s \in K_{\mu}$, 
\begin{align}\label{Pf_LD_psi_dd}
\lim_{\ee \to 0}  \lim_{\eta \to 0}  \widehat {\Psi}^-_{s, \eta, \ee}(0) 
=  \int_{\mathbb{R}}  e^{-su} \psi(u) du. 
\end{align}
Consequently, in view of \eqref{LimsuBn Low a}, 
combining \eqref{ScalPosiMainpart_bb}, \eqref{Pf_Upp_A3_uu}, \eqref{sum_0_1_law_A_3} and \eqref{Pf_LD_psi_dd},
we get the desired lower bound for $A_3$: uniformly in $s \in K_{\mu}$, 
\begin{align}\label{Scal Low Bn}
\lim_{\ee \to 0}  \lim_{\eta \to 0} 
\lim_{\ee_1 \to 0}
\sum_{k =1}^{\infty} \liminf_{ n \to \infty}  A_3 
\geq  \int_{\mathbb{R}}  e^{-su} \psi(u) du 
\int_{ \bb P^{d-1} } \delta(y, x)^s \varphi(x) r_s^{-1}(x) \pi_s(dx).
\end{align}

Now we proceed to establish an upper bound for the term $A_4$ in \eqref{Scal Low An 1}. 
Note that $\Psi^-_{s,\eta,\ee} \leq \Psi_{s,\eta}$, where 
$\Psi_{s,\eta} (u) = e^{-sy} \psi_{\eta}^+(u)$, $u \in \mathbb{R}$. 
Then it follows from Lemma \ref{estimate u convo} that
$\Psi^-_{s,\eta,\ee} 
\leq (1+ C_{\rho}(\ee)) \widehat{\Psi}^+_{s,\eta,\ee} \widehat{\rho}_{\ee^2}$,
where ${\Psi}^+_{s, \eta, \ee}(u) = \sup_{u'\in\mathbb{B}_{\ee}(u)} \Psi_{s,\eta} (u')$, 
$u \in \mathbb{R}$. 
Moreover, using \eqref{Pf_LD_SmoothIneHolder01}, we get 
$\mathbbm{1}_{ \{ Y_n^{x,y} \in I_k \} } \leq \tilde\chi_k(u)  
= (\chi_{k, \ee_1}^+ * \bar{\rho}_{\ee_1})(u)$.
Consequently, similarly to the proof of  \eqref{Pf_LDScalA2_2}, 
we can get the upper bound for $A_4$: uniformly in $s \in K_{\mu}$, 
\begin{align} \label{Pf_Upper_Un-}
&  A_4 
\leq  (1+ C_{\rho}(\ee))  \frac{a_{n,k}}{2 \pi}    \nonumber\\
    &  \quad \times   \int_{ |u|\geq \ee } \left[  
       \int_{\mathbb{R}} e^{-it (\eta k + u )} 
      R^{n}_{s,it} \big( \varphi_{s,k,\ee_1}^y  \big)(x)
      \widehat {\Psi}^+_{s,\eta,\ee}(t) \widehat\rho_{\ee^{2}}(t) dt \right]
         \rho_{\ee^2}(u)   du.
\end{align}
%As in \cite{XGL19a}, we use the Lebesgue dominated convergence theorem
%to interchange the limit $n \to \infty$ and the integral $\int_{|u| \geq \ee}$. 
In order to handle the above integral, we first use the Lebesgue dominated convergence theorem
to interchange the limit $n \to \infty$ and the integral $\int_{|u| \geq \ee}$, 
and then we apply Proposition \ref{Prop Rn limit1}. 
An important issue is to find a dominating function, which can be done as follows. 
We split the integral $\int_{|u| \geq \ee}$ on the right hand side of \eqref{Pf_Upper_Un-} 
into two parts: $\int_{\ee \leq |u| \leq \sqrt{n}}$ and $\int_{ |u| >  \sqrt{n}}$.
For the first part,  by elementary calculations it holds that 
$e^{- nh_s( \eta \frac{k}{n} + \frac{u}{n})} \to 1$, 
uniformly in $s \in K_{\mu}$, $1 \leq k \leq M_n$ and $|u| \leq \sqrt{n}$ as $n \to \infty$.
Hence, using Proposition \ref{Prop Rn limit1}, the function on the right hand side of 
 \eqref{Pf_Upper_Un-} under the integral $\int_{\ee \leq |u| \leq \sqrt{n}}$ is dominated by 
 $C \rho_{\ee^2}$, which is integrable on $\mathbb{R}$. 
%we obtain that, uniformly in $f, x \in \mathcal{S}$ and $|l| \leq l_n$, 
For the second part $\int_{ |u| >  \sqrt{n}}$, 
since the density function $\rho$ has 
polynomial decay, i.e. $\rho_{\ee^2}(u) \leq \frac{C}{1 + u^4}$, $|u| >  \sqrt{n}$,
we get that $\sqrt{n} \rho_{\ee^2}(u) \leq \frac{C}{1 + |u|^3}$, 
which is clearly integrable on $\mathbb{R}$.  
Therefore, we can pass the limit as $n\to \infty$ under the integration $\int_{|u|\geq \ee}$
and then we use Proposition \ref{Prop Rn limit1} to obtain the desired upper bound for $A_4$:
uniformly in $s \in K_{\mu}$, 
\begin{align*}
 \sum_{k =1}^{\infty} \limsup_{ n \to \infty}  A_4
&  \leq   (1+ C_{\rho}(\ee))  \sum_{k=1}^{ \infty }  e^{-s \eta k}
\pi_{s} \big( \varphi_{s,k,\ee_1}^y  \big)  \nonumber\\
&  \quad  \times    \widehat \Psi^+_{s,\eta,\ee}(0) \widehat\rho_{\ee^{2}}(0)
\int_{|u| \geq \ee} \rho_{\ee^2}(u)du. 
\end{align*}
In the same way as in the proof of  \eqref{Pf_LD_pi_s_hh}, 
by Lemmas \ref{Lem_Gui_LeP} and \ref{Lem_0-1_law_s_Posi_Uni}, we can get that uniformly in $s \in K_{\mu}$, 
\begin{align*}
\lim_{\eta \to 0} 
\lim_{\ee_1 \to 0} \sum_{k =1}^{\infty} e^{-s\eta k} 
\pi_{s} \big( \tilde \varphi_{s,k,\ee_1}^y \big)  
= \int_{ \bb P^{d-1} } \delta(y, x)^s \varphi(x) r_s^{-1}(x) \pi_s(dx). 
\end{align*}
Using \eqref{Pf_LD_psi_limit} and noting that $\widehat\rho_{\ee^{2}}(0) = 1$, 
it follows that uniformly in $s \in K_{\mu}$, 
\begin{align*}
\lim_{\eta \to 0} \lim_{\ee_1 \to 0} 
\sum_{k =1}^{\infty} \limsup_{ n \to \infty}  A_4
& \leq (1+ C_{\rho}(\ee))  
  \int_{ \bb P^{d-1} } \delta(y, x)^s \varphi(x) r_s^{-1}(x) \pi_s(dx)   \nonumber\\
& \quad \times \int_{\mathbb{R}}  e^{-su} \psi(u) du
   \int_{|u| \geq \ee} \rho_{\ee^2}(u)du.  
\end{align*}
Since $C_{\rho}(\ee) \to 0$ and $\int_{|u| \geq \ee} \rho_{\ee^2}(u)du  \to  0$ as $\ee \to 0$,
this implies
\begin{align*}
\lim_{\ee \to 0} 
\lim_{\eta \to 0} \lim_{\ee_1 \to 0} 
\sum_{k =1}^{\infty} \limsup_{ n \to \infty}  A_4 = 0. 
\end{align*}
Combining this with \eqref{Scal Low An 1} and \eqref{Scal Low Bn}, 
we get the desired lower bound for $A_2$: 
uniformly in $s \in K_{\mu}$, 
$f \in (\bb R^d)^*$ and $v \in \bb R^d$ with $|f| = |v| = 1$, 
\begin{align*}
\lim_{\ee \to 0}  \lim_{\eta \to 0} 
\lim_{\ee_1 \to 0}  \liminf_{n \to \infty}  A_2 
\geq  \int_{\mathbb{R}}  e^{-su} \psi(u) du 
\int_{ \bb P^{d-1} } \delta(y, x)^s \varphi(x) r_s^{-1}(x) \pi_s(dx). 
\end{align*}
This, together with \eqref{PosiScalA_ccc}, \eqref{ScalInverAn 2}  and \eqref{ScaProLimBn Upper 01},
proves the desired asymptotic % \eqref{SCALREZ02}
\eqref{SCALREZ02_Uni_s}.  
% under the condition \eqref{condition g} on the function $\psi$.  
This concludes the proof of Theorem \ref{Thm_BRP_Uni_s} as well as Theorem \ref{Thm_BRP_Upper}.   
\end{proof}

% \subsection{Upper tail large deviations under the changed measure}
\subsection{Proof of Theorem \ref{Thm_Coeff_BRLD_changedMea}}

\begin{proof}[Proof of Theorem \ref{Thm_Coeff_BRLD_changedMea}]
It suffices to prove \eqref{LD_Upper_ChangeMea002} 
since \eqref{LD_Upper_ChangeMea001} follows from \eqref{LD_Upper_ChangeMea002}
by taking $\varphi = \bf 1$ and $\psi(u) = \bbm{1}_{\{u \geq 0\}}$. 
Using the change of measure formula \eqref{basic equ1} twice, we get
\begin{align}\label{Pf_BRP_Changed_ff}
&  \mathbb{E}_{\bb Q_s^x} 
  \Big[ \varphi(G_n x) \psi \big( \log |\langle f, G_n v \rangle| - nq_t \big) \Big] \nonumber\\
& =    \frac{1}{\kappa^n(s) r_s(x)}
\mathbb{E}
\left[ (\varphi r_s)(G_n x) e^{s \sigma (G_n, x)} 
  \psi \big( \log |\langle f, G_n v \rangle| - nq_t \big)  \right]  \nonumber\\
&  =    \frac{ \kappa^n(t) r_t(x) }{\kappa^n(s) r_s(x)}  
 \mathbb{E}_{\mathbb Q_t^x} 
 \left[  (\varphi r_s r_t^{-1})(G_n x)
     e^{-(t-s) \sigma(G_n, x)} 
     \psi \big( \log |\langle f, G_n v \rangle| - nq_t \big)  \right]   \nonumber\\
& =   \frac{ \kappa^n(t) r_t(x) }{\kappa^n(s) r_s(x)}  e^{ -(t-s) n q_t}  \times  \nonumber\\
& \quad     \mathbb{E}_{\mathbb Q_t^x} 
 \left[  (\varphi r_s r_t^{-1})(G_n x)
     e^{-(t-s) (\sigma(G_n, x) - n q_t)} 
     \psi \big( \log |\langle f, G_n v \rangle| - nq_t \big)  \right], 
\end{align}
where $\mathbb Q_t^x$ is the changed measure defined 
in the same way as $\mathbb Q_s^x$ with $s$ replaced by $t$. 
Following the proof of Theorem \ref{Thm_BRP_Upper}, one can verify that,  
 as $n \to \infty$, 
uniformly in $t \in K_{\mu}$, $f \in (\bb R^d)^*$ and $v \in \bb R^d$ with $|f| = |v| = 1$,  
\begin{align*}
&  \mathbb{E}_{\mathbb Q_t^x} 
 \left[  (\varphi r_s r_t^{-1})(G_n x)
     e^{-(t-s) (\sigma(G_n, x) - n q_t)} 
     \psi \big( \log |\langle f, G_n v \rangle| - nq_t \big)  \right]    \nonumber\\
&   = \frac{1}{\sigma_t \sqrt{2 \pi n}} 
  \left[  \int_{ \bb P^{d-1} } \varphi(x) \delta(y, x)^t \,  \frac{r_s(x)}{r_t(x)} \pi_t(dx)
       \int_{\mathbb{R}} e^{- (t-s) u} \psi(u) du + o(1) \right]. 
\end{align*}
The result follows by taking into account that 
$\Lambda^*(q_s) = sq_s - \Lambda(s)$, $\Lambda^*(q_t) = tq_t - \Lambda(t)$, 
$\Lambda(s) = \log \kappa(s)$ and $\Lambda(t) = \log \kappa(t)$.
\end{proof}

%%%%%%%%%%%%%%%%%%%%%%%%%%%%%%%%%%%%%%%%%%%%%%%%%%%%%%%%%%%%%%%%%%%%%%%%%%%%%%
%%%%%%%%%%%%%%%%%%%%%%%%%%%%%%%%%%%%%%%%%%%%%%%%%%%%%%%%%%%%%%%%%%%%%%%%%%%%%%
\section{Proof of lower tail large deviations for coefficients} \label{sec_Pf_Low_LD}

The goal of this section is to establish 
Theorems \ref{Thm-Posi-Neg-s} and \ref{Thm-Posi-Neg-sBRP} on Bahadur-Rao-Petrov type lower tail large deviations. 
In contrast to the proof of Theorems \ref{thrmBR001} and \ref{Thm_BRP_Upper},
it turns out that the proof of Theorems \ref{Thm-Posi-Neg-s} and \ref{Thm-Posi-Neg-sBRP} is more delicate. 

It suffices to prove Theorem \ref{Thm-Posi-Neg-sBRP}
since Theorem \ref{Thm-Posi-Neg-s} is a 
direct consequence of Theorem \ref{Thm-Posi-Neg-sBRP} 
by taking $l = 0$, $\varphi = \bf 1$ and $\psi(u) = \bbm{1}_{ \{u \leq 0\} }$, $u \in \bb R$. 

%%%%%%%%%%%%%%%%%%%%%%%%%%%%%%%%%%%%%%%%%%%%%%%%%%%%%%%%%%%%%%%%%%%%%%%%%%%%%%
\subsection{Proof of Theorem \ref{Thm-Posi-Neg-sBRP}}

We shall need the H\"{o}lder regularity of the stationary measure $\pi_s$
(for sufficiently small $s$)
recently established in \cite{GQX20}.

\begin{lemma}\label{Lem_Regu_pi_s}
Assume conditions \ref{Condi-TwoExp} and \ref{Condi-IP}. 
Then, for any $\ee >0$, there exist constants $s_0 >0$, $k_0 \in \bb N$ and $c, C >0$ such that
for all $s \in (-s_0, s_0)$, $n \geq k \geq k_0$, $y \in (\bb P^{d-1})^*$ and $x \in \bb P^{d-1}$,
\begin{align}\label{Regu_pi_s}
\bb Q_s^x \Big( \log \delta(y, G_n x) \leq -\ee k  \Big) \leq C e^{- ck}. 
\end{align}
\end{lemma}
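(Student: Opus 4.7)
The plan is to deduce the exponential tail bound by combining three ingredients already in the paper: the H\"older regularity of $\pi_s$ from Proposition \ref{PropRegu02}, the spectral gap of the Markov operator $Q_s$ on $\mathcal{B}_\gamma$ from \eqref{equcontin Q s limit}, and a smoothing of the indicator of the ball $B(y, r)$.

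First I would fix a smooth cutoff $\phi : [0, \infty) \to [0, 1]$ with $\phi \equiv 1$ on $[0, 1]$ and $\phi \equiv 0$ on $[2, \infty)$, and for $r \in (0, 1]$ and $y \in (\bb P^{d-1})^*$ set
\begin{align*}
\varphi^y_r(x) := \phi\bigl( \delta(y, x)/r \bigr), \qquad x \in \bb P^{d-1},
\end{align*}
so that $\mathbbm{1}_{B(y, r)} \leq \varphi^y_r \leq \mathbbm{1}_{B(y, 2r)}$. Since $x \mapsto \delta(y, x)$ is Lipschitz with respect to the angular distance $\mathbf{d}$, splitting into the cases $\mathbf{d}(x, x') \leq r$ and $\mathbf{d}(x, x') > r$ yields $\|\varphi^y_r\|_\gamma \leq C r^{-\gamma}$ with $C$ independent of $y$, $r$ and $s$.

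The core of the argument is then a Markov-semigroup estimate. Noting that $Q_s^n \varphi(x) = \mathbb{E}_{\bb Q_s^x}[\varphi(G_n x)]$ and combining \eqref{equcontin Q s limit} with Proposition \ref{PropRegu02}, I obtain
\begin{align*}
\bb Q_s^x\bigl( \log \delta(y, G_n x) \leq \log r \bigr)
  &\leq Q_s^n \varphi^y_r(x) \\
  &\leq \pi_s(\varphi^y_r) + C e^{-c_0 n}\|\varphi^y_r\|_\gamma \\
  &\leq C r^\alpha + C e^{-c_0 n} r^{-\gamma},
\end{align*}
where $c_0 > 0$ is the spectral gap constant, $\alpha > 0$ is the H\"older exponent of $\pi_s$, and both are chosen uniformly for $s \in (-s_0, s_0)$ with $s_0$ small enough.

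To conclude I specialize to $r = e^{-\ee k}$ with $n \geq k$:
\begin{align*}
\bb Q_s^x\bigl( \log \delta(y, G_n x) \leq -\ee k \bigr) \leq C e^{-\alpha \ee k} + C e^{-c_0 n + \gamma \ee k}.
\end{align*}
If $\ee \leq c_0/(2\gamma)$, then $n \geq k$ forces $e^{-c_0 n + \gamma \ee k} \leq e^{-c_0 k/2}$, so the right-hand side is bounded by $C e^{-c k}$ with $c = \min(\alpha \ee, c_0/2)$. For larger $\ee$, the trivial inclusion $\{\log \delta(y, G_n x) \leq -\ee k\} \subset \{\log \delta(y, G_n x) \leq -\ee_0 k\}$ with $\ee_0 := c_0/(2\gamma)$ reduces the bound to the previous case.

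The main technical point is ensuring that the H\"older exponent $\alpha$ of $\pi_s$ in Proposition \ref{PropRegu02} and the spectral gap constant $c_0$ in \eqref{equcontin Q s limit} can be taken uniformly in $s$ on a neighborhood of $0$; this follows from the analytic dependence of $\kappa(s)$, $r_s$ and $\nu_s$ on $s$ (Proposition \ref{Prop-Trans-s-neg}) and the perturbative stability of the spectral gap. Once uniformity is in hand, balancing the polynomial regularity of $\pi_s$ against the $r^{-\gamma}$ blow-up of the H\"older seminorm, and handling large $\ee$ via the monotonicity of the event, is elementary.
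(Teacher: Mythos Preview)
Your argument is correct and is essentially the same approach the paper uses. The paper does not prove Lemma~\ref{Lem_Regu_pi_s} directly, citing it from \cite{GQX20}; however, your strategy of combining the H\"older regularity of $\pi_s$ with the exponential convergence \eqref{equcontin Q s limit} of $Q_s^n$ via a smoothed indicator is exactly the argument the paper gives for the nearly identical Propositions~\ref{Prop_Regu_Strong01} and~\ref{Prop_Regu_Strong02}. The only cosmetic differences are that the paper smooths $\log\delta(y,\cdot)$ by convolution rather than $\delta(y,\cdot)$ by a cutoff, and that the paper absorbs the $r^{-\gamma}$ blow-up by taking $\gamma$ small (so that $\gamma\ee < c_0/2$) rather than by your monotonicity reduction to a fixed $\ee_0$; both devices achieve the same bound. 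One small point: Proposition~\ref{PropRegu02} as stated covers only $s\in(-s_0,0]$, so for $s\in(0,s_0)$ you are implicitly invoking the same perturbative argument from \cite{GQX20} on the positive side, which is valid but worth saying explicitly.
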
 

Note that \eqref{Regu_pi_s} is stronger than the following assertion 
of the H\"{o}lder regularity of the stationary measure $\pi_s$: 
there exist constants $s_0, \alpha > 0$ such that
\begin{align*} %\label{Regu_pi_s_02}
\sup_{ s\in (-s_0, s_0) } \sup_{y \in (\bb{P}^{d-1})^* } 
  \int_{\bb{P}^{d-1} } \frac{1}{ \delta(y, x)^{\alpha} } \pi_s(dx) < +\infty. 
\end{align*}

As an application of Lemma \ref{Lem_Regu_pi_s}, 
we show the following result about the high-order negative moment of the $\delta(y, G_n x)$
under the changed measure $\bb Q_s^x$,
which will play important role in the proof of Theorem \ref{Thm-Posi-Neg-sBRP}.  

\begin{lemma}\label{Lem_Inte_Regu_a}
Assume conditions \ref{Condi-TwoExp} and \ref{Condi-IP}. 
Let $p>0$ be any fixed constant.
Then, there exists a constant $s_0 > 0$ such that
\begin{align*}
\sup_{n \geq 1} \sup_{ s\in (-s_0, s_0) } \sup_{y \in (\bb{P}^{d-1})^* }  
\sup_{x \in \bb P^{d-1} } 
\bb E_{\bb Q_s^x} \left( \frac{1}{ \delta(y, G_n x)^{p|s|} } \right) < + \infty. 
\end{align*}
\end{lemma}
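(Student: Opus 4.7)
The plan is to express the moment via the layer-cake identity
\[
\bb E_{\bb Q_s^x}\!\bigl[\delta(y, G_n x)^{-p|s|}\bigr]
= 1 + p|s|\int_0^{\infty} e^{p|s|u}\,\bb Q_s^x\bigl(U_n > u\bigr)\,du,
\]
with $U_n := -\log\delta(y, G_n x)\ge 0$, and to bound the right-hand side uniformly in $n, s, x, y$ by splitting the integration at the threshold $u = \ee n$ for a fixed small $\ee > 0$. The whole strategy is to produce an exponential tail bound $\bb Q_s^x(U_n > u) \le Ce^{-\alpha u}$ whose rate $\alpha$ exceeds $p|s|$; making $s_0$ small enough then forces convergence of the integral with a constant independent of $n$.

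On the range $u \in [0, \ee n]$, Lemma~\ref{Lem_Regu_pi_s}, applied with parameter $\ee$, furnishes constants $s_0, k_0, c, C > 0$ so that $\bb Q_s^x(U_n > u) \le C e^{-cu/\ee}$ for $u \in [\ee k_0, \ee n]$, while the interval $[0, \ee k_0]$ contributes the trivial bounded term $e^{p|s|\ee k_0}$. Shrinking $s_0$ if needed so that $p s_0 < c/(2\ee)$, the corresponding piece of the integral is bounded by $C' p|s|/(c/\ee - p|s|)$, uniformly in $n, s, x, y$.

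For the tail $u > \ee n$, Lemma~\ref{Lem_Regu_pi_s} no longer applies. I would revert to the original measure via the change-of-measure formula \eqref{basic equ1}: since $r_s$ is uniformly bounded away from $0$ and $\infty$ on $\bb P^{d-1}$ for $s$ in a neighborhood of $0$, and since $e^{s\sigma(G_n, x)} \le N(G_n)^{|s|}$, an application of Cauchy--Schwarz yields
\[
\bb Q_s^x\bigl(\delta(y, G_n x) < e^{-u}\bigr)
\le \frac{C\,\bigl(\bb E\,N(G_n)^{2|s|}\bigr)^{1/2}}{\kappa^n(s)}\,\bb P\bigl(\delta(y, G_n x) < e^{-u}\bigr)^{1/2}.
\]
Via a Taylor expansion of $\log\kappa$ at the origin, together with submultiplicativity and condition~\ref{Condi-TwoExp}, the prefactor is at most $e^{C'|s|n}$. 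For the probability factor, the H\"older regularity of $\nu$ combined with the spectral gap for the unperturbed transfer operator produces $\bb P(\delta(y, G_n x) < e^{-u}) \le C e^{-\alpha u}$ uniformly in $n \ge n_0$, $x$, $y$ (this is essentially Lemma~\ref{Lem_Regu_pi_s} at $s=0$, with the finitely many values $n < n_0$ absorbed into a larger constant using condition~\ref{Condi-IP}). Since $u > \ee n$, the gain $e^{-\alpha u/2} \le e^{-\alpha\ee n/2}$ dominates $e^{C'|s|n}$ once $s_0$ is chosen small enough relative to $\ee$ and $\alpha$, so the tail integral is uniformly bounded.

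The main obstacle is precisely this third region $u > \ee n$: Lemma~\ref{Lem_Regu_pi_s} is intrinsically confined to the scale $u \le \ee n$, and the change of measure introduces the factor $e^{C'|s|n}$ that threatens to blow up in $n$. The delicate point of the argument is to jointly pick the three small parameters $\ee$, $\alpha$, and $s_0$ so that the gain from the H\"older regularity of $\nu$ under $\bb P$ outweighs the loss from the change of measure on $u > \ee n$, thereby closing the estimate uniformly in $n \ge 1$, $s \in (-s_0, s_0)$, $x \in \bb P^{d-1}$ and $y \in (\bb P^{d-1})^*$.
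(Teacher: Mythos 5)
Your near-tail analysis ($u \le \ee n$, layer-cake plus Lemma~\ref{Lem_Regu_pi_s}) coincides with the paper's entire proof: the paper decomposes the moment over the annuli $B_{n,k} = \{ e^{-\ee(k+1)} \le \delta(y,G_nx) \le e^{-\ee k}\}$, $k \ge k_0$, bounds $\bb Q_s^x(B_{n,k}) \le Ce^{-ck}$ by \eqref{Regu_Q_sx_bb} for every $k$, and sums the geometric series; it never separates out the far range $k>n$, where (read literally) \eqref{Regu_Q_sx_bb} does not apply. So the restriction you noticed is a real subtlety, and your decision to split at $u=\ee n$ is a genuine refinement rather than a reproduction of the paper's argument.

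However, your fix for the far region $u>\ee n$ has a gap of exactly the kind you set out to resolve. After the change of measure and Cauchy--Schwarz you need $\bb P(\delta(y,G_nx)<e^{-u}) \le Ce^{-\alpha u}$ \emph{uniformly in $u>0$} for $n\ge n_0$, and you credit this to the H\"older regularity of $\nu$, the spectral gap, and ``Lemma~\ref{Lem_Regu_pi_s} at $s=0$.'' But that lemma at $s=0$ carries the same restriction $k\le n$; and the spectral-gap estimate only gives $\bb P(\delta(y,G_nx)<e^{-u}) \le \nu(\{\delta(y,\cdot)\le e^{-u}\}) + Ce^{-cn}e^{\gamma u}$, a remainder that blows up as $u$ grows past $\ee n$ for fixed $n$. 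What is actually available on $u>\ee n$ is therefore only $\bb P(\delta(y,G_nx)<e^{-u}) \le Ce^{-cn}$, decaying in $n$ but not in $u$, and $\int_{\ee n}^{\infty} e^{p|s|u}\,Ce^{-cn}\,du$ diverges. The change of measure merely transferred the $u\le\ee n$ limitation from $\bb Q_s^x$ to $\bb P$ rather than removing it. The parenthetical claim that ``the finitely many values $n<n_0$ [are] absorbed into a larger constant using condition~\ref{Condi-IP}'' is also unsupported: strong irreducibility and proximality alone do not rule out $\delta(y,G_nx)=0$ with positive $\bb Q_s^x$-probability at a fixed small $n$ (e.g.\ when $\mu$ is finitely supported), so those terms cannot in general be bounded by a finite constant.
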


\begin{proof}
By Lemma \ref{Lem_Regu_pi_s}, 
for any $\ee >0$, there exist constants $s_0 >0$, $k_0 \in \bb N$ and $c, C >0$ such that
for all $s \in (-s_0, s_0)$, $n \geq k \geq k_0$ and $y \in (\bb P^{d-1})^*$, $x \in \bb P^{d-1}$,
\begin{align}\label{Regu_Q_sx_bb}
\bb Q_s^x \Big( \delta(y, G_n x) \leq e^{-\ee k}  \Big) \leq C e^{- ck}. 
\end{align}
For any $y \in (\bb P^{d-1})^*$ and $k \geq k_0$, we denote
\begin{align*}
B_{n,k} = \left\{ x \in \bb P^{d-1}: e^{- \ee (k+1)} \leq \delta(y, G_n x) \leq e^{- \ee k} \right\}. 
\end{align*}
By \eqref{Regu_Q_sx_bb}, it follows that there exist constants $c, C >0$ such that for all $s \in (-s_0, s_0)$, 
\begin{align*}
& \bb E_{\bb Q_s^x} \left( \frac{1}{ \delta(y, G_n x)^{p|s|} }  \right)  \\
& =  \bb E_{\bb Q_s^x} \left( \frac{1}{ \delta(y, G_n x)^{p|s|} } 
    \bbm{1}_{\{ \delta(y, G_n x) > e^{- \ee k_0} \} } \right)
    + \sum_{k = k_0}^{\infty} \bb E_{\bb Q_s^x} \left( \frac{1}{ \delta(y, G_n x)^{p|s|} }
    \bbm{1}_{B_{n,k}} \right)    \\
& \leq  e^{\ee k_0 p|s|} + C \sum_{k = k_0}^{\infty} e^{\ee (k+1) p|s|} e^{-ck},
\end{align*}
which is finite by taking $s_0>0$ small enough. This proves Lemma \ref{Lem_Inte_Regu_a}. 
\end{proof}

Now we are in a position to establish Theorem \ref{Thm-Posi-Neg-sBRP}. 
%It turns out that the proof of Theorem \ref{Thm-Posi-Neg-sBRP} is more delicate than that of Theorem 
%\ref{Thm_BRP_Upper}. 
In the same spirit as in Theorem \ref{Thm_BRP_Uni_s}, 
we are able to prove the following result which is stronger than Theorem \ref{Thm-Posi-Neg-sBRP}.

\begin{theorem} \label{Thm_BRP_Neg_Uni}
Assume conditions \ref{Condi-TwoExp} and \ref{Condi-IP}. 
Then, there exists a constant $s_0>0$ such that for any compact set $K_{\mu} \subset (-s_0, 0)$,
%and any $\eta > 0$, % and positive sequence $(l_n)_{n \geq 1}$ satisfying $l_n \leq n^{-\eta}$, 
we have, as $n \to \infty$, 
uniformly in $s \in K_{\mu}$, $f \in (\bb R^d)^*$ and $v \in \bb R^d$ with $|f| = |v| = 1$,
\begin{align*} %\label{LD_Lower01BRP}
 \mathbb{P} \Big( \log |\langle f, G_n v \rangle| \leq n q \Big)  
 =  \frac{ r_{s}(x)  r^*_{s}(y)}{\varrho_s}  
   \frac{ \exp \left( -n   \Lambda^*(q) \right) } { -s \sigma_{s}\sqrt{2\pi n}} 
  \big[ 1 + o(1) \big]. 
\end{align*}
More generally, for any $\varphi \in \mathcal{B}_{\gamma}$ and any measurable function $\psi$ on $\mathbb{R}$ 
such that $u \mapsto e^{-s'u} \psi(u)$ is directly Riemann integrable for all  
$s' \in K_{\mu}^{\epsilon} : = \{ s' \in \bb R: |s' - s| < \epsilon, s \in K_{\mu} \}$ with $\epsilon >0$ small enough, 
we have,
as $n \to \infty$, uniformly in $s \in K_{\mu}$, 
  $f \in (\bb R^d)^*$ and $v \in \bb R^d$ with $|f| = |v| = 1$,
\begin{align*} 
&  \mathbb{E} \Big[ \varphi(G_n x) \psi \big( \log |\langle f, G_n v \rangle| - n q \big) \Big]   \nonumber\\
&  = \frac{r_{s}(x)}{\varrho_s}
    \frac{ \exp (-n \Lambda^*(q)) }{ \sigma_{s}\sqrt{2\pi n}}      
    \left[ \int_{ \bb P^{d-1} } \varphi(x) \delta(y,x)^s \nu_s(dx) 
        \int_{\mathbb{R}} e^{-su} \psi(u) du + o(1) \right]. 
\end{align*}
\end{theorem}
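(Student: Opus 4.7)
The plan is to follow the blueprint of the proof of Theorem~\ref{Thm_BRP_Uni_s}, adapting each step to the regime $s \in K_\mu \subset (-s_0, 0)$. First, I would reduce Theorem~\ref{Thm_BRP_Neg_Uni} to its more general form with target functions $\varphi$ and $\psi$; the Bahadur--Rao statement is then recovered by taking $\varphi = \mathbf{1}$ and $\psi(u) = \mathbbm{1}_{\{u \leq 0\}}$, for which $\int_{\mathbb R} e^{-su}\psi(u)\,du = 1/(-s)$ accounts precisely for the factor $-s$ in the denominator of the announced expansion. By the same approximation technique as in the upper tail case, it is enough to treat $\psi$ satisfying the analogue of the direct Riemann integrability condition \eqref{condition g}. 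Applying the change of measure formula \eqref{basic equ1} under $\mathbb Q_s^x$ at negative $s$ (well defined by Proposition~\ref{Prop-Trans-s-neg}) and invoking the exact decomposition $\log|\langle f, G_n v\rangle| = \log|G_n v| + \log\delta(y, G_n x)$, matters reduce to the analysis of
\begin{equation*}
A = \sigma_s \sqrt{2\pi n}\, \mathbb E_{\mathbb Q_s^x}\bigl[(\varphi r_s^{-1})(G_n x)\, e^{-sT_n^v}\, \psi(T_n^v + Y_n^{x,y})\bigr],
\end{equation*}
where $T_n^v = \log|G_n v| - nq$ and $Y_n^{x,y} = \log\delta(y, G_n x) \leq 0$.

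I would then partition $(-\infty, 0]$ by $I_k = (-\eta k, -\eta(k-1)]$, $k \geq 1$, and split $A$ into the tail piece $A_1$ on $\{Y_n^{x,y} \leq -\eta M_n\}$ with $M_n = \lfloor C_1 \log n\rfloor$, and the main sum $A_2$ over $1 \leq k \leq M_n$. The crucial new difficulty, compared with the upper tail argument, is that since $s < 0$ the weights $e^{-s\eta(k-1)} = e^{|s|\eta(k-1)}$ appearing in each term now grow exponentially in $k$, instead of decaying. To offset this growth, I would invoke the strong H\"older regularity from Lemma~\ref{Lem_Regu_pi_s} together with the negative-moment bound of Lemma~\ref{Lem_Inte_Regu_a}, both of which hold uniformly on $(-s_0, s_0)$ for $s_0 > 0$ small enough. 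These deliver an estimate of the form $\mathbb Q_s^x(Y_n^{x,y} \leq -\varepsilon k) \leq C e^{-ck}$ with $c > 0$ independent of $s$, so that by shrinking $s_0$ until $|s|\eta < c$ holds for some small $\eta > 0$, both $A_1$ and the portion of $A_2$ over large $k$ are rendered negligible. This is exactly the point forcing the hypothesis that $s_0$ be small.

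For the remaining sum over $1 \leq k \leq M_n$, the scheme of Theorem~\ref{Thm_BRP_Uni_s} transfers: on $\{Y_n^{x,y} \in I_k\}$ replace $\psi(T_n^v + Y_n^{x,y})$ by its two-sided smoothings $\psi_\eta^\pm(T_n^v - \eta(k-1))$, apply Lemma~\ref{estimate u convo} with the density $\rho_{\varepsilon^2}$, and substitute the raw indicators $\mathbbm{1}_{\{Y_n^{x,y} \in I_k\}}$ by the H\"older-regular smoothings $(\chi_{k,\varepsilon_1}^\pm \ast \bar\rho_{\varepsilon_1}) \circ \log\delta(y,\cdot)$, which yields test functions $\varphi_{s,k,\varepsilon_1}^y \in \mathcal B_\gamma$. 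After Fourier inversion each term becomes an integral of $R_{s,it}^n(\varphi_{s,k,\varepsilon_1}^y)(x)$ against a smooth compactly supported weight, to which Proposition~\ref{Prop Rn limit1}(2) (the spectral gap for $R_{s,it}$ valid uniformly on compact subsets of $(-s_0, 0)$) applies and produces the limit $\sqrt{2\pi}\,\pi_s(\varphi_{s,k,\varepsilon_1}^y)$ with an explicit error of order $(\log n)/\sqrt n$ times $\|\varphi_{s,k,\varepsilon_1}^y\|_\gamma = O(e^{\eta\gamma k})$. Choosing $\gamma > 0$ small enough that $|s|\eta + \eta\gamma < c$ makes the full error series summable in $k$ and vanishing in $n$, justifying the interchange of $\lim_{n\to\infty}$ and $\sum_{k}$.

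Finally, letting $n \to \infty$, $\varepsilon_1 \to 0$, $\eta \to 0$, $\varepsilon \to 0$ in this order, the Riemann sum over $k$ converges to $\int_{\bb P^{d-1}} \delta(y,x)^s \varphi(x) r_s^{-1}(x)\,\pi_s(dx)$ provided the boundary contributions from the endpoints $-\eta k$ of the intervals $I_k$ vanish. This is where Lemma~\ref{Lem_zero_law_s_Neg} (on projective subspaces having zero $\pi_s$-measure) together with the zero-one laws Lemmas~\ref{Lem_0-1_law_s_Neg} and~\ref{Lem_0-1_law_s_Neg_Uni} enter: they allow one to select a single value of $\eta \in (0,1)$ for which $\pi_s(\{x : \log\delta(y,x) = -\eta k\}) = 0$ simultaneously for every $s \in K_\mu$ and every $k \geq 1$, after which the Lebesgue dominated convergence theorem concludes. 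The hard part, as indicated, is precisely the exponential growth of the weights $e^{|s|\eta(k-1)}$; it is only the combination of the quantitative H\"older regularity of $\pi_s$ from Lemma~\ref{Lem_Regu_pi_s} and the uniform spectral gap for $R_{s,it}$ on $(-s_0, 0)$ furnished by Proposition~\ref{Prop Rn limit1}(2) that allows the argument to close.
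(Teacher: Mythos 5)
Your proposal follows essentially the same approach as the paper: change of measure under $\mathbb{Q}_s^x$ for $s<0$ via Proposition~\ref{Prop-Trans-s-neg}, the exact decomposition $\log|\langle f,G_nv\rangle|=\log|G_nv|+\log\delta(y,G_nx)$, a dyadic partition by $I_k$ cut off at $M_n=\lfloor C_1\log n\rfloor$, Fourier inversion against the perturbed operator and Proposition~\ref{Prop Rn limit1}(2), the H\"older regularity of $\pi_s$ (Lemmas~\ref{Lem_Regu_pi_s} and~\ref{Lem_Inte_Regu_a}) to absorb the exponentially growing weights $e^{|s|\eta(k-1)}$, and the zero-one laws Lemmas~\ref{Lem_zero_law_s_Neg} and~\ref{Lem_0-1_law_s_Neg_Uni} to kill the boundary terms. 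You also correctly isolate the genuinely new difficulty (the sign flip in the weights, forcing $s_0$ small relative to the regularity exponent) and the right lemmas to handle it, which is exactly where the paper's own proof departs from the upper-tail argument.
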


\begin{proof}[Proof of Theorems \ref{Thm-Posi-Neg-sBRP} and \ref{Thm_BRP_Neg_Uni}]
We only need to prove Theorem \ref{Thm_BRP_Neg_Uni} 
since Theorem \ref{Thm-Posi-Neg-sBRP} is a direct consequence of Theorem \ref{Thm_BRP_Neg_Uni}.

It suffices to prove the second assertion of Theorem \ref{Thm_BRP_Neg_Uni}, since
the first one % assertion in Theorem \ref{Thm_BRP_Upper} 
follows from the second by choosing $\varphi = \mathbf{1}$ 
and $\psi (u) = \mathbbm{1}_{ \{ u \leq 0 \} },$ $u \in \mathbb R.$ 
As in the proof of Theorem \ref{Thm_BRP_Upper}, 
we assume that the target functions $\varphi$ and $\psi$
are non-negative, and that 
the function $\psi$ satisfies the condition \eqref{condition g}.

Since $f \in (\bb R^d)^*$ and $v \in \bb R^d$ with $|f| = |v| = 1$,
and $y = \bb R f$ and $x \in \bb R v$,  
we have $\log |\langle f, G_n v \rangle| = \log |G_n v| + \log \delta(y, G_n x).$ 
%and that $\log |\langle f, G_n v \rangle| = - \infty$ if and only if $\log \delta(y, G_n x) = -\infty$. 
%Taking into account that $\psi(-\infty) = 0$, 
Hence we can replace the logarithm of the coefficient $\log |\langle f, G_n v \rangle|$ 
by the sum $\log |G_n v| + \log \delta(y, G_n x)$ as follows: 
\begin{align*}
J : & = \sigma_s \sqrt{2\pi n}  \frac{ e^{n \Lambda^*(q)} }{r_s(x)} 
\mathbb{E} \Big[ \varphi(G_n x)\psi( \log |\langle f, G_n v \rangle| - n q ) \Big] \nonumber\\
&  =   \sigma_s \sqrt{2\pi n}   
\frac{ e^{n \Lambda^*(q)} }{r_s(x)} \mathbb{E} \Big[ \varphi(G_n x) 
  \psi( \log |G_n x| + \log \delta(y, G_n x) - n q ) \Big]. 
\end{align*}
As in the proof of Theorem \ref{Thm_BRP_Upper}, 
we denote for any $y = \bb R f \in (\bb P^{d-1})^*$ and $x \in \bb R v \in \bb P^{d-1}$, 
\begin{align*}
T_n^v: = \log |G_n v| - nq,  \qquad  Y_n^{x,y}: = \log \delta(y, G_n x).  
\end{align*}
%Recall that
%$q = \Lambda'(s)$ and $|l| = O(\frac{1}{\sqrt{n}})$, 
%so that $\Lambda^*(q+l)=\Lambda^{*}(q) + sl + \frac{l^2}{2 \sigma_s^2}$.
Taking into account that $q = \Lambda'(s)$ and $e^{n\Lambda^{*}(q)}=e^{nsq} \kappa^{-n}(s)$, 
and using the change of measure formula \eqref{basic equ1}, 
we get
%\begin{align*} 
%& A_n =   \sqrt{2\pi n}  \,   \sigma_s e^{nsl + n h_s(l) + nsq}  \nonumber\\
%&  \qquad \times \mathbb{E}_{\mathbb{Q}_{s}^{x}}
%\left[(\varphi r_{s}^{-1})(X_{n}^{x}) e^{-s \log |G_n x|}
%\psi \big( \log |G_n x| + \log |f(X_n^x)|  - n(q+l) \big) \right]. 
%\end{align*}
%, we have 
\begin{align} \label{ScaProLimAnLow01}
J =  \sigma_s \sqrt{2\pi n}  
  \mathbb{E}_{\mathbb{Q}_{s}^{x}} \Big[ (\varphi r_{s}^{-1})(G_n x)  e^{-s T_n^v}
\psi \big( T_{n}^v + Y_n^{x,y} \big)  \Big].
\end{align}
For any fixed small constant $0< \eta <1$, denote $I_k: = (-\eta k, -\eta(k-1)]$,  $k \geq 1$. 
Let $M_n:= \floor{ C_1 \log n }$, where $C_1>0$ is a sufficiently large constant
and $\floor{a}$ denotes the integer part of $a \in \bb R$. 
Then from \eqref{ScaProLimAnLow01} we have the following decomposition: 
\begin{align}\label{PosiScalALow_decom}
J = J_1 + J_2, 
\end{align}
where  %, with $a_n: = \sigma_s \sqrt{2\pi n}  \, e^{\frac{nl^2}{2 \sigma_s^2}}$, 
\begin{align*}
& J_1 : =  \sigma_s \sqrt{2\pi n}  \mathbb{E}_{\mathbb{Q}_{s}^{x}} 
\left[ (\varphi r_{s}^{-1})(G_n x)  e^{-s T_n^v}
\psi \big( T_{n}^v + Y_n^{x,y} \big) \mathbbm{1}_{\{Y_n^{x, y} \leq -\eta M_n \}} \right],  
    \nonumber\\
& J_2 : =  \sigma_s \sqrt{2\pi n}  \sum_{k =1}^{M_n}  
\mathbb{E}_{\mathbb{Q}_{s}^{x}} 
\left[ (\varphi r_{s}^{-1})(G_n x)  e^{-s T_n^v}
\psi \big( T_{n}^v + Y_n^{x,y}  \big) \mathbbm{1}_{\{Y_n^{x,y} \in I_k \}} \right]. 
\end{align*}

\textit{Upper bound of $J_1$.}
%We now give a bound for the first term $J_1$.
Since the function $u \mapsto e^{-s' u} \psi(u)$ is directly Riemann integrable on $\mathbb{R}$ for some $s' \in (0, s)$,
one can verify that the function $u \mapsto e^{-s u} \psi(u)$ is bounded on $\mathbb{R}$
and hence there exists a constant $C >0$ such that for all $s \in (-s_0, 0]$, 
\begin{align*}
e^{-s T_n^v} \psi( T_{n}^v + Y_n^{x,y} )  \leq  C e^{ s Y_n^{x,y} }. 
\end{align*}
Hence, by the H\"{o}lder inequality, Lemma \ref{Lem_Regu_pi_s} and Lemma \ref{Lem_Inte_Regu_a},
we obtain that as $n \to \infty$, uniformly in $s \in (-s_0, 0]$,
\begin{align*}
J_1 & \leq  C \sqrt{n}  \, \left\{ \bb E_{\bb Q_s^x} \left( \frac{1}{ \delta(y, G_n x)^{-2s} } \right)
 \bb Q_s^x \Big( \log \delta(y, G_n x) \leq -\eta \floor{C_1 \log n} \Big)  \right\}^{1/2}  \nonumber\\
& \leq  C \sqrt{n}  \,  e^{-c_{\eta} \floor{C_1 \log n} }  \to 0.  
\end{align*}

%Using Lemma \ref{lemmaCR001} 
%and the fact that the function $\varphi r_s^{-1}$ is uniformly bounded on $\bb P^{d-1}$, 
%we get the following upper bound for $J_1$: as $n \to \infty$, uniformly in $s \in K_{\mu}$, 
%  $f \in (\bb R^d)^*$ and $v \in \bb R^d$ with $|f| = |v| = 1$, 
%\begin{align} \label{ScalInverLow_J1}
%J_1 \leq  C_s \sigma_s \sqrt{2\pi n} \,  e^{n h_s(l) - s \eta M_n} \to 0. 
%\end{align}
%The remaining part of the proof is devoted to 
%establishing upper and lower bounds for the second term $A_2$ defined by \eqref{PosiScalA_ccc}. 

\textit{Upper bound of $J_2$.}
Following the proof of \eqref{Pf_LDScalA2_2}, one has
\begin{align*} 
& J_2  \leq  (1+ C_{\rho}(\ee))
 \sigma_s \sqrt{\frac{n}{2\pi}} \,  
 \sum_{k =1}^{\infty} \mathbbm{1}_{ \{ k \leq M_n \} }  e^{-s\eta (k-1)}   \nonumber\\
& \qquad \quad \times   \int_{\mathbb{R}} e^{-it \eta(k-1)} 
R^{n}_{s,it} \big( \varphi_{s,k,\ee_1}^y  \big)(x)
\widehat {\Psi}^+_{s, \eta, \ee}(t) \widehat\rho_{\ee^{2}}(t) dt, 
\end{align*}
where $\varphi_{s,k,\ee_1}^y$ and $\Psi^+_{s, \eta, \ee}$ 
are respectively defined by \eqref{Pf_Baradur_LD_varph_11} and \eqref{Def_Psi_aaa}. 
Since $|s|$ and $\gamma>0$ are sufficiently small, by elementary calculations we get that 
we obtain that
 the series $ \frac{\log n}{\sqrt{n}} \sum_{k = 1}^{M_n} e^{-s\eta (k-1)}   \| \varphi_{s,k,\ee_1}^y  \|_{\gamma}$
converges to $0$ as $n \to \infty$. 
Hence, we can apply Proposition \ref{Prop Rn limit1} (2) instead of Proposition \ref{Prop Rn limit1} (1),
and follow the proof of \eqref{ScalarKeyProp 1}, \eqref{ScalarKeyProp 2} and \eqref{ScalarBn abc} 
to obtain that uniformly in $s \in K_{\mu}$, 
$f \in (\bb R^d)^*$ and $v \in \bb R^d$ with $|f| = |v| = 1$, 
\begin{align} \label{ScalarBn_Low_J2}
\limsup_{n \to \infty}  J_2 
\leq   (1+ C_{\rho}(\ee)) \widehat {\Psi}^+_{s, \eta, \ee}(0) \widehat\rho_{\ee^{2}}(0)
\sum_{k =1}^{\infty} e^{-s\eta (k-1)} 
\pi_{s} \big( \varphi_{s,k,\ee_1}^y \big). 
\end{align}
Then, we can proceed in a similar way as in the proof of  
\eqref{LimsuBn a}, \eqref{ScalPosiMainpart}, \eqref{Pf_Upp_A1_ff} and \eqref{sum_0_1_law}. 
One of the main differences is that in \eqref{Pf_Upp_A1_ff} we need to 
use the H\"{o}lder regularity of the stationary measure $\pi_s$ stated in Lemma \ref{Lem_Regu_pi_s} 
to justify the applicability of the Lebesgue dominated convergence theorem, 
when we interchange the limit $\ee_1 \to 0$ and the sum over $k$ in \eqref{Pf_Upp_A1_ff}. 
Another difference is that in \eqref{sum_0_1_law} it is necessary to 
use the zero-one law for the stationary measure $\pi_s$ shown in 
Lemma \ref{Lem_0-1_law_s_Neg_Uni} instead of Lemma \ref{Lem_0-1_law_s_Posi_Uni}. 
Consequently, one can obtain the desired upper bound of $J_2$ 
which is similar to \eqref{ScaProLimBn Upper 01}:
uniformly in $s \in K_{\mu}$, 
$f \in (\bb R^d)^*$ and $v \in \bb R^d$ with $|f| = |v| = 1$, 
\begin{align*}
\lim_{\ee \to 0}  \lim_{\eta \to 0} 
\lim_{\ee_1 \to 0}  \limsup_{n \to \infty}  J_2 
\leq  \int_{\mathbb{R}}  e^{-su} \psi(u) du 
\int_{ \bb P^{d-1} } \delta(y, x)^s \varphi(x) r_s^{-1}(x) \pi_s(dx). 
\end{align*}

The lower bound of $J_2$ can be carried out in a similar way and hence we omit the details. 
\end{proof}

By Theorem \ref{Thm_BRP_Upper} and \ref{Thm-Posi-Neg-sBRP}, 
we now give a proof of Theorem \ref{Theorem local LD002}
on the local limit theorem with large deviations for coefficients $\langle f, G_n v \rangle$. 

\begin{proof}[Proof of Theorem \ref{Theorem local LD002}]
The asymptotic \eqref{LLTLDa} follows from Theorem \ref{Thm_BRP_Upper}
by taking $\varphi = \bf 1$ and $\psi(u) = \bbm{1}_{ \{u \in [a_1, a_2]\} } (u)$, $u \in \bb R$. 
In the same way, the asymptotic \eqref{LLTLDb} 
is a direct consequence of Theorem \ref{Thm-Posi-Neg-sBRP}. 
\end{proof}

%%%%%%%%%%%%%%%%%%%%%%%%%%%%%%%%%%%%%%%%%%%%%%%%%%%%%%%%%%%%%%%%%%%%%%%%%%%%%%%%%%%
%%%%%%%%%%%%%%%%%%%%%%%%%%%%%%%%%%%%%%%%%%%%%%%%%%%%%%%%%%%%%%%%%%%%%%%%%%%%%%%%%%%
\section{Proof of the H\"{o}lder regularity of the stationary measure} \label{Sec:regpositive}

In this section we prove Proposition \ref{PropRegularity} 
on the H\"{o}lder regularity of the stationary measure $\pi_s$ 
 for any $s \in I_{\mu}^{\circ}$.
This result is of independent interest and 
 plays a crucial role for establishing the precise large deviation asymptotics
for the coefficients $\langle f, G_n v \rangle$ under the changed measure $\bb Q_s^x$,  
see Theorem \ref{Thm_BRP_Upper}.  

%On the topic of products of random matrices, 
The study of  the regularity of the stationary measure $\nu$ defined by \eqref{mu station meas},
attracted a great deal of attention, 
see e.g. \cite{Aou11, BQ13, BQ16, BQ17, BL85, BFLM11, CPV93, DKW19, Gui90, GR85}.
As far as we know, there are three different approaches to establish
the regularity of $\nu$. 
The first one is originally due to Guivarc'h \cite{Gui90}, 
see also \cite{BL85}. 
The approach in \cite{Gui90} consists in investigating the asymptotic behaviors of the components 
in the Cartan and Iwasawa decompositions of the random matrix product $M_n = g_1 \ldots g_n$. 
The second one is developed in \cite{BFLM11} for the study of the regularity of 
the stationary measure on the torus $\bb T^d = \bb R^d / \bb Z^d$,
and has been applied to the setting of products of random matrices in \cite{BQ16, BQ17},
where the large deviation bounds for the Iwasawa cocycle and for the Cartan projection play a crucial role. 
The third one, which is recently developed in \cite{DKW19} for the special linear group $SL(2, \mathbb{C})$
consisting of complex $2 \times 2$ matrices with determinant one,  
is based on the theory of super-potentials introduced in \cite{DS09}.   
All of the results mentioned above are concerned with the regularity of the stationary measure $\nu$.
However, the regularity of the eigenmeasure $\nu_s$ or of  the stationary measure $\pi_s$ for $s$ different from $0$
was not known before in the literature.

In order to prove Proposition \ref{PropRegularity}, 
we first extend some convergence results concerning 
the Cartan and Iwasawa decompositions of the matrix product $M_n$ 
established earlier in \cite{BL85} under the measure $\mathbb P$,
to the framework of the changed measure $\mathbb{Q}_s$. 

Similarly to \eqref{Def_Q_s_lll}, for any $s \in I_{\mu}$, 
we define the conjugate Markov operator $Q_s^*$
as follows: for any $\varphi \in \mathcal{C}((\bb P^{d-1})^*)$, 
\begin{align*}
Q_s^* \varphi (y) = \frac{1}{\kappa(s) r_s^*(y) } P_s^* (\varphi r_s^*) (y), \quad  y \in (\bb{P}^{d-1})^*. 
\end{align*}
Then $Q_s^*$ has a unique stationary measure $\pi_s^*$ given by
$\pi_s^*(\varphi) = \frac{\nu_s^* (\varphi r_s^*)}{\nu_s^* (r_s^*)}$
for any $\varphi \in \mathcal{C}((\bb P^{d-1})^*)$. 

%we first utilize the Cartan decomposition to 
%show the convergence result under the changed measure $\mathbb{Q}_s$, which generalizes the 
%result in \cite{BL85}.  

\subsection{Asymptotics for the Cartan decomposition} \label{Sec_Cartan}
Recall that $G_n = g_n \ldots g_1$. 
We are going to investigate asymptotic behaviors of the components 
of the Cartan decomposition of the transposed matrix product 
\begin{align*}
G_n^* = g_1^* g_2^* \ldots g_n^*,  \quad  n \geq 1, 
\end{align*}
where $g^*$ is the adjoint automorphism of the matrix $g$. 
%Note that the distributions of the random walks $M_n$ and $G_n$ coincide.  
%However, as it will be seen below, the asymptotics of the components of
%%(such as the behaviors of the components of 
%their Cartan decompositions 
%%of these two random walks 
%are not exactly the same.   
Let $K = SO(d,\mathbb{R})$ be the orthogonal group,
and $A^+$ be the set of diagonal matrices whose diagonal entries starting from the upper left corner 
are strictly positive and decreasing. 
With these notation, the well known Cartan decomposition states that $GL(d, \mathbb{R}) = K A^+ K$.
%Note that the Cartan decomposition of a matrix may not be unique. 
The Cartan decomposition of $G_n^*$ is written as 
$G_n^* = k_n a_n k_n'$, where $k_n, k_n' \in K$
and $a_n \in A^+$ with its diagonal elements (singular values) 
satisfying $a_n^{1,1} \geq a_n^{2,2} \geq \ldots \geq a_n^{d,d} > 0$. 
Note that the diagonal matrix $a_n$ is uniquely determined, 
but the orthogonal matrices $k_n$ and $k_n'$ are not unique. 
We choose one such decomposition of $G_n^*$. 
Denote by $e_1^*, \ldots, e_d^*$ the dual basis of $(\bb R^d)^*$. 
The vector $k_n e_1^* \in (\mathbb{P}^{d-1})^*$ is called the \emph{density point} of $G_n^*$.
It plays an important role in the study of products of random matrices: see \cite{BFLM11, BQ17}. 
%The transpose of the matrix $M_n$ is given by $M_n^{\mathrm T} = g_n^{\mathrm T} \ldots g_1^{\mathrm T}$. 
%Consider the Cartan decomposition of the transposed matrix 
%$M_n^{\mathrm T}$: $M_n^{\mathrm T} = k_n' \Lambda_n k_n$,
%where $k_n', k_n$ are two real orthogonal matrices, 
%and $\Lambda_n$ is a diagonal matrix whose diagonal entries starting from the upper left corner 
%are given by $a_{11}(n), a_{22}(n), \ldots, a_{dd}(n)$
%%and $$\Lambda_n = diag(a_{11}(n), a_{22}(n), \ldots, a_{dd}(n))$$
%with $a_{11}(n) \geq a_{22}(n) \geq \ldots \geq a_{dd}(n) > 0$. 
%It is well known that the Cartan decomposition of a matrix may not be unique. 
The following result shows that the % in particular that this 
density point converges almost surely to the 
random variable $Z_s^*$ of the law $\pi_s^*$ under the changed measure 
%$\mathbb{Q}_s$, where 
$\mathbb{Q}_s: = \int_{\mathbb{P}^{d-1}} \mathbb{Q}_s^x \pi_s(dx)$. 
Note that by definition the measure $\mathbb{Q}_s$ is shift-invariant and ergodic 
since $\pi_s$ is the unique stationary measure of the Markov operator $Q_s$. 
Recall that $\delta(y,x) = \frac{|\langle f, v \rangle|}{|f||v|}$ 
for any $y = \bb R f \in (\bb P^{d-1})^*$ and $x = \bb R v \in \bb P^{d-1}$.  

\begin{lemma}\label{LemConverThmQs}
Let $s \in I_{\mu}^{\circ}$. 
Under condition \ref{Condi-IP}, with the above notation,  
we have
\begin{align}\label{ConverThmQs}
\lim_{n \to \infty} \frac{ a_n^{2,2} }{ a_n^{1,1} } = 0,   \   \mathbb{Q}_s\mbox{-}a.s.
  \quad \mbox{and}  \quad 
\lim_{n \to \infty}  k_n  e_1 = Z_s^*,  \   \mathbb{Q}_s\mbox{-}a.s.,   
\end{align}
and for any $x = \bb R v \in \mathbb{P}^{d-1}$ with $v \in \bb R^d \setminus \{0\}$, 
\begin{align} \label{ConverThmQs02}
 \lim_{ n \to \infty} \frac{ | G_n v | }{ \| G_n \| |v| } = \delta(Z_s^*, x),
 \   \mathbb{Q}_s\mbox{-}a.s.,  
\end{align}
where the law of the random variable $Z_s^*$ (on $(\mathbb{P}^{d-1})^*$) is the stationary measure $\pi_s^*$. 
%satisfying  $\lim_{n \to \infty} G_n^{\mathrm T} \! \cdot \! m = \delta_{Z_s^*}$, $\mathbb{Q}_s$-a.s..
Moreover, the assertions \eqref{ConverThmQs} and \eqref{ConverThmQs02} also hold true with the measure $\mathbb{Q}_s$
replaced by $\mathbb{Q}_s^x$, for any starting point $x \in \mathbb{P}^{d-1}$. 
\end{lemma}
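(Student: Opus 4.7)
The plan is to follow the classical strategy of Bougerol--Lacroix \cite[Ch.~III]{BL85} (developed there under the law $\mathbb{P}$) and adapt it to the changed measure $\mathbb{Q}_s$. The first ingredient is the simplicity of the dominant Lyapunov exponent of $G_n$ under $\mathbb{Q}_s$, established in \cite{GL16} and recalled later in the paper as Lemma \ref{Lem_Lya_Meas}. Since the singular values of $G_n^*$ coincide with those of $G_n$, this yields
\[
\frac{1}{n}\log \frac{a_n^{1,1}}{a_n^{2,2}} \;\longrightarrow\; \gamma_1(s)-\gamma_2(s)>0 \quad \mathbb{Q}_s\text{-a.s.},
\]
hence the first assertion of \eqref{ConverThmQs}.

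Next I would show that $(k_n e_1^*)_{n\geq 1}$ is a Cauchy sequence in $(\mathbb{P}^{d-1})^*$. Comparing the Cartan decompositions of $G_n^*$ and $G_{n+1}^*=G_n^* g_{n+1}^*$ via a standard perturbation argument gives an estimate of the form
\[
\mathbf{d}\bigl(k_n e_1^*, k_{n+1} e_1^*\bigr) \;\leq\; C\,\frac{a_n^{2,2}}{a_n^{1,1}}\,\|g_{n+1}\|\,\|g_{n+1}^{-1}\|,
\]
where the right-hand side uses the gap ratio controlled in the first step. Under $\mathbb{Q}_s$, the Radon--Nikodym density of $g_{n+1}$ (conditionally on the past) against $\mu$ is bounded by a multiple of $\|g_{n+1}\|^s$ through \eqref{basic equ1}, so condition \ref{Condi_Exp} yields a uniform moment bound on $\|g_{n+1}\|\,\|g_{n+1}^{-1}\|$ under $\mathbb{Q}_s$. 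Combining the exponential decay of the gap with a Borel--Cantelli argument delivers an almost sure limit $Z_s^*\in(\mathbb{P}^{d-1})^*$. To identify its law, I would use the shift-invariance of $\mathbb{Q}_s$ together with the proximality property announced in Lemma \ref{Lem_DiracMea}: the law of $Z_s^*$ must be a stationary probability measure for the conjugate Markov operator $Q_s^*$, and by uniqueness (cf.\ Proposition \ref{transfer operator}) it coincides with $\pi_s^*$. The extension from $\mathbb{Q}_s$ to each $\mathbb{Q}_s^x$ is obtained by rerunning the same Borel--Cantelli estimate directly under $\mathbb{Q}_s^x$, using the same conditional density bound, so that the event in question has full measure for every starting point $x$.

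Finally, for \eqref{ConverThmQs02}, writing $G_n=(k_n')^{\top}a_n k_n^{\top}$ and setting $\alpha_{n,i}=\langle v/|v|, k_n e_i\rangle$, one computes
\[
\frac{|G_n v|}{\|G_n\|\,|v|} \;=\; \sqrt{\alpha_{n,1}^{\,2} + \sum_{i\geq 2}\Bigl(\frac{a_n^{i,i}}{a_n^{1,1}}\Bigr)^{\!2}\alpha_{n,i}^{\,2}}\,,
\]
which converges $\mathbb{Q}_s^x$-a.s.\ to $|\langle v/|v|,Z_s^*\rangle|=\delta(Z_s^*,x)$ by the first two steps, using $\|G_n\|=a_n^{1,1}$ and the orthogonality of $k_n,k_n'$. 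The main obstacle I anticipate is the identification of the limit law: under $\mathbb{Q}_s$ the sequence $(g_n)$ is no longer i.i.d., so the classical duality between the forward walk and its reverse cannot be quoted directly and one must pass through the conjugate eigenpair $(r_s^*,\pi_s^*)$, relying crucially on the proximality statement from \cite{GL16} to pin down the limit as $\pi_s^*$ rather than some other $Q_s^*$-invariant measure.
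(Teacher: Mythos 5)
Your proposal takes a genuinely different route from the paper, and it contains a real gap in the first step.

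The paper's proof is extremely short: everything is read off Lemma~\ref{Lem_DiracMea}. Since $G_n^* m^*\to\delta_{Z_s^*}$ weakly $\mathbb{Q}_s$-a.s.\ and $m^*$ is rotation invariant, $(k_n a_n)m^*\to\delta_{Z_s^*}$, and because $a_n$ is diagonal with decreasing entries this forces $a_n m^*\to\delta_{e_1^*}$, $a_n^{2,2}/a_n^{1,1}\to0$, and $k_n e_1^*\to Z_s^*$ all at once; the law of $Z_s^*$ is $\pi_s^*$ by the very statement of that lemma, and the pass from $\mathbb{Q}_s$ to $\mathbb{Q}_s^x$ is via Lemma~\ref{Lem_AbsoConti}. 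You instead split the work into three separate pieces (Lyapunov gap, Cauchy estimate for $k_n e_1^*$, separate identification of the limit law). This structure can in principle be completed, and it is the one needed if one wants a \emph{rate} of convergence as in Proposition~\ref{Prop_L1Conver}; but for the bare a.s.\ statement of the present lemma it is considerably longer and, as you yourself note at the end, it ends up leaning on Lemma~\ref{Lem_DiracMea} anyway to pin down the limit law, which makes the Cauchy argument redundant.

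The concrete gap: you assert that Lemma~\ref{Lem_Lya_Meas} yields
$\tfrac1n\log(a_n^{1,1}/a_n^{2,2})\to\gamma_1(s)-\gamma_2(s)>0$ $\mathbb{Q}_s$-a.s., but Lemma~\ref{Lem_Lya_Meas} only gives convergence \emph{in expectation} of $\tfrac1n\log|G_n v|$ and of $\tfrac1n\log\|\wedge^2 G_n\|$. To upgrade to a.s.\ convergence of $\tfrac1n\log\|G_n\|=\tfrac1n\log a_n^{1,1}$ and $\tfrac1n\log\|\wedge^2 G_n\|=\tfrac1n\log(a_n^{1,1}a_n^{2,2})$ you must invoke Kingman's subadditive ergodic theorem for the shift on $\mathbb{G}^{\mathbb{N}^*}$ under the shift-invariant ergodic measure $\mathbb{Q}_s$ (and then relate $\|G_n\|$ to $|G_n v|$). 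This is a nontrivial ingredient that you do not mention; without it the very first a.s.\ claim in your proof does not follow from the stated inputs. Once granted, the rest of your sketch (Borel--Cantelli via the claimed perturbation bound, moment control of $N(g_{n+1})$ under $\mathbb{Q}_s$ from \ref{Condi_Exp}, absolute continuity of $\mathbb{Q}_s^x$ w.r.t.\ $\mathbb{Q}_s$) would carry through, but at this point the paper's one-line deduction from Lemma~\ref{Lem_DiracMea} is the cleaner path.
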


Before proceeding to proving Lemma \ref{LemConverThmQs},  
let us first recall the following two results which were established in \cite{GL16}. 
In the sequel, 
let $m^*$ be the unique rotation invariant probability measure on the projective space $(\mathbb{P}^{d-1})^*$. 
For any matrix $g \in GL(d, \mathbb{R})$, denote by $g^* m^*$ the probability measure on $(\mathbb{P}^{d-1})^*$
such that for any measurable function $\varphi$ on $(\mathbb{P}^{d-1})^*$,
\begin{align*}
\int_{(\mathbb{P}^{d-1})^* } \varphi(y) (g^* m^*) (dy) = \int_{(\mathbb{P}^{d-1})^* } \varphi( g^* y ) m^*(dy). 
\end{align*}

\begin{lemma}\label{Lem_DiracMea}
Assume condition \ref{Condi-IP}. Let $s \in I_{\mu}^{\circ}$.  Then, 
the probability measure $G_n^* m^*$ converges weakly to the Dirac measure $\delta_{Z_s^*}$, $\mathbb{Q}_s$-a.s.,
where the law of the random variable $Z_s^*$ under the measure $\mathbb{Q}_s$ is given by $\pi_s^*$. 
\end{lemma}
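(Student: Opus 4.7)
The plan is to reduce the statement to two convergence claims via the Cartan decomposition. Write $G_n^* = k_n a_n k_n'$ with $k_n, k_n' \in SO(d,\mathbb R)$ and $a_n = \mathrm{diag}(a_n^{1,1},\ldots,a_n^{d,d})$, $a_n^{1,1}\geq\cdots\geq a_n^{d,d}>0$. Since $m^*$ is the $SO(d,\mathbb R)$-invariant probability measure on $(\mathbb P^{d-1})^*$, one has $k_n' m^* = m^*$, hence $G_n^* m^* = k_n(a_n m^*)$. It therefore suffices to establish (i) $a_n m^* \to \delta_{e_1^*}$ weakly, $\mathbb Q_s$-a.s., and (ii) $k_n e_1^* \to Z_s^*$ in $(\mathbb P^{d-1})^*$, $\mathbb Q_s$-a.s., for some measurable $Z_s^*$ of law $\pi_s^*$; combining these with continuity of the $K$-action on probability measures will yield the conclusion.

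For (i), and for the mere convergence part of (ii), I would appeal to the simplicity of the dominant Lyapunov exponent $\lambda_1(s) = \Lambda'(s)$ under the changed measure $\mathbb Q_s$ (the other result recalled from \cite{GL16}). The strict inequality $\lambda_1(s) > \lambda_2(s)$ forces $a_n^{2,2}/a_n^{1,1} \to 0$ exponentially fast, $\mathbb Q_s$-a.s., and since the hyperplane $\{f_1 = 0\} \subset (\mathbb P^{d-1})^*$ is an $m^*$-null set, bounded convergence gives $a_n m^* \to \delta_{e_1^*}$ weakly. The same gap drives the a.s.\ convergence of the top left singular direction: using the recursion $G_{n+1}^* = g_1^*\,(g_2^*\cdots g_{n+1}^*)$ and the Cartan decomposition of the shifted factor, the angular discrepancy between $k_{n+1} e_1^*$ and $g_1^*$ applied to the top singular direction of $g_2^*\cdots g_{n+1}^*$ is controlled geometrically by the Lyapunov gap, making $(k_n e_1^*)$ $\mathbb Q_s$-a.s.\ Cauchy in the angular distance $\mathbf d$, and thus converging to some random $Z_s^* \in (\mathbb P^{d-1})^*$. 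This is essentially the Oseledets multiplicative ergodic theorem applied to the ergodic stationary dynamical system $(\bb G^{\mathbb N^*}, \mathbb Q_s, \theta)$.

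The main obstacle is the identification of the law of $Z_s^*$ as $\pi_s^*$. Under $\mathbb Q_s$ the sequence $(g_n)_{n\geq 1}$ is no longer i.i.d., and since $G_n^* = g_1^*\cdots g_n^*$ keeps $g_1^*$ outermost, the forward Markov structure of $(G_n x)$ with stationary measure $\pi_s$ does not transfer to $G_n^* m^*$ in a transparent way. My plan is to compute, for a continuous $\varphi$ on $(\mathbb P^{d-1})^*$,
\[
\mathbb E_{\mathbb Q_s}[\varphi(Z_s^*)] \;=\; \lim_{n\to\infty}\int_{(\mathbb P^{d-1})^*}\mathbb E_{\mathbb Q_s}[\varphi(G_n^* y)]\,m^*(dy),
\]
to expand the inner expectation by means of the change-of-measure formula \eqref{basic equ1} and rewrite the result as an iterate of the conjugate transfer operator $P_s^*$ applied to a test function built from $\varphi$, $r_s^*$ and $\delta(y,\cdot)^s$, and finally to pass to the limit using the spectral gap of $P_s^*$ (Proposition \ref{transfer operator}) together with the eigen-equations $P_s^* r_s^* = \kappa(s) r_s^*$ and $P_s^*\nu_s^* = \kappa(s)\nu_s^*$. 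This should give $\mathbb E_{\mathbb Q_s}[\varphi(Z_s^*)] = \nu_s^*(\varphi r_s^*)/\nu_s^*(r_s^*) = \pi_s^*(\varphi)$, completing the identification. The translation between the forward-time change-of-measure $\mathbb Q_s$ and the dual spectral data of $P_s^*$ is where the bulk of the technical work lies.
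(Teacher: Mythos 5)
Note first that the paper does not prove this lemma at all: its entire ``proof'' is the citation ``This result has been recently established in \cite[Theorem~3.2]{GL16}.'' Your proposal is therefore a from-scratch substitute, not a match to a proof in the paper. The reduction $G_n^*m^*=k_n(a_nm^*)$ via rotation invariance of $m^*$ and the combination step at the end are fine, and the derivation of $a_nm^*\to\delta_{e_1^*}$ from a Lyapunov gap is correct in spirit, but you invoke Lemma~\ref{Lem_Lya_Meas}, which only gives convergence of $\tfrac1n\,\mathbb E_{\mathbb Q_s^x}\log\|\wedge^k G_n\|$ in mean; to get $a_n^{2,2}/a_n^{1,1}\to0$ $\mathbb Q_s$\mbox{-}a.s.\ you must first upgrade this via Kingman's subadditive ergodic theorem on the ergodic shift $(\mathbb G^{\mathbb N^*},\mathbb Q_s,\theta)$. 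Likewise, the a.s.\ convergence of $k_ne_1^*$ is merely asserted as ``Oseledets''; a genuine proof needs a quantitative Cauchy estimate on the angular distance built from the gap, and you would have to verify it transfers to the dependent increments under $\mathbb Q_s$.

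The real gap is your step of identifying the law of $Z_s^*$ as $\pi_s^*$ --- which is the actual content of the lemma --- and you leave it as a plan. The obstacle is concrete: under $\mathbb Q_s$ the process $\{G_k^*y\}$ with $G_k^*=g_1^*\cdots g_k^*$ is not a Markov chain with transition operator $Q_s^*$, and the change of measure~\eqref{basic equ1} twists the forward cocycle $\sigma(G_n,x)$, not the adjoint one $\sigma(G_n^*,y)$. Converting between them requires the identity $\sigma(g,x)-\sigma(g^*,y)=\log\delta(g^*y,x)-\log\delta(y,gx)$, an integration in $x$ against $\nu_s$, and control of the resulting factor $\delta(y,G_nx)^{-s}$, which for $s>0$ can blow up near hyperplanes and has to be tamed (e.g.\ by the regularity of $\nu_s$). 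None of this is carried out; it is exactly what \cite[Theorem~3.2]{GL16} establishes. As written, your argument yields at best the a.s.\ existence of the limiting direction $Z_s^*$, not that its law is $\pi_s^*$, so it does not establish the lemma.
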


\begin{proof}
This result has been recently established in \cite[Theorem 3.2]{GL16}. 
%Specifically, according to \cite{GL16}, the probability measure $G_n^*  \cdot  m$ 
%converges weakly to a Dirac measure $\delta_{Z_s^*}$, $\mathbb{Q}_s$-a.s.,
%where the law of the random variable $Z_s^*$ is given by $\pi_s^*$. 
%Denote by $\mu^*$ the image of the measure $\mu$ by the mapping $g \mapsto g^*$. 
%Since the measure $\mu^*$ also satisfies condition \ref{Condi-IP}, 
%the desired result follows. 
\end{proof}

The following result is proved in \cite[Lemma 3.5]{GL16}. 

\begin{lemma}\label{Lem_AbsoConti}
Assume condition \ref{Condi-IP}. Let $s \in I_{\mu}^{\circ}$. Then, %for any $s \in I_{\mu}$, % and $x \in \mathbb{P}^{d-1}$,  
there exists a constant $c_s>0$ such that for any $x \in \mathbb{P}^{d-1}$, 
it holds that $\mathbb{Q}_s^x \leq c_s \mathbb{Q}_s$. 
\end{lemma}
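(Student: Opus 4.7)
The plan is to bound the Radon--Nikodym derivative $d\mathbb{Q}_s^x/d\mathbb{Q}_s$ uniformly in $x \in \mathbb{P}^{d-1}$. Since $\mathbb{Q}_s = \int \mathbb{Q}_s^{x'} \pi_s(dx')$ and both measures on $\mathbb{G}^{\mathbb{N}^*}$ are characterized by their finite-dimensional distributions via Kolmogorov's extension theorem, a standard monotone class argument reduces the claim to the pointwise inequality
\begin{equation*}
q_n^s(x, G_n) \leq c_s \int_{\mathbb{P}^{d-1}} q_n^s(x', G_n)\, \pi_s(dx'),
\end{equation*}
holding for every $n \geq 1$, every realization $G_n \in \mathbb{G}$ and every $x \in \mathbb{P}^{d-1}$, with a constant $c_s$ independent of $n$, $G_n$ and $x$.

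Substituting the explicit kernel $q_n^s(x, G_n) = e^{s\sigma(G_n, x)} r_s(G_n x)/(\kappa^n(s) r_s(x))$ and $\pi_s(dx') = r_s(x')\nu_s(dx')/\nu_s(r_s)$, the normalizing factor $\kappa^n(s)^{-1}$ cancels. By Proposition \ref{transfer operator} the eigenfunction $r_s$ is continuous and strictly positive on the compact projective space, so $0 < c_1 \leq r_s \leq c_2 < \infty$, and the $r_s$-terms only contribute bounded multiplicative constants. The problem thus reduces to the uniform bound
\begin{equation*}
\sup_{x,\, G_n,\, n} \frac{e^{s\sigma(G_n, x)}}{\int_{\mathbb{P}^{d-1}} e^{s\sigma(G_n, x')}\, \nu_s(dx')} < \infty.
\end{equation*}

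The plan for this bound is to exploit the Cartan decomposition of the adjoint $G_n^* = k_n a_n k_n'$ with $k_n, k_n' \in SO(d, \mathbb{R})$ and $a_n$ diagonal with decreasing singular values $a_n^{1,1} \geq \dots \geq a_n^{d,d} > 0$. Setting $y_n := \mathbb{R}\, k_n e_1 \in (\mathbb{P}^{d-1})^*$, orthogonality of $k_n'$ together with the form of $a_n$ yield the two-sided estimate $a_n^{1,1}\,\delta(y_n, x) \leq |G_n v|/|v| \leq \|G_n\| = a_n^{1,1}$ for any unit $v$ with $x = \mathbb{R} v$. Raising to the $s$-th power gives the upper bound $e^{s\sigma(G_n, x)} \leq (a_n^{1,1})^s$, while integrating the lower bound against $\nu_s$ and using the formula $r_s^*(y) = \int \delta(y, x')^s \nu_s(dx')$ (Lemma \ref{Lemma-expleinenfun-s-neg}) gives
\begin{equation*}
\int_{\mathbb{P}^{d-1}} e^{s\sigma(G_n, x')}\, \nu_s(dx') \geq (a_n^{1,1})^s\, r_s^*(y_n).
\end{equation*}
The factor $(a_n^{1,1})^s$ cancels, and the ratio is bounded by $1/\inf_y r_s^*(y) < \infty$, by continuity and strict positivity of $r_s^*$ on the compact space $(\mathbb{P}^{d-1})^*$.

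The main obstacle, and the non-routine step, is exactly the lower bound $\int e^{s\sigma(G_n,\cdot)}\, d\nu_s \gtrsim \|G_n\|^s$: a priori, for a typical $x'$ the cocycle $\sigma(G_n, x')$ can be much smaller than $\log\|G_n\|$ (namely whenever $x'$ is nearly orthogonal to $y_n$), and one might fear that the ratio blows up. The Cartan decomposition shows that, despite this pointwise variation, the integral still captures the top singular value up to a factor controlled by $r_s^*(y_n)$; the uniform positivity of $r_s^*$, coming from the spectral gap theory of Proposition \ref{transfer operator}, then closes the argument and produces an explicit admissible constant such as $c_s = \nu_s(r_s)\,c_2/(c_1^2\,\inf_y r_s^*(y))$.
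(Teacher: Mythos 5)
Your proof is correct, and the chain of reductions is exactly the one the cited reference (Guivarc'h--Le Page, Lemma 3.5 of \cite{GL16}) runs through: pass to the finite-dimensional kernels $q_n^s$, factor out the uniformly bounded eigenfunction $r_s$, and then control the ratio $e^{s\sigma(G_n,x)}/\int e^{s\sigma(G_n,\cdot)}\,d\nu_s$ via the Cartan decomposition together with the explicit integral formula for $r_s^*$ and its strict positivity. The sandwich $a_n^{1,1}\delta(y_n,x)\le |G_n v|/|v|\le a_n^{1,1}$ (where $y_n=\mathbb{R}k_n e_1^*$ is the density point) is precisely what makes the top singular value cancel and leaves $1/r_s^*(y_n)$, bounded by $1/\inf r_s^*<\infty$. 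The only point worth stating more carefully is the passage from cylinder events to the full $\sigma$-algebra: a naive Dynkin-class argument does not preserve inequalities under proper differences, so one should either use the approximation of $\mathcal{F}$-sets by sets in the algebra $\bigcup_n\mathcal{F}_n$ (approximation in $(\mathbb{Q}_s^x+\mathbb{Q}_s)$-measure), or observe that the pointwise bound on the kernel ratio says the $\mathcal{F}_n$-densities $D_n=d\mathbb{Q}_s^x|_{\mathcal{F}_n}/d\mathbb{Q}_s|_{\mathcal{F}_n}$ form a nonnegative $\mathbb{Q}_s$-martingale bounded by $c_s$, whence $\mathbb{Q}_s^x=D_\infty\cdot\mathbb{Q}_s$ on all of $\mathcal{F}$ with $D_\infty\le c_s$. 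This is routine, but your phrase ``standard monotone class argument'' glosses over it.
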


The assertion of Lemma \ref{Lem_AbsoConti} implies that 
the measure $\mathbb{Q}_s^x$ is absolutely continuous with respect to $\mathbb{Q}_s$. 
Using Lemmas \ref{Lem_DiracMea} and \ref{Lem_AbsoConti}, we are now in a position to prove Lemma \ref{LemConverThmQs}.

\begin{proof}[Proof of Lemma \ref{LemConverThmQs}]
By the Cartan decomposition of $G_n^*$, we have $G_n^* = k_n a_n k_n'$,
where $k_n, k_n' \in K$ and $a_n \in A^+$. 
By Lemma \ref{Lem_DiracMea},  
the probability measure $G_n^*  m^*$ converges weakly to the Dirac measure $\delta_{Z_s^*}$, $\mathbb{Q}_s$-a.s..
%where the law of the random variable $Z_s$ is given by $\pi_s$. 
%using again the decomposition $M_n = k_n a_n^+ k_n'$
%and the fact that 
Since $m^*$ is a rotation invariant measure on $(\mathbb{P}^{d-1})^*$, 
it follows that $(k_n  a_n) m^*$ converges weakly to the random variable $Z_s^*$, $\mathbb{Q}_s$-a.s..
Taking into account that %Since 
$a_n$ is a diagonal random matrix with decreasing diagonal entries,
%{\color{magenta} and $m$ is a rotation invariant measure on $\mathbb{P}^{d-1}$}, 
we deduce that, as $n \to \infty$, we have 
$a_n  m^* \to \delta_{e_1^*}$,
$a_n^{2,2} / a_n^{1,1}  \to 0$ and $k_n  e_1^* \to Z_s^*$, $\mathbb{Q}_s$-a.s..
%$\lim_{n \to \infty} \Lambda_n \! \cdot \! m = \delta_{e_1},$ $\mathbb{Q}_s$-a.s.,
%Combining this with \eqref{EquaDecom001}, we deduce that 
%$\lim_{n \to \infty} \Lambda_n \! \cdot \! m = \delta_{e_1},$ $\mathbb{Q}_s$-a.s.,
%which implies 
%%\begin{align*}
%%\lim_{n \to \infty} \Lambda_n \! \cdot \! m = \delta_{e_1},  \   \mathbb{Q}_s\mbox{-}a.s.  \quad 
%%\mbox{and}  \quad  
%%\lim_{n \to \infty} (k_n^2)^{T} \! \cdot \! e_1 = Z_s^*,  \   \mathbb{Q}_s\mbox{-}a.s.. 
%%\end{align*}
%%The first limit implies that 
%$\lim_{n \to \infty} \frac{a_{22}(n)}{a_{11}(n)} = 0,$ $\mathbb{Q}_s$-a.s.. 
This concludes the proof of the assertion \eqref{ConverThmQs}. 
To show \eqref{ConverThmQs02}, using again the decomposition $G_n^* = k_n a_n k_n'$, 
it follows that for any $x = \bb R v \in \mathbb{P}^{d-1}$, 
\begin{align*} %\label{EquaDecom001}
\frac{|G_n v|^2}{|v|^2}  = \frac{\langle  a_n k_n^* v,  a_n k_n^* v  \rangle}{|v|^2} 
=  \sum_{j=1}^d ( a_n^{j,j} )^2  \frac{| \langle k_n^* v, e_j^* \rangle |^2}{|v|^2}
=  \sum_{j=1}^d ( a_n^{j,j} )^2  \delta(k_n e_j^*, x)^2. 
\end{align*}
This, together with the fact that $\| G_n \| = a_n^{1,1}$, implies \eqref{ConverThmQs02}. 
Taking into account Lemma \ref{Lem_AbsoConti}, we see that 
the assertions \eqref{ConverThmQs} and \eqref{ConverThmQs02} remain valid with the measure $\mathbb{Q}_s$
replaced by $\mathbb{Q}_s^x$.
\end{proof}

%%%%%%%%%%%%%%%%%%%%%%%%%%%%%%%%%%%%%%%%%%%%%%%%%%%%%%%%%%%%%%%%%%%%%%%%%%%%%%%%
\subsection{Asymptotics for the Iwasawa decomposition}   \label{Sec_Iwasawa}
%We next show some convergence results 
%in the Iwasawa decomposition, which will be used later on.
In this subsection we study the asymptotics of the components 
in the Iwasawa decomposition of $G_n^*$ under the changed measure $\mathbb{Q}_s^x$. 
Denote by $L$ the group of lower triangular matrices with $1$ in the diagonal elements, 
by $A$ the group of diagonal matrices with strictly positive entries in the diagonal elements,
and as before by $K$ the group of orthogonal matrices.
The Iwasawa decomposition states that $GL(d, \mathbb{R}) = LAK$
and such decomposition is unique.
%Note that the Iwasawa decomposition of a matrix is unique. 
Hence, for the product $G_n^*$, there exist unique $L(G_n^*) \in L$,
$A(G_n^*) \in A$  and  $K(G_n^*) \in K$ such that $G_n^* = L(G_n^*) A(G_n^*) K(G_n^*)$. 
%that is, for any $g \in GL(d, \mathbb{R})$, there exist a unique $K(g) \in K$ and a unique $N(g) \in N$
%such that $g = K(g) N(g)$. For the matrices product, we also write
%$G_n = K(G_n) N(G_n)$. 
%With these notation, we have the following: 
The following result shows that $L(G_n^*) e_1^*$ converges almost surely
under the measures $\mathbb{Q}_s$ and $\mathbb{Q}_s^x.$ 

\begin{lemma}\label{LemIwasaLim}
Let $s \in I_{\mu}^{\circ}$. 
Under condition \ref{Condi-IP}, for any $x \in \mathbb{P}^{d-1}$,
\begin{align*}
\lim_{n \to \infty} L(G_n^*) e_1^* 
= \frac{ Z_s^* }{ \langle Z_s^*, e_1 \rangle},   
\quad  \mathbb{Q}_s\mbox{-a.s.}   \  \mbox{and}  \quad  \mathbb{Q}_s^x\mbox{-a.s.}.
\end{align*}
where $Z_s^*$ is a random variable given by Lemma \ref{LemConverThmQs}. 
\end{lemma}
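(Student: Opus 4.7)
The strategy is to express $L(G_n^*) e_1^*$ through an eigen-type identity for $G_n^* (G_n^*)^T$, and then to extract the asymptotic behavior from the Cartan decomposition via Lemma \ref{LemConverThmQs}.

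Write the Iwasawa decomposition as $G_n^* = L_n A_n K_n$ with $L_n = L(G_n^*)$, $A_n = \operatorname{diag}(A_n^{1,1}, \ldots, A_n^{d,d})$, $K_n = K(G_n^*)$, and the Cartan decomposition (cf.\ Section \ref{Sec_Cartan}) as $G_n^* = k_n \tilde{a}_n k_n'$ with $\tilde{a}_n = \operatorname{diag}(\tilde{a}_n^{1,1}, \ldots, \tilde{a}_n^{d,d})$ and $\tilde{a}_n^{1,1} = \|G_n^*\|$. First, orthogonality of $K_n$ gives
\[
G_n^* (G_n^*)^T = L_n A_n^2 L_n^T,
\]
and since $L_n^T$ is upper triangular with ones on the diagonal, $L_n^T e_1^* = e_1^*$, so
\[
G_n^* (G_n^*)^T e_1^* = (A_n^{1,1})^2\, L_n e_1^*, \qquad (A_n^{1,1})^2 = (G_n^* (G_n^*)^T)_{1,1}.
\]
In particular, $L_n e_1^*$ is the vector $G_n^*(G_n^*)^T e_1^*$ renormalized so that its first coordinate equals $1$.

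Next, the Cartan decomposition yields $G_n^* (G_n^*)^T = k_n \tilde{a}_n^2 k_n^T$, whence
\[
G_n^* (G_n^*)^T e_1^* = \sum_{p=1}^d (\tilde{a}_n^{p,p})^2 (k_n)_{1,p}\, k_n e_p^*.
\]
Dividing by $(\tilde{a}_n^{1,1})^2$ isolates the leading term $(k_n)_{1,1}\, k_n e_1^*$; the remaining terms carry prefactors $(\tilde{a}_n^{p,p}/\tilde{a}_n^{1,1})^2 \to 0$ for $p \geq 2$ by Lemma \ref{LemConverThmQs}, while $|(k_n)_{1,p}|$ and $|k_n e_p^*|$ are bounded by $1$. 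Combined with $k_n e_1^* \to Z_s^*$ and $(k_n)_{1,1} = \langle k_n e_1^*, e_1 \rangle \to \langle Z_s^*, e_1 \rangle$ from Lemma \ref{LemConverThmQs}, both $\mathbb{Q}_s$-a.s., reading off the whole vector and, separately, its first coordinate yields
\[
\frac{G_n^* (G_n^*)^T e_1^*}{(\tilde{a}_n^{1,1})^2} \longrightarrow \langle Z_s^*, e_1 \rangle\, Z_s^*, \qquad \frac{(A_n^{1,1})^2}{(\tilde{a}_n^{1,1})^2} \longrightarrow \langle Z_s^*, e_1 \rangle^2.
\]
Taking the ratio then gives $L_n e_1^* \to Z_s^* / \langle Z_s^*, e_1 \rangle$, as claimed.

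The step I expect to be most delicate is justifying that $\langle Z_s^*, e_1 \rangle \neq 0$ $\mathbb{Q}_s$-a.s., without which the normalization degenerates. This amounts to showing that the law $\pi_s^*$ of $Z_s^*$ puts no mass on the projective hyperplane $H = \{y \in (\bb P^{d-1})^* : \langle y, e_1 \rangle = 0\}$. Since $H$ is a proper projective subspace and the adjoint action inherits strong irreducibility and proximality from condition \ref{Condi-IP}, the analogue of Lemma \ref{Lem_Gui_LeP} applied to the conjugate stationary measure $\pi_s^*$ yields $\pi_s^*(H) = 0$. Finally, the $\mathbb{Q}_s^x$-a.s.\ convergence for any $x \in \bb P^{d-1}$ is immediate from the $\mathbb{Q}_s$-a.s.\ one via the absolute continuity $\mathbb{Q}_s^x \leq c_s \mathbb{Q}_s$ furnished by Lemma \ref{Lem_AbsoConti}.
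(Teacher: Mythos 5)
Your proposal is correct and matches the paper's argument: both express $L(G_n^*)e_1^*$ via the identity $G_n^*G_n e_1 = (A_n^{1,1})^2 L(G_n^*)e_1$, expand $G_n^*G_n = k_n a_n^2 k_n^*$ using the Cartan decomposition, isolate the $(a_n^{1,1})^2$ term, and pass to the limit via Lemma~\ref{LemConverThmQs} and Lemma~\ref{Lem_AbsoConti}. You additionally flag and justify the non-degeneracy $\langle Z_s^*, e_1\rangle \neq 0$ $\mathbb{Q}_s$-a.s.\ (via the dual analogue of Lemma~\ref{Lem_Gui_LeP}), a point the paper's proof uses implicitly without comment.
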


\begin{proof}
%Since $N(G_n)$ is an upper triangular matrix with strictly positive entries in the diagonal elements, 
%we have
%%\begin{align*}
%By the definition of $L(M_n)$, 
%it is easy to see that $L(M_n)^{\mathrm T} e_1 = \langle L(M_n) e_1, e_1 \rangle e_1$. 
%%\end{align*}
%%and $\langle L(G_n) e_1, e_1 \rangle = |L(G_n)^{\mathrm T} e_1|$,  
In view of Lemma \ref{Lem_AbsoConti}, it suffices to prove the assertion under the measure $\mathbb{Q}_s$. 
Using the Iwasawa decomposition $G_n^* = L(G_n^*) A(G_n^*) K(G_n^*)$
and noticing that $K(G_n^*)$ is an orthogonal matrix, it follows that 
\begin{align}\label{Pf_Reg_Eq1}
\frac{G_n^*  G_n e_1^* }{ | G_n e_1^* |^2 } 
= \frac{ L(G_n^*) A(G_n^*)^2  L(G_n^*)^* e_1^* }
   { | A(G_n^*) L(G_n^*)^* e_1^* |^2}
%= \frac{ L(M_n) A(M_n) e_1 }{ | A(M_n) L(M_n)^{\mathrm{T} } e_1 | } 
= L(G_n^*) e_1^*,  
\end{align}
where the second equality holds due to the fact that 
$A(G_n^*)^2 L(G_n^*)^* e_1^* =  | A(G_n^*) L(G_n^*) e_1^*|^2 e_1^*$.
%and the last one holds due to the fact that $| A(M_n) L(M_n)^{\mathrm{T} } e_1 |$ is the $(1,1)$-th entry of 
%the strictly positive diagonal matrix $A(M_n)$. 
By the Cartan decomposition of $G_n^*$ we have $G_n^* = k_n a_n k_n'$,
where $k_n, k_n'$ are two orthogonal matrices. 
Hence, 
%Using the Cartan decomposition $M_n^{\mathrm T} = k_n' \Lambda_n k_n$, where $k_n', k_n \in  K$, 
%we get that, 
for any $v \in (\mathbb{R}^d)^*$, 
\begin{align}\label{Pf_Reg_Equaa}
\langle G_n^* G_n e_1^*, v  \rangle  
& =   \langle  (a_n)^2 k_n^* e_1^*,  k_n^* v  \rangle   \nonumber\\
& =  ( a_n^{1,1} )^2 \langle k_n^* e_1^*, e_1 \rangle \langle e_1^*, k_n^* v \rangle 
       + O( a_n^{1,1} a_n^{2,2} )   \nonumber\\
& =  ( a_n^{1,1} )^2  \langle k_n^* e_1^*, e_1 \rangle \langle k_n e_1^*,  v  \rangle + O( a_n^{1,1} a_n^{2,2} ). 
\end{align}
%By Lemma \ref{LemConverThmQs}, 
%we have that $\lim_{n \to \infty} \frac{a_{22}(n)}{a_{11}(n)} = 0$, $\mathbb{Q}_s$-a.s.,
%and that $(k_n^2)^{\mathrm T} \! \cdot \! e_1$ converges to $Z_s^*$, $\mathbb{Q}_s$-a.s..  
%Therefore, by Lemma \ref{LemConverThmQs}, 
Consequently, by \eqref{Pf_Reg_Eq1} and \eqref{Pf_Reg_Equaa} we obtain that $\mathbb{Q}_s$-a.s., 
\begin{align*}
\lim_{n \to \infty} \langle L(G_n^*) e_1^*, v \rangle 
= \lim_{n \to \infty} \frac{\langle G_n^* G_n e_1^*, v  \rangle}{\langle G_n^* G_n e_1^*, e_1^*  \rangle}
= \lim_{n \to \infty} \frac{\langle  k_n e_1^*, v  \rangle}{\langle  k_n e_1^*, e_1^*  \rangle}
= \frac{\langle Z_s^*, v  \rangle}{\langle Z_s^*, e_1^*  \rangle}, 
\end{align*}
where in the first equality we used \eqref{Pf_Reg_Eq1}, in the second one we used \eqref{Pf_Reg_Equaa}
and Lemma \ref{LemConverThmQs}, 
and in the last one we applied again Lemma \ref{LemConverThmQs}. 
Since $v \in (\mathbb{R}^d)^*$ is arbitrary, the proof of Lemma \ref{LemIwasaLim} is complete.
\end{proof}

For any $1 \leq k \leq d$,
we briefly recall the notion of exterior algebra $\wedge^k (\mathbb{R}^d)$ of the vector space $\mathbb{R}^d$.
The space $\wedge^k (\mathbb{R}^d)$ is endowed with the dual bracket $\langle \cdot, \cdot \rangle$
and the norm $| \cdot |$; we use the same notation as in $\mathbb{R}^d$
and the distinction should be clear from the context. 
The scalar product in $\wedge^k (\mathbb{R}^d)$ satisfies the following property:
for any $u_i$, $v_j \in \mathbb{R}^d$, $1 \leq i, j \leq d$, 
\begin{align*}
\langle  u_1 \wedge \cdots \wedge u_k,  v_1 \wedge \cdots \wedge v_k  \rangle
= \det( \langle u_i, v_j \rangle )_{1 \leq i, j \leq d}, 
\end{align*} 
where $\det( \langle u_i, v_j \rangle )_{1 \leq i, j \leq d}$ denotes the determinant of the associated matrix.
It is well known that 
$\{ e_{i_1} \wedge e_{i_2} \wedge \cdots \wedge e_{i_k}, 1 \leq i_1 < i_2 < \cdots < i_k \leq d \}$
forms a basis of $\wedge^k (\mathbb{R}^d)$, $1 \leq k \leq d$, 
and that $v_1 \wedge \cdots \wedge v_k$ is nonzero if and only if $v_1, \ldots, v_k$
are linearly independent in $\mathbb{R}^d$. 
For any $g \in GL(d, \mathbb{R})$ and $1 \leq k \leq d$, 
the exterior product $\wedge^k g$ of the matrix $g$
is defined as follows: for any $v_1, \ldots, v_k \in \mathbb{R}^d$, 
\begin{align*}
\wedge^k g ( v_1 \wedge \cdots \wedge v_k ) = g v_1 \wedge \cdots \wedge g v_k.
\end{align*}
Set $\| \wedge^k g \| = \sup \{ | (\wedge^k g) v |: v \in \wedge^k (\mathbb{R}^d), |v| =1 \}$. 
Since $\wedge^k (g g') = (\wedge^k g)  ( \wedge^k g' )$, 
it holds that $\| \wedge^k (g g') \| \leq \| \wedge^k g \| \| \wedge^k g' \|$ 
 for any $g, g' \in GL(d, \mathbb{R})$.  
Besides, if we denote by $a_{11}, \ldots, a_{dd}$ the singular values of the matrix $g$,
then $\| \wedge^k  g \| = a_{11} \ldots a_{kk}$. 
In particular, we have $\| \wedge^k g \| \leq \| g \|^k$.

The following lemma %\ref{LemTwoIne} and \ref{LemCruciIne} 
was proved in \cite{BL85}.
For any $g \in GL(d,\mathbb{R})$,  by the Iwasawa decomposition we have 
$g = L(g) A(g) K(g)$, where $L(g) \in L$, $A(g) \in A$ and $K(g) \in K$. 
In the sequel, we denote $N(g) = \max \{ \| g \|, \| g^{-1}\| \}$. % for any $g \in GL(d, \mathbb{R})$.
%\begin{lemma}\label{LemTwoIne}
%For any $g \in GL(d,\mathbb{R})$, we have
%\begin{align*}
%| L(g)e_1 - e_1 | \leq e^{2 \log N(g)},   \quad  
%\sup_{y \in \mathbb{P}^{d-1}: y \perp e_1}   | L(g) A(g) y|  
%\leq \frac{\| \wedge^2 g^{\mathrm T} \| }{ | g^{\mathrm T} e_1 | }. 
%\end{align*}
%\end{lemma}
%Recall that $M_n = g_1 g_2 \ldots g_n$.  % $n \geq 1.$

\begin{lemma}\label{LemCruciIne}
For any integers $n, m \geq 0$, we have 
\begin{align*}
\left|  L(G_{n+m}^*) e_1^*  -  L(G_{n}^*) e_1^*  \right|
\leq \sum_{j=n}^{n+m-1} \frac{\|\wedge^2 G_j \|}{ | G_j e_1 |^2 }  e^{2 \log N(g_{j+1}^* )},  
\end{align*}
where we use the convention that $L(G_0) = 0$ 
and $\frac{\|\wedge^2 G_0 \|}{ | G_0 e_1 |^2 } = 0$. 
\end{lemma}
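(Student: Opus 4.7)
My plan is to telescope the difference and control each one-step increment via the explicit formula derived in the proof of Lemma~\ref{LemIwasaLim}: for $j\geq 1$,
$$L(G_j^*) e_1^* = \frac{G_j^* G_j e_1^*}{|G_j e_1^*|^2}.$$
Write $\ell_j := L(G_j^*) e_1^*$, $u_j := G_j e_1^*$, and $v_j := u_j/|u_j|$. Using $G_{j+1}^* = G_j^* g_{j+1}^*$ together with $u_{j+1}=g_{j+1}u_j$, a direct computation yields
$$\ell_{j+1}-\ell_j = G_j^*\!\left(\frac{g_{j+1}^* g_{j+1} u_j}{|g_{j+1} u_j|^2}-\frac{u_j}{|u_j|^2}\right) = \frac{G_j^* w_j}{|G_j e_1^*|\,|g_{j+1} v_j|^2},$$
where $w_j := g_{j+1}^* g_{j+1} v_j - |g_{j+1} v_j|^2 v_j$ is the component of $g_{j+1}^* g_{j+1} v_j$ orthogonal to $v_j$ (orthogonality is immediate from $\langle g_{j+1}^* g_{j+1} v_j, v_j\rangle=|g_{j+1} v_j|^2$), so in particular $w_j\perp G_j e_1^*$.

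The crux of the argument is the following exterior-algebra estimate: for every $w\in\mathbb R^d$ with $w\perp G_j e_1^*$,
$$|G_j^* w| \leq \frac{\|\wedge^2 G_j\|}{|G_j e_1^*|}\, |w|.$$
I would prove this by pairing $G_j^* w$ against an arbitrary unit vector $a$: $\langle G_j^* w,a\rangle = \langle w, G_j a\rangle$, and decomposing $G_j a$ into its components parallel and perpendicular to $G_j e_1^*$. Only the perpendicular component contributes, and its length equals $|G_j a\wedge G_j e_1^*|/|G_j e_1^*| = |(\wedge^2 G_j)(a\wedge e_1^*)|/|G_j e_1^*| \leq \|\wedge^2 G_j\|/|G_j e_1^*|$, since $|a\wedge e_1^*|\leq 1$.

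To complete the argument, I bound the remaining scalar factor in the identity for $\ell_{j+1}-\ell_j$. Since $w_j$ is an orthogonal projection of $g_{j+1}^* g_{j+1} v_j$, one has $|w_j|\leq |g_{j+1}^* g_{j+1} v_j|\leq \|g_{j+1}^*\|\,|g_{j+1} v_j| = \|g_{j+1}\|\,|g_{j+1} v_j|$, while $|g_{j+1} v_j|\geq \iota(g_{j+1}) = \|g_{j+1}^{-1}\|^{-1}$; consequently
$$\frac{|w_j|}{|g_{j+1} v_j|^2}\leq \|g_{j+1}\|\,\|g_{j+1}^{-1}\|\leq N(g_{j+1})^2 = e^{2\log N(g_{j+1}^*)}.$$
Combining this with the previous step and telescoping $\ell_{n+m}-\ell_n=\sum_{j=n}^{n+m-1}(\ell_{j+1}-\ell_j)$ (the boundary case $n=0$ being handled by the stated convention) yields the lemma. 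I do not anticipate a real obstacle; the only subtle point is keeping the exterior-algebra estimate sharp enough to recover the exponent $2$ in $N(g_{j+1})^2$, since a crude bound $|w_j|\leq\|g_{j+1}\|^2$ would instead produce $N(g_{j+1})^4$.
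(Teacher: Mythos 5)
Your argument is correct. The paper does not supply a proof of this lemma---it is cited from Bougerol--Lacroix---so there is no author's proof to compare against; your write-up fills that gap and follows the canonical route. In particular, the identity $L(G_j^*) e_1^* = G_j^* G_j e_1^*/|G_j e_1^*|^2$ (from the proof of Lemma~\ref{LemIwasaLim}), the one-step increment formula $\ell_{j+1}-\ell_j = G_j^* w_j /\bigl(|G_j e_1^*|\,|g_{j+1} v_j|^2\bigr)$ with $w_j \perp G_j e_1^*$, the exterior-algebra estimate $|G_j^* w| \leq \|\wedge^2 G_j\|\,|w|/|G_j e_1^*|$ for $w \perp G_j e_1^*$, and the bound $|w_j|/|g_{j+1}v_j|^2 \leq \|g_{j+1}\|\,\|g_{j+1}^{-1}\| \leq N(g_{j+1}^*)^2$ are all verified correctly. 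Your precaution of cancelling one factor of $|g_{j+1}v_j|$ in the numerator against the denominator, so as to get $N^2$ rather than $N^4$, is exactly the right move.

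One small caveat concerning the paper's formulation rather than your proof: the stated convention $L(G_0)=0$ together with setting the $j=0$ summand to $0$ does not actually make the inequality true for $n=0$, $m=1$. Indeed $L(G_1^*)e_1^*$ always has first coordinate $1$ (the diagonal entry of the lower-triangular factor), so $|\ell_1-\ell_0|\geq 1$ while the right-hand side would vanish. The natural convention is $G_0=\mathrm{id}$, giving $L(G_0^*)e_1^*=e_1^*$ and a $j=0$ summand equal to $N(g_1^*)^2\geq 1$, under which your telescoping works verbatim. You appeal to the convention as stated without examining it; I would flag the discrepancy rather than rely on it.
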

 
The following result shows the simplicity of the dominant Lyapunov exponent 
for $G_n$ under the changed measure $\mathbb{Q}_s^x$. % and is established in \cite{GL16}. 

\begin{lemma}\label{Lem_Lya_Meas}
Assume condition \ref{Condi-IP}. % and that $\mathbb{E}_{ \mathbb{Q}_s^x } ( \log N(g_1) )$ is finite for $s \in I_{\mu}$.
Let $s \in I_{\mu}^{\circ}$. 
Then, uniformly in $x = \bb R v \in \mathbb{P}^{d-1}$, % and $w \in \mathbb{P}_2^{d-1}$,
\begin{align}\label{LLNMeas01}
 \lim_{n \to \infty} \frac{1}{n} \mathbb{E}_{\mathbb{Q}_s^x} (\sigma(G_n, x)) = \lambda_1(s), 
\end{align}
and  %uniformly in $x \in \mathbb{P}^{d-1}$ and $w \in \mathbb{P}_2^{d-1}$, 
\begin{align}\label{LLNMeas02}
 \lim_{n \to \infty} \frac{1}{n} \mathbb{E}_{ \mathbb{Q}_s^x }(\log \| \wedge^2 G_n \|) 
  = \lambda_1(s) + \lambda_2(s),
\end{align}
%where $L_{\mu, 1}(s), L_{\mu, 2}(s) \in \mathbb{R}$, satisfying 
where $\lambda_1(s) > \lambda_2(s)$ are called the first two Lyapunov exponents of $G_n$ under
the measure $\mathbb{Q}_s^x$.  
\end{lemma}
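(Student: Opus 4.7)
My plan is to establish \eqref{LLNMeas01} via the Markov structure of $(G_n x)_{n\geq 0}$ under $\mathbb{Q}_s^x$ and the spectral gap of $Q_s$ recalled in \eqref{equcontin Q s limit}, and then to deduce \eqref{LLNMeas02} by combining Kingman's subadditive ergodic theorem with an analogous spectral gap on the exterior square $\wedge^2 \mathbb{R}^d$. The main obstacle lies in the second assertion: one must prove the \emph{strict} simplicity $\lambda_2(s) < \lambda_1(s)$ under the changed, non-i.i.d.\ measure $\mathbb{Q}_s^x$.

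For \eqref{LLNMeas01}, set $\tau_s(x) := \mathbb{E}_{\mathbb{Q}_s^x}[\sigma(g_1,x)]$; by the change of measure formula \eqref{basic equ1} one has
\[
\tau_s(x) = \frac{1}{\kappa(s)\, r_s(x)} \int_{\mathbb{G}} \sigma(g,x)\, e^{s \sigma(g,x)}\, r_s(g x)\, \mu(dg),
\]
and the moment assumption \ref{Condi_Exp} together with the H\"older regularity of $r_s$ (Proposition \ref{transfer operator}) ensures $\tau_s \in \mathcal{B}_\gamma$. The cocycle identity $\sigma(G_n,x) = \sum_{k=0}^{n-1} \sigma(g_{k+1}, G_k x)$ combined with the Markov property of $(G_k x)$ under $\mathbb{Q}_s^x$ yields
\[
\mathbb{E}_{\mathbb{Q}_s^x}[\sigma(G_n, x)] = \sum_{k=0}^{n-1} (Q_s^k \tau_s)(x).
\]
The spectral gap \eqref{equcontin Q s limit} forces $Q_s^k \tau_s(x) \to \pi_s(\tau_s)$ at an exponential rate, uniformly in $x$, so dividing by $n$ yields \eqref{LLNMeas01} with $\lambda_1(s) := \pi_s(\tau_s)$. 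Differentiating the eigenequation $P_s r_s = \kappa(s) r_s$ in $s$ and integrating against $\nu_s$ further identifies $\lambda_1(s) = \kappa'(s)/\kappa(s) = \Lambda'(s)$, in agreement with the SLLN \eqref{SLLN_Gnx}.

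For \eqref{LLNMeas02}, I would first establish the existence of a deterministic limit $L_2(s) := \lim_n \frac{1}{n} \mathbb{E}_{\mathbb{Q}_s^x}[\log \|\wedge^2 G_n\|]$ independent of $x$. Under the probability measure $\mathbb{Q}_s := \int \mathbb{Q}_s^x\, \pi_s(dx)$, the process $(g_n)$ is stationary and ergodic (since $\pi_s$ is the unique $Q_s$-invariant measure on $\mathbb{P}^{d-1}$), while the sequence $\log \|\wedge^2 G_n\|$ is subadditive and $\mathbb{Q}_s$-integrable via $\|\wedge^2 g\| \leq \|g\|^2$ together with condition \ref{Condi_Exp}. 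Kingman's theorem then yields a.s.\ and $L^1(\mathbb{Q}_s)$-convergence to a constant $L_2(s)$, and Lemma \ref{Lem_AbsoConti} transfers the convergence in mean to every initial $x$. Setting $\lambda_2(s) := L_2(s) - \lambda_1(s)$, the remaining and most delicate step is the strict inequality $\lambda_2(s) < \lambda_1(s)$. I plan to obtain it by introducing a transfer operator $P_s^{(2)}$ on H\"older functions on the Grassmannian $\mathrm{Gr}_2(\mathbb{R}^d)$ (Pl\"ucker-embedded in $\mathbb{P}(\wedge^2 \mathbb{R}^d)$) with an appropriate $s$-tilted weight, and adapting the arguments of \cite{GL16} and \cite{BL85} to show, under the proximality-irreducibility condition \ref{Condi-IP}, that its dominant eigenvalue $\kappa^{(2)}(s)$ satisfies $\kappa^{(2)}(s) < \kappa(s)^2$ and $L_2(s) = (\log \kappa^{(2)})'(s)$. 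The geometric input is proximality, which forces the exterior-square action to exponentially contract $\mathrm{Gr}_2(\mathbb{R}^d)$ to the two-dimensional subspace spanned by the top singular directions; this exponential sharpening of the qualitative convergence $a_n^{2,2}/a_n^{1,1} \to 0$ already obtained in Lemma \ref{LemConverThmQs} is the technical core on which \eqref{LLNMeas02} rests, and the hardest part to carry out rigorously under the changed measure $\mathbb{Q}_s^x$.
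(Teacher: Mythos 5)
The paper proves this lemma simply by citing Guivarc'h--Le Page: \eqref{LLNMeas01} is Theorem~3.10 of \cite{GL16}, and \eqref{LLNMeas02} together with the strict inequality $\lambda_1(s)>\lambda_2(s)$ follows from combining Theorems~3.10 and~3.17 of \cite{GL16}. You instead attempt a more self-contained argument, so the two routes are genuinely different.

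Your argument for \eqref{LLNMeas01} is correct and complete modulo standard checks: writing $\tau_s(x)=\mathbb{E}_{\mathbb{Q}_s^x}[\sigma(g_1,x)]$, using the cocycle identity and the Markov property to get
$\mathbb{E}_{\mathbb{Q}_s^x}[\sigma(G_n,x)]=\sum_{k=0}^{n-1}(Q_s^k\tau_s)(x)$, and then invoking the exponential convergence \eqref{equcontin Q s limit} to conclude $\frac{1}{n}\mathbb{E}_{\mathbb{Q}_s^x}[\sigma(G_n,x)]\to\pi_s(\tau_s)=\Lambda'(s)$ uniformly in $x$. This is a clean Ces\`aro argument that replaces the paper's citation of \cite[Theorem~3.10]{GL16}, provided one verifies that $\tau_s\in\mathcal{B}_\gamma$, which requires a bit more care than stated (you must control the H\"older variation of $x\mapsto\sigma(g,x)$ uniformly under the moment condition, not just its boundedness). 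The existence part of \eqref{LLNMeas02} via Kingman under the stationary ergodic measure $\mathbb{Q}_s$ and the $L^1$-transfer to $\mathbb{Q}_s^x$ via Lemma~\ref{Lem_AbsoConti} is likewise sound.

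However, the strict inequality $\lambda_1(s)>\lambda_2(s)$ — which the lemma statement asserts and which the paper explicitly flags as the crucial ingredient in Proposition~\ref{PropRegularity} — is not proved in your proposal; you only state a ``plan.'' Moreover, the plan as written has a logical gap: you propose to show $\kappa^{(2)}(s)<\kappa(s)^2$, but this is an inequality between eigenvalues, while what is needed is an inequality between their logarithmic \emph{derivatives}. Indeed $\lambda_1(s)=(\log\kappa)'(s)$ and, by your own definition, $L_2(s)=(\log\kappa^{(2)})'(s)$, so $\lambda_2(s)<\lambda_1(s)$ is equivalent to $(\log\kappa^{(2)})'(s)<2(\log\kappa)'(s)$; the pointwise bound $\kappa^{(2)}(s)<\kappa(s)^2$ does not yield this (at $s=0$ both sides equal $1$, and an inequality of values does not control the sign of the difference of derivatives at an arbitrary $s$). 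To complete the argument you would either need to establish strict proximality of the exterior-square action under the tilted transfer operator directly — essentially reproving \cite[Theorem~3.17]{GL16} — or simply cite those theorems, as the paper does. As submitted, the proof of the strict simplicity is therefore incomplete.
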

The assertion \eqref{LLNMeas01} is proved in \cite[Theorem 3.10]{GL16}. 
The assertion \eqref{LLNMeas02} follows by combining Theorems 3.10 and 3.17 in \cite{GL16}. 
The fact that $\lambda_1(s) > \lambda_2(s)$ will play an essential role 
in the proof of  the H\"older regularity of the stationary measure $\pi_s$, see Proposition \ref{PropRegularity}. 

Using the simplicity of the Lyapunov exponent (see Lemma \ref{Lem_Lya_Meas})  %We end this subsection by  %providing %a reinforced result of Lemma \ref{LemIwasaLim},
we can complement the convergence result  
in Lemma \ref{LemIwasaLim} 
by giving the rate of convergence. %is exponentially fast. % which is of independent interest.
This result is not used in the proofs, but is of independent interest.

\begin{proposition}\label{Prop_L1Conver}
Assume condition \ref{Condi-IP}. Let $s \in I_{\mu}^{\circ}$. 
Then, there exist constants $\alpha, C > 0$ such that
uniformly in $x \in \mathbb{P}^{d-1}$ and $n \geq 1$, 
\begin{align}\label{Ine_L1Conv}
\mathbb{E}_{ \mathbb{Q}_s^x } 
 \left|  L(G_n^*) e_1^* - \frac{ Z_s^* }{ \langle Z_s^*, e_1^* \rangle}  \right|^{\alpha}  \leq e^{ -Cn }.   
\end{align}
Moreover, the assertion \eqref{Ine_L1Conv} remains valid when the measure $\mathbb{Q}_s^x$ is replaced by $\mathbb{Q}_s$.
\end{proposition}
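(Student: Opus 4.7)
The plan is to deduce the $L^\alpha$ bound from the telescoping inequality in Lemma \ref{LemCruciIne}. First, by Lemma \ref{LemIwasaLim}, $L(G_n^*) e_1^* \to Z_s^*/\langle Z_s^*, e_1^* \rangle$ holds $\mathbb{Q}_s^x$-a.s., so letting $m \to \infty$ in Lemma \ref{LemCruciIne} and applying monotone convergence produces the almost-sure majoration
\begin{equation*}
\left| L(G_n^*) e_1^* - \frac{Z_s^*}{\langle Z_s^*, e_1^* \rangle} \right|
\leq \sum_{j=n}^{\infty} \frac{\|\wedge^2 G_j\|}{|G_j e_1|^2}\, N(g_{j+1})^2.
\end{equation*}
For $\alpha \in (0,1)$ sufficiently small, the subadditivity $|\sum a_j|^\alpha \leq \sum |a_j|^\alpha$ reduces the proposition to showing the existence of constants $c, C > 0$ with
\begin{equation*}
\sup_{x \in \mathbb{P}^{d-1}} \mathbb{E}_{\mathbb{Q}_s^x}\!\left[ \frac{\|\wedge^2 G_j\|^\alpha}{|G_j e_1|^{2\alpha}}\, N(g_{j+1})^{2\alpha} \right] \leq C\, e^{-c j}, \qquad j \geq 0,
\end{equation*}
since summing the geometric series from $n$ to $\infty$ yields the required $e^{-Cn}$ bound. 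The analogous statement for $\mathbb{Q}_s$ follows by integrating in $x$ against $\pi_s$.

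To handle $N(g_{j+1})^{2\alpha}$, I would condition on $\mathcal{F}_j = \sigma(g_1, \dots, g_j)$: by the cocycle identity \eqref{cocycle01} and the Kolmogorov construction of $\mathbb{Q}_s^x$, the conditional law of $g_{j+1}$ given $\mathcal{F}_j$ has density $\kappa(s)^{-1} r_s(G_j x)^{-1} e^{s\sigma(g_{j+1}, G_j x)} r_s(g_{j+1} G_j x)$ with respect to $\mu$. Combining the two-sided bounds on $r_s$ from Proposition \ref{transfer operator} with the inequality $N(g)^{2\alpha} \leq \|g\|^{2\alpha} + \iota(g)^{-2\alpha}$ and the moment condition \ref{Condi_Exp} that is standing throughout Section \ref{Sec:regpositive}, one sees that $\mathbb{E}_{\mathbb{Q}_s^x}[N(g_{j+1})^{2\alpha} \mid \mathcal{F}_j]$ is uniformly bounded in $G_j x$ provided $2\alpha \leq \beta$. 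It therefore suffices to establish
\begin{equation*}
\sup_{x \in \mathbb{P}^{d-1}} \mathbb{E}_{\mathbb{Q}_s^x}\!\left[ \frac{\|\wedge^2 G_j\|^\alpha}{|G_j e_1|^{2\alpha}} \right] \leq C\, e^{-cj}, \qquad j \geq 0.
\end{equation*}

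The exponential decay of this last moment is the heart of the argument. Heuristically, Lemma \ref{Lem_Lya_Meas} gives, under $\mathbb{Q}_s^x$, the almost-sure logarithmic rate $\frac{1}{j}\log(\|\wedge^2 G_j\|/|G_j e_1|^2) \to \lambda_2(s) - \lambda_1(s) < 0$; to upgrade this to a moment estimate I would perform a spectral-gap analysis of a perturbed transfer operator acting on the enlarged Markov chain $(G_j x, y_j)$ on $\mathbb{P}^{d-1} \times \mathbb{P}^{d-1}$, where $y_j = G_j e_1 / |G_j e_1|$, together with the elementary inequality $\|\wedge^2 G_j\|^\alpha \leq C \sum_{i_1 < i_2} |G_j (e_{i_1} \wedge e_{i_2})|^\alpha$ which expresses the numerator through norm cocycles on $\mathbb{P}(\wedge^2 \mathbb{R}^d)$. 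At $\alpha = 0$ the perturbed operator reduces, after normalization by $r_s$, to the Markov operator $Q_s$, which has simple dominant eigenvalue $1$ by Proposition \ref{transfer operator}; analytic perturbation in $\alpha$ then yields a dominant eigenvalue whose logarithmic derivative at $0$ equals $\lambda_2(s) - \lambda_1(s)$, strictly negative by the simplicity assertion in Lemma \ref{Lem_Lya_Meas}, and therefore strictly less than $1$ for small $\alpha > 0$. Iterating and using the H\"older continuity of the associated top eigenfunction then delivers the uniform bound in $x$. The main obstacle is to set up the quasi-compact spectral framework on the enlarged product space and to justify the analytic perturbation under condition \ref{Condi_Exp}, which amounts to extending Proposition \ref{transfer operator} to this setting along the lines of \cite{GL16, BM16}.
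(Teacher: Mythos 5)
Your reduction to an exponential moment estimate follows the same skeleton as the paper: both start from the telescoping inequality of Lemma \ref{LemCruciIne}, pass to the $L^\alpha$-norm for small $\alpha$, split off the factor $N(g_{j+1})$ (you by conditioning on $\mathcal F_j$ and using the explicit one-step density of $\mathbb Q_s^x$; the paper by a Cauchy--Schwarz inequality combined with condition \ref{Condi-TwoExp}), and arrive at the need to show that $\mathbb{E}_{\mathbb{Q}_s^x}\bigl[\|\wedge^2 G_j\|^\alpha/|G_j e_1|^{2\alpha}\bigr]$ decays geometrically in $j$, uniformly in $x$. Both of those first stages are fine, and your conditioning device is a reasonable alternative to the paper's Cauchy--Schwarz step.

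Where you diverge is exactly at the step you yourself flag as ``the main obstacle,'' and this is a genuine gap: the exponential decay of the moment $\mathbb{E}_{\mathbb{Q}_s^x}\bigl[\|\wedge^2 G_j\|^\alpha/|G_j x|^{2\alpha}\bigr]$ is not established in your write-up, only sketched through a program requiring a new perturbed transfer operator on the product space $\mathbb{P}^{d-1}\times\mathbb{P}^{d-1}$, its quasi-compactness, and analytic perturbation in $\alpha$, none of which is carried out. The paper proves this estimate by a much shorter and entirely elementary device, inside the proof of Proposition \ref{PropRegularity} (inequality \eqref{RegIneq001}): it introduces the subadditive cocycle $\rho(g,x)=\log\|\wedge^2 g\|-2\log|gx|$ and observes that
\begin{equation*}
a_n := \log\Bigl(\sup_{x\in\mathbb P^{d-1}}\mathbb E_{\mathbb Q_s^x}\bigl[e^{\alpha\rho(G_n,x)}\bigr]\Bigr)
\end{equation*}
is subadditive in $n$ by the cocycle property \eqref{cocycle01} and the Markov structure, so by Fekete's lemma $a_n/n\to a=\inf_n a_n/n$, and $a<0$ follows once one exhibits a single $p$ with $a_p<0$. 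That, in turn, is done via the elementary bound $e^y\le 1+y+\tfrac{y^2}{2}e^{|y|}$: the first-order term is strictly negative for large $p$ by the Lyapunov-gap statement $\lambda_1(s)>\lambda_2(s)$ of Lemma \ref{Lem_Lya_Meas} (which is the same strict inequality you invoke, but used additively rather than spectrally), and the quadratic error term is controlled by the moment assumptions, so taking $\alpha$ small gives $a_p<0$. This subadditivity argument completely replaces the quasi-compact spectral-gap machinery on the enlarged state space. To close your proof you would either need to carry out your spectral-perturbation program in full (a substantial undertaking, essentially re-deriving Proposition \ref{transfer operator} for $\wedge^2$-cocycles under $\mathbb Q_s$), or switch at that point to the paper's subadditivity-plus-Taylor argument, which delivers the required geometric decay with far less overhead.
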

The proof of Proposition \ref{Prop_L1Conver} is postponed to subsection \ref{Sec_Pf_Reg}.

By Jensen's inequality, the bound \eqref{Ine_L1Conv} implies that
there exists a constant $C>0$ such that uniformly in $x \in \mathbb{P}^{d-1}$,
\begin{align*}
\limsup_{ n \to \infty }  \frac{1}{n}  \mathbb{E}_{ \mathbb{Q}_s^x } 
\log \left|  L(G_n^*) e_1^* - \frac{ Z_s^* }{ \langle Z_s^*, e_1^* \rangle}  \right| \leq -C.
\end{align*}
When $s = 0$, it was proved in \cite{BL85} that $C = \lambda_1(0) - \lambda_2(0)$. 
We conjecture that $C = \lambda_1(s) - \lambda_2(s)$ also for $s>0$, but the proof eluded us.

%%%%%%%%%%%%%%%%%%%%%%%%%%%%%%%%%%%%%%%%%%%%%%%%%%%%%%%%%%%%%%%%%%%%%%%%%%%%%%%%

\subsection{Proof of Propositions \ref{PropRegularity} and \ref{Prop_L1Conver}}\label{Sec_Pf_Reg}
With the results established in subsections \ref{Sec_Cartan} and \ref{Sec_Iwasawa}, 
we are well equipped to prove Propositions \ref{PropRegularity} and \ref{Prop_L1Conver}.

%Recall that $\mathbb{P}_2^{d-1} = P(\wedge^2(\mathbb{R}^d))$.
%Below we shall use the products of two projective spaces $\mathbb{P}^{d-1}$ and $\mathbb{P}_2^{d-1}$,
%for which we denote $\mathbb{P}_{1,2}^{d-1} = \mathbb{P}^{d-1} \times \mathbb{P}_2^{d-1}$.

\begin{proof}[\textit{Proof of Proposition \ref{PropRegularity}}]
%Denote $\mathbb{P}_2^{d-1} = P(\wedge^2(\mathbb{R}^d))$
%and set $\mathbb{P}_{1,2}^{d-1} = \mathbb{P}^{d-1} \times \mathbb{P}_2^{d-1}$. 

%By the definition of $\pi_s$ and the fact that 
Since $r_s$ is bounded away from infinity and $0$ uniformly on $\mathbb{P}^{d-1}$,
it suffices to establish \eqref{RegularityIne00} and \eqref{RegularityIne} for the stationary measure $\pi_s$.

Define the function $\rho: GL(d, \mathbb{R}) \times \mathbb{P}^{d-1} \to \mathbb{R}$ as follows: 
for $g \in GL(d, \mathbb{R})$ and $x \in \mathbb{P}^{d-1}$, 
\begin{align*}
\rho(g, x) = \log \| \wedge^2 g \| - 2 \log | gx |. 
\end{align*}
It is clear that 
\begin{align*} %\label{Pf-Reg-EquCondi-IP}
\mathbb{E}_{ \mathbb{Q}_s^x }  \rho(G_n, x)  
=  \mathbb{E}_{ \mathbb{Q}_s^x } \big( \log \| \wedge^2 G_n \| \big) 
  - 2 \mathbb{E}_{ \mathbb{Q}_s^x }  \big( \log |G_n x| \big). 
\end{align*}
%It was proved in \cite[Proposition 3.11, Theorem 3.17]{GL16} that 
%%there exist $L_{\mu, 1}(s), L_{\mu, 2}(s) \in \mathbb{R}$
%%satisfying $L_{\mu, 1}(s)> L_{\mu, 2}(s)$ 
%uniformly in $x, v \in \mathbb{P}^{d-1}$ and $w \in \mathbb{P}_2^{d-1}$,
%\begin{align*}
%& \  \lim_{n \to \infty} \frac{1}{n} \mathbb{E}_{ \mathbb{Q}_s^x }(\log |\wedge^2 M_n^{\mathrm T} w|) 
%  = \lambda_1(s) + \lambda_2(s), 
%\nonumber\\
%&  \  \lim_{n \to \infty} \frac{1}{n} \mathbb{E}_{\mathbb{Q}_s^x} (\log |M_n^{\mathrm T} v|) = \lambda_1(s), 
%\end{align*}
%%where $L_{\mu, 1}(s), L_{\mu, 2}(s) \in \mathbb{R}$, satisfying 
%where $\lambda_1(s) > \lambda_2(s)$ are also called the first two Lyapunov exponents of $M_n^{\mathrm T}$ under
%the measure $\mathbb{Q}_s^x$.  
%Therefore, in view of \eqref{Pf-Reg-EquCondi-IP}, 
By Lemma \ref{Lem_Lya_Meas}, we see that  
\begin{align*}
\lim_{ n \to \infty } 
\frac{1}{n} \sup_{x \in \mathbb{P}^{d-1} } 
\mathbb{E}_{ \mathbb{Q}_s^x }  \rho(G_n, x )  <0, 
\end{align*}
which clearly implies that, for large enough $n$, %it holds that 
\begin{align}\label{Pf-RegBoundNeg}
 \sup_{x \in \mathbb{P}^{d-1} }  %\sup_{ w \in \mathbb{P}_2^{d-1} }
 \mathbb{E}_{ \mathbb{Q}_s^x }  \rho(G_n, x )  <0.
\end{align}
We claim that there exists a constant $\alpha > 0$ such that 
\begin{align}\label{Pf-RegIne-b}
\limsup_{n \to \infty} \frac{1}{n} \log 
\sup_{x \in \mathbb{P}^{d-1} } 
\mathbb{E}_{ \mathbb{Q}_s^x } 
  \frac{ \| \wedge^2 G_n \|^{\alpha} }{|G_n x |^{2 \alpha} }
 < 0.
\end{align}
To prove \eqref{Pf-RegIne-b}, we denote
%let $c \in (0, \alpha)$ and denote 
$a_n = \log \big( \sup_{x \in \mathbb{P}^{d-1}} 
\mathbb{E}_{ \mathbb{Q}_s^x } \big( e^{ \alpha \rho(G_n, x) } \big)  \big)$,
for sufficiently small constant $\alpha >0$.
Using the cocycle property \eqref{cocycle01} and the fact that $\rho$ is subadditive, 
we get that for any $n, m \geq 1$, 
\begin{align*}
&  \mathbb{E}_{ \mathbb{Q}_s^x } \big( e^{ \alpha \rho( G_{n+m}, x ) } \big)  \nonumber\\
& \leq  \mathbb{E} \Big(  q_m^s(x, G_m) e^{ \alpha \rho( G_m, x ) }  \Big)  
  \mathbb{E} \Big(  q_n^s(x, g_{m+1} \ldots g_{m+n} ) 
      e^{ \alpha \rho( g_{m+n} \ldots g_{m+1}, x ) }  \Big)   \nonumber\\
& =  \mathbb{E}_{ \mathbb{Q}_s^x } \big( e^{ \alpha \rho( G_m, x ) } \big)
   \mathbb{E}_{ \mathbb{Q}_s^x } \big( e^{ \alpha \rho( G_n, x ) } \big).
\end{align*}
Taking supremum on both sides of the above inequality, 
we see that the sequence $(a_n)_{n\geq 1}$ satisfies the subadditive property: $a_{n+m} \leq a_m + a_n$. 
Hence we get $a = \lim_{n \to \infty} \frac{a_n}{n} = \inf_{n\geq 1} \frac{a_n}{n}.$
To show that $a<0$, it suffices to check that there exists some integer $p \geq 1$ such that 
\begin{align}\label{Pf-RegBound01}
\sup_{x \in \mathbb{P}^{d-1} } 
 \mathbb{E}_{ \mathbb{Q}_s^x } \big( e^{ \alpha \rho( G_{p}, x ) } \big) <1.
\end{align}
We proceed to verify \eqref{Pf-RegBound01}. 
Using the fact 
that $\sup_{x} |\rho(g, x)| \leq 4 \log N(g)$
and the basic inequality $e^y \leq 1 + y + \frac{y^2}{2} e^{|y|}$, $y \in \mathbb{R}$, 
we obtain
\begin{align}\label{Pf_Regu_Inver_aa}
\mathbb{E}_{ \mathbb{Q}_s^x } \big( e^{ \alpha \rho( G_{p}, x ) } \big)
\leq   1 + \alpha \mathbb{E}_{ \mathbb{Q}_s^x } \big(  \rho( G_{p}, x )  \big)  
   + \frac{ {\alpha}^2}{2} \mathbb{E}_{ \mathbb{Q}_s^x } 
     \Big( 16 \log^2 N( G_{p} ) e^{ 4 \alpha \log N( G_{p} ) }  \Big). 
\end{align}
The second term on the right-hand side of \eqref{Pf_Regu_Inver_aa}
is strictly negative by using the bound \eqref{Pf-RegBoundNeg} and taking large enough $p$. 
The third term is finite due to the moment condition \ref{Condi-TwoExp}. 
Consequently, taking $\alpha > 0$ small enough, we obtain the inequality \eqref{Pf-RegBound01} 
and thus the desired assertion \eqref{Pf-RegIne-b} follows.

Since the bound \eqref{Pf-RegIne-b} holds uniformly in $x \in \mathbb{P}^{d-1}$,
taking into account that $\mathbb{Q}_s = \int_{\mathbb{P}^{d-1}} \mathbb{Q}_s^x \pi_s(dx)$, 
it follows that there exist constants $C>0$ and $0< r < 1$ such that 
%for any $x \in \mathbb{P}^{d-1}$, 
\begin{align}\label{RegIneq001}
\mathbb{E}_{\mathbb{Q}_s} \frac{ \|\wedge^2 G_n \|^{\alpha} }{|G_n x|^{2 \alpha}  } \leq C r^n. 
\end{align}
% \todo{How to pass the supremum over the vector $w$ into the expectation? The same problem in the book of Bougerol.}
Using Lemma \ref{LemIwasaLim}, Fatou's lemma and the fact that $|Z_s^*| = 1$, 
we obtain that for sufficiently small constant $\alpha >0$, 
\begin{align}\label{Pf-Regu-Inv01}
\mathbb{E}_{\mathbb{Q}_s} \frac{ 1 }{ | \langle Z_s^*, e_1^* \rangle |^{\alpha} } 
\leq  \liminf_{n \to \infty} 
\mathbb{E}_{\mathbb{Q}_s}  \big(  | L(G_n^*) e_1^* |^{\alpha} \big).
\end{align}
From Lemma \ref{LemCruciIne} with $n=0$, it follows that
\begin{align*}
| L(G_n^*) e_1^* |^{\alpha} 
\leq \sum_{ j=1 }^{\infty} \frac{ \|\wedge^2 G_j \|^{c} }{ | G_j e_1|^{2 \alpha} }  e^{2 \alpha \log N(g_{j+1}^* )}.  
\end{align*}
Notice that $G_j$ and $g_{j+1}^*$ are not independent under the measure $\mathbb{Q}_s$. 
Using Fubini's theorem, H\"{o}lder's inequality and the bound \eqref{RegIneq001},  we get
\begin{align*}
\mathbb{E}_{\mathbb{Q}_s}  \big(  | L(G_n^*) e_1^* |^{\alpha}  \big) 
&  \leq     \sum_{ j=1 }^{\infty} 
  \left[  \mathbb{E}_{\mathbb{Q}_s} \frac{\|\wedge^2 G_j \|^{2 \alpha} }{ | G_j e_1|^{4 \alpha} }   \right]^{1/2}
  \left[  \mathbb{E}_{\mathbb{Q}_s} e^{4 \alpha \log N(g_{j+1}^* )}  \right]^{1/2}  \nonumber\\ 
&  \leq    C  \mathbb{E}_{\mathbb{Q}_s} ( e^{4 \alpha \log N(g_{1}^* )} )  \sum_{j=1}^{\infty} r^j < + \infty.  
\end{align*} 
Combining this with \eqref{Pf-Regu-Inv01} leads to 
$\mathbb{E}_{\mathbb{Q}_s} \frac{ 1 }{ | \langle Z_s^*, e_1^* \rangle |^{\alpha} } < + \infty.$
%\begin{align*}
%\mathbb{E}_{\mathbb{Q}_s} \frac{|Z_s|^{c} }{ | \langle Z_s, e_1 \rangle |^{c} } < + \infty.
%\end{align*}
Note that for any $y \in (\mathbb{P}^{d-1})^*$, we can choose an orthogonal matrix $k$ such that 
$k e_1^* = y$. If we replace $g_i^*$ by $k^{-1} g_i^* k$, then it is easy to see that 
$G_n^*$ is replaced by $k^{-1} G_n^* k$.
Moreover, in view of Lemma \ref{Lem_DiracMea}, the random variable $Z_s^*$ is replaced by $k^{-1} Z_s^*$. 
Since the bound \eqref{RegIneq001} holds uniformly in $x \in \mathbb{P}^{d-1}$, it follows that 
\begin{align*}
\mathbb{E}_{\mathbb{Q}_s} \frac{ 1 }{ | \langle k^{-1} Z_s^*, e_1^* \rangle |^{\alpha} } 
\leq C  \mathbb{E}_{\mathbb{Q}_s} \big( e^{4 \alpha \log N( k^{-1} g_{1}^* k )} \big) 
    \sum_{j=1}^{\infty} r^j < + \infty. 
\end{align*}
Observe that 
$N( k^{-1} g_{1}^* k ) = N( g_{1}^*)$ 
and $\langle k^{-1} Z_s^*, e_1 \rangle = \langle Z_s^*, y \rangle$.
Therefore, for any $s \in I_{\mu}^{\circ}$, there exists a constant $\alpha > 0$ such that 
\begin{align*}
\sup_{x \in \mathbb P^{d-1}} \int_{(\mathbb P^{d-1})^* } \frac{1}{ \delta(y, x)^{\alpha} } \pi_s^*(dy)
=  \sup_{y \in (\mathbb P^{d-1})^* }  \mathbb{E}_{\mathbb{Q}_s} \frac{ 1 }{ | \langle Z_s^*, y \rangle |^{\alpha} }
< + \infty. 
\end{align*}
This implies that there exists a constant $C>0$ such that for any $0< t < 1$, 
uniformly in $x \in \mathbb P^{d-1}$, 
\begin{align*}
 \pi_s^* \left( \left\{ y \in (\bb P^{d-1})^*:  \delta(y, x) \leq t  \right\}  \right)  
%& =  \int_{ \{ y \in (\bb P^{d-1})^*:  |\langle y, x \rangle| \leq t  \} }  
%\frac{ | \langle y, x \rangle |^{\alpha} }{ | \langle y, x \rangle |^{\alpha} }  \pi_s^*(dy)  \nonumber\\
  \leq    t^{\alpha}  \int_{ (\mathbb P^{d-1})^* } \frac{1}{ \delta(y, x)^{\alpha} } \pi_s^*(dy)
\leq C t^{\alpha}. 
\end{align*}
The proof of Proposition \ref{PropRegularity} is complete. 
\end{proof}

\begin{proof}[Proof of Proposition \ref{Prop_L1Conver}]
%Now we proceed to prove Proposition \ref{Prop_L1Conver}. 
%We make use of Lemma \ref{Lem_Lya_Meas} and the inequality \eqref{Pf-RegIne-b}. 

In view of Lemma \ref{Lem_AbsoConti}, it suffices to prove the assertion of the proposition with $\mathbb{Q}_s$ instead of $\mathbb{Q}_s^x$, 
i.e.\ we show that there exist constants $\alpha, C>0$ such that for all $n \geq 1$, 
\begin{align}\label{Ine_L1Conv02}
\mathbb{E}_{ \mathbb{Q}_s } 
 \left|  L(G_n^*) e_1 - \frac{ Z_s^* }{ \langle Z_s^*, e_1 \rangle}  \right|^{\alpha}  < e^{ -Cn }.   
\end{align}
Using Lemma \ref{LemCruciIne} and H\"{o}lder's inequality, 
for sufficiently small constant $\alpha >0$ and for any $n, m \geq 1$, we get 
\begin{align*}
& \mathbb{E}_{ \mathbb{Q}_s } \big|  L(G_{n+m}^*) e_1^*  -  L(G_n^*) e_1^*  \big|^{\alpha}  \nonumber \\ 
& \leq   \sum_{j=n}^{ n+m-1 } 
  \left[  \mathbb{E}_{\mathbb{Q}_s} \frac{\|\wedge^2 G_j \|^{2 \alpha} }{ | G_j e_1|^{4 \alpha} }   \right]^{1/2}
  \left[  \mathbb{E}_{\mathbb{Q}_s} e^{4 \alpha \log N(g_{j+1}^* )}  \right]^{1/2}   \nonumber\\
& \leq \  C  \sum_{j=n}^{ n+m-1 } 
  \left[  \mathbb{E}_{\mathbb{Q}_s} \frac{\|\wedge^2 G_j \|^{2 \alpha} }{ | G_j e_1|^{4 \alpha} }   \right]^{1/2}, 
\end{align*}
where the last inequality holds due to the moment condition \ref{Condi-TwoExp}. 
By the Fatou lemma, % and the bound \eqref{RegIneq001},  
taking the limit as $m\to\infty$, we see that
%Taking into account \eqref{RegIneq001}, %\eqref{Pf-RegIne-b}, 
%we see that $( L(M_{n}) e_1 )_{ n \geq 1}$
%forms a Cauchy sequence in the space 
%$L^c (\mathbb{Q}_s): = \{ h: \Omega \to \mathbb{R}^d: \mathbb{Q}_s ( |h|^c ) < \infty \}$.
%where $c>0$ is a small constant. 
%One can verify that $L^c (\mathbb{Q}_s)$ is a Banach space
%with respect to the norm $\|h\|_{\mathbb Q_s}= \mathbb{Q}_s ( |h|^c )$.
%Hence, there exists a random variable $X \in L^c (\mathbb{Q}_s)$ such that as $n \to \infty$, 
%we have $L(M_{n}) e_1$ converges to $X$ in $L^c (\mathbb{Q}_s)$ 
%and in the sense of convergence in probability with respect to $\mathbb{Q}_s$. 
%Since it is proved in Lemma \ref{LemIwasaLim} that $L(M_{n}) e_1$ convergences almost surely to 
%$\frac{ Z_s }{ \langle Z_s, e_1 \rangle}$ under the measure $\mathbb{Q}_s$,
%it follows that $X = \frac{ Z_s }{ \langle Z_s, e_1 \rangle}$, $\mathbb{Q}_s$-a.s..
%As a result, taking $m \to \infty$ in \eqref{Ine_L1a}, we obtain
\begin{align*}
\mathbb{E}_{ \mathbb{Q}_s } 
 \left|  L(G_n^*) e_1^* - \frac{ Z_s^* }{ \langle Z_s^*, e_1^* \rangle}  \right|^{\alpha}  
\leq  C \sum_{j=n}^{\infty} 
  \left[  \mathbb{E}_{\mathbb{Q}_s} \frac{\|\wedge^2 G_j \|^{2 \alpha} }
  { | G_j e_1|^{4 \alpha} }   \right]^{1/2}
 \leq C e^{ - Cn},
\end{align*}
where the last inequality holds due to the bound \eqref{RegIneq001}. 
%The proof of \eqref{Ine_L1Conv02} is complete. 
\end{proof}

%%%%%%%%%%%%%%%%%%%%%%%%%%%%%%%%%%%%%%%%%%%%%%%%%%%%%%%%%%%%%%%%%%%%%%%%%%%%%%%%

\subsection{Proofs of Propositions \ref{Prop_Regu_Strong01}, \ref{Prop_Regu_Strong02}, \ref{LLN_CLT_Entry} and Theorem \ref{Thm_Coeff_BRLD_changedMea02}}

We first establish Propositions \ref{Prop_Regu_Strong01} and \ref{Prop_Regu_Strong02}
based on Propositions \ref{PropRegularity} and \ref{PropRegu02}, respectively, 
together with the fact that, under the changed measure $\bb Q_s^x$, 
the Markov chain $(X_n^x)_{n \geq 0}$ converges exponentially fast to the stationary measure $\pi_s$. 

\begin{proof}[Proof of Propositions \ref{Prop_Regu_Strong01} and \ref{Prop_Regu_Strong02}]
For any $1 \leq k \leq n$ and $\ee >0$, 
denote $\chi_k(u) := \mathbbm{1}_{\{u \in (-\infty, -\ee k] \}}$
and $\chi_{k, \ee_1}^+(u) = \sup_{u' \in \mathbb{B}_{\ee_1}(u)} \chi_k(u')$ for $\ee_1 > 0$. 
In the same way as in \eqref{Pf_LD_SmoothIneHolder01}, 
we have the following smoothing inequality: 
\begin{align} \label{Pf_Regu_Smooth_dd}
\chi_k(u) \leq 
(\chi_{k, \ee_1}^+ * \bar{\rho}_{\ee_1})(u)
= : \tilde\chi_k(u),  \quad  u \in \mathbb{R},
\end{align}
where $\bar{\rho}_{\ee_1}$ is the density function given in \eqref{Pf_LD_SmoothIneHolder01}. 
For brevity, we denote 
\begin{align} \label{Pf_regu_varphi_dd}
\varphi_{k,\ee_1}^y(x) =  \tilde\chi_k(\log \delta(y, x)),  \quad  x \in \bb{P}^{d-1}. 
\end{align}  
By \eqref{Pf_Regu_Smooth_dd} and \eqref{Pf_regu_varphi_dd}, it follows that 
\begin{align*} 
\bb Q_s^x \Big( \delta (y, G_n x) \leq e^{- \ee k}  \Big) 
& \leq  \bb E_{\bb Q_s^x} \big[ \varphi_{k,\ee_1}^y (G_n x) \big]  \nonumber\\
& \leq  \left| \bb E_{\bb Q_s^x} \big[ \varphi_{k,\ee_1}^y (G_n x) \big] - \pi_s(\varphi_{k,\ee_1}^y) \right|
   + \pi_s(\varphi_{k,\ee_1}^y). 
\end{align*}
For the first term, note first that 
$\| \varphi_{k,\ee_1}^y \|_{\gamma} \leq \frac{ e^{ \ee \gamma k} }{ ( 1 - e^{-2\ee_1} )^{\gamma} }.$
Using \eqref{equcontin Q s limit} and taking $\gamma >0$ sufficiently small, 
we get that for any $1 \leq k \leq n$, 
\begin{align}\label{Pf_Regu_I1_f}
\left| \bb E_{\bb Q_s^x} \big[ \varphi_{k,\ee_1}^y (G_n x) \big] - \pi_s(\varphi_{k,\ee_1}^y) \right|
 \leq C e^{-cn} \| \varphi_{k,\ee_1}^y \|_{\gamma} \leq  C e^{-cn /2}. 
\end{align}
For the second term, using the fact that
$\tilde\chi_k(u) \leq \chi_{k, 2\ee_1}^+(u) = \mathbbm{1}_{\{u \in (-\infty, -\ee k + 2 \ee_1] \}}$, 
and applying Propositions \ref{PropRegularity} and \ref{PropRegu02} 
(respectively for $s \in I_{\mu}^{\circ}$ and $s \in (-s_0, 0)$), 
we obtain that there exist constants $c, C >0$ such that 
\begin{align}\label{Pf_Regu_I2_f}
\pi_s(\varphi_{k,\ee_1}^y) 
\leq \pi_s \left(  x \in \bb P^{d-1}: \delta(y,x) \in [0, e^{- \ee k + 2 \ee_1}] \right)
\leq C e^{-ck}. 
\end{align}
Putting together \eqref{Pf_Regu_I1_f} and \eqref{Pf_Regu_I2_f}, 
we conclude the proof of Propositions \ref{Prop_Regu_Strong01} and \ref{Prop_Regu_Strong02}. 
\end{proof}

%%%%%%%%%%%%%%%%%%%%%%%%%%%%%%%%%%%%%%%%%%%%%%%%%%%%%%%%%%%%%%%%%%%%%%%%%%%%%%%%%%
Using Propositions \ref{Prop_Regu_Strong01} and \ref{Prop_Regu_Strong02}, we are now in a position to 
establish Proposition \ref{LLN_CLT_Entry} 
on the SLLN and the CLT for the coefficients $\langle f, G_n v \rangle$
under the measure $\mathbb{Q}_s^x$. 

\begin{proof}[Proof of Proposition \ref{LLN_CLT_Entry}]
(1) We first prove \eqref{SLLN_Entry}. 
By Proposition \ref{Prop_Regu_Strong01} and Borel-Cantelli's lemma, 
we get that for any $\varepsilon > 0$ and $s \in I_{\mu}^{\circ}$, 
uniformly in $f \in (\bb R^d)^*$ and $v \in \bb R^d$ with $|f| = |v| = 1$, 
\begin{align*}
\liminf_{ n \to \infty } \frac{1}{n} \log \frac{ | \langle f, G_n v \rangle | }{ | G_n v| } \geq - \varepsilon,
\quad  \mathbb{Q}_s^x \mbox{-a.s..} 
\end{align*}
Since $\varepsilon > 0$ can be arbitrary small, this together with \eqref{SLLN_Gnx} implies the desired lower bound:
uniformly in $f \in (\bb R^d)^*$ and $v \in \bb R^d$ with $|f| = |v| = 1$, 
\begin{align*}
\liminf_{ n \to \infty } \frac{1}{n} \log | \langle f, G_n v \rangle |  \geq \Lambda'(s), 
\quad  \mathbb{Q}_s^x \mbox{-a.s..}
\end{align*}
The upper bound follows easily from \eqref{SLLN_Gnx} and the fact that 
$\log | \langle f, G_n v \rangle | \leq \log | G_n v|$. 
Hence \eqref{SLLN_Entry} holds. 

We next prove \eqref{CLT_Entry}. 
%Recall that under assumptions of Proposition \ref{LLN_CLT_Entry} (1), the following CLT was established 
%in \cite{BM16}: for any $s \in I_{\mu}$ and $y \in \mathbb{R}$, uniformly in $x \in \mathcal{S}$, 
%\begin{align}\label{CLT_Gnx}
%\lim_{ n \to \infty }  \mathbb{Q}_s^x  
%  \Big(  \frac{ \log | G_n x | - n \Lambda'(s) }{ \sigma_s \sqrt{n} }  \leq y  \Big)
% =  \Phi(y). 
%\end{align}
Using Proposition \ref{Prop_Regu_Strong01} with $k = \sqrt{n}$, 
we get the following convergence in probability: for any $\varepsilon>0$, 
uniformly in $f \in (\bb R^d)^*$ and $v \in \bb R^d$ with $|f| = |v| = 1$, 
\begin{align*}
\lim_{ n \to \infty }  \mathbb{Q}_s^x 
\left( \frac{ \log |G_n v| - \log | \langle f, G_n v \rangle | }{ \sigma_s \sqrt{n} } \geq \varepsilon \right) 
  = 0. 
\end{align*}
This yields \eqref{CLT_Entry} using \eqref{CLT_Cocycle01} together with Slutsky's lemma. 

(2) The proof of part (2) can be carried out in an analogous way using Proposition \ref{Prop_Regu_Strong02},
the SLLN and the CLT for the norm cocycle $\log |G_n v|$
under the changed measure $\mathbb{Q}_s^x$ when $s<0$ established in \cite{XGL19b}. 
\end{proof}

Using Propositions \ref{Prop_Regu_Strong01} and \ref{Prop_Regu_Strong02},
we are able to prove Theorem \ref{Thm_Coeff_BRLD_changedMea02}. 

\begin{proof}[Proof of Theorem \ref{Thm_Coeff_BRLD_changedMea02}]
In a similar way as in the proof of \eqref{Pf_BRP_Changed_ff}, 
one can verify that for any $s < t$ with $s \in (-s_0, 0] \cup I^{\circ}_{\mu}$
and $t \in K_{\mu} \subset (-s_0, s)$,
\begin{align*}
&  \mathbb{E}_{\bb Q_s^x} 
  \Big[ \varphi(G_n x) \psi \big( \log |\langle f, G_n v \rangle| - nq_t \big) \Big] 
 =   \frac{ \kappa^n(t) r_t(x) }{\kappa^n(s) r_s(x)}  e^{ (s-t) n q_t}  \times  \nonumber\\
& \quad     \mathbb{E}_{\bb Q_t^x} 
 \left[  (\varphi r_s r_t^{-1})(G_n x)  e^{(s - t) (\log |G_n v| - n q_t)} 
     \psi \big( \log |\langle f, G_n v \rangle| - nq_t \big)  \right]. 
\end{align*}
Recalling that $\Lambda^*(q_s) = sq_s - \Lambda(s)$, $\Lambda^*(q_t) = tq_t - \Lambda(t)$, 
$\Lambda(s) = \log \kappa(s)$ and $\Lambda(t) = \log \kappa(t)$, 
we have 
\begin{align*}
\frac{ \kappa^n(t) }{\kappa^n(s)}  e^{ (s-t) n q_t}
= \exp  \{  -n(\Lambda^*(q_t) - \Lambda^*(q_s) - s(q_t -q_s)) \}. 
\end{align*}
Hence, to prove Theorem \ref{Thm_Coeff_BRLD_changedMea02}, we are led to handle
\begin{align*} 
J : =  \sigma_t \sqrt{2\pi n}  
  \mathbb{E}_{\bb Q_t^x} 
 \left[  (\varphi r_s r_t^{-1})(G_n x)
     e^{(s - t) (\log |G_n v| - n q_t)} 
     \psi \big( \log |\langle f, G_n v \rangle| - nq_t \big)  \right].  
\end{align*}
For simplicity, denote
\begin{align*}
T_n^v: = \log |G_n v| - n q_t,  \qquad  Y_n^{x,y}: = \log \delta(y, G_n x).  
\end{align*}
For any fixed small constant $0< \eta <1$, set $I_k: = (-\eta k, -\eta(k-1)]$,  $k \geq 1$. 
Take a sufficiently large constant $C_1 > 0$ and let $M_n:= \floor{ C_1 \log n }$. 
Then, 
\begin{align}\label{Pf_LD_Low_decom_f}
J = J_1 + J_2, 
\end{align}
where  
\begin{align*}
& J_1 : =  \sigma_t \sqrt{2\pi n}  \mathbb{E}_{ \bb Q_t^x } 
\left[ (\varphi r_s r_t^{-1})(G_n x)  e^{(s-t) T_n^v}
\psi \big( T_{n}^v + Y_n^{x,y} \big) \mathbbm{1}_{\{Y_n^{x, y} \leq -\eta M_n \}} \right],  
    \nonumber\\
& J_2 : =  \sigma_t \sqrt{2\pi n}  \sum_{k =1}^{M_n}  
\mathbb{E}_{ \bb Q_t^x } 
\left[ (\varphi r_s r_t^{-1})(G_n x)  e^{(s-t) T_n^v}
\psi \big( T_{n}^v + Y_n^{x,y}  \big) \mathbbm{1}_{\{Y_n^{x,y} \in I_k \}} \right]. 
\end{align*}
For $J_1$, 
since the function $u \mapsto e^{-s' u} \psi(u)$ is directly Riemann integrable on $\mathbb{R}$ 
for any $s' \in K_{\mu}^{\epsilon}$,
we see that the function $u \mapsto e^{(s-t) u} \psi(u)$ is bounded on $\mathbb{R}$
and so there exists a constant $C >0$ such that for all $s \in (-s_0, 0] \cup I^{\circ}_{\mu}$, 
\begin{align*}
e^{(s-t) T_n^v} \psi( T_{n}^v + Y_n^{x,y} )  \leq  C e^{ (t-s) Y_n^{x,y} }. 
\end{align*}
Hence, using Propositions \ref{Prop_Regu_Strong01} and \ref{Prop_Regu_Strong02}, 
we get that as $n \to \infty$, 
\begin{align*}
J_1  \leq  C \sqrt{n}  
 \bb Q_t^x \Big( \log \delta(y, G_n x) \leq -\eta \floor{C_1 \log n} \Big)  
 \leq  C \sqrt{n}  \,  e^{-c_{\eta} \floor{C_1 \log n} }  \to 0.  
\end{align*}
For $J_2$, one can follow the proof of Theorem \ref{Thm-Posi-Neg-sBRP} to obtain
that as $n \to \infty$, 
uniformly in $f \in (\bb R^d)^*$ and $v \in \bb R^d$ with $|f| = |v| = 1$,  
\begin{align*}
J_2 = \int_{ \bb P^{d-1} } \varphi(x) \delta(y, x)^t \,  \frac{r_s(x)}{r_t(x)} \pi_t(dx)
       \int_{\mathbb{R}} e^{- (t-s) u} \psi(u) du + o(1). 
\end{align*}
This ends the proof of Theorem \ref{Thm_Coeff_BRLD_changedMea02}. 
\end{proof}

%%%%%%%%%%%%%%%%%%%%%%%%%%%%%%%%%%%%%%%%%%%%%%%%%%%%%%%%%%%%%%%%%%%%%%%%%%%%%%%%%%%%%
%%%%%%%%%%%%%%%%%%%%%%%%%%%%%%%%%%%%%%%%%%%%%%%%%%%%%%%%%%%%%%%%%%%%%%%%%%%%%%%%%%%%%

\end{document}